\newcommand*\Cdot{\mathpalette\Cdot@{.5}}
\newcommand*\Cdot@[2]{\mathbin{\vcenter{\hbox{\scalebox{#2}{$\m@th#1\circled{$1$}$}}}}}
\newcommand*{\centerfloat}{%
  \parindent \z@
  \leftskip \z@ \@plus 1fil \@minus \textwidth
  \rightskip\leftskip
  \parfillskip \z@skip}
\colorlet{linkequation}{blue}
\newcommand*{\refeqq}[1]{%
  \begingroup
    \hypersetup{
      linkcolor=linkequation,
      linkbordercolor=linkequation,
    }%
    \ref{#1}%
  \endgroup
}
\newcommand{\colim@}[2]{%
  \vtop{\m@th\ialign{##\cr
    \hfil$#1\operator@font colim$\hfil\cr
    \noalign{\nointerlineskip\kern1.5\ex@}#2\cr
    \noalign{\nointerlineskip\kern-\ex@}\cr}}%
}
\newcommand{\colim}{%
  \mathop{\mathpalette\colim@{\rightarrowfill@\scriptscriptstyle}}\nmlimits@
}
\renewcommand{\varprojlim}{%
  \mathop{\mathpalette\varlim@{\leftarrowfill@\scriptscriptstyle}}\nmlimits@
}
\renewcommand{\varinjlim}{%
  \mathop{\mathpalette\varlim@{\rightarrowfill@\scriptscriptstyle}}\nmlimits@
}
\providecommand*{\twoheadrightarrowfill@}{%
  \arrowfill@\relbar\relbar\twoheadrightarrow
}
\providecommand*{\twoheadleftarrowfill@}{%
  \arrowfill@\twoheadleftarrow\relbar\relbar
}
\providecommand*{\xtwoheadrightarrow}[2][]{%
  \ext@arrow 0579\twoheadrightarrowfill@{#1}{#2}%
}
\providecommand*{\xtwoheadleftarrow}[2][]{%
  \ext@arrow 5097\twoheadleftarrowfill@{#1}{#2}%
}
\newcommand*{\relrelbarsep}{.386ex}
\newcommand*{\relrelbar}{%
  \mathrel{%
    \mathpalette\@relrelbar\relrelbarsep
  }%
}
\newcommand*{\@relrelbar}[2]{%
  \raise#2\hbox to 0pt{$\m@th#1\relbar$\hss}%
  \lower#2\hbox{$\m@th#1\relbar$}%
}
\providecommand*{\rightrightarrowsfill@}{%
  \arrowfill@\relrelbar\relrelbar\rightrightarrows
}
\providecommand*{\leftleftarrowsfill@}{%
  \arrowfill@\leftleftarrows\relrelbar\relrelbar
}
\providecommand*{\xrightrightarrows}[2][]{%
  \ext@arrow 0359\rightrightarrowsfill@{#1}{#2}%
}
\providecommand*{\xleftleftarrows}[2][]{%
  \ext@arrow 3095\leftleftarrowsfill@{#1}{#2}%
}
\tikzset{near start abs/.style={xshift=1cm}}
\DeclareSymbolFont{extraup}{U}{zavm}{m}{n}
\DeclareMathSymbol{\varheart}{\mathalpha}{extraup}{86}
\DeclareMathSymbol{\vardiamond}{\mathalpha}{extraup}{87}
 \DeclareMathSymbol{\varclub}{\mathalpha}{extraup}{84} 
\DeclareMathSymbol{\varspade}{\mathalpha}{extraup}{85}
\newcommand{\bigslant}[2]{{\raisebox{.2em}{$#1$}\left/\raisebox{-.2em}{$#2$}\right.}}
\theoremstyle{definition}
\newtheorem{thm}{Theorem}[section]
\newtheorem{cor}{Corollary}[thm]
\newtheorem{lem}[thm]{Lemma}
\newtheorem{prop}[thm]{Proposition}
\theoremstyle{definition}
\newtheorem{ex}{Example}[section]
\newtheorem{remark}{Remark}[section]
\newcommand{\tb}{\mathtt{b}}
\newcommand{\tf}{\mathtt{f}}
\newcommand{\tc}{\mathtt{c}}
\newcommand{\ta}{\mathtt{a}}
\newcommand{\bR}{\mathbb{R}}
\newcommand{\bN}{\mathbb{N}}
\newcommand{\bC}{\mathbb{C}}
\newcommand{\bZ}{\mathbb{Z}}
\newcommand{\image}{\operatorname{image}}
\newcommand{\Hom}{\operatorname{Hom}}
\DeclareFontFamily{U}{mathx}{\hyphenchar\font45}
\DeclareFontShape{U}{mathx}{m}{n}{
      <5> <6> <7> <8> <9> <10>
      <10.95> <12> <14.4> <17.28> <20.74> <24.88>
      mathx10
      }{}
\DeclareSymbolFont{mathx}{U}{mathx}{m}{n}
\DeclareMathAccent{\widecheck}{0}{mathx}{"71}
\DeclareMathAccent{\wideparen}{0}{mathx}{"75}
\newcommand{\bT}{\mathbb{T}}
\newcommand*\bigcdot{\mathpalette\bigcdot@{.5}}
\newcommand*\bigcdot@[2]{\mathbin{\vcenter{\hbox{\scalebox{#2}{$\m@th#1\bullet$}}}}}
\newcommand{\adjunction}{\@ifstar\named@adjunction\normal@adjunction}
\newcommand{\normal@adjunction}[4]{%
  #1\colon #2%
  \mathrel{\vcenter{%
    \offinterlineskip\m@th
    \ialign{%
      \hfil$##$\hfil\cr
      \longrightharpoonup\cr
      \noalign{\kern-.3ex}
      \smallbot\cr
      \longleftharpoondown\cr
    }%
  }}%
  #3 \noloc #4%
}
\newcommand{\named@adjunction}[4]{%
  #2%
  \mathrel{\vcenter{%
    \offinterlineskip\m@th
    \ialign{%
      \hfil$##$\hfil\cr
      \scriptstyle#1\cr
      \noalign{\kern.1ex}
      \longrightharpoonup\cr
      \noalign{\kern-.3ex}
      \smallbot\cr
      \longleftharpoondown\cr
      \scriptstyle#4\cr
    }%
  }}%
  #3%
}
\newcommand{\longrightharpoonup}{\relbar\joinrel\rightharpoonup}
\newcommand{\longleftharpoondown}{\leftharpoondown\joinrel\relbar}
\newcommand\noloc{%
  \nobreak
  \mspace{6mu plus 1mu}
  {:}
  \nonscript\mkern-\thinmuskip
  \mathpunct{}
  \mspace{2mu}
}
\newcommand{\smallbot}{%
  \begingroup\setlength\unitlength{.15em}%
  \begin{picture}(1,1)
  \roundcap
  \polyline(0,0)(1,0)
  \polyline(0.5,0)(0.5,1)
  \end{picture}%
  \endgroup
}
\newcommand{\leftrarrows}{\mathrel{\raise.75ex\hbox{\oalign{%
  $\scriptstyle\leftarrow$\cr
  \vrule width0pt height.5ex$\hfil\scriptstyle\relbar$\cr}}}}
\newcommand{\lrightarrows}{\mathrel{\raise.75ex\hbox{\oalign{%
  $\scriptstyle\relbar$\hfil\cr
  $\scriptstyle\vrule width0pt height.5ex\smash\rightarrow$\cr}}}}
\newcommand{\Rrelbar}{\mathrel{\raise.75ex\hbox{\oalign{%
  $\scriptstyle\relbar$\cr
  \vrule width0pt height.5ex$\scriptstyle\relbar$}}}}
\def\leftrightarrowsfill@{\arrowfill@\leftrarrows\Rrelbar\lrightarrows}
\newcommand{\xleftrightarrows}[2][]{\ext@arrow 3399\leftrightarrowsfill@{#1}{#2}}
\newcommand{\la}{\langle}
\newcommand{\ra}{\rangle}
\newcommand{\wt}{\widetilde}
\definecolor{Red}{rgb}{0.8,0,0.2}
\newcommand{\GG}[1]{}
\def\@footnotecolor{red}
\def\@footnotemark{%
    \leavevmode
    \ifhmode\edef\@x@sf{\the\spacefactor}\nobreak\fi
    \stepcounter{Hfootnote}%
    \global\let\Hy@saved@currentHref\@currentHref
    \hyper@makecurrent{Hfootnote}%
    \global\let\Hy@footnote@currentHref\@currentHref
    \global\let\@currentHref\Hy@saved@currentHref
    \hyper@linkstart{footnote}{\Hy@footnote@currentHref}%
    \@makefnmark
    \hyper@linkend
    \ifhmode\spacefactor\@x@sf\fi
    \relax
  }%
\title[Braid Arrangement Bimonoids and the Toric Variety of the Permutohedron]{Braid Arrangement Bimonoids and the Toric Variety\\ of the Permutohedron}
\author{William Norledge}
\address{Pennsylvania State University}
\email{wxn39@psu.edu}
\begin{document}

\usetagform{red}

\renewcommand{\chapterautorefname}{Chapter}
\renewcommand{\sectionautorefname}{Section}
\renewcommand{\subsectionautorefname}{Section}

\renewcommand{\chapterautorefname}{Chapter}
\renewcommand{\sectionautorefname}{Section}
\renewcommand{\subsectionautorefname}{Section}

\begin{abstract}
We show that the toric variety of the permutohedron (=permutohedral space) has the structure of a cocommutative bimonoid in species, with multiplication/comultiplication given by embedding/projecting-onto boundary divisors. In terms of Losev-Manin's description of permutohedral space as a moduli space, multiplication is concatenation of strings of Riemann spheres and comultiplication is forgetting marked points. In this way, the bimonoid structure is an analog of the cyclic operad structure on the moduli space of genus zero marked curves. Covariant/contravariant data on permutohedral space is endowed with the structure of cocommutative/commutative bimonoids by pushing-forward/pulling-back data along the (co)multiplication. Many \hbox{well-known} combinatorial objects index data on permutohedral space. Moreover, combinatorial objects often have the structure of bimonoids, with multiplication/comultiplication given by merging/restricting objects in some way. We prove (for a selection of cases) that the bimonoid structure enjoyed by these indexing combinatorial objects coincides with that induced by the bimonoid structure of permutohedral space. Thus, permutohedral space may be viewed as a fundamental underlying object which geometrically interprets many combinatorial Hopf algebras. Aguiar-Mahajan have shown that classical combinatorial Hopf theory is based on the braid hyperplane arrangement in a crucial way. This paper aims to similarly establish permutohedral space as a central object, providing an even more unified perspective. 

The main motivation for this work concerns Feynman amplitudes in the Schwinger parametrization, which become integrals over permutohedral space if one blows-up everything in the resolution of singularities. Then the Hopf algebra structure of Feynman graphs, first appearing in the work of Connes-Kreimer, coincides with that induced by the bimonoid structure of permutohedral space. 
\end{abstract}


\maketitle

\vspace{-4ex}

\setcounter{tocdepth}{1} 
\hypertarget{foo}{ }
\tableofcontents

\section*{Introduction}

In enumerative combinatorics, the notion of generating function may be categorified as follows. One picks a certain type (`species') of combinatorial object $\text{p}$, for example $\text{p}=\text{binary trees}$. One decides what are the points of these combinatorial objects, for example $\text{points}=\text{vertices}$ of binary trees. Then, for $I$ a finite set, form the set $\text{p}[I]$ of all $I$-labeled combinatorial objects of type $\text{p}$, where by $I$-labeling we mean an (often bijective) function on $I$ into the chosen points. The associated generating function just records the cardinalities of the sets $\text{p}[I]$, however there is more structure. For each bijection $\sigma: J\to I$ we have a relabeling bijection $\text{p}[\sigma]:\text{p}[I]\to \text{p}[J]$, which precomposes \hbox{$I$-labelings} with $\sigma$ to produce $J$-labelings. Thus, we actually have a presheaf on the category $\textsf{finSet}^\times$ of finite sets and bijections
\[
\text{p}: ({\textsf{finSet}^\times})^{\text{op}} \to \textsf{Set}
,\qquad
I \mapsto \text{p}[I], \quad \sigma \mapsto \text{p}[\sigma]
.\]
In general, for $\textsf{C}$ a category, we refer to $\textsf{C}$-valued presheaves on $\textsf{finSet}^\times$ as $\textsf{C}$-valued \emph{Joyal species}, going back to \cite{joyal1981theorie}, \cite{joyal1986foncteurs}. 

Instead of the discrete structures of combinatorics, one can instead consider continuous spaces, for example in species whose components $\text{p}[I]$ are configuration spaces of points, or moduli spaces parametrizing spaces with marked points. In this context, species are more often called \hbox{`S-modules'} or `symmetric sequences/families' in the literature. 

The continuous and discrete settings are often related by letting combinatorial objects index data on spaces. That is, one has a combinatorial species $\text{p}$, a family of spaces $(\mathcal{X}^I)_{I\in \textsf{finSet}}$, some chosen data `$\text{D}$', and a morphism of species
\[
\eta: \text{p} \to \text{D}(\mathcal{X}^{(-)})
.\]
In addition, species appearing in these two contexts often have compatible algebraic structure, i.e. $\eta$ is a homomorphism of the appropriate kind. As a famous example, certain multigraphs index cells of the moduli space of genus zero stable marked curves with nodal singularities, and the indexing is a homomorphism of cyclic operads. More precisely, fundamentally the \emph{space} $\mathcal{X}^{(-)}$ has algebraic structure, manifesting e.g. as factorization of its boundaries, which endows $\text{D}(\mathcal{X}^{(-)})$ with algebraic structure by pushforward/pullback of data. 

The main goal of this paper is to rigorously exhibit several of these indexing constructions, but also to provide a theoretical framework for formalizing and systematizing further such examples. We focus on the case of the toric variety of the permutohedron, called \emph{permutohedral space} following \cite{Kap}, which may be realized as a moduli space parametrizing strings of stable marked Riemann spheres, glued at the poles \cite{losevmanin}, \cite{batyrevblume10}. Our main result is the precise formulation of the indexing of nef line bundles on permutohedral space by generalized permutohedra, or more generally line bundles by Boolean functions, see \autoref{Invertible Sheaves and Boolean Functions}. In particular, in \autoref{thm:mainmain} we realize it as a homomorphism of bimonoids. It turns out we actually have to use invertible sheaves instead of line bundles in order to do this faithfully.

This paper is not comprehensive in giving examples of combinatorial objects which index data on permutohedral space. One of the most interesting examples we do not study is the indexing of degree one cycles of (tropical) permutohedral space by loopless valuated matroids, see e.g. \cite[Section 2.3]{eur}. This could be studied analogously to our treatment of generalized permutohedra and Boolean functions. 

Along the way, we shall require three key elaborations of Joyal species. Firstly, we need a generalization of Joyal species called braid arrangement species, which are required in settings where one does not have `factorization'. The reason we shall not enjoy this factorization property is because data of a product of spaces is not necessarily the product of data, 
\[
\text{D}( \mathcal{X}\times \mathcal{Y})  \neq \text{D}(\mathcal{X})\times \text{D}(\mathcal{Y})
.\]
Braid arrangement species (= Coxeter species of type $A$) are due to Aguiar-Mahajan \cite[Section 2.3]{AM22}, who define species and bimonoids over real reflection hyperplane arrangements, arbitrary real hyperplane arrangements \cite{aguiar2020bimonoids}, or even \hbox{left-regular bands}. A \emph{composition} $F$ of $I$ is a tuple of nonempty sets $F=(S_1,\dots, S_k)$ such that $I=S_1\sqcup \dots \sqcup S_k$. A $\textsf{C}$-valued \emph{braid arrangement species}, or just \emph{species}, $\textsf{p}$ consists of an object 
\[
\textsf{p}[F]\in \textsf{C}
\] 
for each composition $F$, equipped with actions of bijections and permutations, see \autoref{bas}. The name `braid arrangement species' is explained by the fact compositions are in natural one-to-one correspondence with faces of the braid hyperplane arrangement. If $\textsf{C}$ is a monoidal category, associated to a Joyal species $\text{p}$ is the \emph{Joyal-theoretic} braid arrangement species $\textsf{p}$, which has
\[
\textsf{p}[F] := \text{p}[S_1]\otimes \dots \otimes \text{p}[S_k] 
.\] 
See \autoref{sec:Joyal-Theoretic Species}. In this way, species generalize Joyal species by allowing the components $\textsf{p}[F]$ to be more complicated objects than just these tensor products. In this paper, we find crucial to connecting the setting without factorization with the more classical setting where we do have factorization is the notion of \emph{weakly Joyal-theoretic species}, see \autoref{sec:Joyal-Theoretic Species}. An example of a weakly Joyal-theoretic species involves taking external tensor products of sheaves of modules, see \autoref{exten}.

A \emph{bimonoid} in species (when $\textsf{C}=\textsf{Vec}$ we tend to say `bialgebra' instead of bimonoid) is a species $\textsf{h}$ equipped with a pair of maps
\[
\mu_{F,G} : \textsf{h}[F]\to   \textsf{h}[G] \qquad \text{and} \qquad   \Delta_{F,G} : \textsf{h}[G]\to   \textsf{h}[F] 
\]
for each pair of compositions $F,G$ such that $F$ contains $G$ as braid arrangement faces, called the multiplication and comultiplication respectively. The key axiom for bimonoids, called the \emph{bimonoid axiom}, depends crucially on Tits' projection product \cite{Tits74} of braid arrangement faces,\footnote{\ the product for just the braid arrangement appeared independently in \cite[Section 4.1]{epstein1976general}, where it is used to express causal factorization of time-ordered products} see \autoref{Bimonoids in Braid Arrangement Species}.

A bimonoid $\textsf{h}$ is called \emph{Joyal-theoretic} if $\textsf{h}$ is a Joyal-theoretic species and its (co)multiplication similarly factorizes, see \autoref{sec:Joyal-Theoretic Bimonoids}. In this case, setting $F=(S,T)$ and $G=(I)$ recovers more familiar looking (co)multiplication,
\[
\mu_{S,T}= \mu_{(S,T),(I)} : \textsf{h}[S]\otimes   \textsf{h}[T]\to   \textsf{h}[I]
 \qquad \text{and} \qquad  
 \Delta_{S,T}= \Delta_{(S,T),(I)} :    \textsf{h}[I] \to\textsf{h}[S]\otimes   \textsf{h}[T]
.\] 
Bimonoids in species may be formalized as bialgebras over a certain bimonad acting on species. See \cite[Section 2.10]{AM22} for the case $\textsf{C}=\textsf{Vec}$, and see \cite[Section 3.7]{aguiar2020bimonoids} for the case $\textsf{C}=\textsf{Set}$ but in a less structured setting where there is no action of bijections. Aguiar-Mahajan have shown how bimonoids in species provide a deeper and more unified perspective on combinatorial graded Hopf algebras \cite[Chapter 15]{aguiar2010monoidal}, and also how these graded Hopf algebra generalize to non-type $A$ settings \cite[Part II]{AM22}.

Secondly, we require a vertical categorification of species and bimonoids to allow for $\textsf{C}$ to be a $2$-category. This is because the data `$\text{D}$' often forms a category 
\[
\text{D}(\mathcal{X}^{I})\in \textsf{Cat}
\]
where $\textsf{Cat}$ is the $2$-category of $1$-categories. This means we need to develop a theory of species and bimonoids where axioms hold only up to coherent isomorphism. This takes up most of the first part of the paper, and culminates in our definition of a `$2$-bimonoid' in \autoref{Bimonoids in Braid Arrangement Species}. We do not prove any coherence theorems, and in this paper it is only a conjecture that we have all the coherence conditions we should want in the correction definition of a $2$-bimonoid. We leave the conjectural formalization of $2$-bimonoids as bialgebras over a $2$-bimonad to future work.

Thirdly, towards applications to Feynman amplitudes mentioned in the abstract (see e.g. \cite{Brown:2015fyf}, \cite{schultka2018toric}), after completing the indexing of nef invertible sheaves by generalized permutohedra, we would like to construct some kind of `bialgebra of global sections of invertible sheaves'. For example, this should be the bialgebra which appears in the factorization identities for Symanzik polynomials. Also, since permutohedral space is a bimonoid, surely the structure sheaf is somehow a bialgebra (recall global regular functions on permutohedral space are all constant). To realize these, we require the notion of $\textsf{p}$-graded set/vector species, see \autoref{Graded Braid Arrangement Species}. Here, $\textsf{p}$ is a species valued in categories, and one has a set/vector space for each object $\ta\in \textsf{p}[F]$. Similarly, when $\textsf{h}$ is a bimonoid valued in categories, one can define $\textsf{h}$-graded bimonoids/bialgebras, see \autoref{Bimonoids in Graded Braid Arrangement Species}. 

We now give a brief summary of the paper. \autoref{part1}, which consists of \autoref{sec1}, \autoref{sec2} and \autoref{sec3}, deals with developing our three elaborations on Joyal species described above. The reader might wish to skip ahead to \autoref{part2}, and refer back to the theory building of \autoref{part1} as required. 

We begin the application to permutohedral space in \autoref{sec:prelim} by considering \emph{preposets} (=reflexive and transitive relations). The bialgebra of preposets $\textbf{O}$ was defined in \cite[Section 13.1.6]{aguiar2010monoidal}. See also \cite[Section 15.4]{aguiar2017hopf}. Preposets $p$ are significant because they bijectively index cones $\sigma_p$ of the braid arrangement. By cone duality, they also bijectively index cones $\sigma^\circ_p=-\sigma^\vee_p$ of the adjoint braid arrangement which are generated by coroots, see \autoref{sec:cones}. Multiplication of preposets corresponds to taking products of coroot cones, and comultiplication of preposets corresponds to taking faces of coroot cones and factorizing, see \autoref{lem:iso} and \autoref{lem:face}. (See \autoref{rem:braidconein} for an interpretation in terms of cones of the braid arrangement.)

However, the bialgebra $\textbf{O}$ does not have a $\textsf{Set}$-valued bimonoid analog, because the comultiplication of preposets is sometimes `inadmissible' and equals zero. This is a problem for us, especially given the kind of role we want combinatorial objects like preposets to play, mentioned above. The fix is to let the bimonoid of preposets $\textsf{O}_\bullet$ be valued in \emph{pointed} sets instead, or even pointed categories\footnote{\ for us, a \emph{pointed category} $\textsf{D}\in \textsf{Cat}_\bullet$ is a functor $1_{\textsf{Cat}} \to \textsf{D}$ where $1_{\textsf{Cat}}$ is the trivial category} $\textsf{O}_\bullet[I]\in \textsf{Cat}_\bullet$, where morphisms between preposets correspond to inclusion of coroots cones, see \autoref{prob:isbimon}. Note the distinguished object $\bullet\in \textsf{O}_\bullet[I]$ playing the role of zero is not a preposet, rather it is a formal element we add by hand. We make the convention that its corresponding cone is the empty cone $\sigma^\circ_\bullet:=\emptyset$. 

In \autoref{sec:Graded Bialgebra}, we proceed to describe the bimonoid structure of permutohedral space `algebra first' as opposed to `geometry first', in order to fully expose the combinatorial structure. (A geometry first description is straightforward using Losev-Manin's realization of permutohedral space \cite{losevmanin} as a certain moduli space parametrizing stable marked strings of Riemann spheres glued at the poles; the multiplication is concatenation of strings of Riemann spheres and the comultiplication is forgetting marked points followed by stabilization.) We construct the \hbox{$\textsf{O}_\bullet$-graded} bialgebra $\bC\textsf{\textbf{O}}$, where the vector space associated to $p$ is the $\bC$-algebra $\bC[\text{M}_p]$ which has basis the integer points of $\sigma^\circ_p$. The algebraic structure of $\bC\textsf{\textbf{O}}$ largely follows from the geometric interpretation of $\textsf{O}_\bullet$ in terms of coroot cones. The multiplication of $\bC\textsf{\textbf{O}}$ is given by taking products of integer points, and the comultiplication is given by projecting onto faces and factorizing, see \autoref{sec:Graded Bialgebra}. 

The toric variety $\bC\Sigma^I$ of the braid arrangement fan over a finite set $I$ is constructed by taking charts the affine varieties 
\[
U_F:= \Hom_{\textsf{Aff}_\bC}\! \big(\,  \ast \, ,\,   \text{Spec}(\bC[\text{M}_F])\big) = \Hom_{\bC\textsf{Alg}}\! \big (\bC[\text{M}_F],\bC\big )
\]
where $F$ is a total preposet on $I$ (which are equivalent to compositions of $I$), and gluing along the inclusion of coroot cones. In \autoref{sec:permspace1} and \autoref{sec:permspace2}, we get a multiplication/comultiplication on permutohedral space $\bC\Sigma$ by pulling back points $\bC[\text{M}_F]\to \bC$ along the comultiplication/multiplication of $\bC\textsf{\textbf{O}}$. In particular, $\bC\textsf{\textbf{O}}$ was a commutative bialgebra, whereas $\bC\Sigma$ is a cocommutative bimonoid. 

From a geometry first perspective, we may reinterpret $\bC\textsf{\textbf{O}}$ as follows. For each preposet $p$, the toric variety $U_p$ of the restriction of the braid fan to $\sigma_p$ is naturally an open set $U_p\subseteq \bC\Sigma^I$. We have a natural identification 
\[
\bC[\text{M}_p] = \big\{\text{regular functions $U_p\to \bC$}\big\}
.\]
Then the multiplication/comultiplication of $\bC\textsf{\textbf{O}}$ is given by pulling back regular functions along the comultiplication/multiplication of permutohedral space $\bC\Sigma$.

In \autoref{sec:site}, we formalize and generalize this structure as follows. We have the bimonoid of opens $\textsf{Op}^{\text{op}}$, given by
\[
\textsf{Op}^{\text{op}}[F] := \big \{ \text{Zariski open sets of } \bC\Sigma^{S_1}\times \dots \times \bC\Sigma^{S_k} \big \} 
\]
where the multiplication/comultiplication is given by pulling back opens along the comultiplication/multiplication of permutohedral space. In \autoref{indexbyopens}, we show the indexing of opens by preposets may be formalized as a homomorphism of bimonoids
\[
\textsf{O}_\bullet \to  \textsf{Op}^{\text{op}}
,\qquad p\mapsto U_p
.\]
We have the $\textsf{Op}^{\text{op}}$-graded bialgebra $\bC\textbf{\textsf{Op}}^{\text{op}}$, where the vector space associated to an open $U$ is the space of regular functions on $U$, and the multiplication/comultiplication is given by pulling back regular functions along the comultiplication/multiplication of permutohedral space. We may then pullback the grading of $\bC\textbf{\textsf{Op}}^{\text{op}}$ along $\textsf{O}_\bullet \to  \textsf{Op}^{\text{op}}$, which recovers the $\textsf{O}_\bullet$-graded bialgebra $\bC\textsf{\textbf{O}}$. In \autoref{thm}, we show that graded bimonoids/bialgebras can always be pulled-back along homomorphisms in this way.

We have that $\bC\textbf{\textsf{Op}}^{\text{op}}$ is not Joyal-theoretic, whereas $\bC\textsf{\textbf{O}}$ is. Thus, $\bC\textsf{\textbf{O}}$ may be viewed as a kind of `algebraic Joyalization' of $\bC\textbf{\textsf{Op}}^{\text{op}}$ by restricting its structure to that which is seen by preposets. Of course, $\bC\textsf{\textbf{O}}$ may be viewed in isolation as Joyal-theoretic, which is what we did in \autoref{sec:Graded Bialgebra}. However, the origin (or at least interpretation) of its algebraic structure, of pulling back opens and regular functions, is inherently non Joyal-theoretic.\footnote{\ An analogous construction can be found in \cite{lno2019}, where algebraic Joyalization happens by taking a quotient, and once again the algebraic structure (this time a Lie algebra) has its origin in a non Joyal-theoretic setting. The quotient is by the so-called Steinmann relations of axiomatic QFT.}

In \autoref{sec:topos} and \autoref{Invertible Sheaves and Boolean Functions}, we consider an analog of the above but in the setting of the sheaf topos $\textsf{Sh}$ of $\textsf{Op}^{\text{op}}$. We consider sheaves of modules instead of opens, global sections instead of regular functions, and Boolean functions $\textsf{BF}$ (or generalized permutohedra $\textsf{GP}$) instead of preposets. This goes as follows.

Let $\mathcal{O}_F$ denote the structure sheaf of $\bC\Sigma^{S_1}\times \dots \times \bC\Sigma^{S_k}$. We have the bimonoid of modules $\textsf{Mod}$, given by
\[
\textsf{Mod}[F] :=  \big \{\text{sheaves of $\mathcal{O}_F$-modules on } \bC\Sigma^{S_1}\times \dots \times \bC\Sigma^{S_k} \big \} 
\]
where the multiplication/comultiplication is given by pulling back modules along the comultiplication/multiplication of permutohedral space. This is a `$2$-bimonoid', with algebraic structure holding only up to coherent isomorphism. In \autoref{A Homomorphism of Bimonoid1} and \autoref{A Homomorphism of Bimonoids}, we construct a (lax, but strong) homomorphism of bimonoids
\[
\textsf{BF}\to \textsf{Mod}, 
\qquad 
z \mapsto \mathcal{O}_z
\]
with essential image the invertible sheaves of permutohedral space, where $\textsf{BF}$ is the bimonoid of so-called Boolean functions, see \cite[Section 12.1]{aguiar2017hopf}. We prove this is a homomorphism in \autoref{thm:mainmain}. The restriction to generalized permutohedra 
\[
\textsf{GP}\hookrightarrow \textsf{BF}
\]
indexes the nef invertible sheaves. If we want this homomorphism to be injective, so that $\textsf{BF}$ and $\textsf{GP}$ are faithfully interpreted, we have to use invertible sheaves here and \emph{not} line bundles. In addition, the invertible sheaves we construct $\mathcal{O}_z$ have to be `closed' under pullback along the (co)multiplication of permutohedral space. In particular, the usual construction of invertible sheaves on toric varieties as subsheaves of the sheaf of rational functions is not sufficient. 

We have the $\textsf{Mod}$-graded bialgebra $\bC\textbf{\textsf{Mod}}$, where the vector space associated to a module $M$ is the space of global sections $s\in M$, and the multiplication/comultiplication of $\bC\textbf{\textsf{Mod}}$ is given by pulling back global sections along the comultiplication/multiplication of permutohedral space, see \autoref{Global Sections}. We may then pullback the grading of $\bC\textbf{\textsf{Mod}}$ along $\textsf{BF}\to \textsf{Mod}$, to give Joyal-theoretic bialgebras $\bC\textbf{\textsf{BF}}$ and $\bC\textbf{\textsf{GP}}$, see \autoref{The Joyal-Theoretic Bialgebra of Global Sections of Invertible Sheaves}. In particular, $\bC\textbf{\textsf{BF}}$ is the bialgebra of global sections of invertible sheaves. As with $\bC\textbf{\textsf{O}}$, $\bC\textbf{\textsf{BF}}$ may be considered in isolation as Joyal-theoretic, however its algebraic structure has come from an inherently non Joyal-theoretic setting.

The restriction $\bC\textbf{\textsf{GP}}$ of $\bC\textbf{\textsf{BF}}$ to generalized permutohedra has a nice description. The space of global sections of the invertible sheaf $\mathcal{O}_\mathfrak{p}$ of a generalized permutohedron $\mathfrak{p}$ is naturality the vector space with basis the integer points of $\mathfrak{p}$. Then pullback of global sections along the multiplication of permutohedral space corresponds to taking (integer points in) faces of generalized permutohedra and factorizing, and the pullback of global sections along the comultiplication of permutohedral space corresponds to taking products of (integer points in) generalized permutohedra. See \cite[Section 5.1]{aguiar2017hopf} for a description of these operations at the level of the indexing generalized permutohedra. 




\subsection*{Future Work}

The main motivation for this work concerns Euclidean Feynman amplitudes in the Schwinger parametrization, which become integrals over permutohedral space if one `blows-up everything' in the resolution of singularities \cite{Brown:2015fyf}, \cite{schultka2018toric}. Then the Hopf algebra structure of Feynman graphs, first appearing in the work of Connes-Kreimer, coincides with that induced by the bimonoid structure of permutohedral space. After conversations with Franz Herzog, the author understands that a conjectural Hopf algebra structure of Feynman amplitudes in \emph{Minkowski signature} may require non Joyal-theoretic bimonoids throughout. For example, can the forest-like treatment of IR divergences in \cite{MR4112459} be understood in terms of non Joyal-theoretic bimonoids? 

In previous work of the author, the construction of the perturbative S-matrix scheme in causal perturbation theory was formalized in terms of bimonoids in species \cite{norledge2020species}. Study the role permutohedral space may play there, via its interpretations as the space of configurations on a worldline (tropical) or worldsheet (complex).

There are other combinatorial bimonoids which can be geometrically interpreted over permutohedral space. For example, the indexing of degree one cycles of tropical permutohedral space by valuated loopless matroids. These cycles can be pulled back along the (co)multiplication of permutohedral space, which is compatible with the bimonoid structure on loopless matroids as given here \cite[Section 13.8.5]{aguiar2010monoidal}. This suggests the question; can the Grassmannian or Dressian be equipped with bimonoid structure?

The author's understanding is that cyclic operads are \emph{not} operads (if by `operad' we mean a monoid with respect to a plethystic tensor product, and by plethystic tensor product we mean the categorified Fourier transform of a composition of analytic endofunctors). Rather they are \emph{algebras} over a certain monad acting on Joyal species, and are thus analogous to the (bi)monoids of this paper. Using Aguiar-Mahajan's theory of bimonoids with respect to left-regular bands \cite[Section 3.9]{aguiar2020bimonoids}, can the moduli space of genus zero stable marked curves with nodal singularities be realized as a bimonoid? 

The type $A$ full flag variety has analogous bimonoid structure, and this may be interesting to study. Copies of permutohedral space sit inside the full flag variety as \hbox{sub-bimonoids}. 

\subsection*{Acknowledgments.} 

We thank Adrian Ocneanu for his support and useful discussions, and Nick Early for useful discussions. We thank Marcelo Aguiar and Swapneel Mahajan for the sharing of unpublished notes on the topic of $\textsf{p}$-graded species. We thank Penn State maths department for their support. 

\part{Species and Bimonoids Background} \label{part1}

\section{Species}\label{sec1}

\subsection{Compositions of Finite Sets}

Let $I$ be a finite set. For $k\in \bN$, let
\[
(k):=\{1,\dots,k\}
\]
equipped with the ordering \hbox{$1>\dots> k$}. A \emph{composition} $F$ of $I$ of \emph{length} $l(F)=k$ is a surjective function $F:I\to (k)$. The set of all compositions of $I$ is denoted $\Sigma[I]$,
\[  
\Sigma[I]:=  \bigsqcup_{k\in \bN}  \big \{  \text{surjective functions}\ F:I \to (k) \big\}
.\]
We often denote compositions by $k$-tuples 
\[  
F=  (S_1, \dots, S_k)
\]
where $S_i:= F^{-1}(i)$, $1\leq i \leq k$. The $S_i$ are called the \emph{lumps} of $F$. In particular, we have the length one composition $(I)$ for $I\neq \emptyset$, and the length zero composition $(\, )$ which is the unique composition of the empty set. Note the $1$-tuple consisting of the empty set $(\emptyset)$ does not correspond to a composition. For compositions $F,G\in \Sigma[I]$, we write $G\leq F$ if $G$ can be obtained from $F$ by iteratively merging contiguous lumps. 

Given a decomposition $I\! =S\sqcup T$ of $I$ ($S$, $T$ can be empty), for $H=(S_1, \dots , S_{k})$ a composition of $S$ and $K=(T_1,\dots, T_{l})$ a composition of $T$, their \emph{concatenation} $H; K$ is the composition of $I$ given by
\[ 
H;  K: =   ( S_1, \dots , S_{k},  T_1,\dots, T_{l}  ) 
.\]
For $S\subseteq I$ and $H=(S_1, \dots , S_{k}) \in \Sigma[I]$, the \emph{restriction} $H|_S$ of $H$ to $S$ is the composition of $S$ given by
\[
H|_S:=  (  S_1 \cap S, \dots, S_k\cap S )_+
\]
where $(-)_+$ means we delete any sets from the list which are the empty set.


\subsection{Joyal Species}

Before describing braid arrangement species, we first recall the more classical notion of Joyal species, going back to \cite{joyal1981theorie}, \cite{joyal1986foncteurs}. The connection between Joyal species and braid arrangement species is described in \autoref{sec:Joyal-Theoretic Species}.

Let $\textsf{finSet}^\times$ denote the monoidal category with objects finite sets $I$, morphisms bijective functions $\sigma$, and monoidal product disjoint union. For $\textsf{C}$ a category, a \hbox{$\textsf{C}$-valued} \emph{Joyal (co)species} $\text{p}$ is a \hbox{$\textsf{C}$-valued} (co)presheaf on $\textsf{finSet}^\times$,
\[
\text{p} : (\textsf{finSet}^\times)^{\text{(op)}} \to \textsf{C}
,\qquad
I \mapsto \text{p}[I], \quad    \sigma\mapsto  \text{p}[\sigma] 
.\]
In particular, for $\textsf{C}\in \{  \textsf{Set}, \textsf{Vec} \}$, we say \emph{Joyal set (co)species} and \emph{Joyal vector (co)species}. 

This definition allows for $\textsf{C}$ to be a higher category. For example, in this paper we shall often have \hbox{$\textsf{C}=\textsf{Cat}$}, where $\textsf{Cat}$ is the $2$-category of $1$-categories. In general, when $\textsf{C}$ is a \hbox{$2$-category}, a $\textsf{C}$-valued Joyal (co)species should be a $\textsf{C}$-valued \hbox{$2$-(co)presheaf}\footnote{\ i.e. pseudofunctor, a.k.a. strong $2$-functor, see e.g. \cite[Definition B1.2.1]{MR2063092}} on $\textsf{finSet}^\times$. Such objects might be called \hbox{`$2$-(co)species'}, and are also known as `stuff types' when $\textsf{C}=\textsf{Grpd}$. We will just say `$\textsf{C}$-valued (co)species', where the fact it is a $2$-(co)species is understood from the fact $\textsf{C}$ is a $2$-category. For simplicity, in this paper we add the condition that $\textsf{C}$-valued Joyal (co)species are strictly unital, i.e. they preserve the identities of $\textsf{finSet}^\times$ strictly.

If $\textsf{C}$ has the additional structure of a monoidal category $\textsf{C}=(\textsf{C}, \otimes, 1_\textsf{C})$, a $\textsf{C}$-valued Joyal (co)species $\text{p}$ is called \emph{connected} if 
\[
\text{p}[\emptyset]=1_\textsf{C}
.\] 
In this paper, we only consider connected Joyal (co)species.\footnote{\ the generic \hbox{not-necessarily-connected} setting would require a more general theory which uses decompositions in place of compositions, where a \emph{decomposition} $F$ of $I$ is a function $F:I\to (k)$ (not necessarily surjective), although the existence of this more general theory is currently just a conjecture for the author}

In \cite[Chapter 15]{aguiar2010monoidal}, Aguiar-Mahajan showed that a deeper, more explanatory, and more unified perspective on combinatorial ($\bN$-graded) Hopf algebras is that they are decategorifications (namely $\textsf{finSet}^\times\to \bN$) of Hopf monoids constructed internal to vector species, using the Day convolution as the tensor product. Moreover, many of these Hopf monoids are linearizations of certain \hbox{set-theoretic} bimonoids constructed in connected set species. (The set-theoretic bimonoids are not internal bimonoids, rather they are correctly formalized as bialgebras over a certain bimonad.)



\begin{ex}
The mapping $I \mapsto \Sigma[I]$ extends to a connected Joyal set species $\Sigma$ (where $\textsf{Set}$ is a cartesian category) given by 
\[
\Sigma : (\textsf{finSet}^\times)^{\text{op}} \to \textsf{Set}
,\qquad
I \mapsto \Sigma[I], \quad    \sigma\mapsto  \Sigma[\sigma] 
\]  
where $\Sigma[\sigma](F):= F\circ \sigma$. Then $\Sigma$ becomes a set-theoretic bimonoid with multiplication concatenation of compositions and comultiplication restriction of compositions. Its covariant linearization has decategorified image in graded vector spaces the graded Hopf algebra of noncommutative symmetric functions $\textbf{NSym}$, which has been widely studied. As we shall see, the bimonoid structure of $\Sigma$ is a precursor to the bimonoid structure of permutohedral space, which is obtained by replacing $(k)$ with a string of $k$-many Riemann spheres, glued at the poles.  
\end{ex}

\subsection{Braid Arrangement Species} \label{bas}

We now describe a generalization of connected Joyal species called braid arrangement species. As we shall see, braid arrangement species are required in settings where one does not have certain `factorization'. Factorization is implicit in Joyal species. This generalization is due to \hbox{Aguiar-Mahajan} \cite{aguiar2020bimonoids}, \cite{AM22}, who focus on the cases $\textsf{C}\in \{ \textsf{Set}, \textsf{Vec}\}$. In this paper, we require a vertical categorification of Aguiar-Mahajan's work to allow for $\textsf{C}$ to be a $2$-category.

As with Joyal species, one can define braid arrangement species so they take values in any higher category $\textsf{C}$. In this paper, $\textsf{C}$ will either be the \hbox{$2$-category} of (strict) $1$-categories $\textsf{Cat}$\footnote{\ we let $\textsf{Cat}$ be sufficiently large, i.e. `very large', to contain e.g. $\textsf{Set}$} or the \hbox{$2$-category} of pointed categories $\textsf{Cat}_\bullet$\footnote{\ For us, a \emph{pointed category} $\textsf{D}$ is a functor of the form $1_{\textsf{Cat}} \to \textsf{D}$, where $\textsf{D}$ is a (strict) $1$-category and $1_{\textsf{Cat}}$ is the trivial category. The image of $1_{\textsf{Cat}}$ is called the \emph{distinguished object} of $\textsf{D}$, which we often denote by $\bullet$. A functor between pointed categories is required to preserve the distinguished object.}, and occasionally the category of sets $\textsf{Set}$ or the category of vector spaces $\textsf{Vec}$ over a field $\Bbbk$.\footnote{\ $\textsf{p}$-\emph{graded species}, defined in \autoref{Graded Braid Arrangement Species}, are valued exclusively in $\textsf{Set}$ and $\textsf{Vec}$, and really, our braid arrangement species valued in $\textsf{Set}$ and $\textsf{Vec}$ come from quotienting out the grading of a $\textsf{p}$-graded species, see e.g. \autoref{quot}} However $\textsf{Cat}$ and $\textsf{Cat}_\bullet$ are moreover \emph{strict} $2$-categories, meaning unitors and associators are trivial. Therefore we define braid arrangement species in the setting of strict $2$-categories only, since we do not need to go any higher.


When a bijection $\sigma:J\to I$ between finite sets is understood and there is no ambiguity, given a composition $F \in \Sigma[I]$, we let
\[
F'   :=     F \circ \sigma             \,    \in \Sigma[J]
.\]  
Let $\text{Sym}_k$ denote the symmetric group on $(k)=\{ 1,\dots,k \}$. When a permutation $\beta\in \text{Sym}_k$ is understood and there is no ambiguity, given a composition $F\in \Sigma[I]$ of length $k$, we let
\[
\wt{F}:=   \beta \circ F         \,    \in \Sigma[I]
.\] 
When composable bijections $K\xrightarrow{\tau} J \xrightarrow{\sigma} I$ are understood, we let $F'':= F\circ (\sigma\circ\tau)$. Similarly, when permutations $\beta,\gamma\in \text{Sym}_k$ are understood,  we let $\wt{\wt{F}}:=(\gamma\circ \beta) \circ F$. We have the following five basic facts, which tell us that compositions form a bimodule under the action of bijections and permutations. The action of bijections and permutations commute
\[
\wt{(F')}=(\wt{F})'
\]
(from now on let $\wt{F}':=\wt{(F')}=(\wt{F})'$). The action of bijections is a right action,
\[
(F')' = F'' \quad \qquad \text{and}\qquad  \quad   F'=F \quad \text{if}\quad \sigma=\text{id}_I
.\]
The action of permutations is a left action,
\[
\wt{(\wt{F})} = \wt{\wt{F}}  \quad\qquad \text{and}\qquad \quad    \wt{F}=F \quad \text{if}\quad \beta=\text{id}_{(k)}
.\] 

For $\textsf{C}$ a strict $2$-category, a \hbox{$\textsf{C}$-valued} (strong, strictly unital) \emph{braid arrangement species}, or just \emph{species}, $\textsf{p}$ consists of an object $\textsf{p}[F]\in \textsf{C}$ for each composition $F\in \Sigma[I]$, $I\in \textsf{finSet}^\times$, a morphism (or $1$-cell)
\[
\textsf{p}[\sigma,F]  : \textsf{p}[F] \to \textsf{p}[F'] 
\]
for each bijection of finite sets $\sigma: J\to I \, \in \textsf{finSet}^\times$ and composition $F\in \Sigma[I]$, and a morphism
\[
\textsf{p}[F,\beta]  : \textsf{p}[F] \to \textsf{p}[\wt{F}]
\]
for each permutation $\beta\in  \text{Sym}_k$, $k\in \bN$, and composition $F$ of length $k$. When there is no ambiguity, we often abbreviate
\[
\textsf{p}[I]:= \textsf{p}[(I)], \qquad \textsf{p}[\sigma]:= \textsf{p}[\sigma,(I)],\qquad \textsf{p}[\beta]:= \textsf{p}[(I),\beta]
\]
for $I\neq \emptyset$, and even just
\[
\sigma:= \textsf{p}[\sigma,F],  \qquad \beta := \textsf{p}[F,\beta]
.\]
In addition, we require an invertible $2$-cell called $(\sigma,\beta)$-\emph{functoriality}
\[\begin{tikzcd}[row sep=0.9cm]
	{\textsf{p}[F]} & {\textsf{p}[\widetilde{F}]} \\
	{\textsf{p}[F']} & {\textsf{p}[\widetilde{F}']}
	\arrow["\beta"', from=2-1, to=2-2]
	\arrow["\sigma"', from=1-1, to=2-1]
	\arrow["\sigma", from=1-2, to=2-2]
	\arrow["\beta", from=1-1, to=1-2]
	\arrow["{  \textsf{p}[ \sigma, F, \beta ] }"{description}, Rightarrow, from=2-1, to=1-2]
\end{tikzcd}\]
an invertible $2$-cell (contravariant) $\sigma$-\emph{functoriality} and a trivial $2$-cell \emph{strictly $\sigma$-unitary}
\[\begin{tikzcd}[row sep=1cm]
	& {\textsf{p}[F']} \\
	{\textsf{p}[F]} && {\textsf{p}[F'']} && {\textsf{p}[F]} & {\textsf{p}[F]}
	\arrow["\sigma", from=2-1, to=1-2]
	\arrow["\tau", from=1-2, to=2-3]
	\arrow["{\sigma\circ \tau}"', from=2-1, to=2-3]
	\arrow[""{name=a, anchor=center, inner sep=0}, "{\text{id}_{\textsf{p}[F]}}", curve={height=-10pt}, from=2-5, to=2-6]
	\arrow[""{name=b, anchor=center, inner sep=0}, "{\text{id}_I}"', curve={height=10pt}, from=2-5, to=2-6]
	\arrow[""{name=0, anchor=center, inner sep=0}, from=2-1, to=2-3]
	\arrow["{ \textsf{p}[ \sigma,\tau, F ]  }"{description}, shorten >=3pt, Rightarrow, from=1-2, to=0]
\end{tikzcd}\]
and an invertible $2$-cell (covariant) $\beta$-\emph{functoriality} and a trivial $2$-cell \emph{strictly $\beta$-unitary}
\[\begin{tikzcd}[row sep=0.9cm]
	& {\textsf{p}[\widetilde{F}]} \\
	{\textsf{p}[F]} && {\textsf{p}[\widetilde{\widetilde{F}}]} && {\textsf{p}[F]} & {\textsf{p}[F].}
	\arrow["\beta", from=2-1, to=1-2]
	\arrow["\gamma", from=1-2, to=2-3]
	\arrow["\gamma\circ\beta"', from=2-1, to=2-3]
	\arrow[""{name=a, anchor=center, inner sep=0},  "{\text{id}_{\textsf{p}[F]}}", curve={height=-10pt}, from=2-5, to=2-6]
	\arrow[""{name=b, anchor=center, inner sep=0},  "{\text{id}_{(k)}}"', curve={height=10pt}, from=2-5, to=2-6]
	\arrow[""{name=0, anchor=center, inner sep=0}, from=2-1, to=2-3]
	\arrow["{\textsf{p}[ F,\beta,\gamma  ] }"{description}, shorten >=3pt, Rightarrow, from=1-2, to=0]
\end{tikzcd}\]
When there is no ambiguity, we often abbreviate
\[
(\sigma,\beta):= \textsf{p}[ \sigma, F, \beta ], \qquad    (\sigma,\tau):= \textsf{p}[ \sigma,\tau, F ]  ,\qquad     (\beta,\gamma):=\textsf{p}[ F,\beta,\gamma  ] 
.\]
We require such $2$-cells for all meaningful choices of $F,\sigma,\tau,\beta,\gamma$, with the requirement that \hbox{$\sigma,\tau,\beta,\gamma\neq \text{id}$.}\footnote{\ alternatively, one can include these cases in the definition, but then needs to add coherence conditions saying that all the resulting $2$-cells are trivial, e.g. $\textsf{p}[\sigma,F,\text{id}_{(k)}]= \text{id}_{\textsf{p}[\sigma,F]}$} For example, we have a $\sigma$-functoriality $2$-cell $(\sigma,\tau)$ for each pair of composable non-identity bijections $K\xrightarrow{\tau} J \xrightarrow{\sigma} I$ and choice of composition $F\in \Sigma[I]$. If all the $2$-cells are trivial, i.e. the above five diagrams commute on the nose, then the species is called \emph{strict}. If $\textsf{C}$ is a $1$-category, for example $\textsf{C}\in \{ \textsf{Set},\textsf{Vec} \}$, then a $\textsf{C}$-valued species must be strict. 

We impose four coherence conditions on the $2$-cells $(\sigma,\beta)$, $(\sigma,\tau)$, $(\beta,\gamma)$. We require trivial $3$-cells \hbox{\emph{$(\sigma,\tau,\beta)$-coherence}} and \emph{$(\beta,\gamma,\sigma)$-coherence}
\begin{figure}[H]
	\centering
	\begin{tikzcd}[column sep=large, row sep=0.75cm]  
		\centering
		& {\textsf{p}[F']} \\
		{\textsf{p}[F]} && {\textsf{p}[F'']} \\
		& {\textsf{p}[\wt{F}']} \\
		{\textsf{p}[\wt{F}]} && {\textsf{p}[\wt{F}'']} 
		\arrow[""{name=a, anchor=center, inner sep=0}, no head,shorten >=30pt,  shorten <=30pt,  from=1-2, to=3-2]
		\arrow["{(\sigma,\tau)}"{description}, Rightarrow,  curve={height=-14pt},  shorten >=4pt,  shorten <=2pt,  from=1-2,  to=a ]
		\arrow[""{name=b, anchor=center, inner sep=0}, no head,shorten >=30pt,  shorten <=30pt,  from=4-1, to=4-3]
		\arrow["{(\sigma,\tau)}"{description}, Rightarrow,  curve={height=-8pt},  shorten >=4pt,  shorten <=2pt,  from=3-2,  to=b]
		\arrow["\sigma\circ\tau"{description}, from=4-1, to=4-3]
		\arrow["\sigma"{description}, from=4-1, to=3-2]
		\arrow["\tau"{description}, from=3-2, to=4-3]
		\arrow["\sigma"{description}, from=2-1, to=1-2]
		\arrow["{\beta}"{description}, from=2-1, to=4-1]
		\arrow["\tau"{description}, from=1-2, to=2-3]
		\arrow["{\beta}"{description}, from=2-3, to=4-3]
		\arrow[""{name=0, anchor=center, inner sep=0}, shorten <=13pt, shorten >=-2.5pt, from=1-2, to=3-2]
		\arrow["{(\sigma,\beta)}"{description}, curve={height=14pt}, shorten <=6pt, shorten >=6pt, Rightarrow, from=1-2, to=4-1]
		\arrow["{(\tau,\beta)}"{description}, curve={height=-6pt}, Rightarrow, from=2-3, to=3-2]
		\arrow["{(\sigma\circ \tau,\beta)}"', curve={height=17pt},   shorten >=5pt,  shorten <=5pt,  Rightarrow, from=2-3, to=4-1]
		\arrow["{\beta}"', shorten >=5pt, no head, from=1-2, to=0]
		\arrow["\sigma\circ\tau"{description}, no head, shorten >=-5pt, from=2-1, to=0]
		\arrow[from=0, to=2-3]
	\end{tikzcd}
\qquad \quad  
\begin{tikzcd}[column sep=large, row sep=0.75cm]  
		\centering
	& {\textsf{p}[\wt{F}]} \\
	{\textsf{p}[F]} && {\textsf{p}[\wt{\wt{F}}]} \\
	& {\textsf{p}[\wt{F}']} \\
	{\textsf{p}[F']} && {\textsf{p}[\wt{\wt{F}}']} 
		\arrow[	""{name=0, anchor=center, inner sep=0}, no head,shorten >=30pt,  shorten <=30pt,  from=2-1, to=2-3]
	\arrow[""{name=a, anchor=center, inner sep=0}, no head,shorten >=30pt,  shorten <=30pt,  from=1-2, to=3-2]
	\arrow["{(\beta,\gamma)}"{description}, Rightarrow,  curve={height=-14pt},  shorten >=4pt,  shorten <=2pt,  from=1-2,  to=a ]
	\arrow[""{name=b, anchor=center, inner sep=0}, no head,shorten >=30pt,  shorten <=30pt,  from=4-1, to=4-3]
	\arrow["{(\beta,\gamma)}"{description}, Rightarrow,  curve={height=-8pt},  shorten >=6pt,  shorten <=2pt,  from=3-2,  to=b]
	\arrow["\gamma \circ \beta"{description}, from=4-1, to=4-3]
	\arrow["\beta"{description}, from=4-1, to=3-2]
	\arrow["\gamma"{description}, from=3-2, to=4-3]
	\arrow["\beta"{description}, from=2-1, to=1-2]
	\arrow["{\sigma}"{description}, from=2-1, to=4-1]
	\arrow["\gamma"{description}, from=1-2, to=2-3]
	\arrow["{\sigma}"{description}, from=2-3, to=4-3]
	\arrow[shorten <=13pt, shorten >=-2.5pt, from=1-2, to=3-2]
	\arrow["{(\sigma, \beta)}"{description}, curve={height=-14pt}, shorten <=6pt, shorten >=6pt, Rightarrow, from=4-1, to=1-2]
	\arrow["{(\sigma,\gamma)}"{description}, curve={height=6pt}, Rightarrow, from=3-2, to=2-3]
	\arrow["{(\sigma, \gamma\circ \beta)}", curve={height=-18pt},   shorten >=5pt,  shorten <=5pt,  Rightarrow, from=4-1, to=2-3]
	\arrow["{\sigma}"', shorten >=5pt, no head, from=1-2, to=0]
	\arrow["\gamma\circ\beta"{description}, no head, shorten >=-5pt, from=2-1, to=0]
	\arrow[from=0, to=2-3]
\end{tikzcd}
\end{figure}
\noindent and trivial $3$-cells \emph{$(\sigma,\tau,\upsilon)$-coherence} and \emph{$(\beta,\gamma,\delta)$-coherence}
\begin{figure}[H]
		\centering
\begin{tikzcd}[column sep=1.2cm, row sep=0.7cm] 
			\centering
	& {\textsf{p}[F']} \\
	\\
	\textsf{p}[F] && {\textsf{p}[F'']} \\
	& {\textsf{p}[F''']}
		\arrow[""{name=1, anchor=center, inner sep=0}, from=3-1, to=4-2]
			\arrow["{\!  \! \! \! \! (\sigma \circ \tau,\upsilon)}", shorten >=8pt, shorten <=5pt, curve={height=3pt}, Rightarrow, from=3-3, to=1]
		\arrow[""{name=0, anchor=center, inner sep=0}, from=3-1, to=3-3]
		\arrow[""{name=2, anchor=center, inner sep=0}, from=1-2, to=4-2]
	\arrow["\tau", from=1-2, to=3-3]
	\arrow["\upsilon", from=3-3, to=4-2]
	\arrow["{(\sigma,\tau \circ \upsilon)}"{description}, curve={height=9pt}, shorten <=7pt, shorten >=5pt, Rightarrow, from=1-2, to=1]
		\arrow["{\ \, (\sigma,\tau)}", curve={height=8pt}, shorten >=7pt,  shorten <=6pt, Rightarrow, from=1-2, to=0]
		\arrow["\sigma", from=3-1, to=1-2]
			\arrow["{(\tau,\upsilon)}"{description}, shorten >=2pt, curve={height=5pt}, Rightarrow, from=3-3, to=2]
\end{tikzcd}
\qquad \quad 
\begin{tikzcd}[column sep=1.2cm, row sep=0.7cm] 
			\centering
	& {\textsf{p}[\wt{F}]} \\
	\\
	\textsf{p}[F] && {\textsf{p}[\wt{\wt{F}}]} \\
	& {\textsf{p}[\wt{\wt{\wt{F}}}].}
	\arrow[""{name=1, anchor=center, inner sep=0}, from=3-1, to=4-2]
	\arrow["{\!  \! \! \! \! (\gamma \circ \beta,\delta)}", shorten >=8pt, shorten <=5pt, curve={height=3pt}, Rightarrow, from=3-3, to=1]
	\arrow[""{name=0, anchor=center, inner sep=0}, from=3-1, to=3-3]
	\arrow[""{name=2, anchor=center, inner sep=0}, from=1-2, to=4-2]
	\arrow["\gamma", from=1-2, to=3-3]
	\arrow["\delta", from=3-3, to=4-2]
	\arrow["{(\beta,\delta \circ \gamma)}"{description}, curve={height=9pt}, shorten <=7pt, shorten >=5pt, Rightarrow, from=1-2, to=1]
	\arrow["{\ \, (\beta ,\gamma)}", curve={height=8pt}, shorten >=7pt,  shorten <=6pt, Rightarrow, from=1-2, to=0]
	\arrow["\beta", from=3-1, to=1-2]
	\arrow["{(\gamma,\delta)}"{description}, shorten >=2pt, curve={height=5pt}, Rightarrow, from=3-3, to=2]
\end{tikzcd}
\end{figure}
\noindent  for all meaningful choices. For example, we have a $(\sigma,\tau,\beta)$-coherence trivial $3$-cell for each pair of composable non-identity bijections $K\xrightarrow{\tau} J \xrightarrow{\sigma} I$, choice of composition $F\in \Sigma[I]$, and choice of non-identity permutation $\beta\in \text{Sym}_{k}$ where $k=l(F)$.

Let us make these coherence conditions fully explicit in the case $\textsf{C}\in \{\textsf{Cat},\textsf{Cat}_\bullet\}$, at the level of components of the $2$-cells (although in this paper, all $\textsf{Cat}_\bullet$-valued species will in fact be strict). For this, the following notation will be useful. When an appropriate bijection $\sigma$ is understood and there is no ambiguity, given a (generalized) element $\ta\in \textsf{p}[F]$ in some component of a species $\textsf{p}$, we abbreviate
\[
\ta' :=    \sigma(\ta)  \, \in   \textsf{p}[F']
.\]
When composable bijections $\sigma\circ\tau$ are understood, we abbreviate $\ta'':=\sigma\circ \tau(\ta)\in   \textsf{p}[F'']$. Similarly, when an appropriate permutation $\beta$ is understood and there is no ambiguity, we abbreviate
\[
\wt{\ta} :=     \beta(\ta)  \, \in   \textsf{p}[\wt{F}]
.\]
When permutations $\beta,\gamma\in \text{Sym}_k$ are understood, we abbreviate $\tilde{\tilde{\ta}}:=\gamma \circ \beta (\ta)\in   \textsf{p}[\wt{\wt{F}}]$. 

Then, in the case $\textsf{C}\in \{\textsf{Cat},\textsf{Cat}_\bullet\}$, the $2$-cell $(\sigma,\beta)$ consists of a natural isomorphism in the category $\textsf{p}[\wt{F}']$ 
\[
{(\sigma,\beta)}_{\ta} : \wt{(\ta')} \to (\tilde{\ta})'
\]
for each object $\ta\in \textsf{p}[F]$, $(\sigma,\tau)$ consists of a natural isomorphism in $\textsf{p}[F'']$ 
\[
{(\sigma,\tau)}_{\ta} :  (\ta')' \to     \ta''
\]
for each object $\ta\in \textsf{p}[F]$, and $(\beta,\gamma)$ consists of a natural isomorphism in $\textsf{p}[\wt{\wt{F}}]$ 
\[
{(\beta,\gamma)}_{\ta} :  \wt{(\wt{\ta})} \to     \wt{\wt{\ta}}
\]
for each object $\ta\in \textsf{p}[F]$. Recall being natural isomorphisms means that e.g. in the case of ${(\sigma,\beta)}_{\ta}$, we have
\[
{(\sigma,\beta)}_{\tb}   \circ  \wt{(\mathtt{f}')}
= 
(\tilde{\mathtt{f}})'  \circ  {(\sigma,\beta)}_{\ta} 
\]
for all morphisms $\mathtt{f}:\ta \to \tb \in  \textsf{p}[F]$. Then $(\sigma,\tau,\beta)$-coherence says 
\[
{(\sigma \circ \tau, \beta)}_\ta  \circ  { \wt{         (\sigma,\tau)  }  }_{\ta}  =  {(\sigma, \tau)}_{\tilde{\ta}} \circ  (\sigma,\beta)'_{\ta}  \circ  {(\tau, \beta)}_{\ta'}
\] 
$(\beta,\gamma,\sigma)$-coherence says 
\[
 ( \sigma, \gamma\circ \beta   )_\ta \circ   (\beta ,\gamma )_{\ta'} = (\beta ,\gamma )'_{\ta} \circ (\sigma,\gamma)_{\tilde{\ta}} \circ \wt{(\sigma,\beta)}_{\ta}  
\]
$(\sigma,\tau,\upsilon)$-coherence says 
\[
( \sigma \circ \tau , \upsilon )_\ta \circ ( \sigma , \tau )'_\ta = (\sigma, \tau \circ \upsilon)_{\ta} \circ  (\tau , \upsilon)_{\ta'}
\]
and $(\beta,\gamma,\delta)$-coherence says 
\[
( \gamma \circ \beta , \delta )_\ta \circ \wt{( \beta , \gamma )}_\ta = (\beta, \delta \circ \gamma )_{\ta} \circ  (\gamma , \delta)_{\tilde{\ta}}
\]
for all meaningful choices. Note here, each instance of a prime or tilde is referring to a bijection or permutation unambiguously. For example, $(\sigma,\beta)_{\ta}$ is a morphism in the category $\textsf{p}[\wt{F}']$ and so the prime $(\sigma,\beta)'_{\ta}$ appearing in $(\sigma,\tau,\beta)$-coherence must be referring to $\tau$ and not $\sigma$.

\begin{remark}
	The name `braid arrangement species' comes from the fact that compositions $F$ are in natural one-to-one correspondence with the chambers of the braid hyperplane arrangement. Therefore we might say that $\textsf{p}$ consists of an object $\textsf{p}[F]$ for each chamber of the braid arrangement. More generally, we have the remarkable fact that species (equipped with actions of the $\beta$'s but not $\sigma$'s), and bimonoids in species, make sense with respect to any real hyperplane arrangement, or even any left-regular band \cite{aguiar2020bimonoids}. There is also a more structured theory involving reflection hyperplane arrangements \cite{AM22}.
\end{remark}

\begin{remark}\label{functors}
We may formalize $\textsf{C}$-valued braid arrangement species as certain strictly unitary pseudofunctors, a.k.a. strictly unitary strong $2$-functors, on the action groupoid for the action of bijections $\sigma$ and permutations $\beta$ on compositions. See \cite[Section 2.1.1]{AM22} for an explicit definition of this groupoid. This is similar to Joyal species, where the action groupoid $\textsf{finSet}^\times$ was for the action of just bijections on finite sets.
\end{remark} 



\begin{ex}\label{ex:exp}
The primordial $\textsf{Set}$-valued species is the \emph{exponential species} $\textsf{E}$, given by
\[
\textsf{E}[F] := \{ F  \} 
, \qquad 
\textsf{E}[\sigma, F](F) : = F'
, \qquad 
\textsf{E}[F, \beta ](F) : =  \wt{F}
.\]
Up to isomorphism, the exponential species is the unique species $\textsf{p}$ such that each component $\textsf{p}[F]$ is a singleton set. The name `exponential species' comes from the connection between species and formal power series. 
\end{ex}

\begin{ex}
We have the $\textsf{Set}$-valued species of compositions $\mathsf{\Sigma}$, given by 
\[
\mathsf{\Sigma}\big [(\, )\big] :=   1_{\textsf{Set}}
,\qquad
\mathsf{\Sigma}  [F] := \big  \{    (H_1,\dots, H_k) \ \big | \   H_i\in \Sigma[S_i]   \big   \}   
\]
where $F=(S_1,\dots, S_k)$, and
\[
\mathsf{\Sigma}[\sigma, F](H_1,\dots, H_k) := (H'_1,\dots, H'_k)
, \qquad 
\mathsf{\Sigma}[F, \beta ](H_1,\dots, H_k) :=   (H_{\beta^{-1}(1)},\dots, H_{\beta^{-1}(k)}) 
.\]
Here, $H'_i=H_i \circ \sigma|_{S'_i}$ where $S'_i= \sigma^{-1}(S_i)$. We also have a $\textsf{Cat}$-valued version of $\mathsf{\Sigma}$ by including a single morphism
\[
(K_1,\dots, K_k) \to (H_1,\dots, H_k)
\]
whenever $K_i\leq H_i$ for all $1\leq i \leq k$. See \autoref{ex:conect} for the connection to the Joyal species of compositions $\Sigma$. 
\end{ex}


Dually, we may define $\textsf{C}$-valued \emph{cospecies} $\textsf{p}$, which consist of objects $\textsf{p}[F]$, morphisms $\textsf{p}[\sigma,F]$, $\textsf{p}[F,\beta]$, and $2$-cells $\textsf{p}[\sigma,F,\beta]$, $\textsf{p}[\sigma,\tau,F]$, $\textsf{p}[F,\beta,\gamma]$ as with species, however the source and target of $\textsf{p}[\sigma,F]$ and $\textsf{p}[F,\beta]$ are switched. Thus, cospecies are covariant in the bijections $\sigma$ and contravariant in the permutations $\beta$. The coherence conditions we impose are the analogous four trivial $3$-cells.

We make this explicit. In the setting of cospecies, we adopt the following alternative notation. For $\sigma:J\to I$, $\beta\in \text{Sym}_{k}$, and $F\in \Sigma[J]$ where $l(F)=k$, we instead let
\[
F'   :=     F \circ \sigma^{-1}             
\qquad \text{and} \qquad
\wt{F}   :=     \beta^{-1}  \circ F      
.\]  
For $\textsf{C}$ a strict $2$-category, a $\textsf{C}$-valued \emph{cospecies} consists of an object $\textsf{p}[F]\in \textsf{C}$ for each composition $F$, a morphism
\[
\textsf{p}[\sigma,F']  : \textsf{p}[F] \to \textsf{p}[F'] 
\]
for each bijection of finite sets $\sigma: J\to I$ and composition $F\in \Sigma[J]$, and a morphism
\[
\textsf{p}[\wt{F},\beta]  : \textsf{p}[F] \to \textsf{p}[\wt{F}]
\]
for each permutation $\beta\in  \text{Sym}_k$, $k\in \bN$, and composition $F$ of length $k$. Then $(\sigma,\beta)$-functoriality is an invertible $2$-cell as before, however the $\sigma$-functoriality and $\beta$-functoriality $2$-cells now have the form $\textsf{p}[ \sigma,F ]\circ  \textsf{p}[ \tau,F ] \Rightarrow   \textsf{p}[   \sigma\circ \tau,F ]$ and $\textsf{p}[ \beta,F ]\circ  \textsf{p}[ \gamma,F ] \Rightarrow   \textsf{p}[ \gamma\circ\beta,F ]$ respectively, i.e. the order of composition of the image is switched. After constructing the analogous trivial $3$-cells, for $\textsf{C}\in \{\textsf{Cat},\textsf{Cat}_\bullet\}$ and at the level of components of the $2$-cells, we find that $(\sigma,\tau,\beta)$-coherence now says 
\[
{(\sigma \circ \tau, \beta)}_\ta  \circ  { \wt{         (\sigma,\tau)  }  }_{\ta}  =  {(\sigma, \tau)}_{\tilde{\ta}} \circ 
\underbrace{ (\tau,\beta)'_{\ta}  \circ  {(\sigma, \beta)}_{\ta'} }_{  \text{  $\sigma$ and $\tau$ switched  } }
\] 
and $(\beta,\gamma,\sigma)$-coherence now says  
\[
( \sigma, \gamma\circ \beta   )_\ta \circ   (\beta ,\gamma )_{\ta'} = (\beta ,\gamma )'_{\ta} \circ
\underbrace{ (\sigma,\beta)_{\tilde{\ta}} \circ \wt{(\sigma,\gamma)}_{\ta}   }_{  \text{  $\beta$ and $\gamma$ switched  } }
 .\]
Whereas $(\sigma,\tau,\upsilon)$-coherence and $(\beta,\gamma,\delta)$-coherence are unchanged.



As we shall see, cospecies arise by taking contravariant data on a species $\textsf{p}$, with the action of bijections and permutations being given by pulling back data along the corresponding actions of $\textsf{p}$. We have the following basic, discrete example of this; of contravariantly linearizing set species to obtain vector cospecies.


\begin{ex}
	Let $\textsf{p}$ be a $\textsf{Set}$-valued species with finite discrete components $\textsf{p}[F]$, probably sets of labeled combinatorial objects of a certain kind. Then we have the $\textsf{Vec}$-valued cospecies $\textbf{\textsf{p}}^\ast$ given by
	\[
	\textbf{\textsf{p}}^\ast [F] := \big \{    \text{functions} \ f:\textsf{p}[F]\to \Bbbk  \big \}
	\]
	\[
	\textbf{\textsf{p}}^\ast [\sigma, F] (f) :=    f\circ  \textsf{p}[\sigma, F]
	,\qquad 
	\textbf{\textsf{p}}^\ast [F,\beta] (f) :=    f\circ  \textsf{p}[F,\beta]
	.\] 
	Note the fact $\textbf{\textsf{p}}^\ast$ is a $\textsf{Vec}$-valued cospecies, i.e. the five functoriality $2$-cells commute, follows immediately from the fact that $\text{p}$ is a species, and that pullback of functions is functorial in the function one pulls backs along. 
\end{ex}

We shall emulate this construction, but in a continuous setting, several times in this paper. If the pullback of data is functorial only up to isomorphism, our cospecies will need to have non-trivial $2$-cells.


\subsection{Lax Morphisms of Species} 

We continue to let $\textsf{C}$ be a strict $2$-category. A \emph{lax morphism}, or just \emph{morphism}, of $\textsf{C}$-valued species $\eta: \textsf{p}\to \textsf{x}$ consists of a morphism
\[
\eta_F : \textsf{p}[F]  \to  \textsf{x}[F]  
\]
for each composition $F\in \Sigma[I]$, $I\in \textsf{finSet}^\times$, together with $2$-cells 
\[\begin{tikzcd}
	{\textsf{p}[F]} & {\textsf{p}[F']} && {\textsf{p}[F]} & {\textsf{p}[\wt{F}]} \\
	{\textsf{x}[F]} & {\textsf{x}[F']} && {\textsf{x}[F]} & {\textsf{x}[\wt{F}]}
	\arrow["\beta", from=1-4, to=1-5]
	\arrow["{\eta_F}"', from=1-4, to=2-4]
	\arrow["\beta"', from=2-4, to=2-5]
	\arrow["{\eta_{\wt{F}}}", from=1-5, to=2-5]
	\arrow["\sigma", from=1-1, to=1-2]
	\arrow["{\eta_F}"', from=1-1, to=2-1]
	\arrow["\sigma"', from=2-1, to=2-2]
	\arrow["{\eta_{F'}}", from=1-2, to=2-2]
	\arrow["\eta_{\sigma,F}", Rightarrow, from=2-1, to=1-2]
	\arrow["\eta_{F,\beta}", Rightarrow, from=2-4, to=1-5]
\end{tikzcd}\]
for each non-identity bijection $\sigma:J\to I$ and non-identity permutation $\beta\in \text{Sym}_{l(F)}$. We often abbreviate
\[
\sigma_F :=  \eta_{\sigma,F} \qquad \text{and} \qquad \beta_F:=   \eta_{F,\beta}
\]
and in the totally lumped case
\[
\eta_I:= \eta_{(I)} \qquad \text{and} \qquad  \sigma_I :=  \eta_{\sigma,(I)}\qquad \text{and} \qquad  \beta_I:=   \eta_{(I),\beta}
.\]
In addition, we require a trivial $3$-cell \emph{$(\sigma,\beta)$-coherence}
\[\begin{tikzcd}[column sep=0.9cm, row sep=0.6cm] 
	& {\textsf{p}[F]} && {\textsf{p}[\widetilde{F}]} \\
	{\textsf{p}[F']} && {\textsf{p}[\widetilde{F}']} \\
	& {\textsf{x}[F]} && {\textsf{x}[\widetilde{F}]} \\
	{\textsf{x}[F']} && {\textsf{x}[\widetilde{F}']}
		\arrow["{\beta_F}"{description}, curve={height=-20pt}, shorten <=6pt, shorten >=6pt, Rightarrow, from=3-2, to=1-4]
								\arrow["{ (\sigma,\beta)  }"{description}, curve={height=3pt}, shorten >=6pt, shorten <=15pt, Rightarrow, from=4-1, to=3-4]
	\arrow["\beta"{description}, from=4-1, to=4-3]
	\arrow["\sigma"{description}, from=1-2, to=2-1]
	\arrow["\sigma"{description}, from=1-4, to=2-3]
	\arrow["\sigma"{description}, from=3-4, to=4-3]
	\arrow["\sigma"{description}, from=3-2, to=4-1]
	\arrow["{\eta_{\widetilde{F}}}"{description}, from=1-4, to=3-4]
	\arrow["\beta"{description}, from=1-2, to=1-4]
	\arrow[""{name=0, anchor=center, inner sep=0}, shorten <=13pt, from=2-3, to=4-3]
	\arrow[""{name=1, anchor=center, inner sep=0}, shorten <=13pt, from=1-2, to=3-2]
	\arrow[""{name=2, anchor=center, inner sep=0}, shorten <=19pt, from=3-2, to=3-4]
	\arrow[""{name=3, anchor=center, inner sep=0}, shorten <=18pt, from=2-1, to=2-3]
	\arrow["{\sigma_F}"{description}, curve={height=7pt}, Rightarrow, from=3-2, to=2-1]
	\arrow["{\sigma_{\widetilde{F}}}"{description},   curve={height=7pt}, Rightarrow, from=3-4, to=2-3]
	\arrow["{\eta_{\widetilde{F}'}}"{description}, shorten >=6pt, no head, from=2-3, to=0]
	\arrow["{\eta_F}", shorten >=5pt, no head, from=1-2, to=1]
	\arrow["\beta"{description}, shorten >=9pt, no head, from=3-2, to=2]
	\arrow["\beta"{description}, shorten >=9pt, no head, from=2-1, to=3]
	\arrow["{\eta_{F'}}"{description}, from=2-1, to=4-1]
		\arrow["{\beta_{F'}}"{description}, curve={height=18pt},  Rightarrow, from=4-1, to=2-3]
			\arrow["{ (\sigma,\beta)  }"{description}, curve={height=-9pt}, shorten >=18pt, Rightarrow, from=2-1, to=1-4]
\end{tikzcd}\]
and trivial $3$-cells \emph{$\sigma$-coherence} and \emph{$\beta$-coherence}
\begin{figure}[H]
	\centering
	\begin{tikzcd}[column sep=1.3cm, row sep=0.8cm] 
		\centering
	& {\textsf{p}[F']} \\
{\textsf{p}[F]} && {\textsf{p}[F'']}  \\
& {\textsf{x}[F']} \\
{\textsf{x}[F]} && {\textsf{x}[F'']} 
\arrow["\sigma\circ\tau"', from=4-1, to=4-3]
\arrow["\sigma"{description}, from=4-1, to=3-2]
\arrow["\tau"{description}, from=3-2, to=4-3]
\arrow["\sigma"{description}, from=2-1, to=1-2]
\arrow["{\eta_F}"{description}, from=2-1, to=4-1]
\arrow["\tau"{description}, from=1-2, to=2-3]
\arrow["{\eta_{F''}}"{description}, from=2-3, to=4-3]
\arrow[""{name=0, anchor=center, inner sep=0}, shorten <=13pt, shorten >=-1pt, from=1-2, to=3-2]
\arrow[""{name=a, anchor=center, inner sep=0}, shorten <=13pt, from=4-1, to=4-3]
	\arrow["{ (\sigma,\tau)  }"{description}, curve={height=-12pt}, shorten >=3pt,  shorten <=0pt, Rightarrow, from=1-2, to=0]
			\arrow["{ (\sigma,\tau)  }"{description}, curve={height=-12pt}, shorten >=3pt,  shorten <=0pt, Rightarrow, from=3-2, to=a]
\arrow["{\sigma_F}"{description}, curve={height=-11pt}, shorten <=6pt, shorten >=6pt, Rightarrow, from=4-1, to=1-2]
\arrow["{\tau_{F'}}"{description}, curve={height=6pt}, Rightarrow, from=3-2, to=2-3]
\arrow["{(\sigma\circ \tau)_{F}}", curve={height=-18pt},   shorten >=5pt,  shorten <=5pt,  Rightarrow, from=4-1, to=2-3]
\arrow["{\eta_{F'}}"', shorten >=5pt, no head, from=1-2, to=0]
\arrow["\sigma\circ\tau"{description}, no head, shorten >=-5pt, from=2-1, to=0]
\arrow[from=0, to=2-3]
	\end{tikzcd}
	\qquad \quad  
	\begin{tikzcd}[column sep=1.3cm, row sep=0.8cm]    
		\centering
	& {\textsf{p}[\wt{F}]} \\
{\textsf{p}[F]} && {\textsf{p}[\wt{\wt{F}}]} \\
& {\textsf{x}[\wt{F}]} \\
{\textsf{x}[F]} && {\textsf{x}[\wt{\wt{F}}]}
\arrow["{\gamma \circ \beta}"', from=4-1, to=4-3]
\arrow["\beta"{description}, from=4-1, to=3-2]
\arrow["\gamma"{description}, from=3-2, to=4-3]
\arrow["\beta"{description}, from=2-1, to=1-2]
\arrow["{\eta_F}"{description}, from=2-1, to=4-1]
\arrow["\gamma"{description}, from=1-2, to=2-3]
\arrow["{\eta_{\wt{\wt{F}}}}"{description}, from=2-3, to=4-3]
\arrow[""{name=0, anchor=center, inner sep=0}, shorten <=13pt, from=2-1, to=2-3]
\arrow[""{name=a, anchor=center, inner sep=0}, shorten <=13pt, from=4-1, to=4-3]
	\arrow["{ (\beta,\gamma)  }"{description}, curve={height=-12pt}, shorten >=3pt,  shorten <=0pt, Rightarrow, from=1-2, to=0]
		\arrow["{ (\beta,\gamma)  }"{description}, curve={height=-12pt}, shorten >=3pt,  shorten <=0pt, Rightarrow, from=3-2, to=a]
\arrow["{\eta_{\wt{F}}}"', shorten >=-35pt,  from=1-2, to=0]
\arrow["{\beta_F}"{description}, curve={height=-12pt}, shorten <=6pt, shorten >=6pt, Rightarrow, from=4-1, to=1-2]
\arrow["{\gamma_{\wt{F}}}"{description}, curve={height=6pt}, Rightarrow, from=3-2, to=2-3]
\arrow["{(\gamma\circ\beta)_{F}}", curve={height=-20pt},   shorten >=5pt,  shorten <=5pt,  Rightarrow, from=4-1, to=2-3]
\arrow["{\gamma \circ \beta}"{description}, from=2-1,no head, to=0]
\arrow[from=0, to=2-3]
	\end{tikzcd}
\end{figure}
\noindent for all meaningful choices. We have a well-defined composition of morphisms by composing \hbox{$2$-cells}. A morphism is called \emph{strong}, respectively \emph{strict}, if all the $2$-cells $\sigma_F$ and $\beta_F$ are isomorphisms, respectively identities. If $\textsf{C}$ is a $1$-category, then all morphisms are strict.  

In the case $\textsf{C}\in \{\textsf{Cat} ,  \textsf{Cat}_\bullet\}$, let us make morphisms explicit at the level of components of the $2$-cells. We have that $\sigma_F$ consists of a natural morphism in $\textsf{x}[F']$
\[
{\sigma}_\ta : \big ( \eta_F(\ta)\big )'   \to   \eta_F(\ta')    
\] 
for each object $\ta\in \textsf{p}[F]$, and $\beta_F$ consists of a natural morphism in $\textsf{x}[\wt{F}]$
\[
\beta_\ta   :  \wt{\eta_F(\ta)}   \to   \eta_F(\tilde{\ta})  
\]
for each object $\ta\in \textsf{p}[F]$. Then $(\sigma,\beta)$-coherence says
\[
 \eta_{\wt{F}'} \big ( (\sigma,\beta)_{\ta} \big ) \circ  \beta_{\ta'}   \circ \wt{\sigma}_\ta  
 =
{\sigma}_{\tilde{\ta}}  \circ \beta'_\ta  \circ  (\sigma,\beta)_{\eta_F(\ta)}
\]
$\sigma$-coherence says
\[
 {{(\sigma \circ \tau)}}_\ta \circ (\sigma,\tau)_{\eta_{F}(\ta)} =   \eta_{F''}\big ((\sigma,\tau)_{\ta}\big )  \circ  {\tau}_{\ta'}   \circ  {\sigma}'_\ta 
\]
and $\beta$-coherence says
\[
 {{(\gamma \circ \beta )}}_\ta   \circ (\beta,\gamma)_{\eta_{F}(\ta)}  =\eta_{\wt{\wt{F}}}\big ((\beta,\gamma)_{\ta}\big )  \circ  {\gamma  }_{\tilde{\ta}}   \circ  \wt{\beta}_\ta 
.\]

	
\begin{remark}
If we formalize species as functors as in \autoref{functors}, lax morphisms of species are lax natural transformations of functors.
\end{remark}

The definition of morphisms for cospecies is analogous, with the same $2$-cells. Then \hbox{$(\sigma,\beta)$-coherence} is unchanged, however $\sigma$-coherence becomes
\[
(\sigma \circ \tau)_\ta \circ (\sigma,\tau)_{\eta_F(\ta)} =   \eta_{F''}\big ( (\sigma,\tau)_{\ta}\big)  \circ   \! \! \! \! \! \! \!  \underbrace{ \sigma_{\ta'} \circ \tau'_{\ta} }_{ \text{$\sigma$ and $\tau$ switched}  }
\]
and $\beta$-coherence becomes
\[
 {{(\gamma \circ \beta )}}_\ta   \circ (\beta,\gamma)_{\eta_{F}(\ta)}  =\eta_{\wt{\wt{F}}}\big ((\beta,\gamma)_{\ta}\big )  \circ  \! \! \! \! \! \! \!    \underbrace{ {\beta  }_{\tilde{\ta}}   \circ  \wt{\gamma}_\ta }_{ \text{$\beta$ and $\gamma$ switched}  }
  \! \! \! \! \! \! \! .\]

\subsection{Joyal-Theoretic Species}  \label{sec:Joyal-Theoretic Species}


In this section, we describe the connection between connected Joyal species and braid arrangement species. In particular, we show the sense in which braid arrangement species are a generalization of connected Joyal species. 

We now suppose that $\textsf{C}$ has the additional structure of a symmetric monoidal \hbox{$2$-category} $\textsf{C}=(\textsf{C},\otimes,1_\textsf{C})$. In this paper, recall we shall have
\[
\textsf{C} \in \{  \textsf{Set}, \textsf{Vec}, \textsf{Cat}, \textsf{Cat}_\bullet      \}
.\]
These are all symmetric monoidal categories as follows. We let $\textsf{Set}$ and $\textsf{Cat}$ be cartesian categories, $\textsf{Vec}$ is equipped with the tensor product of vector spaces, and $\textsf{Cat}_\bullet$ is equipped with the smash product for the cartesian product of categories, see e.g. \cite[Construction 4.19]{MR2558315}. Therefore for the purposes of this paper, we may assume that the monoidal product $\otimes : \textsf{C}\times \textsf{C}\to \textsf{C}$ is a strict $2$-functor, and that the associators, unitors and braiding are all strict 2-natural transformations. 

Given a connected $\textsf{C}$-valued Joyal species $\text{p}$, we have an associated $\textsf{C}$-valued species $\textsf{p}=\textsf{Br}(\text{p})$ given by 
\[
\textsf{p} \big  [(\, )\big]: =  1_\textsf{C} 
, \qquad
\textsf{p} \big  [\text{id}_{\emptyset},(\, )\big ]: =  \text{id}_{1_\textsf{C}}, 
\]
\[
\textsf{p}  [F]: = \text{p}[S_1] \otimes \dots \otimes \text{p}[S_k]
,\qquad 
\textsf{p} [\sigma, F]: = \text{p}[\sigma|_{S'_1}] \otimes \dots \otimes \text{p}[\sigma|_{S'_k}]
\]
and the action of permutations $\beta$, which are then morphisms of the form
\[
\textsf{p} [F,\beta]   :   \text{p}[S_1] \otimes \dots \otimes \text{p}[S_k]\,  \to   \,  \text{p}[S_{\beta^{-1}  (1) }] \otimes \dots \otimes \text{p}[S_{ \beta^{-1}  (k) }]
,\] 
are the isomorphisms given by the braiding of $\textsf{C}$, i.e. permuting tensor factors. If $\textsf{C}$ is just a $1$-category, we are done. Otherwise we need functoriality $2$-cells, which are given by
\[
\textsf{p}[\sigma , F, \beta ] := \text{id}
,\qquad 
\textsf{p}[\sigma ,\tau, F ] :=    \text{p}[\sigma|_{S'_1} ,\tau|_{S''_1}] \otimes  \dots \otimes  \text{p}[\sigma|_{S'_k} ,\tau|_{S''_k}]
,\qquad 
\textsf{p}[F, \beta ,\gamma] :=  \text{id}
\]
where $\text{p}[\sigma, \tau]$ is the compositor $2$-cell $\text{p}[\tau] \circ \text{p}[\sigma] \Rightarrow \text{p}[\sigma \circ \tau]$ of $\text{p}$. 

Going in the opposite direction, given a $\textsf{C}$-valued species $\textsf{p}$, we have the connected $\textsf{C}$-valued Joyal species $\text{p}=\text{Joy}(\textsf{p})$ obtained by restricting its structure to the totally lumped components $F=(I)$, and thus throwing away the action of the $\beta$'s. 

\begin{remark}
Both $\textsf{Br}$ and $\text{Joy}$ extend to functors between Joyal species and braid arrangement species, however they will not feature in this paper.
\end{remark}

The \emph{Joyalization} $\breve{\textsf{p}}:=  \textsf{Br} (\text{Joy}  (\textsf{p}))$ of a species $\textsf{p}$ is given by 
\[
\breve{\textsf{p}} \big  [(\, )\big]: =  1_\textsf{C} 
, \qquad
\breve{\textsf{p}} \big  [\text{id}_{\emptyset},(\, )\big ]: =  \text{id}_{1_\textsf{C}}, 
\]
\[
\breve{\textsf{p}}  [F]: = \textsf{p}[S_1] \otimes \dots \otimes \textsf{p}[S_k]
,\qquad 
\breve{\textsf{p}} [\sigma, F]: = \textsf{p}[\sigma|_{S'_1}] \otimes \dots \otimes \textsf{p}[\sigma|_{S'_k}]
\]
\[
\breve{\textsf{p}}[\sigma , F, \beta ] := \text{id}
,\qquad 
\breve{\textsf{p}}[\sigma ,\tau, F ] :=    \textsf{p}[\sigma|_{S'_1} ,\tau|_{S''_1},  (S_1)] \otimes  \dots \otimes  \textsf{p}[\sigma|_{S'_k} ,\tau|_{S''_k}, (S_k)]
,\qquad 
\breve{\textsf{p}}[F, \beta ,\gamma] :=  \text{id}
\]
with the action of the $\beta$ given by permuting tensor factors. A \emph{weakly Joyal-theoretic species} is a species $\textsf{p}$ equipped with a morphism `$\text{prod}$' of the form
\[
\text{prod}:\breve{\textsf{p}}   \to \textsf{p}
\]
such that $\text{prod}_{I}= \text{id}_{\textsf{p}[I]}$. As we shall see, weakly \hbox{Joyal-theoretic} species provide a bridge between connected Joyal species and braid arrangement species. If $\text{prod}$ is an isomorphism, we say $\textsf{p}$ is \emph{strongly Joyal-theoretic}. If $\text{prod}$ is an identity map, we say $\textsf{p}$ is \emph{strictly Joyal-theoretic}, abbreviated \emph{Joyal-theoretic} in this paper.\footnote{\ this breaks the principle of equivalence, but will nevertheless be useful} (Equivalently, a Joyal-theoretic species is a species which is in the strict image of the functor $\textsf{Br}$, equivalently a species which is equal to its own Joyalization.)

\begin{remark}
	The components of $\text{prod}$ are morphisms of the form
	\[
	\text{prod}_F : \textsf{p}[S_1] \otimes \dots \otimes   \textsf{p}[S_k] \to \textsf{p}[F]
	.\]
	As we shall see, $\textsf{p}[F]$ is often some intractable, ambient object, and the morphism $\text{prod}_F$ is picking out those `semisimple' elements which factorize.
\end{remark}

\begin{ex}\label{ex:conect}
	The $\textsf{Cat}$-valued species of compositions $\mathsf{\Sigma}$ is Joyal-theoretic since
	\[
	\mathsf{\Sigma} = \textsf{Br}(\Sigma)
	.\]
	In particular, 
	\[
		\mathsf{\Sigma}[F] = \Sigma[S_1] \times \dots \times \Sigma[S_k] 
		.\]
\end{ex}

A morphism $\eta:\textsf{p}\to \textsf{q}$ of Joyal-theoretic species is called \emph{Joyal-theoretic} if it similarly factorizes, that is
\[
\eta_{(\, )} = \text{id}_{1_\textsf{C}}
,\qquad
\eta_F =  \eta_{(S_1)}\otimes \dots \otimes \eta_{(S_k)},
\]
\[
\sigma_F =  \sigma_{(S_1)} \otimes \dots \otimes   \sigma_{(S_k)}
,\qquad
\beta_F = \text{id}
.\]
If $\eta$ is Joyal-theoretic, then to check coherence it is enough to check $\sigma$-coherence in the totally lumped case $F=(I)$ only. We show this in the case $\textsf{C}=\textsf{Cat}$ at the level of components of the $2$-cells, with the general case following similarly. For $\sigma$-coherence, given $\ta=(\ta_1,\dots, \ta_k)\in \textsf{p}[F]$, we have
\[
{{(\sigma \circ \tau)}}_{\ta} \circ (\sigma,\tau)_{\eta_{F}(\ta)} 
=
\Big( \bigtimes_{1\leq i \leq k} {{(\sigma|_{S'_i} \circ \tau|_{S''_i})}}_{\ta_i}     \Big)  \circ   \Big( \bigtimes_{1\leq i \leq k}   (\sigma|_{S'_i},\tau|_{S''_i})_{\eta_{(S_i)}(\ta_i)}      \Big) 
\]
\[
=
 \underbrace{\bigtimes_{1\leq i \leq k} \Big (  {{(\sigma|_{S'_i} \circ \tau|_{S''_i})}}_{\ta_i}    \circ     (\sigma|_{S'_i},\tau|_{S''_i})_{\eta_{(S_i)}(\ta_i)}     \Big )
 =
\bigtimes_{1\leq i \leq k}        \Big (   
\eta_{(S''_i)}\big ((\sigma|_{S'_i},\tau|_{S''_i})_{\ta_i}\big )  \circ  {\tau|_{S''_i}}_{\ta'_i}   \circ ( {\sigma|_{S'_i}}_{\ta_i})' 
  \Big )}_{\text{by $\sigma$-coherence in the case $F=(I)$}}
  \]
  \[
=
\Big (\bigtimes_{1\leq i \leq k} \eta_{(S''_i)}\big ((\sigma|_{S'_i},\tau|_{S''_i})_{\ta_i}\big )   \Big ) 
 \circ 
 \Big ( \bigtimes_{1\leq i \leq k}  {\tau|_{S''_i}}_{\ta'_i}      \Big ) 
 \circ
  \Big ( \bigtimes_{1\leq i \leq k}  ({\sigma|_{S'_i}}_{\ta_i})'   \Big ) 
=
 \eta_{F''}\big ((\sigma,\tau)_{\ta}\big )  \circ  {\tau}_{\ta'}   \circ  {\sigma}'_\ta 
.\]
Whereas $(\sigma,\beta)$-coherence and $\beta$-coherence are trivially satisfied.

A morphism $\eta:\textsf{p}\to \textsf{q}$ of strongly Joyal-theoretic species is called \emph{strongly \hbox{Joyal-theoretic}} if the morphism
\[
\text{prod}_\textsf{q}^{-1}\circ \eta \circ\text{prod}_\textsf{p} : \breve{\textsf{p}} \to  \breve{\textsf{q}} 
\]
is Joyal-theoretic, i.e. if $\eta$ is Joyal-theoretic up to specified isomorphism.

We leave implicit the constructions and definitions for cospecies which are analogous to those of this section.

\section{Bimonoids}\label{sec2}

\subsection{Bimonoids in Braid Arrangement Species}\label{Bimonoids in Braid Arrangement Species}

Given compositions $F=(S_1, \dots , S_{k})$ and $G=(T_1,\dots, T_{l})$ of the same finite set $I$, their \emph{Tits product} $FG$ is the composition of $I$ obtained by restricting $G$ to each of the lumps of $F$, and then concatenating,
\[ 
F G:=  G|_{S_1} ;\,  \cdots \,  ; G|_{S_k}
.\]
This gives the set $\Sigma[I]$ the structure of a monoid. The identity is $(I)$ when $I\neq \emptyset$, and $(\, )$ when $I=\emptyset$. The theory of bimonoids in braid arrangement species depends crucially on this monoidal structure. If $G\leq F$, then for a given permutation $\beta\in \text{Sym}_{l}$ we let $\hat{\beta}\in \text{Sym}_k$ such that
\[
\hat{\beta} \circ F =\widetilde{G}    F 
\]
where $\wt{G}=\beta \circ G$ and $\widetilde{G}    F$ is the Tits product of $\widetilde{G}$ with $F$ ($\widetilde{G}    F$ and $F$ do indeed have the same lumps because $G\leq F$).
 
Let $\textsf{C}$ be a strict $2$-category. A $\textsf{C}$-valued (strong, strictly (co)unital) \emph{bimonoid in species} $\textsf{h}$ is a $\textsf{C}$-valued species such that for each pair of compositions $F,G$ with $G\leq F$, we have a pair of morphisms
\[
\mu_{F,G} : \textsf{h}[F] \to  \textsf{h}[G]
\qquad \text{and} \qquad
\Delta_{F,G} : \textsf{h}[G] \to  \textsf{h}[F]
\]  
called the multiplication and comultiplication respectively. We often abbreviate
\begin{equation} \label{eq:F} 
	\mu_{F}:=  \mu_{F,(I)} \qquad \text{and} \qquad \Delta_{F}:=  \Delta_{F,(I)}.
\end{equation}
We require an invertible $2$-cell \emph{multiplication $\sigma$-naturality} and an invertible $2$-cell \emph{multiplication $\beta$-naturality} 
\[\begin{tikzcd}[row sep=large]
	{\textsf{h}[F]} & {\textsf{h}[F']} && {\textsf{h}[F]} & {\textsf{h}[\widetilde{G}    F]} \\
	{\textsf{h}[G]} & {\textsf{h}[G']} && {\textsf{h}[G]} & {\textsf{h}[\widetilde{G}]}
	\arrow["\sigma", from=1-1, to=1-2]
	\arrow["{\mu_{F,G}}"', from=1-1, to=2-1]
	\arrow["\sigma"', from=2-1, to=2-2]
	\arrow["{\mu_{F',G'}}", from=1-2, to=2-2]
	\arrow["{\hat{\beta}}", from=1-4, to=1-5]
	\arrow["{\mu_{F,G}}"', from=1-4, to=2-4]
	\arrow["{\mu_{\widetilde{G}    F,\widetilde{G}}}", from=1-5, to=2-5]
	\arrow["\beta"', from=2-4, to=2-5]
	\arrow["{\mu_{F,G,\sigma}}"{description}, Rightarrow, from=2-1, to=1-2]
	\arrow["{\mu_{F,G,\beta}}"{description},  Rightarrow, from=2-4, to=1-5]
\end{tikzcd}\]
an invertible $2$-cell \emph{associativity} and a trivial $2$-cell \emph{strict unitality}  
\[\begin{tikzcd}[row sep=large]
	& {\textsf{h}[G]} \\
	{\textsf{h}[F]} && {\textsf{h}[E]} && {\textsf{h}[F]} & {\textsf{h}[F]}
	\arrow["{\mu_{F,G}}", from=2-1, to=1-2]
	\arrow["{\mu_{G,E}}", from=1-2, to=2-3]
	\arrow[""{name=0, anchor=center, inner sep=0}, "{\mu_{F,E}}"', from=2-1, to=2-3]
	\arrow["{\text{id}}", curve={height=-10pt}, from=2-5, to=2-6]
	\arrow["{\mu_{F,F}}"', curve={height=10pt}, from=2-5, to=2-6]
	\arrow["{\mu_{F,G,E}}"{description}, Rightarrow, shorten >=4pt, shorten <=3pt, from=1-2, to=0]
\end{tikzcd}\]
an invertible $2$-cell \emph{comultiplication $\sigma$-naturality} and an invertible $2$-cell \emph{comultiplication} \hbox{\emph{$\beta$-naturality}} 
\[\begin{tikzcd}[row sep=large]
	{\textsf{h}[F]} & {\textsf{h}[F']} && {\textsf{h}[F]} & {\textsf{h}[\widetilde{G}    F]} \\
	{\textsf{h}[G]} & {\textsf{h}[G']} && {\textsf{h}[G]} & {\textsf{h}[\widetilde{G}]}
	\arrow["\sigma", from=1-1, to=1-2]
	\arrow["{\Delta_{F,G}}", from=2-1, to=1-1]
	\arrow["\sigma"', from=2-1, to=2-2]
	\arrow["{\Delta_{F',G'}}"', from=2-2, to=1-2]
	\arrow["{\hat{\beta}}", from=1-4, to=1-5]
	\arrow["{\Delta_{F,G}}", from=2-4, to=1-4]
	\arrow["{\Delta_{\widetilde{G}    F,\widetilde{G}}}"', from=2-5, to=1-5]
	\arrow["\beta"', from=2-4, to=2-5]
		\arrow["{\Delta_{F,G,\sigma}}"{description}, Rightarrow, from=1-1, to=2-2]
		\arrow["{\Delta_{F,G,\beta}}"{description}, Rightarrow, from=1-4, to=2-5]
\end{tikzcd}\]
an invertible $2$-cell \emph{coassociativity} and a trivial $2$-cell \emph{strict counitality}
\[\begin{tikzcd}[row sep=large]
	& {\textsf{h}[G]} \\
	{\textsf{h}[F]} && {\textsf{h}[E]} && {\textsf{h}[F]} & {\textsf{h}[F]}
	\arrow["{\Delta_{F,G}}"', from=1-2, to=2-1]
	\arrow["{\Delta_{G,E}}"', from=2-3, to=1-2]
	\arrow[     ""{name=0, anchor=center, inner sep=0},       "{\Delta_{F,E}}", from=2-3, to=2-1]
	\arrow["{\text{id}}"', curve={height=10pt}, from=2-6, to=2-5]
	\arrow["{\Delta_{F,F}}", curve={height=-10pt}, from=2-6, to=2-5]
		\arrow["{\Delta_{F,G,E}}"{description}, Rightarrow, shorten >=4pt, shorten <=3pt, from=1-2, to=0]
\end{tikzcd}\]
and an invertible $2$-cell \emph{bimonoid axiom}
\[\begin{tikzcd} [row sep=large]
	{\textsf{h}[F]} & {\textsf{h}[A]} & {\textsf{h}[G]} \\
\textsf{h}[F  G] && \textsf{h}[G  F]
	\arrow["{\mu_{F,A}}", from=1-1, to=1-2]
	\arrow["{\Delta_{G,A}}", from=1-2, to=1-3]
	\arrow["{\Delta_{F   G,F}}"', from=1-1, to=2-1]
	\arrow[ ""{name=0, anchor=center, inner sep=0}, "{\beta}"', from=2-1, to=2-3]
	\arrow["{\mu_{{G   F},G}}"', from=2-3, to=1-3]
	\arrow["{\text{B}_{F,G,A}}"{description}, shorten >=1pt, shorten <=5pt, Rightarrow, from=0, to=1-2]
\end{tikzcd}\]
for all meaningful choices, with the requirement that $F\neq G$, $\sigma\neq \text{id}_I$, \hbox{$\beta\neq \text{id}_{(k)}$} in (co)multiplication naturality and (co)associativity, and $F,G\neq A$ in the bimonoid axiom (but $F=G$ is allowed).\footnote{\ as with species, we could allow these choices and then add the coherence conditions that the resulting $2$-cells are all trivial} Thus, we have a bimonoid axiom $2$-cell $\text{B}_{F,G,A}$ for each choice of compositions $F,G,A$ such that $A< F,G$. Notice $FG$ and $GF$ always have the same lumps. If all the $2$-cells are trivial, i.e. the above nine diagrams commute on the nose, then the bimonoid is called \emph{strict}. If $\textsf{C}$ is a $1$-category, then a $\textsf{C}$-valued bimonoid must be strict.

\begin{ex}\label{ex:expbimon}
	The exponential species $\textsf{E}$ is a strict bimonoid, with multiplication and comultiplication given by
	\[
	\mu_{F,G} (F) =G
	\qquad \text{and} \qquad
	\Delta_{F,G} (G)= F
	\]
	respectively.
\end{ex}

We impose eleven coherence conditions on these $2$-cells, which are as follows. We require trivial $3$-cells \emph{associativity \hbox{$\sigma$-coherence}} and \emph{associativity $\beta$-coherence}
\begin{figure}[H]
	\centering
	\begin{tikzcd}[column sep=1.2cm, row sep=1cm]    
		\centering
		& {\textsf{h}[G]} \\
		{\textsf{h}[F]} && {\textsf{h}[E]} \\
		& {\textsf{h}[G']} \\
		{\textsf{h}[F']} && {\textsf{h}[E']} 
		\arrow[""{name=a, anchor=center, inner sep=0}, no head,shorten >=30pt,  shorten <=30pt,  from=1-2, to=3-2]
		\arrow["{\mu_{F,G,E}}"{description}, Rightarrow,  curve={height=-18pt},  shorten >=4pt,  shorten <=2pt,  from=1-2,  to=a ]
		\arrow[""{name=b, anchor=center, inner sep=0}, no head,shorten >=30pt,  shorten <=30pt,  from=4-1, to=4-3]
		\arrow["{ \mu_{F',G',E'} }"{description}, Rightarrow,  curve={height=-8pt},  shorten >=6pt,  shorten <=2pt,  from=3-2,  to=b]
		\arrow["\mu_{F',E'}"{description}, from=4-1, to=4-3]
		\arrow["\mu_{F',G'}"{description}, from=4-1, to=3-2]
		\arrow["\mu_{G',E'}", from=3-2, to=4-3]
		\arrow["\mu_{F,G}", from=2-1, to=1-2]
		\arrow["{\sigma}"{description}, from=2-1, to=4-1]
		\arrow["\mu_{G,E}", from=1-2, to=2-3]
		\arrow["{\sigma}"{description}, from=2-3, to=4-3]
		\arrow[""{name=0, anchor=center, inner sep=0}, shorten <=13pt, shorten >=-2.5pt, from=1-2, to=3-2]
		\arrow["{\mu_{F,G,\sigma}}"{description}, curve={height=14pt}, shorten <=6pt, shorten >=6pt, Rightarrow, from=1-2, to=4-1]
		\arrow["{\mu_{G,E,\sigma}}"{description}, curve={height=-7pt}, Rightarrow, from=2-3, to=3-2]
		\arrow["{  \mu_{F,E,\sigma} }"', curve={height=20pt},   shorten >=5pt,  shorten <=5pt,  Rightarrow, from=2-3, to=4-1]
		\arrow["{\sigma}"', shorten >=5pt, no head, from=1-2, to=0]
		\arrow["\mu_{F,E}"{description}, no head, shorten >=-5pt, from=2-1, to=0]
		\arrow[from=0, to=2-3]
	\end{tikzcd}
	\qquad \quad  
	\begin{tikzcd}[column sep=1.2cm, row sep=1cm]    
		\centering
		& {\textsf{h}[G]} \\
		{\textsf{h}[F]} && {\textsf{h}[E]} \\
		& {\textsf{h}[\wt{E}G]} \\
		{\textsf{h}[\wt{E}F]} && {\textsf{h}[\wt{E}]} 
		\arrow[	""{name=0, anchor=center, inner sep=0}, no head,shorten >=30pt,  shorten <=30pt,  from=2-1, to=2-3]
		\arrow[""{name=a, anchor=center, inner sep=0}, no head,shorten >=30pt,  shorten <=30pt,  from=1-2, to=3-2]
		\arrow["{\mu_{F,G,E}}"{description}, Rightarrow,  curve={height=-18pt},  shorten >=4pt,  shorten <=2pt,  from=1-2,  to=a ]
		\arrow[""{name=b, anchor=center, inner sep=0}, no head,shorten >=30pt,  shorten <=30pt,  from=4-1, to=4-3]
		\arrow["{\mu_{\wt{E}F,\wt{E}G,\wt{E}}}"{description}, Rightarrow,  curve={height=-8pt},  shorten >=7pt,  shorten <=2pt,  from=3-2,  to=b]
		\arrow["\mu_{\wt{E}F,\wt{E}}"{description}, from=4-1, to=4-3]
		\arrow["\mu_{\wt{E}F,\wt{E}G}"{description}, from=4-1, to=3-2]
		\arrow["\mu_{\wt{E}G,\wt{E}}", from=3-2, to=4-3]
		\arrow["\mu_{F,G}"{description}, from=2-1, to=1-2]
		\arrow["{\hat{\hat{\beta}}}"{description}, from=2-1, to=4-1]
		\arrow["\mu_{G,E}", from=1-2, to=2-3]
		\arrow["{\beta}"{description}, from=2-3, to=4-3]
		\arrow[shorten <=13pt, shorten >=-2.5pt, from=1-2, to=3-2]
		\arrow["{\mu_{F,G,\hat{\beta}}}"{description}, curve={height=14pt}, shorten <=6pt, shorten >=6pt, Rightarrow, from=1-2, to=4-1]
		\arrow["{\mu_{G,E,\beta}}"{description}, curve={height=-6pt}, Rightarrow, from=2-3, to=3-2]
		\arrow["{\mu_{F,E,\beta}}"', curve={height=22pt},   shorten >=5pt,  shorten <=5pt,  Rightarrow, from=2-3, to=4-1]
		\arrow["{\hat{\beta}}"', shorten >=5pt, no head, from=1-2, to=a]
		\arrow["\mu_{F,E}"{description}, no head, shorten >=-5pt, from=2-1, to=0]
		\arrow[from=0, to=2-3]
	\end{tikzcd}
\end{figure}
\noindent trivial $3$-cells \emph{$\sigma$-functoriality multiplication coherence} and \emph{$\beta$-functoriality multiplication coherence}
\begin{figure}[H]
	\centering
	\begin{tikzcd}[column sep=1.1cm, row sep=0.8cm]  
		\centering
		& {\textsf{h}[F']} \\
		{\textsf{h}[F]} && {\textsf{h}[F'']} \\
		& {\textsf{h}[G']} \\
		{\textsf{h}[G]} && {\textsf{h}[G'']} 
		\arrow[""{name=a, anchor=center, inner sep=0}, no head,shorten >=30pt,  shorten <=30pt,  from=1-2, to=3-2]
		\arrow["{(\sigma,\tau)}"{description}, Rightarrow,  curve={height=-18pt},  shorten >=4pt,  shorten <=2pt,  from=1-2,  to=a ]
		\arrow[""{name=b, anchor=center, inner sep=0}, no head,shorten >=30pt,  shorten <=30pt,  from=4-1, to=4-3]
		\arrow["{(\sigma,\tau)}"{description}, Rightarrow,  curve={height=-8pt},  shorten >=4pt,  shorten <=2pt,  from=3-2,  to=b]
		\arrow["\sigma\circ\tau"{description}, from=4-1, to=4-3]
		\arrow["\sigma"{description}, from=4-1, to=3-2]
		\arrow["\tau"{description}, from=3-2, to=4-3]
		\arrow["\sigma"{description}, from=2-1, to=1-2]
		\arrow["{\mu_{F,G}}"{description}, from=2-1, to=4-1]
		\arrow["\tau", from=1-2, to=2-3]
		\arrow["{\mu_{F'',G''}}"{description}, from=2-3, to=4-3]
		\arrow[""{name=0, anchor=center, inner sep=0}, shorten <=13pt, shorten >=-2.5pt, from=1-2, to=3-2]
		\arrow["{\mu_{F,G,\sigma}}"{description}, curve={height=-14pt}, shorten <=6pt, shorten >=6pt, Rightarrow, from=4-1, to=1-2]
		\arrow["{\mu_{F',G',\tau}}"{description}, curve={height=6pt}, Rightarrow, from=3-2, to=2-3]
		\arrow["{\mu_{F,G,\sigma\circ \tau}}", curve={height=-17pt},   shorten >=5pt,  shorten <=5pt,  Rightarrow, from=4-1, to=2-3]
		\arrow[shorten >=5pt, no head, from=1-2, to=0]
		\arrow["\sigma\circ\tau"{description}, no head, shorten >=-5pt, from=2-1, to=0]
		\arrow[from=0, to=2-3]
	\end{tikzcd}
	\qquad \quad  
	\begin{tikzcd}[column sep=1.1cm, row sep=0.8cm]    
		\centering
		& {\textsf{h}[\wt{G}F]} \\
		{\textsf{h}[F]} && {\textsf{h}[\wt{\wt{G}}F]} \\
		& {\textsf{h}[\wt{G}]} \\
		{\textsf{h}[G]} && {\textsf{h}[\wt{\wt{G}}]} 
		\arrow[	""{name=0, anchor=center, inner sep=0}, no head,shorten >=30pt,  shorten <=30pt,  from=2-1, to=2-3]
		\arrow[""{name=a, anchor=center, inner sep=0}, no head,shorten >=30pt,  shorten <=30pt,  from=1-2, to=3-2]
		\arrow["{(\hat{\beta},\hat{\gamma})}"{description}, Rightarrow,  curve={height=-18pt},  shorten >=4pt,  shorten <=2pt,  from=1-2,  to=a ]
		\arrow[""{name=b, anchor=center, inner sep=0}, no head,shorten >=30pt,  shorten <=30pt,  from=4-1, to=4-3]
		\arrow["{(\beta,\gamma)}"{description}, Rightarrow,  curve={height=-8pt},  shorten >=6pt,  shorten <=2pt,  from=3-2,  to=b]
		\arrow["\gamma \circ \beta"{description}, from=4-1, to=4-3]
		\arrow["\beta"{description}, from=4-1, to=3-2]
		\arrow["\gamma"{description}, from=3-2, to=4-3]
		\arrow["\hat{\beta}"{description}, from=2-1, to=1-2]
		\arrow["{\mu_{F,G}}"{description}, from=2-1, to=4-1]
		\arrow["\hat{\gamma}"{description}, from=1-2, to=2-3]
		\arrow["{\mu_{\wt{\wt{G} } F,\wt{\wt{G}}}}"{description}, from=2-3, to=4-3]
		\arrow[shorten <=13pt, shorten >=-2.5pt, from=1-2, to=3-2]
		\arrow["{\mu_{F,G,\beta}}"{description}, curve={height=-14pt}, shorten <=6pt, shorten >=6pt, Rightarrow, from=4-1, to=1-2]
		\arrow["{  \mu_{\wt{G}F,\wt{G}, \gamma}   }"{description}, curve={height=6pt}, Rightarrow, from=3-2, to=2-3]
		\arrow["{      \mu_{F,G,\gamma\circ \beta}     }", curve={height=-20pt},   shorten >=5pt,  shorten <=5pt,  Rightarrow, from=4-1, to=2-3]
		\arrow[shorten >=5pt, no head, from=1-2, to=0]
		\arrow["\widehat{\gamma\circ\beta}"{description}, no head, shorten >=-5pt, from=2-1, to=0]
		\arrow[from=0, to=2-3]
	\end{tikzcd}
\end{figure}
\noindent trivial $3$-cells \emph{coassociativity $\sigma$-coherence} and \emph{coassociativity $\beta$-coherence}
\begin{figure}[H]
	\centering
	\begin{tikzcd}[column sep=1.1cm, row sep=1cm]  
		\centering
		& {\textsf{h}[G]} \\
		{\textsf{h}[F]} && {\textsf{h}[E]} \\
		& {\textsf{h}[G']} \\
		{\textsf{h}[F']} && {\textsf{h}[E']} 
		\arrow[""{name=a, anchor=center, inner sep=0}, no head,shorten >=30pt,  shorten <=30pt,  from=1-2, to=3-2]
		\arrow["{\Delta_{F,G,E}}"{description}, Rightarrow,  curve={height=18pt},  shorten >=4pt,  shorten <=2pt,  from=1-2,  to=a ]
		\arrow[""{name=b, anchor=center, inner sep=0}, no head,shorten >=30pt,  shorten <=30pt,  from=4-1, to=4-3]
		\arrow[Rightarrow,  curve={height=-8pt},  shorten >=7pt,  shorten <=2pt,  from=3-2,  to=b]
		\arrow["\Delta_{F',E'}"{description}, to=4-1, from=4-3]
		\arrow["\Delta_{F',G'}"', to=4-1, from=3-2]
		\arrow["\Delta_{G',E'}"{description}, to=3-2, from=4-3]
		\arrow["\Delta_{F,G}"', to=2-1, from=1-2]
		\arrow["{\sigma}"{description}, from=2-1, to=4-1]
		\arrow["\Delta_{G,E}"', to=1-2, from=2-3]
		\arrow["{\sigma}"{description}, from=2-3, to=4-3]
		\arrow[""{name=0, anchor=center, inner sep=0}, shorten <=13pt, shorten >=-2.5pt, from=1-2, to=3-2]
		\arrow["{\Delta_{F,G,\sigma}}"{description}, curve={height=-8pt}, shorten <=6pt, shorten >=3pt, Rightarrow, from=2-1, to=3-2]
		\arrow["{\Delta_{G,E,\sigma}}"{description}, curve={height=-6pt}, shorten <=10pt,  shorten >=5pt, Rightarrow, from=1-2, to=4-3]
		\arrow["{  \Delta_{F,E,\sigma} }"{description}, curve={height=25pt},   shorten >=5pt,  shorten <=5pt,  Rightarrow, from=2-1, to=4-3]
		\arrow["{\sigma}", shorten >=5pt, no head, from=1-2, to=0]
		\arrow["\Delta_{F,E}"{description}, shorten >=0pt, shorten <=-2pt, from=0, to=2-1]
		\arrow[from=0, to=2-3, no head]
	\end{tikzcd}
	\qquad \quad  
	\begin{tikzcd}[column sep=1.1cm, row sep=1cm]  
		\centering
& {\textsf{h}[G]} \\
{\textsf{h}[F]} && {\textsf{h}[E]} \\
& {\textsf{h}[\wt{E}G]} \\
{\textsf{h}[\wt{E}F]} && {\textsf{h}[\wt{E}]} 
\arrow[""{name=a, anchor=center, inner sep=0}, no head,shorten >=30pt,  shorten <=30pt,  from=1-2, to=3-2]
\arrow["{\Delta_{F,G,E}}"{description}, Rightarrow,  curve={height=18pt},  shorten >=4pt,  shorten <=2pt,  from=1-2,  to=a ]
\arrow[""{name=b, anchor=center, inner sep=0}, no head,shorten >=30pt,  shorten <=30pt,  from=4-1, to=4-3]
\arrow[Rightarrow,  curve={height=-8pt},  shorten >=8pt,  shorten <=2pt,  from=3-2,  to=b]
\arrow["\Delta_{\wt{E}F,\wt{E}}"{description}, to=4-1, from=4-3]
\arrow["\Delta_{\wt{E}F,\wt{E}G}"', to=4-1, from=3-2]
\arrow["\Delta_{\wt{E}G,\wt{E}}"{description}, to=3-2, from=4-3]
\arrow["\Delta_{F,G}"', to=2-1, from=1-2]
\arrow["{\hat{\hat{\beta}}}"{description}, from=2-1, to=4-1]
\arrow["\Delta_{G,E}"', to=1-2, from=2-3]
\arrow["{\beta}"{description}, from=2-3, to=4-3]
\arrow[""{name=0, anchor=center, inner sep=0}, shorten <=13pt, shorten >=-2.5pt, from=1-2, to=3-2]
\arrow["{\Delta_{F,G,\hat{\beta}}}"{description}, curve={height=-8pt}, shorten <=6pt, shorten >=3pt, Rightarrow, from=2-1, to=3-2]
\arrow["{\Delta_{G,E,\beta}}"{description}, curve={height=-6pt}, shorten <=10pt,  shorten >=5pt, Rightarrow, from=1-2, to=4-3]
\arrow["{  \Delta_{F,E,\beta} }"{description}, curve={height=25pt},   shorten >=5pt,  shorten <=5pt,  Rightarrow, from=2-1, to=4-3]
\arrow["{\hat{\beta}}", shorten >=5pt, no head, from=1-2, to=0]
\arrow["\Delta_{F,E}"{description}, shorten >=0pt, shorten <=-2pt, from=0, to=2-1]
\arrow[from=0, to=2-3, no head]
	\end{tikzcd}
\end{figure}
\noindent trivial $3$-cells \emph{$\sigma$-functoriality comultiplication coherence} and \emph{$\beta$-functoriality comultiplication coherence}
\begin{figure}[H]
	\centering
	\begin{tikzcd}[column sep=1.4cm, row sep=1cm]    
		\centering
		& {\textsf{h}[F']} \\
		{\textsf{h}[F]} && {\textsf{h}[F'']} \\
		& {\textsf{h}[G']} \\
		{\textsf{h}[G]} && {\textsf{h}[G'']} 
		\arrow[""{name=a, anchor=center, inner sep=0}, no head,shorten >=30pt,  shorten <=30pt,  from=1-2, to=3-2]
				\arrow[  from=2-1, to=2-3]
		\arrow["{(\sigma,\tau)}"{description}, Rightarrow,  curve={height=12pt},  shorten >=4pt,  shorten <=2pt,  from=1-2,  to=a ]
		\arrow[""{name=b, anchor=center, inner sep=0}, no head,shorten >=30pt,  shorten <=30pt,  from=4-1, to=4-3]
		\arrow["{(\sigma,\tau)}"{description}, Rightarrow,  curve={height=8pt},  shorten >=4pt,  shorten <=2pt,  from=3-2,  to=b]
		\arrow["\sigma\circ\tau"{description}, from=4-1, to=4-3]
		\arrow["\sigma"{description}, from=4-1, to=3-2]
		\arrow["\tau"{description}, from=3-2, to=4-3]
		\arrow["\sigma"{description}, from=2-1, to=1-2]
		\arrow["{\Delta_{F,G}}"{description}, to=2-1, from=4-1]
		\arrow["\tau", from=1-2, to=2-3]
		\arrow["{\Delta_{F'',G''}}"{description}, to=2-3, from=4-3]
		\arrow[""{name=0, anchor=center, inner sep=0}, no head, shorten <=13pt, shorten >=-2.5pt, from=1-2, to=3-2]
			\arrow["{\Delta_{F,G,\sigma}}"{description}, curve={height=8pt}, shorten <=6pt, shorten >=3pt, Rightarrow, from=2-1, to=3-2]
	\arrow["{\Delta_{F',G',\tau}}", curve={height=-1pt}, shorten <=10pt,  shorten >=20pt, Rightarrow, from=1-2, to=4-3]
	\arrow["{  \Delta_{F,G,\sigma\circ \tau} }"{description}, curve={height=-25pt},   shorten >=5pt,  shorten <=5pt,  Rightarrow, from=2-1, to=4-3]
		\arrow[shorten <=5pt, to=1-2, from=0]
		\arrow["\sigma\circ\tau", no head, shorten >=-5pt, from=2-1, to=0]
		\arrow[from=0, to=2-3, no head]
	\end{tikzcd}
	\qquad \quad  
	\begin{tikzcd}[column sep=1.2cm, row sep=1cm]   
		\centering
		& {\textsf{h}[\wt{G}F]} \\
		{\textsf{h}[F]} && {\textsf{h}[\wt{\wt{G}}F]} \\
		& {\textsf{h}[\wt{G}]} \\
		{\textsf{h}[G]} && {\textsf{h}[\wt{\wt{G}}]} 
		\arrow[	""{name=0, anchor=center, inner sep=0},shorten >=0pt,  shorten <=0pt,  from=2-1, to=2-3]
		\arrow[""{name=a, anchor=center, inner sep=0}, no head,shorten >=30pt,  shorten <=30pt,  from=1-2, to=3-2]
		\arrow["{(\hat{\beta},\hat{\gamma})}"{description}, Rightarrow,  curve={height=13pt},  shorten >=4pt,  shorten <=2pt,  from=1-2,  to=a ]
		\arrow[""{name=b, anchor=center, inner sep=0}, no head,shorten >=30pt,  shorten <=30pt,  from=4-1, to=4-3]
		\arrow["{(\beta,\gamma)}"{description}, Rightarrow,  curve={height=8pt},  shorten >=6pt,  shorten <=2pt,  from=3-2,  to=b]
		\arrow["\gamma \circ \beta"{description}, from=4-1, to=4-3]
		\arrow["\beta"{description}, from=4-1, to=3-2]
		\arrow["\gamma"{description}, from=3-2, to=4-3]
		\arrow["\hat{\beta}"{description}, from=2-1, to=1-2]
		\arrow["{\Delta_{F,G}}"{description}, to=2-1, from=4-1]
		\arrow["\hat{\gamma}"{description}, from=1-2, to=2-3]
		\arrow["{\Delta_{\wt{\wt{G}}F,\wt{\wt{G}}}}"{description}, to=2-3, from=4-3]
		\arrow[shorten <=13pt, shorten >=-2.5pt, from=1-2, to=3-2, no head]
		\arrow["{\Delta_{F,G,\beta}}"{description}, curve={height=8pt}, shorten <=6pt, shorten >=3pt, Rightarrow, from=2-1, to=3-2]
\arrow["{\Delta_{\wt{G}F,\wt{G},\gamma}}", curve={height=-1pt}, shorten <=10pt,  shorten >=20pt, Rightarrow, from=1-2, to=4-3]
\arrow["{  \Delta_{F,G,\gamma\circ \beta} }"{description}, curve={height=-28pt},   shorten >=5pt,  shorten <=5pt,  Rightarrow, from=2-1, to=4-3]
		\arrow[shorten <=5pt, to=1-2, from=0]
		\arrow["\widehat{\gamma\circ\beta}", no head, shorten >=-5pt, from=2-1, to=0]
		\arrow[from=0, to=2-3, no head]
	\end{tikzcd}
\end{figure}
\noindent trivial $3$-cells \emph{associativity coherence} and \emph{coassociativity coherence}
\begin{figure}[H]
	\centering
	\begin{tikzcd}[column sep=1.3cm, row sep=0.8cm]  
		\centering
		& {\textsf{h}[G]} \\
		\\
		\textsf{h}[F] && {\textsf{h}[E]} \\
		& {\textsf{h}[D]}
		\arrow[""{name=1, anchor=center, inner sep=0}, from=3-1, to=4-2]
		\arrow["{\!  \! \! \! \! \mu_{F,E,D}}", shorten >=8pt, shorten <=5pt, curve={height=3pt}, Rightarrow, from=3-3, to=1]
		\arrow[""{name=0, anchor=center, inner sep=0}, from=3-1, to=3-3]
		\arrow[""{name=2, anchor=center, inner sep=0}, from=1-2, to=4-2]
		\arrow["{ \mu_{G,E} }", from=1-2, to=3-3]
		\arrow["{  \mu_{E,D} }", from=3-3, to=4-2]
		\arrow["{  \mu_{F,G,D}   }"{description}, curve={height=9pt}, shorten <=7pt, shorten >=5pt, Rightarrow, from=1-2, to=1]
		\arrow["{\ \,  \mu_{F,G,E}  }", curve={height=8pt}, shorten >=7pt,  shorten <=6pt, Rightarrow, from=1-2, to=0]
		\arrow["{  \mu_{F,G} }", from=3-1, to=1-2]
		\arrow["{ \mu_{G,E,D} }"{description}, shorten >=2pt, curve={height=5pt}, Rightarrow, from=3-3, to=2]
	\end{tikzcd}
	\qquad \quad 
	\begin{tikzcd}[column sep=1.3cm, row sep=0.8cm]  
		\centering
& {\textsf{h}[G]} \\
\\
\textsf{h}[F] && {\textsf{h}[E]} \\
& {\textsf{h}[D]}
\arrow[""{name=1, anchor=center, inner sep=0}, to=3-1, from=4-2]
\arrow["{\!  \! \! \! \! \Delta_{F,E,D}}", shorten >=8pt, shorten <=5pt, curve={height=3pt}, Rightarrow, from=3-3, to=1]
\arrow[""{name=0, anchor=center, inner sep=0}, to=3-1, from=3-3]
\arrow[""{name=2, anchor=center, inner sep=0}, to=1-2, from=4-2]
\arrow["{ \Delta_{G,E} }"', to=1-2, from=3-3]
\arrow["{  \Delta_{E,D} }"', to=3-3, from=4-2]
\arrow["{  \Delta_{F,G,D}   }"{description}, curve={height=9pt}, shorten <=7pt, shorten >=5pt, Rightarrow, from=1-2, to=1]
\arrow["{\ \,  \Delta_{F,G,E}  }", curve={height=8pt}, shorten >=7pt,  shorten <=6pt, Rightarrow, from=1-2, to=0]
\arrow["{  \Delta_{F,G} }"', to=3-1, from=1-2]
\arrow["{ \Delta_{G,E,D} }"{description}, shorten >=2pt, curve={height=5pt}, Rightarrow, from=3-3, to=2]
	\end{tikzcd}
\end{figure}
\noindent and a trivial $3$-cell \emph{bimonoid $\sigma$-coherence} 
\[\begin{tikzcd} [column sep=1.4cm, row sep=1cm]  
	& {\textsf{h}[F]} && {\textsf{h}[A]} && {\textsf{h}[G]} \\
	{\textsf{h}[FG]} &&&& {\textsf{h}[GF]} \\
	& {\textsf{h}[F']} && {\textsf{h}[A']} && {\textsf{h}[G']} \\
	{\textsf{h}[F'G']} &&&& {\textsf{h}[G'F']}
			\arrow[""{name=a, anchor=center, inner sep=0}, no head, from=2-1, to=2-5]
						\arrow[""{name=b, anchor=center, inner sep=0}, no head, from=4-1, to=4-5]
	\arrow["{\Delta_{FG,F,\sigma}}", curve={height=6pt}, Rightarrow, to=3-2, from=2-1]
	\arrow["{\mu_{F,A,\sigma}}"', curve={height=15pt}, Rightarrow, to=3-2, from=1-4]
	\arrow["{\Delta_{G,A,\sigma}}"', curve={height=20pt}, Rightarrow, to=3-4, from=1-6]
	\arrow["{\mu_{F,A}}"{description}, from=1-2, to=1-4]
	\arrow["{\Delta_{G,A}}"{description}, from=1-4, to=1-6]
	\arrow["{\Delta_{FG,F}}"{description}, from=1-2, to=2-1]
	\arrow["{\beta}"{description}, from=2-1,   shorten >=-3pt,    to=2-5]
	\arrow["{\mu_{GF,G}}"{description}, from=2-5, to=1-6]
	\arrow["{\sigma}"{description}, from=2-1, to=4-1]
	\arrow[""{name=0, anchor=center, inner sep=0}, from=1-2, to=3-2]
	\arrow[""{name=1, anchor=center, inner sep=0}, shorten >=-2pt,   from=1-4, to=3-4]
	\arrow[""{name=2, anchor=center, inner sep=0}, from=2-5, to=4-5]
	\arrow["{\sigma}", from=1-6, to=3-6]
	\arrow["{\mu_{F,A}}", from=3-2, to=3-4]
	\arrow["{\mu_{GF,G}}"{description}, from=4-5, to=3-6]
	\arrow["{\Delta_{FG,F}}"{description}, from=3-2, to=4-1]
	\arrow["{\beta}"{description}, from=4-1, to=4-5]
	\arrow[""{name=3, anchor=center, inner sep=0}, from=3-4, to=3-6]
	\arrow["{\Delta_{G,A}}"{description}, shorten >=5pt, no head, from=3-4, to=3]
	\arrow["{\sigma}"{description}, shorten >=3pt, no head, from=1-2, to=0]
	\arrow["{\sigma}"{description}, shorten >=3pt, no head, from=1-4, to=1]
	\arrow["{\sigma}"{description}, shorten >=3pt, no head, from=2-5, to=2]
	\arrow["{\mu_{GF,G,\sigma}}"', curve={height=-11pt}, Rightarrow, to=4-5, from=1-6]
	\arrow["{(\sigma,\beta)^{-1}}", curve={height=-9pt}, Rightarrow, to=4-1, from=2-5]
		\arrow["{\text{B}_{F,G,A}}"', Rightarrow,   shorten >=3pt,  shorten <=8pt, from=a, to=1-4]
				\arrow["{\text{B}_{F',G',A'}}"', Rightarrow,   shorten >=1pt,  shorten <=8pt, from=b, to=3-4]
\end{tikzcd}\]
\noindent Note that e.g. associativity $\beta$-coherence is well-defined; we have $\wt{E}F=\wt{E}GF$ because $G\leq F$. One can check similar results for the other $3$-cells involving permutations. 

\begin{remark}
	We do not prove any coherence theorems in this paper, and it is only a conjecture that we have all the coherence conditions we should want in the correct definition of a bimonoid valued in a \hbox{$2$-category}. We leave the conjectural formalization of these bimonoids as bialgebras over a \hbox{$2$-bimonad} to future work.
\end{remark}

Let us make these coherence conditions explicit in the case $\textsf{C}\in \{\textsf{Cat},\textsf{Cat}_\bullet\}$, at the level of components of the $2$-cells. The $2$-cell $\mu_{F,G,\sigma}$ consists of a natural isomorphism in the category $\textsf{h}[G']$
\[
{\mu_{F,G,\sigma}}_\ta :  \mu'_{F,G}(\ta) \to    \mu_{F',G'}(\ta') 
\]
for each object $\ta\in \textsf{h}[F]$. The $2$-cell $\mu_{F,G,\beta}$ consists of a natural isomorphism in the category $\textsf{h}[\wt{G}]$
\[
{\mu_{F,G,\beta}}_\ta :  \wt{\mu_{F,G}(\ta)} \to    \mu_{\wt{G}F,\wt{G}}(\tilde{\ta}) 
\]
for each object $\ta\in \textsf{h}[F]$. The $2$-cell $\mu_{F,G,E}$ consists of a natural isomorphism in the category $\textsf{h}[E]$
\[
{\mu_{F,G,E}}_\ta :  \mu_{G,E}\big (\mu_{F,G} (\ta)\big ) \to \mu_{F,E} (\ta)
\]
for each object $\ta\in \textsf{h}[F]$. Similarly for the $2$-cells $\Delta_{F,G,\sigma}$, $\Delta_{F,G,\beta}$ and $\Delta_{F,G,E}$. The $2$-cell $\text{B}_{F,G,A}$ consists of a natural isomorphism in the category $\textsf{h}[G]$
\[
{\text{B}_{F,G,A}}_{\ta}  :   \mu_{GF,G} \big (\wt{\Delta_{FG,F}(\ta)}\big ) \to \mu_{G,A} \big ( \mu_{F,A} (\ta) \big )
\]
for each object $\ta\in \textsf{h}[F]$. Then associativity $\sigma$-coherence says
\[
{\mu_{F',G',E'}}_{\ta'}  \circ   \mu_{G',E'} \big ({\mu_{F,G,\sigma}}_\ta \big )  \circ  {\mu_{G,E,\sigma}}_{\mu_{F,G}(\ta)}  
=
{\mu_{F,E,\sigma}}_{\ta} \circ {\mu'_{F,G,E}}_{\ta}
\]
associativity $\beta$-coherence says
\[
{\mu_{\wt{E}F,\wt{E}G,\wt{E}}}_{\tilde{\ta}}  \circ   \mu_{\wt{E}G,\wt{E}} \big ({\mu_{F,G,\hat{\beta}}}_\ta \big )  \circ  {\mu_{G,E,\beta}}_{\mu_{F,G}(\ta)}  
=
{\mu_{F,E,\beta}}_{\ta} \circ {\wt{\mu_{F,G,E}}}_{\ta}
\]
$\sigma$-functoriality multiplication coherence says
\[
\mu_{F'',G''}\big((\sigma,\tau)_\ta\big) \circ {\mu_{F',G',\tau}}_{\ta'} \circ {\mu'_{F,G,\sigma}}_{\ta}    =    {\mu_{F,G,\sigma \circ \tau}}_\ta  \circ    (\sigma,\tau)_{\mu_{F,G}(\ta)}
\]
$\beta$-functoriality multiplication coherence says
\[
\mu_{ \wt{\wt{G}}F  , \wt{\wt{G}}  }  \big( (\hat{\beta},\hat{\gamma})_\ta \big) \circ {\mu_{\wt{G}F, \wt{G}, \gamma}}_{\tilde{\ta}} \circ {\wt{\mu_{F,G,\beta}}}_{\ta}
=
{\mu_{F,G,\gamma\circ \beta}}_\ta \circ {(\beta,\gamma)}_{  \mu_{F,G} (\ta) }
\]
coassociativity $\sigma$-coherence says
\[
{\Delta_{F',G',E'}}_{\ta'} \circ   \Delta_{F',G'}( {\Delta_{G,E,\sigma}}_\ta) \circ  {\Delta_{F,G,\sigma}}_{\Delta_{G,E}(\ta)}
=
{\Delta_{F,E,\sigma}}_\ta \circ {\Delta'_{F,G,E}}_\ta
\]
coassociativity $\beta$-coherence says
\[
{\Delta_{\wt{E}F,\wt{E}G,\wt{E}}}_{\tilde{\ta}} \circ   \Delta_{\wt{E}F,\wt{E}G}( {\Delta_{G,E,\beta}}_\ta) \circ  {\Delta_{F,G,\beta}}_{\Delta_{G,E}(\ta)}
=
{\Delta_{F,E,\beta}}_\ta \circ {\wt{\Delta_{F,G,E}}}_\ta
\]
$\sigma$-functoriality comultiplication coherence says
\[
\Delta_{F'',G''}\big ((\sigma,\tau)_\ta\big ) \circ {\Delta_{F'G',\tau}}_{\ta'} \circ  {\Delta'_{F,G,\sigma}}_\ta
=
{\Delta_{F,G,\sigma\circ \tau}}_\ta \circ  (\sigma,\tau)_{\Delta_{F,G}(\ta)}
\]
$\beta$-functoriality comultiplication coherence says
\[
\Delta_{\wt{\wt{G}}F ,\wt{\wt{G}}  } \big( (\beta,\gamma)_{\ta}\big ) \circ {\Delta_{\wt{G}F,\wt{G},\gamma}}_{\tilde{\ta}} \circ {\wt{\Delta_{F,G,\beta}}}_{\ta}
=
{\Delta_{F,G,\gamma\circ \beta}}_\ta \circ {(\hat{\beta}, \hat{\gamma})}_{\Delta_{F,G}(\ta)}
\]
associativity coherence says
\[
{\mu_{F,E,D}}_\ta \circ \mu_{E,D}( {{\mu_{F,G,E}}_\ta} )={\mu_{F,G,D}}_\ta \circ  {\mu_{G,E,D}}_{\mu_{F,G}(\ta)} 
\]
coassociativity coherence says
\[
{\Delta_{F,E,D}}_\ta \circ  {{\Delta_{F,G,E}}_{\Delta_{E,D}(\ta)}} ={\Delta_{F,G,D}}_\ta \circ  \Delta_{F,G}({\Delta_{G,E,D}}_{\ta} )
\]
and bimonoid $\sigma$-coherence says
\[
\Delta_{G,A}({\mu_{F,A,\sigma }}_{\ta}) \circ  {\Delta_{G,A,\sigma}}_{\mu_{F,A}(\ta)} \circ {\text{B}'_{F,G,A}}_{\ta}
\]
\[
=
{\text{B}_{F',G',A'}}_{\ta'} \circ  \mu_{GF,G}( { \wt{\Delta_{FG,F,\sigma}}  }_{\ta}) \circ \mu_{GF,G} \big({(\sigma,\beta)}^{-1}_{\Delta_{FG,F}(\ta)}\big ) \circ  {\mu_{GF,G,\sigma}}_{\wt{\Delta_{FG,F}(\ta)}}
.\]

A $\textsf{C}$-valued \emph{bimonoid in cospecies} $\textsf{h}$ is a $\textsf{C}$-valued cospecies $\textsf{h}$ equipped with analogous data, requiring the analogous $3$-cells to commute. The functoriality coherence conditions have to be adjusted as follows; $\sigma$-functoriality multiplication coherence now says
\[
\mu_{F'',G''}\big((\sigma,\tau)_\ta\big) \circ    
 \underbrace{ {\mu_{F',G',\sigma}}_{\ta'} \circ {\mu'_{F,G,\tau}}_{\ta} }_{  \text{$\sigma$ and $\tau$ switched} }  
 =    {\mu_{F,G,\sigma \circ \tau}}_\ta  \circ    (\sigma,\tau)_{\mu_{F,G}(\ta)}
\]
$\beta$-functoriality multiplication coherence now says
\[
\mu_{ \wt{\wt{G}}F  , \wt{\wt{G}}  }  \big( (\hat{\beta},\hat{\gamma})_\ta \big) 
\underbrace{\circ {\mu_{\wt{G}F, \wt{G}, \beta}}_{\tilde{\ta}} \circ {\wt{\mu_{F,G,\gamma}}}_{\ta}  }_{  \text{$\beta$ and $\gamma$ switched} }  
=
{\mu_{F,G,\gamma\circ \beta}}_\ta \circ {(\beta,\gamma)}_{  \mu_{F,G} (\ta) }
\]
$\sigma$-functoriality comultiplication coherence now says
\[
\Delta_{F'',G''}\big ((\sigma,\tau)_\ta\big ) \circ 
\underbrace{ {\Delta_{F'G',\sigma}}_{\ta'} \circ  {\Delta'_{F,G,\tau}}_\ta }_{  \text{$\sigma$ and $\tau$ switched} }  
=
{\Delta_{F,G,\sigma\circ \tau}}_\ta \circ  (\sigma,\tau)_{\Delta_{F,G}(\ta)}
\]
and $\beta$-functoriality comultiplication coherence now says
\[
\Delta_{\wt{\wt{G}}F ,\wt{\wt{G}}  } \big( (\beta,\gamma)_{\ta}\big ) \circ
\underbrace{ {\Delta_{\wt{G}F,\wt{G},\beta}}_{\tilde{\ta}} \circ {\wt{\Delta_{F,G,\gamma}}}_{\ta} }_{  \text{$\beta$ and $\gamma$ switched} }  
=
{\Delta_{F,G,\gamma\circ \beta}}_\ta \circ {(\hat{\beta}, \hat{\gamma})}_{\Delta_{F,G}(\ta)}
.\]
All the other coherence conditions remain unchanged. 



\subsection{Lax Homomorphisms of Bimonoids}


Let $\textsf{C}$ be a strict $2$-category. Let $\textsf{h}$ and $\textsf{x}$ be bimonoids in $\textsf{C}$-valued species. A \emph{lax homomorphism}, or just \emph{homomorphism}, of bimonoids
\[
\varphi: \textsf{h} \to \textsf{x}
\]
consists of a morphism of the underlying species $\varphi=(\varphi_F, {\sigma}_\ta, \beta_\ta )$ together with a pair of $2$-cells called \emph{preservation of multiplication} and \emph{preservation of comultiplication} 
\[\begin{tikzcd}[row sep=large] 
	{\textsf{h}[F]} & {\textsf{h}[G]} && {\textsf{h}[F]} & {\textsf{h}[G]} \\
	{\textsf{x}[F]} & {\textsf{x}[G]} && {\textsf{x}[F]} & {\textsf{x}[G]}
	\arrow["{\mu_{F,G}}", from=1-1, to=1-2]
	\arrow["{\varphi_F}"', from=1-1, to=2-1]
	\arrow["{\mu_{F,G}}"', from=2-1, to=2-2]
	\arrow["{\varphi_G}", from=1-2, to=2-2]
	\arrow["{\Delta_{F,G}}"', from=1-5, to=1-4]
	\arrow["{\varphi_G}", to=2-5, from=1-5]
	\arrow["{\Delta_{F,G}}", from=2-5, to=2-4]
	\arrow["{\varphi_F}"', to=2-4, from=1-4]
	\arrow["{\mathtt{\Phi}_{F,G}}", Rightarrow, from=2-1, to=1-2]
	\arrow["{\mathtt{\Psi}_{F,G}}"', Rightarrow, from=2-5, to=1-4]
\end{tikzcd}\]
for each pair of compositions $F,G$ with $G< F$. We require trivial $3$-cells \emph{multiplication \hbox{$\sigma$-naturality} coherence} and \emph{multiplication $\beta$-naturality coherence}
\[\begin{tikzcd}
	& {\textsf{h}[F]} && {\textsf{h}[F']} &&& {\textsf{h}[F]} && {\textsf{h}[\widetilde{G} F]} \\
	{\textsf{h}[G]} && {\textsf{h}[G']} &&& {\textsf{h}[G]} && {\textsf{h}[\widetilde{G}]} \\
	& {\textsf{x}[F]} && {\textsf{x}[F']} &&& {\textsf{x}[F]} && {\textsf{x}[\widetilde{G} F]} \\
	{\textsf{x}[G]} && {\textsf{x}[G']} &&& {\textsf{x}[G]} && {\textsf{x}[\widetilde{G}]}
		\arrow["{\sigma_F}"{description}, curve={height=-22pt}, shorten >=6pt, Rightarrow, from=3-2, to=1-4]
					\arrow["{\hat{\beta}_F}"{description}, curve={height=-22pt}, shorten >=6pt, Rightarrow, from=3-7, to=1-9]
													\arrow["{ \mu_{F,G,\sigma} }"{description}, curve={height=4pt}, shorten >=3pt, shorten <=14pt, Rightarrow, from=4-1, to=3-4]
													\arrow["{ \mu_{F,G,\beta} }"{description}, curve={height=4pt}, shorten >=3pt, shorten <=14pt, Rightarrow, from=4-6, to=3-9]
	\arrow["{\varphi_G}"{description}, from=2-1, to=4-1]
	\arrow["\sigma"{description}, from=4-1, to=4-3]
	\arrow["{\mu_{F,G}}"{description}, from=1-2, to=2-1]
	\arrow["{\mu_{F',G'}}"{description}, from=1-4, to=2-3]
	\arrow["{\mu_{F',G'}}"{description}, from=3-4, to=4-3]
	\arrow["{\mu_{F,G}}"{description}, from=3-2, to=4-1]
	\arrow["{\varphi_{F'}}"{description}, from=1-4, to=3-4]
	\arrow["\sigma"{description}, from=1-2, to=1-4]
	\arrow["{\varphi_G}"{description}, from=2-6, to=4-6]
	\arrow[""{name=0, anchor=center, inner sep=0}, shorten <=18pt, from=2-6, to=2-8]
	\arrow["\beta"{description}, from=4-6, to=4-8]
	\arrow["{\mu_{F,G}}"{description}, from=1-7, to=2-6]
	\arrow["{\mu_{\widetilde{G} F,\widetilde{G}}}"{description}, from=1-9, to=2-8]
	\arrow["{\mu_{\widetilde{G} F,\widetilde{G} }}", from=3-9, to=4-8]
	\arrow["{\varphi_{\widetilde{G} F}}"{description}, from=1-9, to=3-9]
	\arrow["{\mu_{F,G}}"{description}, from=3-7, to=4-6]
	\arrow["{\hat{\beta}}"{description}, from=1-7, to=1-9]
	\arrow["{\mathtt{\Phi}_{F,G}}", curve={height=6pt}, shorten <=3pt, Rightarrow, from=3-7, to=2-6]
	\arrow["{\mathtt{\Phi}_{\widetilde{G} F,\widetilde{G}}}", curve={height=12pt}, shorten <=9pt, shorten >=3pt, Rightarrow, from=3-9, to=2-8]
	\arrow["{\mathtt{\Phi}_{F,G}}", curve={height=6pt}, shorten <=4pt, Rightarrow, from=3-2, to=2-1]
	\arrow["{\mathtt{\Phi}_{F',G'}}", curve={height=12pt}, shorten <=8pt, shorten >=3pt, Rightarrow, from=3-4, to=2-3]
	\arrow[""{name=1, anchor=center, inner sep=0}, shorten <=13pt, from=2-3, to=4-3]
	\arrow[""{name=2, anchor=center, inner sep=0}, shorten <=13pt, from=1-2, to=3-2]
	\arrow[""{name=3, anchor=center, inner sep=0}, shorten <=18pt, from=3-2, to=3-4]
	\arrow[""{name=4, anchor=center, inner sep=0}, shorten <=18pt, from=3-7, to=3-9]
	\arrow[""{name=5, anchor=center, inner sep=0}, shorten <=13pt, from=2-8, to=4-8]
	\arrow[""{name=6, anchor=center, inner sep=0}, shorten <=13pt, from=1-7, to=3-7]
	\arrow[""{name=7, anchor=center, inner sep=0}, shorten <=18pt, from=2-1, to=2-3]
	\arrow["\beta"{description}, shorten >=11pt, no head, from=2-6, to=0]
	\arrow["{\varphi_{G'}}"{description}, shorten >=6pt, no head, from=2-3, to=1]
	\arrow["{\varphi_F}", shorten >=5pt, no head, from=1-2, to=2]
	\arrow["\sigma"{description}, shorten >=9pt, no head, from=3-2, to=3]
	\arrow["\hat\beta"{description}, shorten >=9pt, no head, from=3-7, to=4]
	\arrow["{\varphi_{\widetilde{G}}}"{description}, shorten >=6pt, no head, from=2-8, to=5]
	\arrow["{\varphi_F}", shorten >=11pt, no head, from=1-7, to=6]
	\arrow["\sigma"{description}, shorten >=9pt, no head, from=2-1, to=7]
		\arrow["{\sigma_G}"{description}, curve={height=18pt}, shorten <=3pt, Rightarrow, from=4-1, to=2-3]
					\arrow["{\beta_G}"{description}, curve={height=18pt}, shorten <=3pt, Rightarrow, from=4-6, to=2-8]
							\arrow["{ \mu_{F,G,\sigma} }"{description}, curve={height=-9pt}, shorten >=20pt, Rightarrow, from=2-1, to=1-4]
							\arrow["{ \mu_{F,G,\beta} }"{description}, curve={height=-9pt}, shorten >=20pt, Rightarrow, from=2-6, to=1-9]
\end{tikzcd}\]
a trivial $3$-cell \emph{associativity coherence} 
\[\begin{tikzcd}[column sep=large, row sep=large]
	& {\textsf{h}[G]} \\
	{\textsf{h}[F]} && {\textsf{h}[E]} \\
	& {\textsf{x}[G]} \\
	{\textsf{x}[F]} && {\textsf{x}[E]} 
		\arrow[""{name=0, anchor=center, inner sep=0}, shorten >=18pt, no head, from=2-1, to=2-3]
				\arrow[""{name=a, anchor=center, inner sep=0}, no head, from=4-1, to=4-3]
									\arrow["{ \mu_{F,G,E} }"{description}, curve={height=-17pt}, shorten >=6pt, shorten <=6pt, Rightarrow, from=1-2, to=0]
	\arrow["{\mu_{F,E}}"{description}, from=4-1, to=4-3]
	\arrow["{\mu_{F,G}}"{description}, from=4-1, to=3-2]
	\arrow["{\mu_{G,E}}", from=3-2, to=4-3]
	\arrow["{\mu_{F,G}}"{description}, from=2-1, to=1-2]
	\arrow["{\varphi_F}"{description}, from=2-1, to=4-1]
	\arrow["{\mu_{G,E}}", from=1-2, to=2-3]
	\arrow["{\varphi_E}"{description}, from=2-3, to=4-3]
	\arrow["{\mathtt{\Phi}_{F,G}}"{description}, curve={height=-6pt}, shorten <=6pt, shorten >=6pt, Rightarrow, from=4-1, to=1-2]
	\arrow["{\mathtt{\Phi}_{G,E}}"{description}, curve={height=6pt}, Rightarrow, from=3-2, to=2-3]
	\arrow[""{name=1, anchor=center, inner sep=0}, shorten <=13pt, from=1-2, to=3-2]
	\arrow["{\mu_{F,E}}"{description}, shorten <=2pt, from=0, to=2-3]
	\arrow["{\varphi_G}"', shorten >=5pt, no head, from=1-2, to=1]
	\arrow["{\mathtt{\Phi}_{F,E}}", curve={height=-16pt}, shorten <=6pt, shorten >=15pt, Rightarrow, from=4-1, to=2-3]
									\arrow["{ \mu_{F,G,E} }"{description}, curve={height=-10pt}, shorten >=8pt, shorten <=6pt, Rightarrow, from=3-2, to=a]
\end{tikzcd}\]
trivial $3$-cells \emph{comultiplication $\sigma$-naturality coherence} and \emph{comultiplication $\beta$-naturality coherence}
\[\begin{tikzcd}
	& {\textsf{h}[F]} && {\textsf{h}[F']} &&& {\textsf{h}[F]} && {\textsf{h}[\widetilde{G}    F]} \\
	{\textsf{h}[G]} && {\textsf{h}[G']} &&& {\textsf{h}[G]} && {\textsf{h}[\widetilde{G}]} \\
	& {\textsf{x}[F]} && {\textsf{x}[F']} &&& {\textsf{x}[F]} && {\textsf{x}[\widetilde{G}    F]} \\
	{\textsf{x}[G]} && {\textsf{x}[G']} &&& {\textsf{x}[G]} && {\textsf{x}[\widetilde{G}]}
		\arrow["     \Delta_{F,G,\sigma} \! \!  "', curve={height=6pt}, shorten >=0pt, Rightarrow, from=3-2, to=4-3]
				\arrow["     \Delta_{F,G,\beta} \! \!  "', curve={height=6pt}, shorten >=0pt, Rightarrow, from=3-7, to=4-8]
			\arrow["{\sigma_F}", curve={height=-22pt}, shorten >=6pt, Rightarrow, from=3-2, to=1-4]
	\arrow["{\hat{\beta}_F}", curve={height=-23pt}, shorten >=6pt, Rightarrow, from=3-7, to=1-9]
	\arrow["{\varphi_G}"{description}, from=2-1, to=4-1]
	\arrow["\sigma"{description}, from=4-1, to=4-3]
	\arrow["{\Delta_{F,G}}", from=2-1, to=1-2]
	\arrow["{\Delta_{F',G'}}"{description}, from=2-3, to=1-4]
	\arrow["{\Delta_{F',G'}}"{description}, from=4-3, to=3-4]
	\arrow["{\Delta_{F,G}}"{description}, from=4-1, to=3-2]
	\arrow["{\varphi_{F'}}"{description}, from=1-4, to=3-4]
	\arrow["\sigma"{description}, from=1-2, to=1-4]
	\arrow["{\varphi_G}"{description}, from=2-6, to=4-6]
	\arrow[""{name=0, anchor=center, inner sep=0}, shorten <=18pt, from=2-6, to=2-8]
	\arrow["\beta"{description}, from=4-6, to=4-8]
	\arrow["{\Delta_{F,G}}", from=2-6, to=1-7]
	\arrow["{\Delta_{\widetilde{G}    F,\widetilde{G}}}"{description}, from=2-8, to=1-9]
	\arrow["{\Delta_{\widetilde{G}    F,\widetilde{G} }}"{description}, from=4-8, to=3-9]
	\arrow["{\varphi_{\widetilde{G}    F}}"{description}, from=1-9, to=3-9]
	\arrow["{\Delta_{F,G}}"{description}, from=4-6, to=3-7]
	\arrow["{\hat{\beta}}"{description}, from=1-7, to=1-9]
	\arrow[""{name=1, anchor=center, inner sep=0}, shorten <=13pt, from=2-3, to=4-3]
	\arrow[""{name=2, anchor=center, inner sep=0}, shorten <=13pt, from=1-2, to=3-2]
	\arrow[""{name=3, anchor=center, inner sep=0}, shorten <=18pt, from=3-2, to=3-4]
	\arrow[""{name=4, anchor=center, inner sep=0}, shorten <=13pt, from=2-8, to=4-8]
	\arrow[""{name=5, anchor=center, inner sep=0}, shorten <=13pt, from=1-7, to=3-7]
	\arrow[""{name=6, anchor=center, inner sep=0}, shorten <=18pt, from=2-1, to=2-3]
	\arrow["{\mathtt{\Psi}_{F,G}}"'{description}, curve={height=-6pt}, shorten <=6pt, shorten >=6pt, Rightarrow, from=4-1, to=1-2]
	\arrow["{\mathtt{\Psi}_{F',G'}}"'{description}, curve={height=-4pt}, shorten <=6pt, shorten >=6pt, Rightarrow, from=4-3, to=1-4]
	\arrow["{\mathtt{\Psi}_{F,G}}"'{description}, curve={height=-6pt}, shorten <=6pt, shorten >=6pt, Rightarrow, from=4-6, to=1-7]
	\arrow[""{name=7, anchor=center, inner sep=0}, shorten <=18pt, from=3-7, to=3-9]
	\arrow["{\mathtt{\Psi}_{\widetilde{G}    F,\widetilde{G}}}"'{description}, curve={height=-4pt}, shorten <=6pt, shorten >=6pt, Rightarrow, from=4-8, to=1-9]
	\arrow["\beta"{description}, shorten >=11pt, no head, from=2-6, to=0]
	\arrow["{\varphi_{G'}}"{description}, shorten >=6pt, no head, from=2-3, to=1]
	\arrow["{\varphi_F}"{description}, shorten >=5pt, no head, from=1-2, to=2]
	\arrow["\sigma"{description}, shorten >=9pt, no head, from=3-2, to=3]
	\arrow["{\varphi_F}"{description}, shorten >=11pt, no head, from=1-7, to=5]
	\arrow["\sigma"{description}, shorten >=9pt, no head, from=2-1, to=6]
	\arrow["{\varphi_{\widetilde{G}}}"{description}, shorten >=6pt, no head, from=2-8, to=4]
	\arrow["\hat\beta"{description}, shorten >=9pt, no head, from=3-7, to=7]
			\arrow["{\sigma_G}", curve={height=-18pt}, shorten <=3pt, Rightarrow, from=4-1, to=2-3]
	\arrow["{\beta_G}", curve={height=-18pt}, shorten <=3pt, Rightarrow, from=4-6, to=2-8]
					\arrow["   \Delta_{F,G,\sigma}\! \! \! \! \!\!   "', curve={height=5.5pt}, shorten >=-5pt, Rightarrow, from=1-2, to=2-3]
										\arrow["   \Delta_{F,G,\beta}\! \! \! \! \!\!   "', curve={height=5.5pt}, shorten >=-5pt, Rightarrow, from=1-7, to=2-8]
\end{tikzcd}\]
a trivial $3$-cell \emph{coassociativity coherence}
\[\begin{tikzcd}[column sep=large, row sep=large]
	& {\textsf{h}[G]} \\
	{\textsf{h}[F]} && {\textsf{h}[E]}   \\
	& {\textsf{x}[G]}  \\
	{\textsf{x}[F]} && {\textsf{x}[E]}  
		\arrow[""{name=0, anchor=center, inner sep=0}, shorten <=13pt, from=1-2, to=3-2]
				\arrow[""{name=a, anchor=center, inner sep=0}, no head,  from=4-1, to=4-3]
										\arrow["{ \Delta_{F,G,E} }"{description}, curve={height=19pt}, shorten >=6pt, shorten <=6pt, Rightarrow, from=1-2, to=0]
	\arrow["{\Delta_{F,E}}"{description}, from=4-3, to=4-1]
	\arrow["{\Delta_{F,G}}"{description}, from=3-2, to=4-1]
	\arrow["{\Delta_{G,E}}"{description}, from=4-3, to=3-2]
	\arrow["{\Delta_{F,G}}"', from=1-2, to=2-1]
	\arrow["{\varphi_F}"{description}, from=2-1, to=4-1]
	\arrow["{\Delta_{G,E}}"'{description}, from=2-3, to=1-2]
	\arrow["{\varphi_E}"{description}, from=2-3, to=4-3]
	\arrow["{\mathtt{\Psi}_{F,E}}"', curve={height=17pt}, shorten <=6pt, shorten >=6pt, Rightarrow, from=4-3, to=2-1]
	\arrow["{\mathtt{\Psi}_{F,G}}"{description}, curve={height=-6pt}, shorten <=3pt, shorten >=3pt, Rightarrow, from=3-2, to=2-1]
	\arrow["{\mathtt{\Psi}_{G,E}}"{description}, curve={height=6pt}, shorten <=6pt, shorten >=6pt, Rightarrow, from=4-3, to=1-2]
	\arrow[""{name=1, anchor=center, inner sep=0}, shorten >=18pt, no head, from=2-3, to=2-1]
	\arrow["{\varphi_G}", shorten >=5pt, no head, from=1-2, to=0]
	\arrow["{\Delta_{F,E}}"{description}, from=1, to=2-1]
			\arrow["{ \Delta_{F,G,E} }"{description}, curve={height=6pt}, shorten >=7pt, shorten <=2pt, Rightarrow, from=3-2, to=a]
\end{tikzcd}\]
\noindent and a trivial $3$-cell \emph{bimonoid coherence}
\[\begin{tikzcd} [row sep=large, column sep=large]
	& {\textsf{h}[F]} && {\textsf{h}[A]} && {\textsf{h}[G]} \\
	{\textsf{h}[FG]} &&&& {\textsf{h}[GF]} \\
	& {\textsf{x}[F]} && {\textsf{x}[A]} && {\textsf{x}[G]} \\
	{\textsf{x}[FG]} &&&& {\textsf{x}[GF]}
			\arrow[""{name=a, anchor=center, inner sep=0}, no head, from=2-1, to=2-5]
						\arrow[""{name=b, anchor=center, inner sep=0}, no head, from=4-1, to=4-5]
	\arrow["{\mathtt{\Psi}_{FG,F}}", curve={height=6pt}, Rightarrow, from=3-2, to=2-1]
	\arrow["{\mathtt{\Phi}_{F,A}}", curve={height=-15pt}, Rightarrow, from=3-2, to=1-4]
	\arrow["{\mathtt{\Psi}_{G,A}}", curve={height=-20pt}, Rightarrow, from=3-4, to=1-6]
	\arrow["{\mu_{F,A}}"{description}, from=1-2, to=1-4]
	\arrow["{\Delta_{G,A}}"{description}, from=1-4, to=1-6]
	\arrow["{\Delta_{FG,F}}"{description}, from=1-2, to=2-1]
	\arrow["{\beta}"{description}, from=2-1,   shorten >=-3pt,    to=2-5]
	\arrow["{\mu_{GF,G}}"{description}, from=2-5, to=1-6]
	\arrow["{\varphi_{FG}}"{description}, from=2-1, to=4-1]
	\arrow[""{name=0, anchor=center, inner sep=0}, from=1-2, to=3-2]
	\arrow[""{name=1, anchor=center, inner sep=0}, shorten >=-2pt,   from=1-4, to=3-4]
	\arrow[""{name=2, anchor=center, inner sep=0}, from=2-5, to=4-5]
	\arrow["{\varphi_G}", from=1-6, to=3-6]
	\arrow["{\mu_{F,A}}", from=3-2, to=3-4]
	\arrow["{\mu_{GF,G}}"{description}, from=4-5, to=3-6]
	\arrow["{\Delta_{FG,F}}"{description}, from=3-2, to=4-1]
	\arrow["{\beta}"{description}, from=4-1, to=4-5]
	\arrow[""{name=3, anchor=center, inner sep=0}, from=3-4, to=3-6]
	\arrow["{\Delta_{G,A}}"{description}, shorten >=5pt, no head, from=3-4, to=3]
	\arrow["{\varphi_F}"{description}, shorten >=3pt, no head, from=1-2, to=0]
	\arrow["{\varphi_A}"{description}, shorten >=3pt, no head, from=1-4, to=1]
	\arrow["{\varphi_{GF}}"{description}, shorten >=3pt, no head, from=2-5, to=2]
	\arrow["{\mathtt{\Phi}_{GF,G}}", curve={height=11pt}, Rightarrow, from=4-5, to=1-6]
				\arrow["{\beta_{FG}}"', curve={height=9pt}, Rightarrow, from=4-1, to=2-5]
					\arrow["{\text{B}_{F,G,A}}"{description}, curve={height=7pt}, shorten <=7pt, Rightarrow, from=a, to=1-4]
										\arrow["{\text{B}_{F,G,A}}"{description}, curve={height=7pt}, shorten <=7pt, Rightarrow, from=b, to=3-4]
\end{tikzcd}\]
for all meaningful choices. We have a well-defined composition of homomorphisms by composing $2$-cells. A homomorphism is called \emph{strong}, respectively \emph{strict}, if all the $2$-cells ${\sigma_F}$, ${\beta_F}$, ${\mathtt{\Phi}_{F,G}}$, ${\mathtt{\Psi}_{F,G}}$, are isomorphisms, respectively identities.

Let us make these coherence conditions explicit in the case $\textsf{C}\in \{\textsf{Cat},\textsf{Cat}_\bullet\}$, at the level of components of the $2$-cells. We have that ${\mathtt{\Phi}_{F,G}}$ consists of a natural morphism in $\textsf{x}[G]$
\[
{\mathtt{\Phi}_{F,G}}_\ta : \mu_{F,G}   \big( \varphi_F(\ta)  \big   )  \to    \varphi_G  \big (\mu_{F,G}(\ta) \big )
\] 
for each object $\ta \in \textsf{h}[F]$, and ${\mathtt{\Psi}_{F,G}}$ consists of a natural morphism in $\textsf{x}[F]$
\[
{\mathtt{\Psi}_{F,G}}_\ta  : \Delta_{F,G}   \big   (   \varphi_G(\ta) \big       ) \to   \varphi_F\big ( \Delta_{F,G} ( \ta ) \big )
\]
for each object $\ta \in  \textsf{h}[G]$. Then multiplication \hbox{$\sigma$-naturality} coherence says
\[
\varphi_{G'} ( {\mu_{F,G,\sigma}}_\ta ) \circ {\sigma}_{\mu_{F,G}(\ta)}  \circ  {\mathtt{\Phi}'_{F,G}}_{\ta}  = {\mathtt{\Phi}_{F',G'}}_{\ta'} \circ \mu_{F',G'}({\sigma}_{\ta}) \circ {\mu_{F,G,\sigma}}_{\varphi_F(\ta)}
\]
multiplication $\beta$-naturality coherence says
\[
\varphi_{\wt{G}} ( {\mu_{F,G,\beta}}_\ta ) \circ {\beta}_{\mu_{F,G}(\ta)} \circ \wt{{\mathtt{\Phi}_{F,G}}}_{\ta} 
= 
{\mathtt{\Phi}_{\wt{G}F,\wt{G}}}_{\tilde{\ta}}\circ  \mu_{\wt{G}F, \wt{G}}( {\hat{\beta}}_\ta  ) {\circ \mu_{F,G,\beta}}_{\varphi_F(\ta)}
\]
associativity coherence says
\[
\varphi_E ({\mu_{F,G,E}}_{\ta}) \circ {\mathtt{\Phi}_{G,E}}_{\mu_{F,G}(\ta)}  \circ  \mu_{G,E} (   {\mathtt{\Phi}_{F,G}}_{\ta}  )  
=
{\mathtt{\Phi}_{F,E}}_{\mathtt{a}} \circ {\mu_{F,G,E}}_{\varphi_F(\ta)}
\]
comultiplication $\sigma$-naturality coherence says
\[
\varphi_{F'}({\Delta_{F,G,\sigma}}_\ta)  \circ {\sigma}_{\Delta_{F,G}( \ta )} \circ {\mathtt{\Psi}'_{F,G}}_{\ta}  
= 
{\mathtt{\Psi}_{F',G'}}_{\ta'}\circ  \Delta_{F',G'} (  {\sigma_{G}}_\ta ) \circ{\Delta_{F,G,\sigma}}_{\varphi_G(\ta)}
\]
comultiplication $\beta$-naturality coherence says 
\[
\varphi_{\wt{G}F}({\Delta_{F,G,\beta}}_\ta) \circ {\hat{\beta}}_{\Delta_{F,G}( \ta )} \circ \wt{{\mathtt{\Psi}_{F,G}}}_{\ta} 
= 
{\mathtt{\Psi}_{\wt{G}F,\wt{G}}}_{\tilde{\ta}} \circ \Delta_{ \wt{G}F,\wt{G} } (  {\beta}_{\ta} ) \circ {\Delta_{F,G,\beta}}_{\varphi_G(\ta)}
\]
coassociativity coherence says
\[
\varphi_F ({\Delta_{F,G,E}}_{\ta}) \circ  {\mathtt{\Psi}_{F,G}}_{\Delta_{G,E}(\ta)}  \circ  \Delta_{F,G} (   {\mathtt{\Psi}_{G,E}}_{\ta}  )  
=
{\mathtt{\Psi}_{F,E}}_{\mathtt{a}} \circ {\mu_{F,G,E}}_{\varphi_E(\ta)}
\] 
and bimonoid coherence says   
\[
{\mathtt{\Psi}_{G,A}}_{\mu_{F,A}(\ta)} \circ   \Delta_{G,A}( {\mathtt{\Phi}_{F,A}}_{\ta}  ) \circ  {\text{B}_{F,G,A}}_{\varphi_F(\ta)}
\]
\[
=   
 \varphi_G({\text{B}_{F,G,A}}_{\ta}) \circ{\mathtt{\Phi}_{GF,G}}_{\wt{\Delta_{FG,F}}(\ta)}   
\circ   \mu_{GF,F} ({\beta}_{ \Delta_{FG,F} (\ta)}) \circ
\mu_{GF,G}(\wt{\mathtt{\Psi}_{FG,F}}_\ta) 
.\]
We leave the definition of lax homomorphisms for bimonoids in cospecies implicit. The coherence laws will all be unchanged.



\subsection{Joyal-Theoretic Bimonoids} \label{sec:Joyal-Theoretic Bimonoids}

Throughout this section, we assume all species and bimonoids are strict. One can develop the notion of Joyal-theoretic bimonoids in the non-strict setting, as we did for Joyal-theoretic species, however we shall not need it in this paper. Indeed, all the examples of bimonoids we come across for which the algebraic structure holds only up to coherent isomorphism are in any case not Joyal-theoretic.

Given a Joyal-theoretic species $\textsf{h}$, we can give $\textsf{h}$ the structure of a bimonoid as follows. Let $\text{h}$ be the connected Joyal species corresponding to $\textsf{h}$, so that $\textsf{h}=\textsf{Br}(\text{h})$. Suppose $\text{h}$ is equipped with a pair of maps
\begin{equation} \label{eq:muST}        
	\mu_{S,T}  :    \text{h} [S] \otimes   \text{h} [T]   \to  \textsf{h}   [I   ] 
	\qquad \text{and} \qquad
	\Delta_{S,T}  :  \text{h}   [I  ] \to   \text{h} [S] \otimes   \text{h} [T]    
\end{equation}
for each decomposition $S\sqcup T=I$. When either $S$ or $T$ is the empty set, we require that $\mu_{S,T}$ and $\Delta_{S,T}$ are the unitors of $\textsf{C}$, e.g. $\mu_{\emptyset,T}$ is the right unitor
\[
\mu_{\emptyset,T}  :    \underbrace{\text{h} [\emptyset]}_{=1_\textsf{C}} \otimes   \text{h} [T]  \to \textsf{h}   [T   ] 
.\]
Therefore the actual data here is just a pair of maps $\mu_{S,T}$ and $\Delta_{S,T}$ for each two-lump composition $(S,T)$. Let $[I;2]$ denote the set of two-lump compositions of $I$.

\begin{remark}
	Without giving a full exposition of the (conjectural) generalized theory which uses decompositions in place of compositions, the inclusion of empty sets here may seem ad hoc. One can avoid them (since the data is just a pair of functions for each two-lump composition) but then needs to separately include several instances of the Joyal bimonoid axiom below, see \autoref{ex:bimonoid}.
\end{remark}

If $\textsf{h}$ is to be a bimonoid, we need to define maps $\mu_{F,G}$ and $\Delta_{F,G}$ for compositions $G\leq F$. For $(S,T)\in [I;2]$, put
\[
\mu_{(S,T),(I)}:= \mu_{S,T} 
\qquad \text{and} \qquad 
\Delta_{(S,T),(I)} := \Delta_{S,T}
.\] 
Towards defining $\mu_{F,G}$ and $\Delta_{F,G}$ in general, create a sequence of the form
\[
G =F_L \leq  \dots \leq F_1 = F
\]
where $F_{j-1}$ is obtained from $F_j$ by merging just two lumps, say $S_j$ and $T_j$. This is always possible, but the sequence will not be unique. Put
\[
\mu_{F_{j-1},F_j} :=   \text{id} \otimes\dots \otimes   \text{id}   \otimes   \mu_{ S_j, T_j } \otimes \text{id} \otimes \dots \otimes \text{id}
\]
and
\[
\Delta_{F_{j-1},F_j} :=   \text{id} \otimes\dots \otimes   \text{id}   \otimes   \Delta_{ S_j, T_j } \otimes \text{id} \otimes \dots \otimes \text{id}
.\]
If we want $\textsf{h}$ to satisfy associativity and coassociativity, we must then put
\begin{equation} \label{eq:aso,coaso}        
	\mu_{F,G}: =     \mu_{F_{L-1},F_L}  \circ \dots \circ  \mu_{F_{1},F_2}
	\qquad \text{and} \qquad
	\Delta_{F,G}: =   \Delta_{F_{1},F_2}   \circ \dots \circ    \Delta_{F_{L-1},F_L}.
\end{equation}
The Joyal associativity and Joyal coassociativity axioms, see below, ensure these definitions do not depend upon the choice of sequence $F_1 \leq  \dots \leq F_L$. Recalling the notation \textcolor{blue}{(\refeqq{eq:F})} (e.g. $\mu_F:=\mu_{F,(I)}$), and letting $G=(T_1,\dots, T_l)$, the would-be (co)multiplication factorizes as follows,
\begin{equation}\label{eq:fact}
	\mu_{F,G} =  \mu_{F|_{T_1}}\otimes \dots \otimes   \mu_{F|_{T_l}}      \qquad \text{and} \qquad    \Delta_{F,G} =\Delta_{F|_{T_1}}\otimes \dots \otimes   \Delta_{F|_{T_l}}   
	.      
\end{equation}
We now ask what properties must $\mu_{S,T}$ and $\Delta_{S,T}$ satisfy in order for $\mu_{F,G}$ and $\Delta_{F,G}$ to be the well-defined multiplication and comultiplication of a bimonoid. Consider the diagrams \emph{Joyal multiplication naturality} and \emph{Joyal comultiplication naturality}\footnote{\ juxtaposition, e.g. $ST$ or $\sigma\tau$, abbreviates disjoint union $S\sqcup T$ or $\sigma\sqcup \tau$}
\begin{figure}[H]
	\centering
	\begin{minipage}{.5\textwidth}
		\centering
		\begin{tikzcd}[column sep=large]  
			{\text{h}[S]\otimes \text{h}[T]} & {\text{h}[S']\otimes \text{h}[T']} \\
			{\text{h}[I]} & {\text{h}[J]}
			\arrow["{\mu_{S,T}}"', from=1-1, to=2-1]
			\arrow["{\text{h}[\sigma]\otimes \text{h}[\tau]}", from=1-1, to=1-2]
			\arrow["{\text{h}[\sigma\tau]}"', from=2-1, to=2-2]
			\arrow["{\mu_{S',T'}}", from=1-2, to=2-2]
		\end{tikzcd}
	\end{minipage}%
	\begin{minipage}{.5\textwidth}
		\centering
		\begin{tikzcd}[column sep=huge]  
			& {\text{h}[S]\otimes \text{h}[T]} & {\text{h}[S']\otimes \text{h}[T']} \\
			{} & {\text{h}[I]} & {\text{h}[J]}
			\arrow["{\text{h}[\sigma|_{S'}]\otimes \text{h}[\sigma|_{T'}]}", from=1-2, to=1-3]
			\arrow["{\text{h}[\sigma]}"', from=2-2, to=2-3]
			\arrow["{\Delta_{S',T'}}"', from=2-3, to=1-3]
			\arrow["{\Delta_{S,T}}", from=2-2, to=1-2]
		\end{tikzcd}
	\end{minipage}
\end{figure}

\noindent the diagrams \emph{Joyal associativity} and \emph{Joyal coassociativity}

\begin{figure}[H]
	\centering
	\begin{minipage}{.5\textwidth}
		\centering
		\begin{tikzcd}[column sep=large,row sep=large] 
			\text{h}[S]\otimes \text{h}[T]\otimes \text{h}[U]    \arrow[d, " \mu_{S,T} \otimes  \text{id}"'] \arrow[r,   "\text{id}  \otimes  \mu_{T,U}"]   
			& 
			\text{h}[S]\otimes \text{h}[T U] \arrow[d,   "\mu_{S,T  U} "]     
			\\
			\text{h}[S T]\otimes \text{h}[U]   \arrow[r,   "\mu_{S T ,U}"']       
			& 
			\text{h}[I]
		\end{tikzcd}
	\end{minipage}%
	\begin{minipage}{.5\textwidth}
		\centering
		\begin{tikzcd}[column sep=large,row sep=large] 
			\text{h}[S]\otimes \text{h}[T]\otimes \text{h}[U]    \arrow[d,leftarrow, "\Delta_{S,T} \otimes  \text{id}"'] \arrow[r,leftarrow,   "\text{id}  \otimes  \Delta_{T,U}"]   
			& 
			\text{h}[S]\otimes \text{h}[T U] \arrow[d,leftarrow,  "\Delta_{S,T  U} "]     
			\\
			\text{h}[S T]\otimes \text{h}[U]   \arrow[r, leftarrow,   "\Delta_{S T ,U}"']       
			& 
			\text{h}[I]
		\end{tikzcd}
	\end{minipage}
\end{figure}
\noindent and the diagram \emph{Joyal bimonoid axiom}
\[\begin{tikzcd}[row sep=large]  
	{\text{h}[ST]\otimes\text{h}[UV]} & {\text{h}[I]} & {\text{h}[SU]\otimes\text{h}[TV]} \\
	{\text{h}[S]\otimes\text{h}[T]\otimes\text{h}[U]\otimes\text{h}[V]} && {\text{h}[S]\otimes\text{h}[U]\otimes\text{h}[T]\otimes\text{h}[V]}
	\arrow["{\Delta_{S,T}\otimes \Delta_{U,V}}"', from=1-1, to=2-1]
	\arrow["{\mu_{ST,UV}}", from=1-1, to=1-2]
	\arrow["{\Delta_{SU,TV}}", from=1-2, to=1-3]
	\arrow["{\text{id}\otimes B \otimes \text{id}}"', from=2-1, to=2-3]
	\arrow["{\mu_{S,U}\otimes \mu_{T,V}}"', from=2-3, to=1-3]
\end{tikzcd}\]
where $B$ denotes the braiding $B: \textsf{h}[T]   \otimes  \textsf{h}[U]  \xrightarrow{\sim}  \textsf{h}[U]   \otimes  \textsf{h}[T]$. It is important we allow the sets $S,T,U,V$ to be empty in these diagrams. 

\begin{ex}\label{ex:bimonoid}
	If $U=\emptyset$ and $S,T,V\neq \emptyset$, then the bimonoid axiom is equivalent to the following diagram consisting of only nonempty sets,
	\[\begin{tikzcd}
		& {\text{h}[I]} \\
		{\text{h}[ST]\otimes\text{h}[V]} && {\textsf{h}[S]\otimes\text{h}[TV]} \\
		& {\text{h}[S]\otimes\text{h}[T]\otimes\text{h}[V]}
		\arrow["{\Delta_{S,T}\otimes \text{id}}"', from=2-1, to=3-2]
		\arrow["{\mu_{ST,V}}", from=2-1, to=1-2]
		\arrow["{\Delta_{S,TV}}", from=1-2, to=2-3]
		\arrow["{\text{id}\otimes \mu_{T,V}}"', from=3-2, to=2-3].
	\end{tikzcd}\]
	There are several other diagrams like this, obtained by setting various subsets equal to the empty set. If we did not allow $S,T=\emptyset$ in \textcolor{blue}{(\refeqq{eq:muST})}, then we would have had to separately include each instance of these diagrams in our list of axioms.
\end{ex}

\begin{remark}
To relate this to more familiar notions of bimonoid or bialgebra, one can check that in the case $\textsf{C}=\textsf{Vec}$, these Joyal axioms say exactly that $\mu_{S,T}$ and $\Delta_{S,T}$ give $\text{h}$ the structure of a (connected) bimonoid internal to Joyal vector species with respect to the Day convolution. Such an internal bimonoid decategorifies to give classical connected graded bialgebras in various ways, see \cite[Chapter 15]{aguiar2010monoidal}.

A similar result holds for $\textsf{C}=\textsf{Set}$, but there the bimonoid is a bialgebra over a bimonad, rather than an internal bimonoid. 
\end{remark}


\begin{thm}[Aguiar-Mahajan]\label{thm:joyal}
	The maps $\mu_{F,G}$ and $\Delta_{F,G}$ as defined in \textcolor{blue}{(\refeqq{eq:aso,coaso})} give $\textsf{h}$ the structure of a bimonoid if and only if the diagrams Joyal (co)multiplication naturality, Joyal (co)associativity and the Joyal bimonoid axiom commute.
\end{thm}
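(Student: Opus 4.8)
The plan is to prove both implications, with essentially all the content in the ``if'' direction. Throughout I use the description of $\mu_{F,G}$ and $\Delta_{F,G}$ recalled in \autoref{sec:Joyal-Theoretic Bimonoids}: each is obtained by composing copies of $\mu_{S,T}$, resp.\ $\Delta_{S,T}$, tensored with identities, along a maximal chain $G=F_L\le\dots\le F_1=F$ in the interval $[G,F]$ of coarsenings of $F$ refining $G$, and each factors as a tensor product over the lumps of $G$ as in \textcolor{blue}{(\refeqq{eq:fact})}. The ``only if'' direction is the easy one: if the $\mu_{F,G},\Delta_{F,G}$ already assemble into a bimonoid, then each Joyal diagram is literally a totally lumped instance of a bimonoid axiom. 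Joyal (co)multiplication naturality is (co)multiplication $\sigma$-naturality with $F$ two-lump and $G=(I)$ (tensored with identities on the remaining factors); Joyal (co)associativity is (co)associativity with $F$ three-lump, $G$ two-lump, $E=(I)$; and a short bookkeeping check shows that the Joyal bimonoid axiom is exactly the bimonoid axiom in the case $A=(I)$, $F=(S\sqcup T,\,U\sqcup V)$, $G=(S\sqcup U,\,T\sqcup V)$, for which $FG=(S,T,U,V)$, $GF=(S,U,T,V)$, $\beta=\mathrm{id}\otimes B\otimes\mathrm{id}$, $\Delta_{FG,F}=\Delta_{S,T}\otimes\Delta_{U,V}$ and $\mu_{GF,G}=\mu_{S,U}\otimes\mu_{T,V}$.

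For the ``if'' direction, assume all the Joyal diagrams commute. \emph{Step 1 (well-definedness).} The interval $[G,F]$ is the product, over the lumps $T_j$ of $G$, of the intervals $[(T_j),F|_{T_j}]$, and each of the latter is a Boolean lattice (choose which ``walls'' between consecutive lumps of $F|_{T_j}$ to keep); in a Boolean lattice any two maximal chains are connected by square moves swapping two successive merges. A square swapping merges that touch disjoint lumps commutes by bifunctoriality of $\otimes$; a square swapping the two merges of three consecutive lumps $A,B,C$ (assembling $((AB)C)$ versus $(A(BC))$) commutes by Joyal associativity, resp.\ coassociativity. Hence $\mu_{F,G}$ and $\Delta_{F,G}$ are well defined. \emph{Step 2 (easy axioms).} Associativity and coassociativity are then immediate, since for $E\le G\le F$ one may compute $\mu_{F,E}$ (resp.\ $\Delta_{F,E}$) along a chain passing through $G$; strict (co)unitality holds by the unitor conventions on $\mu_{S,T},\Delta_{S,T}$. \emph{Step 3 ($\sigma$-naturality).} By \textcolor{blue}{(\refeqq{eq:fact})} the square for $\mu_{F,G}$ is the tensor product of the squares for the $\mu_{F|_{T_j},(T_j)}$, and each of those is a vertical stack of Joyal multiplication naturality squares along a chain realizing $\mu_{F|_{T_j}}$ (using Step 1 to compare chains); comultiplication $\sigma$-naturality is dual. \emph{Step 4 ($\beta$-naturality).} Here $\beta$ and its lift $\hat\beta$ merely reorder whole blocks of tensor factors, namely the blocks $F|_{T_j}$ and their images $\text{h}[T_j]$, leaving each $\mu_{F|_{T_j}}$ intact; so the square commutes by naturality of the braiding of $\textsf{C}$ together with bifunctoriality of $\otimes$ (symmetric monoidal coherence), requiring no Joyal axiom. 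Comultiplication $\beta$-naturality is dual.

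\emph{Step 5 (the bimonoid axiom).} First reduce to $A=(I)$: since $F$ and $G$ both refine $A$, the Tits products localize, $(FG)|_{A_i}=(F|_{A_i})(G|_{A_i})$ and likewise for $GF$, and the comparison permutation $\beta\colon\textsf{h}[FG]\to\textsf{h}[GF]$ only reorders factors within each $A$-block; by \textcolor{blue}{(\refeqq{eq:fact})} the whole bimonoid-axiom square for $(F,G,A)$ is therefore the tensor product, over the lumps $A_i$ of $A$, of the squares for $(F|_{A_i},\,G|_{A_i},\,(A_i))$. So it suffices to treat $A=(I)$. For this, induct on $k+l$ where $F=(S_1,\dots,S_k)$, $G=(T_1,\dots,T_l)$: peel off $S_1$ and $T_1$, factoring $\mu_F=\mu_{S_1,\,I\setminus S_1}\circ(\mathrm{id}\otimes\mu_{F|_{I\setminus S_1}})$ and $\Delta_G=(\mathrm{id}\otimes\Delta_{G|_{I\setminus T_1}})\circ\Delta_{T_1,\,I\setminus T_1}$; substituting, the square for $(F,G,(I))$ decomposes into one instance of the Joyal bimonoid axiom (the $2\times2$ case), several instances covered by the inductive hypothesis, and rearrangements using Joyal (co)multiplication naturality, Joyal (co)associativity, and symmetric monoidal coherence to slide the braidings past the inert factors. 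The base case $k=l=2$ is the Joyal bimonoid axiom itself, its degenerate sub-cases (some of $S,T,U,V$ empty, cf.\ \autoref{ex:bimonoid}) being subsumed since empty sets are allowed in \textcolor{blue}{(\refeqq{eq:muST})}.

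The main obstacle is Step 5: carrying the induction through requires the combinatorial identities for how the Tits products $FG$, $GF$ and the comparison permutation $\beta$ restrict under peeling off a lump (the block structure of $\beta$ and $(FG)|_A=(F|_A)(G|_A)$), and then exhibiting the general square as a pasting of a single Joyal bimonoid square with naturality squares --- this is where careful bookkeeping is unavoidable. Step 1's square-move connectivity of maximal chains is the other point needing a (standard) lattice-theoretic input; everything else is formal diagram chasing in the symmetric monoidal $2$-category $\textsf{C}$.
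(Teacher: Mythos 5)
Your proposal is correct and follows essentially the same strategy as the paper: the ``only if'' direction reads each Joyal diagram off as a (totally lumped) instance of the bimonoid axioms, and the ``if'' direction uses the factorization \textcolor{blue}{(\refeqq{eq:fact})} to get $\sigma$-naturality from Joyal naturality, braiding naturality for the $\beta$-diagrams, Joyal (co)associativity for well-definedness, and a reduction of the bimonoid axiom to the case $A=(I)$ followed by repeated application of the Joyal bimonoid axiom together with (co)associativity and symmetric monoidal coherence. The only divergence is presentational: where you run an induction on $l(F)+l(G)$ by peeling off $S_1$ and $T_1$, the paper normalizes to binary (co)multiplications in string-diagram notation and rewrites with the Joyal bimonoid axiom until every $\mu$ occurs after every $\Delta$, and your Boolean-lattice square-move argument for chain-independence is a more explicit version of what the paper simply attributes to Joyal (co)associativity.
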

\begin{proof}
	This result is one of the key ideas behind Aguiar-Mahajan's generalization of species and bimonoids, see \cite[Proposition 50]{aguiar2013hopf} and \cite[Section 2.12.3]{AM22}. The author cannot find an explicit proof in their published work so far. The only non-trivial part of the proof is the bimonoid axiom, which can be proved using cartesian product notation or sumless Sweedler notation for the cases $\textsf{C}\in \{ \textsf{Set}, \textsf{Vec},\textsf{Cat},\textsf{Cat}_\bullet\}$. For a generic symmetric monoidal \hbox{$1$-category} $\textsf{C}$, the bimonoid axiom can be proved using string diagram notation in $\textsf{C}$, which we sketch below.
	
	Let us deal with the `only if' direction first. For $S,T\neq \emptyset$, Joyal (co)multiplication naturality is a special case of (co)multiplication $\sigma$-naturality, obtained by setting $F=(S,T)$. (Co)multiplication naturality is trivial if $S=\emptyset$ or $T=\emptyset$. For $S,T,U\neq \emptyset$, Joyal associativity is obtained from the associativity diagrams 
	\[\begin{tikzcd}
		& {\textsf{h}\big[(ST,U)\big ]} &&&& {\textsf{h}\big[(S,TU)\big]} \\
		{\textsf{h}\big[(S,T,U)\big]} && {\textsf{h}[I]} && {\textsf{h}\big[(S,T,U)\big)} && {\textsf{h}[I]}
		\arrow["{\mu_{(S,T,U),(I)}}"', from=2-1, to=2-3]
		\arrow["{\mu_{(S,T,U),(ST,U)}}", from=2-1, to=1-2]
		\arrow["{\mu_{(ST,U),(I)}}", from=1-2, to=2-3]
		\arrow["{\mu_{(S,T,U),(I)}}"', from=2-5, to=2-7]
		\arrow["{\mu_{(S,T,U),(S,TU)}}", from=2-5, to=1-6]
		\arrow["{\mu_{(S,TU),(I)}}", from=1-6, to=2-7]
	\end{tikzcd}\]
	by gluing along the morphism $\mu_{(S,T,U),(I)}$. Joyal associativity is trivial if any of $S,T,U$ are empty. Joyal coassociativity follows similarly. The Joyal bimonoid axiom is either trivial, or a case of the bimonoid axiom where $l(F),l(G)\leq 2$. For example, if $S,T,U,V\neq \emptyset$, then we obtained the bimonoid axiom with $F=(ST,UV)$ and $G=(SU,TV)$, and if $U=\emptyset$ as in \autoref{ex:bimonoid}, then we obtain the bimonoid axiom with $F=(ST,V)$ and $G=(S,TV)$.
	
	For the `if' direction, there are nine diagrams to check. For $\sigma$-multiplication naturality, we have
	\[
	\sigma \circ \mu_{F,G} =  \Big (\bigotimes_{1\leq j \leq l} \sigma|_{T'_j}\Big ) \circ \Big ( \bigotimes_{1\leq j \leq l} \mu_{F|_{T_j}}\Big )  
	= \underbrace{\bigotimes_{1\leq j \leq l} \sigma|_{T'_j} \circ \mu_{F|_{T_j}}  =    \bigotimes_{1\leq j \leq l} \Big (  \mu_{F'|_{T'_j}} \circ \bigotimes_{\mathtt{i}_j} \sigma|_{S'_{\mathtt{i}_j}}  \Big )}_{\text{Joyal multiplication naturality}}
	=\mu_{F',G'} \circ \sigma  
	.\]
	Then $\sigma$-comultiplication naturality follows similarly from Joyal comultiplication naturality. We have that $\beta$-(co)multiplication naturality follows from the fact that the braiding of $\textsf{C}$ is a natural transformation. (Co)associativity follows directly from the definition \textcolor{blue}{(\refeqq{eq:aso,coaso})}, with Joyal (co)associativity ensuring that the (co)multiplication is well-defined.

To prove the bimonoid axiom, it is sufficient to consider the totally lumped case $A=(I)$, since the generic case $A=(A_1,\dots, A_a)$ then follows by factorization of the (co)multiplication as follows,
\[
\Delta_{G,A} \circ \mu_{F,A} =
\Big ( \bigotimes_{1\leq m \leq a}  \Delta_{G|_{A_m}} \Big )\circ \Big (   \bigotimes_{1\leq m \leq a}  \mu_{F|_{A_m}} \Big )
\]
\[
=
\underbrace{\bigotimes_{1\leq m \leq a} \big (  \Delta_{G|_{A_m}} \circ  \mu_{F|_{A_m}} \big )
=
\bigotimes_{1\leq m \leq a} \big (     \mu_{G|_{A_m} F|_{A_m},G|_{A_m}} \circ \beta \circ \Delta_{F|_{A_m}G|_{A_m},F|_{A_m}}         \big )}_{\text{bimonoid axiom in the case $A=(I)$}}
=
\mu_{GF,G} \circ \beta \circ \Delta_{FG,F} 
.\]
We use string diagram notation in $\textsf{C}$. Abbreviating $I=\textsf{h}[I]$, $\mu=\mu_{F}$ and $\Delta=\Delta_{F}$, the multiplication of $\textsf{h}$ is then denoted e.g.
\[\begin{tikzcd}[row sep=0.5cm, column sep=0.2cm]
	& {T_1} &&& {T_2} && {T_3} \\
	& \mu &&& \mu && \mu \\
	{S_1} & {S_2} & {S_3} & {S_4} && {S_5} & {S_6}
	\arrow[no head, from=2-2, to=3-1]
	\arrow[no head, from=2-2, to=3-2]
	\arrow[no head, from=2-2, to=3-3]
	\arrow[no head, from=2-5, to=3-4]
	\arrow[no head, from=2-5, to=3-6]
	\arrow[no head, from=2-7, to=3-7]
	\arrow[no head, from=2-2, to=1-2]
	\arrow[no head, from=1-5, to=2-5]
	\arrow[no head, from=1-7, to=2-7]
\end{tikzcd}\]
where $F=(S_1,\dots, S_6)$ and $G=(  T_1 = S_1\sqcup S_2 \sqcup S_3, T_2= S_4\sqcup S_5, T_3=S_6 )$. Similarly for the comultiplication of $\textsf{h}$. The bimonoid axiom in the case $A=(I)$ is then denoted e.g. 
\[\begin{tikzcd}[row sep=0.2cm, column sep=0.2cm]
	{T_1} && {T_2} &&&&& {T_1} &&&& {T_2} \\
	& \Delta &&&&&& \mu &&&& \mu \\
	&&&&&& {Y_1} & {Y_2} & {Y_3} && {Y_4} & {Y_5} & {Y_6} \\
	&&& {=} \\
	&&&& {X_1} && {X_2} && {X_3} && {X_4} && {X_5} && {X_6} \\
	& \mu &&&& \Delta &&&& \Delta &&&& \Delta \\
	{S_1} & {S_2} & {S_3} &&& {S_1} &&&& {S_2} &&&& {S_3}
	\arrow[no head, from=6-2, to=7-1]
	\arrow[no head, from=6-2, to=7-2]
	\arrow[no head, from=6-2, to=7-3]
	\arrow[no head, from=6-2, to=2-2]
	\arrow[no head, from=2-2, to=1-1]
	\arrow[no head, from=2-2, to=1-3]
	\arrow[no head, from=6-6, to=5-5]
	\arrow[no head, from=6-6, to=5-7]
	\arrow[no head, from=6-10, to=5-9]
	\arrow[no head, from=6-10, to=5-11]
	\arrow[no head, from=6-14, to=5-13]
	\arrow[no head, from=6-14, to=5-15]
	\arrow[no head, from=6-6, to=7-6]
	\arrow[no head, from=2-8, to=3-7]
	\arrow[no head, from=2-8, to=3-8]
	\arrow[no head, from=2-8, to=3-9]
	\arrow[no head, from=2-12, to=3-11]
	\arrow[no head, from=2-12, to=3-12]
	\arrow[no head, from=2-12, to=3-13]
	\arrow[no head, from=2-8, to=1-8]
	\arrow[no head, from=2-12, to=1-12]
	\arrow[no head, from=6-10, to=7-10]
	\arrow[no head, from=6-14, to=7-14]
	\arrow[no head, from=5-5, to=3-7]
	\arrow[no head, from=5-7, to=3-11]
	\arrow[no head, from=5-9, to=3-8]
	\arrow[no head, from=5-11, to=3-12]
	\arrow[no head, from=5-13, to=3-9]
	\arrow[no head, from=5-15, to=3-13]
\end{tikzcd}\]
Drawn here is the diagram for the case $l(F)=3$ and $l(G)=2$. In general, $F$ has length $k$ and $G$ has length $l$, each $S_i$ on the right-hand side divides into $l$ many $X$'s, and each $T_j$ is the union of $k$ many $Y$'s. Note some of the $X$'s and $Y$'s may be empty, in which case one can either complete maps by tensoring with unitors, or just remove those nodes from the diagram. 

The Joyal bimonoid axiom is denoted
\[\begin{tikzcd}[row sep=0.1cm, column sep=0.2cm]
	{T_1} && {T_2} &&& {T_1} &&& {T_2} \\
	& \Delta &&&& \mu &&& \mu \\
	&&&& {Y_1} && {Y_2} & {Y_3} && {Y_4} \\
	&&& {=} \\
	&&&& {X_1} && {X_2} & {X_3} && {X_4} \\
	& \mu &&&& \Delta &&& \Delta \\
	{S_1} && {S_2} &&& {S_1} &&& {S_2}
	\arrow[no head, from=6-6, to=7-6]
	\arrow[no head, from=6-6, to=5-5]
	\arrow[no head, from=6-6, to=5-7]
	\arrow[no head, from=1-6, to=2-6]
	\arrow[no head, from=2-6, to=3-5]
	\arrow[no head, from=2-6, to=3-7]
	\arrow[no head, from=7-9, to=6-9]
	\arrow[no head, from=6-9, to=5-8]
	\arrow[no head, from=6-9, to=5-10]
	\arrow[no head, from=1-9, to=2-9]
	\arrow[no head, from=2-9, to=3-8]
	\arrow[no head, from=2-9, to=3-10]
	\arrow[no head, from=5-5, to=3-5]
	\arrow[no head, from=5-7, to=3-8]
	\arrow[no head, from=5-8, to=3-7]
	\arrow[no head,  from=5-10, to=3-10]
	\arrow[no head, from=7-3, to=6-2]
	\arrow[no head, from=7-1, to=6-2]
	\arrow[no head, from=6-2, to=2-2]
	\arrow[no head, from=2-2, to=1-3]
	\arrow[no head, from=2-2, to=1-1]
\end{tikzcd}\]
First, on the left-hand side of the bimonoid axiom string digram, use (co)assosiativity to replace the top and bottom nodes with the corresponding left comb trees (or indeed any appropriate binary tree). For example, if $l(F)=3$ and $l(G)=4$, we obtain the string diagram
\[\begin{tikzcd}[row sep=0.3cm, column sep=0.15cm]
	{T_1} && {T_2} && {T_3} && {T_4} \\
	&&&&& \Delta \\
	&&& {} & \Delta \\
	&&& \Delta \\
	&&& \mu \\
	&& \mu \\
	& {S_1} && {S_2} && {S_3}
	\arrow[no head, from=5-4, to=6-3]
	\arrow[no head, from=6-3, to=7-2]
	\arrow[very thick, no head, from=5-4, to=4-4]
	\arrow[no head, from=4-4, to=1-1]
	\arrow[no head, from=4-4, to=3-5]
	\arrow[no head, from=3-5, to=2-6]
	\arrow[no head, from=3-5, to=1-3]
	\arrow[no head, from=2-6, to=1-5]
	\arrow[no head, from=2-6, to=1-7]
	\arrow[no head, from=6-3, to=7-4]
	\arrow[no head, from=5-4, to=7-6]
\end{tikzcd}\]
All that appears now is binary (co)multiplication. Next, we repeatedly substitute in the right-hand side of the Joyal bimonoid axiom until no $\mu$ comes before a $\Delta$. This is always possible, since the Joyal bimonoid axiom effectively permutes a $\mu$ occurring before an adjacent $\Delta$, and replaces them with two $\Delta$'s occurring before two $\mu$'s.

For example, in the above string diagram, we have one pair of adjacent $\mu$ and $\Delta$ in the wrong order, marked with a thick line. After permuting them using the Joyal bimonoid axiom, we obtain the string diagram 
\[\begin{tikzcd}[row sep=0.25cm, column sep=0.15cm]
	{T_1} && {T_2} && {T_3} && {T_4} \\
	&&&&& \Delta \\
	&&& {} & \Delta \\
	&&& \mu & \mu \\
	&&& \Delta & \Delta \\
	&& \mu \\
	& {S_1} && {S_2} &&  &  {S_3}
	\arrow[very thick, no head, from=5-4, to=6-3]
	\arrow[no head, from=6-3, to=7-2]
	\arrow[no head, from=5-4, to=4-4]
	\arrow[no head, from=4-4, to=1-1]
	\arrow[no head, from=3-5, to=2-6]
	\arrow[no head, from=3-5, to=1-3]
	\arrow[no head, from=2-6, to=1-5]
	\arrow[no head, from=2-6, to=1-7]
	\arrow[no head, from=6-3, to=7-4]
	\arrow[no head, from=4-5, to=5-5]
	\arrow[no head, from=4-4, to=5-5]
	\arrow[no head, from=4-5, to=5-4]
	\arrow[very thick, no head, from=4-5, to=3-5]
	\arrow[no head, from=5-5, to=7-7]
\end{tikzcd}\]
We now have two places we can substitute in, marked with two thick lines. We continue to substitute in for the higher of the two thick lines, which after several iterations gives the string diagram
\[\begin{tikzcd}[row sep=0.25cm, column sep=0.15cm]
	{T_1} && {T_2} && {T_3} &&& {T_4} \\
	&&&&& \mu & \mu \\
	&&& {} & \mu & \Delta & \Delta \\
	&&& \mu & \Delta & \Delta \\
	&&& \Delta & \Delta \\
	&& \mu \\
	& {S_1} && {S_2} &&  & {S_3}
	\arrow[very thick, no head, from=5-4, to=6-3]
	\arrow[no head, from=6-3, to=7-2]
	\arrow[no head, from=5-4, to=4-4]
	\arrow[no head, from=4-4, to=1-1]
	\arrow[no head, from=3-5, to=1-3]
	\arrow[no head, from=2-6, to=1-5]
	\arrow[no head, from=6-3, to=7-4]
	\arrow[no head, from=4-4, to=5-5]
	\arrow[no head, from=4-5, to=5-4]
	\arrow[no head, from=4-5, to=3-5]
	\arrow[no head, from=5-5, to=7-7]
	\arrow[no head, from=3-5, to=4-6]
	\arrow[no head, from=3-6, to=4-5]
	\arrow[no head, from=4-6, to=5-5]
	\arrow[no head, from=2-7, to=3-7]
	\arrow[no head, from=3-6, to=2-6]
	\arrow[no head, from=2-6, to=3-7]
	\arrow[no head, from=2-7, to=3-6]
	\arrow[no head, from=4-6, to=3-7]
	\arrow[no head, from=2-7, to=1-8]
\end{tikzcd}\]
We now do the same for the second thick line, after which we obtain a string diagram with all $\mu$'s occurring after the $\Delta$'s, thus
\[\begin{tikzcd}[row sep=0.25cm, column sep=1cm]
	& {T_1} & {T_2} & {T_3} & {T_4} \\
	& \mu & \mu & \mu & \mu \\
	\mu & \mu & \mu & \mu \\
	\\
	&&& \Delta & \Delta & \Delta \\
	&& \Delta & \Delta & \Delta \\
	& \Delta & \Delta & \Delta \\
	& {S_1} & {S_2} & {S_3}
	\arrow[no head, from=2-2, to=1-2]
	\arrow[no head, from=2-3, to=1-3]
	\arrow[no head, from=2-4, to=1-4]
	\arrow[no head, from=7-3, to=8-3]
	\arrow[no head, from=2-2, to=7-4]
	\arrow[no head, from=5-4, to=6-3]
	\arrow[no head, from=7-4, to=8-4]
	\arrow[no head, from=2-3, to=6-5]
	\arrow[no head, from=3-4, to=5-4]
	\arrow[no head, from=6-5, to=7-4]
	\arrow[no head, from=2-5, to=5-6]
	\arrow[no head, from=2-4, to=5-6]
	\arrow[no head, from=2-5, to=3-4]
	\arrow[no head, from=2-5, to=1-5]
	\arrow[no head, from=3-1, to=7-2]
	\arrow[no head, from=3-1, to=7-3]
	\arrow[no head, from=6-3, to=7-2]
	\arrow[no head, from=7-2, to=8-2]
	\arrow[no head, from=3-1, to=2-2]
	\arrow[no head, from=6-5, to=5-6]
	\arrow[no head, from=3-2, to=6-4]
	\arrow[no head, from=3-2, to=6-3]
	\arrow[no head, from=3-2, to=2-3]
	\arrow[no head, from=6-4, to=7-3]
	\arrow[draw=none, from=3-3, to=5-5]
	\arrow[no head, from=3-3, to=5-4]
	\arrow[no head, from=3-4, to=5-5]
	\arrow[no head, from=3-3, to=2-4]
	\arrow[no head, from=5-5, to=6-4]
\end{tikzcd}\]
By (co)multiplication $\beta$-naturality of $\textsf{h}$, we can move the $\mu$ and $\Delta$ nodes around in the interior of the diagram arbitrarily, which we already look the liberty of doing to draw the last diagram. Therefore we can draw the diagram so that all braidings happen after the $\Delta$'s, but before the $\mu$'s. For example, by moving nodes, the above diagram becomes
\[\begin{tikzcd}[row sep=0.5cm, column sep=0.2cm]
	& {T_1} && {T_2} && {T_3} && {T_4} \\
	\mu & \mu & \mu & \mu & \mu & \mu & \mu & \mu \\
	\\
	\Delta & \Delta & \Delta & \Delta & \Delta & \Delta & \Delta & \Delta & \Delta \\
	{S_1} &&& {S_2} &&& {S_3}
	\arrow[no head, from=2-2, to=1-2]
	\arrow[no head, from=2-4, to=1-4]
	\arrow[no head, from=2-6, to=1-6]
	\arrow[no head, from=4-4, to=5-4]
	\arrow[no head, from=2-2, to=4-7]
	\arrow[no head, from=4-3, to=4-2]
	\arrow[no head, from=4-7, to=5-7]
	\arrow[no head, from=2-4, to=4-8]
	\arrow[no head, from=2-7, to=4-3]
	\arrow[no head, from=4-8, to=4-7]
	\arrow[no head, from=2-8, to=4-9]
	\arrow[no head, from=2-6, to=4-9]
	\arrow[no head, from=2-8, to=2-7]
	\arrow[no head, from=2-8, to=1-8]
	\arrow[no head, from=2-1, to=4-1]
	\arrow[no head, from=2-1, to=4-4]
	\arrow[no head, from=4-2, to=4-1]
	\arrow[no head, from=4-1, to=5-1]
	\arrow[no head, from=2-1, to=2-2]
	\arrow[no head, from=4-8, to=4-9]
	\arrow[no head, from=2-3, to=4-5]
	\arrow[no head, from=2-3, to=4-2]
	\arrow[""{name=0, anchor=center, inner sep=0}, no head, from=2-3, to=2-4]
	\arrow[no head, from=4-5, to=4-4]
	\arrow[no head, from=2-5, to=4-3]
	\arrow[no head, from=2-7, to=4-6]
	\arrow[no head, from=2-5, to=2-6]
	\arrow[no head, from=4-6, to=4-5]
	\arrow[draw=none, from=0, to=4-6]
\end{tikzcd}\]
Finally, using (co)associativity, we replace consecutive $\Delta$ nodes and $\mu$ nodes with single $\Delta$'s and $\mu$'s, which yields the right-hand side of the bimonoid axiom. The actual braiding that happens in the interior of the diagram is arbitrary by the hexagon identities of $\textsf{C}$.
\end{proof}

 

The analogous construction for bimonoids in cospecies is left implicit. A \emph{strictly Joyal-theoretic bimonoid}, abbreviated \emph{Joyal-theoretic bimonoid}, is a strictly Joyal-theoretic (co)species $\textsf{h}$ equipped with a bimonoid structure given by the construction above. Equivalently, it is a bimonoid whose (co)multiplication factorizes as follows,
\[
\mu_{F,G} =  \mu_{F|_{T_1}}\otimes \dots \otimes   \mu_{F|_{T_l}}      \qquad \text{and} \qquad    \Delta_{F,G} =\Delta_{F|_{T_1}}\otimes \dots \otimes   \Delta_{F|_{T_l}}   
.
\]
A \emph{strongly Joyal-theoretic bimonoid} is a strongly Joyal-theoretic (co)species $\textsf{h}$ equipped with a bimonoid structure which is Joyal-theoretic up to the specified isomorphism. That is, we have factorization
\[
\boldsymbol{\mu}_{F,G} =  \boldsymbol{\mu}_{F|_{T_1}}\otimes \dots \otimes   \boldsymbol{\mu}_{F|_{T_l}}      
\qquad \text{and} \qquad    
\boldsymbol{\Delta}_{F,G} =\boldsymbol{\Delta}_{F|_{T_1}}\otimes \dots \otimes   \boldsymbol{\Delta}_{F|_{T_l}}   
\]
where
\[
\boldsymbol{\mu}_{F,G}= \text{prod}^{-1}_G   \circ \mu_{F,G} \circ  \text{prod}_F
,\qquad 
\boldsymbol{\Delta}_{F,G}= \text{prod}^{-1}_F   \circ \Delta_{F,G} \circ  \text{prod}_G
\]
and $\mu_{F,G}$ and $\Delta_{F,G}$ are the (co)multiplication of $\textsf{h}$.
	

\begin{ex}
The species of compositions $\mathsf{\Sigma}$ is a Joyal-theoretic bimonoid, where $\mu_{S,T}$ is given by concatenation and $\Delta_{S,T}$ is given by restriction,
\[
\mu_{S,T} (H,K) := H;K
\qquad \text{and} \qquad
\Delta_{S,T}(H) = ( H|_S, H|_T)
.\]
Notice that concatenation and restriction both preserve `$\leq$'. Thus $\mathsf{\Sigma}$ has the structure of a bimonoid in $\textsf{Cat}$-valued species. 
\end{ex}

\section{Graded Species and Bimonoids}\label{sec3}

\subsection{Graded Braid Arrangement Species}\label{Graded Braid Arrangement Species}

Recall the category of categories $\textsf{Cat}$ is cartesian, and the category of pointed categories $\textsf{Cat}_\bullet$ is equipped with the smash product of the cartesian product of categories, see e.g. \cite[Construction 4.19]{MR2558315}.

Let $\textsf{C}\in \{  \textsf{Cat}, \textsf{Cat}_\bullet \}$, and let $\textsf{p}$ be a $\textsf{C}$-valued species (we do not assume that $\textsf{p}$ is a strict species). A $\textsf{p}$-\emph{graded set species} $\textsf{P}$ consists of a set $\textsf{P}_{\ta}[F]$ for each composition $F$ and object $\ta \in \textsf{p}[F]$, a function 
\[
\textsf{P}_\ta[ \sigma,F ]  :     \textsf{P}_\ta[ F ] \to \textsf{P}_{\ta'}[ F' ]
\]
for each bijection $\sigma:J\to I$, composition $F\in \Sigma[I]$ and object $\ta \in \textsf{p}[F]$, a function 
\[
\textsf{P}_\ta[ F, \beta ]  :     \textsf{P}_\ta[ F ] \to \textsf{P}_{\tilde{\ta}}[ \wt{F} ]
\]
for each composition $F$, permutation $\beta\in \text{Sym}_{l(F)}$ and object $\ta \in \textsf{p}[F]$, and a function 
\[
\textsf{P}_{\mathtt{f}}[ F,F ]  :     \textsf{P}_\ta[ F ] \to \textsf{P}_{\tb}[ F ]
\]
for each composition $F$ and morphism $\ta\xrightarrow{\mathtt{f}} \tb \in  \textsf{p}[F]$. When there is no ambiguity, we abbreviate
	\begin{equation} \label{eq:abrf}  
\sigma :=\textsf{P}_\ta[ \sigma,F ], \qquad  \beta := \textsf{P}_\ta[ F, \beta ],  \qquad \mathtt{f} :=  \textsf{P}_{\mathtt{f}}[ F,F ] 
.
\end{equation}
We require that the diagrams \emph{graded $(\sigma,\beta)$-functoriality}, \emph{$(\sigma,\mathtt{f})$-functoriality} and \emph{$(\beta,\mathtt{f})$-functoriality}
\[\begin{tikzcd}[column sep=small]
	{\textsf{P}_{\mathtt{a}}[F]} & {\textsf{P}_{\tilde{\mathtt{a}}}[\widetilde{F}]} && {\textsf{P}_{\mathtt{a}}[F]} & {\textsf{P}_{\mathtt{b}}[F]} && {\textsf{P}_{\mathtt{a}}[F]} & {\textsf{P}_{\tilde{\mathtt{a}}}[\widetilde{F}]} \\
	{\textsf{P}_{\mathtt{a}'}[F']} & {\textsf{P}_{   \wt{(\ta')}   }[\widetilde{F}']}\cong {\textsf{P}_{ (\tilde{\ta})' }[\widetilde{F}']} && {\textsf{P}_{\mathtt{a}'}[F']} & {\textsf{P}_{\mathtt{b}'}[F']} && {\textsf{P}_{\mathtt{b}}[F]} & {\textsf{P}_{\tilde{\mathtt{b}}}[\widetilde{F}]}
	\arrow["\beta"', from=2-1, to=2-2]
	\arrow["\sigma"', from=1-1, to=2-1]
	\arrow["\sigma", from=1-2, to=2-2]
	\arrow["\mathtt{f}' "', from=2-4, to=2-5]
	\arrow["\mathtt{f}", from=1-4, to=1-5]
	\arrow["\sigma"', from=1-4, to=2-4]
	\arrow["\sigma", from=1-5, to=2-5]
	\arrow["\beta", from=1-7, to=1-8]
	\arrow["{\mathtt{f}}"', from=1-7, to=2-7]
	\arrow["\beta"', from=2-7, to=2-8]
	\arrow["\tilde{\mathtt{f}}", from=1-8, to=2-8]
	\arrow["\beta", from=1-1, to=1-2]
\end{tikzcd}\]
the diagrams \emph{graded $\sigma$-functoriality} and \emph{graded $\sigma$-unitality}
\[\begin{tikzcd}[row sep=0.4cm, column sep=0.9cm]
	& {\textsf{P}_{\ta'}[F']} \\
	{\textsf{P}_{\ta}[F]} &&{\textsf{P}_{(\ta')'}[F'']}  \cong  {\textsf{P}_{\ta''}[F'']}&& {\textsf{P}_{\ta}[F]} & {\textsf{P}_{\ta}[F]}
	\arrow["\sigma", from=2-1, to=1-2]
	\arrow["\tau", from=1-2, to=2-3]
	\arrow["{\sigma\circ \tau}"', from=2-1, to=2-3]
	\arrow["{\text{id}_{\textsf{P}_{\ta}[F]}}", curve={height=-10pt}, from=2-5, to=2-6]
	\arrow["{\text{id}_I}"', curve={height=10pt}, from=2-5, to=2-6]
\end{tikzcd}\]
the diagrams \emph{graded $\beta$-functoriality} and \emph{graded $\beta$-unitality}
\[\begin{tikzcd}[row sep=0.4cm, column sep=0.9cm]
	& {\textsf{P}_{\tilde{\ta}}[\wt{F}]} \\
	{\textsf{P}_{\ta}[F]} &&{\textsf{P}_{  \wt{(\tilde{\ta})} }[\wt{\wt{F}}]} \cong  {\textsf{P}_{\tilde{\tilde{\ta}}}[\wt{\wt{F}}]}&& {\textsf{P}_{\ta}[F]} & {\textsf{P}_{\ta}[F]}
	\arrow["\beta", from=2-1, to=1-2]
	\arrow["\gamma", from=1-2, to=2-3]
	\arrow["{\gamma \circ \beta}"', from=2-1, to=2-3]
	\arrow["{\text{id}_{\textsf{P}_{\ta}[F]}}", curve={height=-10pt}, from=2-5, to=2-6]
	\arrow["{\text{id}_{(k)}}"', curve={height=10pt}, from=2-5, to=2-6]
\end{tikzcd}\]
and the diagrams \emph{$\mathtt{f}$-functoriality} and \emph{$\mathtt{f}$-unitality}
\[\begin{tikzcd}[row sep=0.4cm, column sep=0.9cm]
	& {\textsf{P}_{\tb}[F]} \\
	{\textsf{P}_{\ta}[F]} && {\textsf{P}_{\tc}[F]} && {\textsf{P}_{\ta}[F]} & {\textsf{P}_{\ta}[F]}
	\arrow["\mathtt{f}", from=2-1, to=1-2]
	\arrow["\mathtt{g}", from=1-2, to=2-3]
	\arrow["{\mathtt{g} \circ \mathtt{f}}"', from=2-1, to=2-3]
	\arrow["{\text{id}_{\textsf{P}_{\ta}[F]}}", curve={height=-10pt}, from=2-5, to=2-6]
	\arrow["{\text{id}_{\ta}}"', curve={height=10pt}, from=2-5, to=2-6]
\end{tikzcd}\]
commute for all meaningful choices. If $\textsf{p}$ is strict, the isomorphisms `$\cong$' in these diagrams can be replaced by equalities `$=$'. Otherwise, these isomorphisms abbreviate the corresponding components of the invertible $2$-cells of $\textsf{p}$. For example, the fully explicit graded \hbox{$(\sigma,\beta)$-functoriality} diagram is
\[\begin{tikzcd}[row sep=0.4cm, column sep=0.5cm]
	{\textsf{P}_{\mathtt{a}}[F]} &   &  {\textsf{P}_{\tilde{\mathtt{a}}}[\widetilde{F}]}    \\
  & & 		{\textsf{P}_{ (\tilde{\mathtt{a}})'    }[\widetilde{F}']}. \\
	{\textsf{P}_{\mathtt{a}'}[F']} &  {\textsf{P}_{   \wt{(\mathtt{a}')}    }[\widetilde{F}']}  & 
	\arrow["\beta"', from=3-1, to=3-2]
	\arrow["\sigma"', from=1-1, to=3-1]
	\arrow["\sigma", from=1-3, to=2-3]
	\arrow["\beta", from=1-1, to=1-3]
		\arrow["{{(\sigma,\beta)}_{\ta}}"', from=3-2, to=2-3]
\end{tikzcd}\]
Similarly, graded $\sigma$-functoriality and graded $\beta$-functoriality are completed by including components of the $2$-cells $(\sigma,\tau)$ and $(\beta,\gamma)$ respectively. 



\begin{remark} \label{rem:formal}
As with species, $\textsf{p}$-graded set species $\textsf{P}$ may be formalized as functors. These functors have the form
\[
\textsf{P} :\textstyle\int\displaystyle \textsf{p} \to \textsf{Set} 
.\]
where $\int \textsf{p}$ is the Grothendieck construction of $\textsf{p}$ (itself formalized as a functor).
\end{remark}

\begin{remark}
Recall the exponential species $\textsf{E}$ from \autoref{ex:exp}, which we can take to be \hbox{$\textsf{Cat}$-valued} by replacing singletons with the trivial category with one object. An $\textsf{E}$-graded set species is the same thing as a set species. In this way, $\textsf{p}$-graded set species generalize set species. 
\end{remark}

Let $\textsf{C}\in \{  \textsf{Cat}, \textsf{Cat}_\bullet \}$, and let $\textsf{p}$ and $\textsf{x}$ be $\textsf{C}$-valued species. Let $\textsf{X}$ be an $\textsf{x}$-graded set species. Given a lax morphism of species 
\[
\eta: \textsf{p} \to \textsf{x}
,\qquad
\eta=(\eta_F,{\sigma}_\ta, {\beta}_\ta)
\]
we have the \emph{pullback} $\textsf{p}$-graded set species $\eta^\ast\textsf{X}$ given by
\[
\eta^\ast\textsf{X}_\ta [F] :=  \textsf{X}_{\eta_F(\ta)} [F],
\]
\[ 
\eta^\ast\textsf{X}_\ta [\sigma,F] :=   {\sigma}_\ta  \circ \textsf{X}_{\eta_F(\ta)} [\sigma,F]
,\qquad 
\eta^\ast\textsf{X}_\ta [F,\beta] :=  {\beta}_\ta  \circ \textsf{X}_{\eta_F(\ta)} [F,\beta]
,\qquad
\eta^\ast\textsf{X}_\tf [F,F] :=  \textsf{X}_{\eta_F(\mathtt{f})} [F,F] 
.\]
Here, e.g. ${\sigma}_\ta$ abbreviates $\textsf{X}_{ {\sigma}_\ta }[F,F]$, according to \textcolor{blue}{(\refeqq{eq:abrf})}. Functoriality of $\eta^\ast\textsf{X}$ then follows directly from functoriality of $\textsf{X}$ and coherence of $\eta$. For example, graded \hbox{$(\sigma,\beta)$-functoriality} of $\eta^\ast\textsf{X}$ is the outer nonagon of the digram (and denoting the isomorphisms `$\cong$' explicitly)
\[\begin{tikzcd}[row sep=0.5cm, column sep=0.9cm]
\textsf{X}_{\eta_F(\ta)} [F]   & \textsf{X}_{\wt{\eta_F(\ta)}} [\wt{F}] &   & \textsf{X}_{\eta_F(\tilde{\ta})} [\wt{F}]    \\
\textsf{X}_{\eta'_F(\ta)} [F'] 	&     &   \textsf{X}_{     \wt{\eta_F(\ta)}'     }   [\wt{F}']	&\textsf{X}_{\eta'_F(\tilde{\ta})} [\wt{F}']  \\
 &  \textsf{X}_{\wt{\eta'_F(\ta)}} [\wt{F}']     &   & \textsf{X}_{\eta_F((\tilde{\ta})')} [\wt{F}']. \\
	\textsf{X}_{\eta_F(\ta')} [F']  & 	\textsf{X}_{\wt{\eta_F(\ta')}} [\wt{F}']  & 	\textsf{X}_{\eta_F(\wt{(\ta')})} [\wt{F}']  &   \\
	\arrow["\beta"', from=4-1, to=4-2]
	\arrow[" \sigma    "', from=1-1, to=2-1]
		\arrow[" \sigma_\ta    "', from=2-1, to=4-1]
	\arrow["\sigma", from=1-4, to=2-4]
	\arrow[" \beta    ", from=1-1, to=1-2]
		\arrow["{\beta}_\ta    ", from=1-2, to=1-4]
	\arrow["{    \eta_F ( {(\sigma,\beta)}_{\ta} )       }"', from=4-3, to=3-4]
			\arrow["    \sigma_\ta  ", from=2-4, to=3-4]
						\arrow["   \beta_\ta  ", from=4-2, to=4-3]
								\arrow["\beta", from=2-1, to=3-2]
										\arrow["\sigma", from=1-2, to=2-3]
				\arrow["{(\sigma,\beta)_\ta}", from=3-2, to=2-3]
					\arrow["{  \beta'_\ta }", from=2-3, to=2-4]
								\arrow["{  \wt{\sigma}_\ta }", from=3-2, to=4-2]
\end{tikzcd}\]
\noindent The top-left pentagon commutes by graded $(\sigma,\beta)$-functoriality of $\textsf{X}$, the top-right square commutes by $(\sigma,\mathtt{f})$-functoriality of $\textsf{X}$, the bottom-right square commutes by $(\beta,\mathtt{f})$-functoriality of $\textsf{X}$, and the bottom-right hexagon commutes by $(\sigma,\beta)$-coherence of $\eta$. The other functoriality diagrams of $\eta^\ast\textsf{X}$ follow similarly. In \autoref{thm}, we elevate this result to the level of bimonoids. 


Continue to let $\textsf{p}$ and $\textsf{x}$ be $\textsf{C}$-valued species for $\textsf{C}\in \{  \textsf{Cat}, \textsf{Cat}_\bullet \}$, equipped with a lax morphism 
\[
\eta: \textsf{p} \to \textsf{x}
.\] 
Let $\textsf{X}$ be an $\textsf{x}$-graded set species and let $\textsf{P}$ be a $\textsf{p}$-graded set species. A \emph{morphism} $\xi: \textsf{P} \to \textsf{X}$ of graded set species over $\eta$ consists of a function 
\[
\xi_{(F,\ta)}: \textsf{P}_\ta[F] \to    \textsf{X}_{\eta_F(\ta)}[F]	
\]
for each composition $F$ and object $\ta \in \textsf{p}[F]$ such that the diagrams
\[
\begin{tikzcd}
	{\textsf{P}_{\mathtt{a}}[F]} & {\textsf{X}_{\eta_F(\ta)}[F]} && 	{\textsf{P}_{\mathtt{a}}[F]} & {\textsf{X}_{\eta_F(\ta)}[F]}   \\
	{\textsf{P}_{\mathtt{a}'}[F']} & {\textsf{X}_{  \eta_{F'}(\mathtt{a}')}[F']}\cong {\textsf{X}_{  \eta'_{F}(\mathtt{a})}[F']}    && 	{\textsf{P}_{\tilde{\mathtt{a}}}[\wt{F}]} & {\textsf{X}_{ \eta_{\wt{F}}(\tilde{\mathtt{a}})}[\wt{F}]}\cong {\textsf{X}_{ \wt{\eta_{F}(\mathtt{a})}}[\wt{F}]}  
	\arrow["\sigma"', from=1-1, to=2-1]
	\arrow["{\xi_{(F,\mathtt{a})}}", from=1-1, to=1-2]
	\arrow["{\xi_{(F',\mathtt{a}')}}"', from=2-1, to=2-2]
	\arrow["\sigma", from=1-2, to=2-2]
	\arrow["{\xi_{(F,\mathtt{a})}}", from=1-4, to=1-5]
	\arrow["{\beta}"', from=1-4, to=2-4]
	\arrow["{\xi_{(\wt{F},\tilde{\mathtt{a}})}}"', from=2-4, to=2-5]
	\arrow["{\beta}", from=1-5, to=2-5]
\end{tikzcd}
\]
\[
\begin{tikzcd}
{\textsf{P}_{\mathtt{a}}[F]} & {\textsf{X}_{\eta_F(\ta)}[F]} \\
 {\textsf{P}_{\mathtt{b}}[F]} & {\textsf{X}_{\eta_F(\mathtt{b})}[F]}
	\arrow["{\mathtt{f}}"', from=1-1, to=2-1]
	\arrow["{\eta_F(\mathtt{f})}", from=1-2, to=2-2]
	\arrow["{\xi_{(F,\mathtt{a})}}", from=1-1, to=1-2]
	\arrow["{\xi_{(F,\mathtt{b})}}"', from=2-1, to=2-2]
\end{tikzcd}
\]
commute for all meaningful choices. Morphism composition is given by composing components. Note the composition of a morphism over $\eta_1$ with a morphism over $\eta_2$ is a morphism over $\eta_2\circ\eta_1$. This defines the category of graded set species. 

\begin{remark}
	In the formalization of graded species as functors, see \autoref{rem:formal}, the pullback $\eta^\ast\textsf{X}$ is given by $\eta^\ast\textsf{X}=\textsf{X} \circ \int \eta$, and a morphism of graded species $\textsf{P}\to \textsf{X}$ over $\eta$ is the same thing as a natural transformation $\textsf{P}\Rightarrow \eta^\ast\textsf{X}$.
\end{remark}


We also have \emph{graded set cospecies}, the only differences being $\textsf{p}$ is now a cospecies, and the source and target of $\textsf{P}_\ta[ \sigma,F]$ and $\textsf{P}_\ta[ F, \beta ]$ are switched. Pullbacks and morphisms defined similarly. Finally, we have $\textsf{p}$-\emph{graded} \emph{vector (co)species} $\textsf{\textbf{P}}$, obtained by replacing sets/functions with vector spaces/linear maps above. Pullbacks and morphisms defined similarly to the set case. 


\subsection{Bimonoids in Graded Braid Arrangement Species}\label{Bimonoids in Graded Braid Arrangement Species}

Let $\textsf{C}\in \{  \textsf{Cat}, \textsf{Cat}_\bullet \}$, and let $\textsf{h}$ be a bimonoid in $\textsf{C}$-valued species (we do not assume that $\textsf{h}$ is a strict bimonoid). An $\textsf{h}$\emph{-graded bimonoid} $\textsf{H}$ is an $\textsf{h}$-graded set species such that for each pair of compositions $G\leq F$ we have functions 
\[
{\mu_{F, G}}_{\ta}  :      \textsf{H}_\ta [F]  \to  \textsf{H}_{\mu_{F,G}(\ta)} [G] 
\qquad \text{and} \qquad
{\Delta_{F, G}}_{\ta}  :      \textsf{H}_\ta [G]  \to  \textsf{H}_{\Delta_{F,G}(\ta)} [F] 
\]
for each object $\ta\in \textsf{h}[F]$ and $\ta\in \textsf{h}[G]$ respectively, such that the diagrams \emph{graded multiplication \hbox{$\sigma$-naturality}} and \emph{graded multiplication $\beta$-naturality}
\[\begin{tikzcd}
	{\textsf{H}_{\mathtt{a}}[F]} & {\textsf{H}_{\mathtt{a}'}[F']} & {\textsf{H}_{\mathtt{a}}[F]} & {\textsf{H}_{\tilde{\mathtt{a}}}[\widetilde{G}    F]}   \\
	{\textsf{H}_{\mu_{F,G}(\mathtt{a})}[G]} & {\textsf{H}_{\mu'_{F,G}(\mathtt{a})}[G']}\cong {\textsf{H}_{\mu_{F',G'}(\mathtt{a}')}[G']} & {\textsf{H}_{\mu_{F,G}(\mathtt{a})}[G]} &  {\textsf{H}_{ \wt{\mu_{F,G}( \mathtt{a})  }}[\widetilde{G}]}\cong {\textsf{H}_{ \mu_{\wt{G}F,\wt{G}}(\tilde{\mathtt{a}})  }[\widetilde{G}]}  
	\arrow["{\hat{\beta}}", from=1-3, to=1-4]
	\arrow["{{\mu_{F,G}}_\ta}"', from=1-3, to=2-3]
	\arrow["\beta"', from=2-3, to=2-4]
	\arrow["{\mu_{\widetilde{G}    F,\widetilde{G}}}", from=1-4, to=2-4]
	\arrow["\sigma", from=1-1, to=1-2]
	\arrow["\sigma"', from=2-1, to=2-2]
	\arrow["{{\mu_{F',G'}}_{\ta'}}", from=1-2, to=2-2]
	\arrow["{{\mu_{F,G}}_\ta}"', from=1-1, to=2-1]
\end{tikzcd}\]
the diagram \emph{multiplication $\mathtt{f}$-naturality}
\[\begin{tikzcd}[column sep=large]
 {\textsf{H}_{\mathtt{a}}[F]} & {\textsf{H}_{\mathtt{b}}[F]} \\
 {\textsf{H}_{\mu_{F,G}(\mathtt{a})}[G]} & {\textsf{H}_{\mu_{F,G}(\mathtt{b})}[G]}
	\arrow["{{\mu_{F,G}}_\ta}"', from=1-1, to=2-1]
	\arrow["{\mathtt{f}}", from=1-1, to=1-2]
	\arrow["{\mu_{F,G}(\mathtt{f})}"', from=2-1, to=2-2]
	\arrow["{{\mu_{F,G}}_\tb}", from=1-2, to=2-2]
\end{tikzcd}\]
the diagrams \emph{graded comultiplication $\sigma$-naturality} and \emph{graded comultiplication $\beta$-naturality}
\[\begin{tikzcd}
	{\textsf{H}_{\Delta_{F,G}(\mathtt{a})}[F]} &
	 {\textsf{H}_{ \Delta'_{F,G}( \mathtt{a})}[F']}\cong {\textsf{H}_{ \Delta_{F',G'}( \mathtt{a}')}[F']}
	  & {\textsf{H}_{\Delta_{F,G}(\mathtt{a})}[F]} & 
	{    \textsf{H}_{  \wt{\Delta_{F,G}(\ta)} }   }[\widetilde{G}F]     \cong   {\textsf{H}_{ \Delta_{\wt{G}F,\wt{G}}( \tilde{\mathtt{a}} )  }[\widetilde{G}    F]}   	\\
	{\textsf{H}_{\mathtt{a}}[G]} & {\textsf{H}_{\mathtt{a}'}[G']} & {\textsf{H}_{\mathtt{a}}[G]} & {\textsf{H}_{ \tilde{\mathtt{a}}  }[\widetilde{G}]}  
	\arrow["{\hat{\beta}}", from=1-3, to=1-4]
	\arrow["{\Delta_{F,G}}", from=2-3, to=1-3]
	\arrow["\beta"', from=2-3, to=2-4]
	\arrow["{\Delta_{\widetilde{G}    F,\widetilde{G}}}"', from=2-4, to=1-4]
	\arrow["\sigma", from=1-1, to=1-2]
	\arrow["\sigma"', from=2-1, to=2-2]
	\arrow["{\Delta_{F',G'}}"', from=2-2, to=1-2]
		\arrow["{\Delta_{F,G}}", from=2-1, to=1-1]
\end{tikzcd}\]
the diagram \emph{comultiplication $\mathtt{f}$-naturality}
\[\begin{tikzcd}[column sep=large]
  {\textsf{H}_{  \Delta_{F,G} (\mathtt{a}) }[F]} & {\textsf{H}_{  \Delta_{F,G}( \mathtt{b})    }[F]} \\
  {\textsf{H}_{ \mathtt{a}}[G]} & {\textsf{H}_{   \mathtt{b}}[G]}
	\arrow["{{\Delta_{F,G}}_\ta}", from=2-1, to=1-1]
	\arrow["{  \Delta_{F,G} (\mathtt{f})}", from=1-1, to=1-2]
	\arrow["{  \mathtt{f}}"', from=2-1, to=2-2]
	\arrow["{{\Delta_{F,G}}_\tb}"', from=2-2, to=1-2]
\end{tikzcd}\]
the diagrams \emph{graded associativity} and \emph{graded unitality}
\[\begin{tikzcd}
	& {\textsf{H}_{\mu_{F,G}(\mathtt{a})}[G]} \\
	{\textsf{H}_{\mathtt{a}}[F]} && 
	\textsf{H}_{\mu_{F,E}(\mathtt{a})}[E]\cong   	\textsf{H}_{\mu_{G,E}(\mu_{F,G}(\mathtt{a})) }  [E]	&& {\textsf{H}_{\mathtt{a}}[F]} & {\textsf{H}_{\mathtt{a}}[F]}
	\arrow["{{\mu_{F,E}}_{\mathtt{a}}}"', from=2-1, to=2-3]
	\arrow["{{\mu_{F,G}}_{\mathtt{a}}}", from=2-1, to=1-2]
	\arrow["{{\mu_{G,E}}_{\mu_{F,G}(\mathtt{a})}}", from=1-2, to=2-3]
	\arrow["{\text{id}}", curve={height=-10pt}, from=2-5, to=2-6]
	\arrow["{{\mu_{F,F}}_{\mathtt{a}}}"', curve={height=10pt}, from=2-5, to=2-6]
\end{tikzcd}\]
the diagrams \emph{graded coassociativity} and \emph{graded counitality}
\[\begin{tikzcd}
	& {\textsf{H}_{\Delta_{G,E}(\mathtt{a})}[G]} \\
	{\textsf{H}_{     \Delta_{F,G} (  \Delta_{G,E} ( \mathtt{a} ))    }[F]}\cong    	{\textsf{H}_{\Delta_{F,E}(\mathtt{a})}[F]}  && {\textsf{H}_{\mathtt{a}}[E]}  && {\textsf{H}_{\mathtt{a}}[F]} & {\textsf{H}_{\mathtt{a}}[F]}
	\arrow["{{\Delta_{F,E}}_{\mathtt{a}}}", from=2-3, to=2-1]
	\arrow["{{\Delta_{F,G}}_{\Delta_{G,E}(\mathtt{a})}}"', from=1-2, to=2-1]
	\arrow["{{\Delta_{G,E}}_{\mathtt{a}}}"', from=2-3, to=1-2]
	\arrow["{\text{id}}"', curve={height=10pt}, from=2-6, to=2-5]
	\arrow["{{\Delta_{F,F}}_{\mathtt{a}}}", curve={height=-10pt}, from=2-6, to=2-5]
\end{tikzcd}\]
and the diagram \emph{graded bimonoid axiom}
\[\begin{tikzcd}[row sep=large, column sep=large]
	{\textsf{H}_\ta[F]} & {\textsf{H}_{\mu_{F,A}(\ta)}[A]} & {\textsf{H}_{ \Delta_{G,A}(\mu_{F,A}(\ta))}[G]}\cong 
	  {\textsf{H}_{    \mu_{GF,G} ( \wt{  \Delta_{FG,F} (\ta)  } )    }[G]}  \\
\textsf{H}_{\Delta_{FG,F}(\ta)}[FG] && \textsf{H}_{\wt{\Delta_{FG,F}}(\ta)}[GF]
	\arrow["{\mu_{F,A}}_{\ta}", from=1-1, to=1-2]
	\arrow["{\Delta_{G,A}}_{\mu_{F,A}(\ta)}", from=1-2, to=1-3]
	\arrow["{\Delta_{FG,F}}_{\ta}"', from=1-1, to=2-1]
	\arrow["{ \beta }"', from=2-1, to=2-3]
	\arrow["{\mu_{GF,G}}_{\wt{\Delta_{FG,F}}(\ta)}"', from=2-3, to=1-3]
\end{tikzcd}\]
commute for all meaningful choices. The isomorphisms `$\cong$' abbreviate components of the invertible $2$-cells of $\textsf{h}$, similar to graded species. 

\begin{remark}
	Recall that $\textsf{E}$ is a bimonoid, see \autoref{ex:expbimon}. An $\textsf{E}$-graded bimonoid is the same thing as a bimonoid in $\textsf{Set}$-valued species. In this way, $\textsf{h}$-graded bimonoids generalize bimonoids in $\textsf{Set}$-valued species. 
\end{remark}

An $\textsf{h}$\emph{-graded bialgebra} $\textbf{\textsf{H}}$ is defined analogously, by replacing sets/functions with vector spaces/linear maps above. 


\begin{thm} \label{thm}
Let $\textsf{x}$ and $\textsf{h}$ be bimonoids in $\textsf{C}$-valued species for $\textsf{C}\in \{  \textsf{Cat}, \textsf{Cat}_\bullet \}$, and let $\textsf{X}$ be an $\textsf{x}$-graded bimonoid (respectively bialgebra). Given a lax homomorphism of bimonoids 
\[
\varphi:\textsf{h}\to \textsf{x}
,\qquad \varphi=(  \varphi_F, {\sigma}_\ta, \beta_\ta, {\mathtt{\Phi}_{F,G}}_\ta ,{\mathtt{\Psi}_{F,G}}_\ta  )
\]
the pullback $\textsf{h}$-graded set species (respectively vector species) $\textsf{H}:=\varphi^\ast  \textsf{X}$ is moreover an \hbox{$\textsf{h}$-graded} bimonoid (respectively $\textsf{h}$-graded bialgebra), with multiplication and comultiplication given by
\[
{\mu_{F,G}}_\ta :=          {\mathtt{\Phi}_{F,G}}_\ta \circ {\mu_{F,G}}_{\varphi(\ta)}
\qquad \text{and} \qquad 
{\Delta_{F,G}}_\ta :=                {    \mathtt{\mathtt{\Psi}}   _{F,G}}_\ta                \circ {\Delta_{F,G}}_{\varphi(\ta)}
\]
respectively. 
\end{thm}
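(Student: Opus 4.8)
The plan is to verify, one diagram at a time, each of the defining commuting diagrams of an $\textsf{h}$-graded bimonoid for the pullback $\textsf{H}:=\varphi^\ast\textsf{X}$ equipped with the stated structure maps ${\mu_{F,G}}_\ta = {\mathtt{\Phi}_{F,G}}_\ta \circ {\mu_{F,G}}_{\varphi(\ta)}$ and ${\Delta_{F,G}}_\ta = {\mathtt{\Psi}_{F,G}}_\ta \circ {\Delta_{F,G}}_{\varphi(\ta)}$ (for $F=G$, where there is no $\mathtt{\Phi}$ or $\mathtt{\Psi}$, these degenerate to the identity by strict (co)unitality of $\textsf{X}$). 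Since $\textsf{H}$ is already a well-defined $\textsf{h}$-graded set species — this is exactly the pullback construction discussed immediately before the theorem, whose graded functoriality was deduced from graded functoriality of $\textsf{X}$ together with the coherence of the underlying morphism of species of $\varphi$ — what remains is purely the bimonoid part: graded (co)multiplication $\sigma$- and $\beta$-naturality, (co)multiplication $\mathtt{f}$-naturality, graded (co)associativity, graded (co)unitality, and the graded bimonoid axiom. In each case I would substitute in the two-factor expressions for the structure maps, draw the resulting enlarged diagram, and exhibit it as a pasting of smaller cells each of which commutes for one of three reasons: (i) the corresponding structure diagram of the $\textsf{x}$-graded bimonoid $\textsf{X}$; (ii) one of the coherence conditions of the lax homomorphism $\varphi$ (multiplication/comultiplication $\sigma$- and $\beta$-naturality coherence, associativity/coassociativity coherence, or bimonoid coherence); (iii) the morphism-functoriality and the $(\sigma,\mathtt{f})$-, $(\beta,\mathtt{f})$-functoriality of $\textsf{X}$, applied to the component $2$-cells of $\varphi$ and of $\textsf{h}$, together with the naturality of the families ${\mathtt{\Phi}_{F,G}}$ and ${\mathtt{\Psi}_{F,G}}$.

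The routine cases are the $\mathtt{f}$-naturality squares and the (co)unitality triangles. Multiplication $\mathtt{f}$-naturality of $\textsf{H}$ for a morphism $\mathtt{f}:\ta\to\tb$ in $\textsf{h}[F]$ follows by pasting the naturality of ${\mathtt{\Phi}_{F,G}}$ along $\mathtt{f}$ onto multiplication $\mathtt{f}$-naturality of $\textsf{X}$ (applied to $\varphi_F(\mathtt{f})$); comultiplication $\mathtt{f}$-naturality is dual. Graded unitality and graded counitality are immediate from the degeneracy just noted. For the graded $\sigma$- and $\beta$-naturality diagrams one pastes graded (co)multiplication $\sigma$- (resp. $\beta$-) naturality of $\textsf{X}$ against the (co)multiplication $\sigma$- (resp. $\beta$-) naturality coherence of $\varphi$, inserting the hidden component isomorphisms of $\textsf{h}$ abbreviated by the `$\cong$' symbols by means of $(\sigma,\mathtt{f})$- and $(\beta,\mathtt{f})$-functoriality of $\textsf{X}$ applied to the relevant components of the $2$-cells $(\sigma,\beta)$, $(\sigma,\tau)$, $(\beta,\gamma)$ of $\textsf{h}$. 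Graded associativity and graded coassociativity follow in the same manner from graded (co)associativity of $\textsf{X}$ together with the associativity (resp. coassociativity) coherence of $\varphi$.

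The hard part will be the graded bimonoid axiom of $\textsf{H}$, the one diagram that simultaneously involves ${\mu_{F,A}}$, ${\Delta_{G,A}}$, ${\Delta_{FG,F}}$, the permutation action $\beta$, and ${\mu_{GF,G}}$. Expanding all four composite structure maps produces a large diagram whose innermost region is the graded bimonoid axiom of $\textsf{X}$ (which commutes because $\textsf{X}$ is an $\textsf{x}$-graded bimonoid), bordered by two instances of the naturality of ${\mathtt{\Phi}}$ and ${\mathtt{\Psi}}$, by the component of the permutation action $\beta$ of $\varphi$, and — crucially — by the bimonoid coherence $3$-cell of $\varphi$, which is precisely the identity relating ${\text{B}}_{F,G,A}$ of $\textsf{h}$, ${\text{B}}_{F,G,A}$ of $\textsf{x}$, ${\mathtt{\Phi}}$, and ${\mathtt{\Psi}}$; the hidden component isomorphism of $\textsf{h}$ in the `$\cong$' along the top edge is absorbed using morphism-functoriality of $\textsf{X}$. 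Since $\textsf{h}$ and $\textsf{x}$ are arbitrary bimonoids here, not Joyal-theoretic ones, there is no reduction to $A=(I)$ and no reduction to binary (co)multiplication available (unlike in the proof of \autoref{thm:joyal}, which this chase otherwise resembles in spirit): one simply fills in the single large diagram for each $A<F,G$. I expect the only genuine difficulty throughout to be bookkeeping — keeping straight which prime or tilde on a component refers to which bijection or permutation, and inserting each hidden component isomorphism of $\textsf{h}$ in the correct place — rather than any conceptual obstruction. The bialgebra statement is proved verbatim, with sets and functions replaced by vector spaces and linear maps; as elsewhere in the paper, no coherence theorem is claimed, only the finite list of diagrams in the definition of an $\textsf{h}$-graded bimonoid (bialgebra).
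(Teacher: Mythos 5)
Your proposal is correct and follows essentially the same route as the paper: it verifies each defining diagram of an $\textsf{h}$-graded bimonoid by pasting the corresponding graded axiom of $\textsf{X}$, the matching coherence condition of $\varphi$, and the $\mathtt{f}$-, $(\sigma,\mathtt{f})$-, $(\beta,\mathtt{f})$-functoriality/naturality cells of $\textsf{X}$ and of $\mathtt{\Phi}$, $\mathtt{\Psi}$, with the bimonoid axiom handled as one large pasting around the graded bimonoid axiom of $\textsf{X}$ and the bimonoid coherence of $\varphi$, exactly as in the paper's octagon/hexagon/decagon chases.
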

\begin{proof}

We have eleven diagrams to check. Throughout this proof, we let
\[
X_{(-)}=\varphi_F(-)
.\]
We can think of $X_\ta$ as the data which is indexed by the combinatorial object $\ta$, and ${\textsf{X}_{X_\mathtt{a}}[F]}$ is a set or vector space associated to $X_\ta$. We also use the abbreviations `$\cong$' for components of $2$-cells throughout this proof, in the hope that it makes the commutative diagrams easier to comprehend.

Graded $\sigma$-multiplication naturality of $\textsf{H}$ is the outside octagon of the diagram
\begin{center} \centerfloat
	\begin{tikzcd}[row sep=0.7cm, column sep=0.9cm]
		{\textsf{X}_{X_\mathtt{a}}[F]} & {\textsf{X}_{X'_\mathtt{a}}[F']} & {\textsf{X}_{X_{\mathtt{a}'}}[F']} \\
		{\textsf{X}_{\mu_{F,G}(X_{\mathtt{a}})}[G]} & {\textsf{X}_{\mu'_{F,G}(X_{\mathtt{a}})}[G']}\cong {\textsf{X}_{\mu_{F',G'}(X'_{\mathtt{a}})}[G']}  & {\textsf{X}_{\mu_{F',G'}(X_{\mathtt{a}'})}[G']} \\
		{\textsf{X}_{X_{\mu_{F,G}(\mathtt{a})}}[G]} & {\textsf{X}_{X'_{\mu_{F,G}(\mathtt{a})}}[G']} & {\textsf{X}_{X_{\mu'_{F,G}(\mathtt{a})}}[G']}\cong  {\textsf{X}_{X_{\mu_{F',G'}(\mathtt{a}')}}[G']}.
		\arrow["{\mu_{F',G'}({\sigma}_{\mathtt{a}})}", from=2-2, to=2-3]
		\arrow["{{\mathtt{\Phi}_{F',G'}}_{\mathtt{a}'}}", from=2-3, to=3-3]
		\arrow["{{\mathtt{\Phi}'_{F,G}}_{\mathtt{a}}}"', from=2-2, to=3-2]
		\arrow["{{\sigma}_{\mu_{F,G}(\mathtt{a})}}"', from=3-2, to=3-3]
		\arrow["\sigma"', from=3-1, to=3-2]
		\arrow["{{\mathtt{\Phi}_{F,G}}_{\mathtt{a}}}"', from=2-1, to=3-1]
		\arrow["\sigma", from=2-1, to=2-2]
		\arrow["{{\mu_{F,G}}_{X_\mathtt{a}}}"', from=1-1, to=2-1]
		\arrow["\sigma", from=1-1, to=1-2]
		\arrow["{{\mu_{F',G'}}_{X'_\mathtt{a}}}"', from=1-2, to=2-2]
		\arrow["{{\mu_{F',G'}}_{X_{\mathtt{a}'}}}", from=1-3, to=2-3]
		\arrow["{{\sigma}_{\mathtt{a}}}", from=1-2, to=1-3]
	\end{tikzcd}
\end{center}
\noindent The top-left square commutes by graded multiplication $\sigma$-naturality of $\textsf{X}$, the top-right square commutes by multiplication $\mathtt{f}$-naturality of $\textsf{X}$, the bottom-left square commutes by \hbox{$(\sigma,\mathtt{f})$-functoriality} of $\textsf{X}$, and the bottom-right square commutes by multiplication $\sigma$-naturality coherence of $\varphi$. Thus, the outer octagon commutes.

Graded $\beta$-multiplication naturality of $\textsf{H}$ is the outside octagon of the diagram
\begin{center} \centerfloat
	\begin{tikzcd}[row sep=0.7cm, column sep=0.9cm]
		{\textsf{X}_{X_\mathtt{a}}[F]} & {\textsf{X}_{\widetilde{X}_{\mathtt{a}}}[\widetilde{G}F]} & {\textsf{X}_{X_{\tilde{\mathtt{a}}}}[\widetilde{G}F]} \\
		{\textsf{X}_{\mu_{F,G}(X_{\mathtt{a}})}[G]} & 
		\textsf{X}_{\wt{ \mu_{F,G}(X_\ta)} }[\wt{G}] \cong {\textsf{X}_{\mu_{\widetilde{G}F,\widetilde{G}}(\widetilde{X}_{\mathtt{a}})}[\widetilde{G}]} 
		& {\textsf{X}_{\mu_{\widetilde{G}F,\widetilde{G}}(X_{\tilde{\mathtt{a}}})}[\widetilde{G}]} \\
		{\textsf{X}_{X_{\mu_{F,G}(\mathtt{a})}}[G]} & {\textsf{X}_{\widetilde{X}_{\mu_{F,G}(\mathtt{a})}}[\widetilde{G}]} & 
	{\textsf{X}_{X_{\wt{\mu_{F,G}(\mathtt{a})}}}[\widetilde{G}]} \cong {\textsf{X}_{X_{\mu_{\widetilde{G}F,\widetilde{G}}(\tilde{\mathtt{a}})}}[\widetilde{G}]} .
		\arrow["{{\mu_{F,G}}_{X_\mathtt{a}}}"', from=1-1, to=2-1]
		\arrow["{{\mathtt{\Phi}_{F,G}}_{\mathtt{a}}}"', from=2-1, to=3-1]
		\arrow["{{\mu_{\widetilde{G}F,\widetilde{G}}}_{X_{\tilde{\mathtt{a}}}}}", from=1-3, to=2-3]
		\arrow["{{\mathtt{\Phi}_{\widetilde{G}F,\widetilde{G}}}_{\tilde{\mathtt{a}}}}", from=2-3, to=3-3]
		\arrow["\hat\beta", from=1-1, to=1-2]
		\arrow["\beta", from=2-1, to=2-2]
		\arrow["\beta"', from=3-1, to=3-2]
		\arrow["{{\mu_{\widetilde{G}F,\widetilde{G}}}_{\widetilde{X}_{\mathtt{a}}}}"', from=1-2, to=2-2]
		\arrow["{{\widetilde{\mathtt{\Phi}_{F,G}}_{\mathtt{a}}}}"', from=2-2, to=3-2]
		\arrow["{{\hat{\beta}}_{\mathtt{a}}}", from=1-2, to=1-3]
		\arrow["{\mu_{\widetilde{G}F,\widetilde{G}}({\hat{\beta}}_{\mathtt{a}})}", from=2-2, to=2-3]
		\arrow["{{\beta}_{\mu_{F,G}(\mathtt{a})}}"', from=3-2, to=3-3]
	\end{tikzcd}
\end{center}
\noindent The top-left square commutes by graded multiplication $\beta$-naturality of $\textsf{X}$, the top-right square commutes by multiplication $\mathtt{f}$-naturality of $\textsf{X}$, the bottom-left square commutes by \hbox{$(\beta,\mathtt{f})$-functoriality} of $\textsf{X}$, and the bottom-right square commutes by multiplication $\beta$-naturality coherence of $\varphi$. Thus, the outer octagon commutes. Graded $\sigma$-comultiplication naturality and graded $\beta$-comultiplication naturality of $\textsf{H}$ follow similarly, from graded comultiplication naturality of $\textsf{X}$, comultiplication $\mathtt{f}$-naturality of $\textsf{X}$, functoriality of $\textsf{X}$, and comultiplication naturality coherence of $\varphi$.

Then multiplication $\mathtt{f}$-naturality of $\textsf{H}$ is the outside hexagon of the diagram
\begin{center} \centerfloat
	\begin{tikzcd}[row sep=0.7cm, column sep=0.9cm]
		{\textsf{X}_{X_\mathtt{a}}[F]} & {\textsf{X}_{X_\mathtt{b}}[F]} \\
		{\textsf{X}_{\mu_{F,G}(X_{\mathtt{a}})}[G]} & {\textsf{X}_{\mu_{F,G}(X_{\mathtt{b}})}[G]} \\
		{\textsf{X}_{X_{\mu_{F,G}(\mathtt{a})}}[G]} & {\textsf{X}_{X_{\mu_{F,G}(\mathtt{b})}}[G]}.
		\arrow["{{\mu_{F,G}}_{X_\mathtt{a}}}"', from=1-1, to=2-1]
		\arrow["{{\mathtt{\Phi}_{F,G}}_{\mathtt{a}}}"', from=2-1, to=3-1]
		\arrow["{\mu_{F,G}(\mathtt{f})}"', from=3-1, to=3-2]
		\arrow["{{\mathtt{\Phi}_{F,G}}_{\mathtt{b}}}", from=2-2, to=3-2]
		\arrow["{\mu_{F,G}(X_\mathtt{f})}", from=2-1, to=2-2]
		\arrow["{{\mu_{F,G}}_{X_\mathtt{b}}}", from=1-2, to=2-2]
		\arrow["{\mathtt{f}}", from=1-1, to=1-2]
	\end{tikzcd}
\end{center}
\noindent  The top square commutes by multiplication $\mathtt{f}$-naturality of $\textsf{X}$, and the bottom square commutes by $\mathtt{f}$-functoriality of $\textsf{X}$ and since $\mathtt{\Phi}_{F,G}$ is a natural transformation. Thus, the outer hexagon commutes. Then comultiplication $\mathtt{f}$-naturality of $\textsf{H}$ follows similarly, from comultiplication $\mathtt{f}$-naturality of $\textsf{X}$, $\mathtt{f}$-functoriality of $\textsf{X}$ and since $\mathtt{\Psi}_{F,G}$ is a natural transformation.

Graded associativity of $\textsf{H}$ is the outside hexagon of the diagram
\[\begin{tikzcd}[row sep=0.8cm, column sep=-0.4cm]
	&& {\textsf{X}_{X_{\mu_{F,G}(\mathtt{a})}}[F]} \\
	& {\textsf{X}_{\mu_{F,G}(X_\mathtt{a})}[F]} && {\textsf{X}_{\mu_{G,H}(X_{\mu_{F,G}(\mathtt{a})})}[F]} \\
	{\textsf{X}_{X_\mathtt{a}}[F]} && {\textsf{X}_{\mu_{F,H}(X_\mathtt{a})}[F]}  \cong   {\textsf{X}_{\mu_{G,E}(\mu_{F,G}(X_\mathtt{a}))}[F]} && 
	{\textsf{X}_{X_{\mu_{F,H}(\mathtt{a})}}[F]}\cong  {\textsf{X}_{X_{\mu_{G,E}(\mu_{F,G}(\mathtt{a}))}}[F]} .
	\arrow["{{\mu_{F,H}}_{X_{\mathtt{a}}}}"', from=3-1, to=3-3]
	\arrow["{{\mu_{F,G}}_{X_{\mathtt{a}}}}", from=3-1, to=2-2]
	\arrow["{{\mu_{G,H}}_{ \mu_{F,G}(X_{\mathtt{a}} )  }}"{description}, from=2-2, to=3-3]
	\arrow["{{\mathtt{\Phi}_{F,G}}_{\mathtt{a}}}", from=2-2, to=1-3]
	\arrow["{{\mu_{G,H}}_{ X_{\mu_{F,G}(\mathtt{a})}   }}", from=1-3, to=2-4]
	\arrow["{{\mathtt{\Phi}_{G,H}}_{\mu_{F,G}(\mathtt{a})}}", from=2-4, to=3-5]
	\arrow["{\mu_{G,H}({\mathtt{\Phi}_{F,G}}_{\mathtt{a}})}"{description}, from=3-3, to=2-4]
	\arrow["{{\mathtt{\Phi}_{F,H}}_{\mathtt{a}}}"', from=3-3, to=3-5]
\end{tikzcd}\]
\noindent  The left triangle commutes by graded associativity of $\textsf{X}$. The right triangle commutes by associativity coherence of $\varphi$. The middle square commutes by multiplication $\mathtt{f}$-naturality of $\textsf{X}$. Thus, the outer hexagon commutes. Then graded coassociativity of $\textsf{H}$ follows similarly, from graded coassociativity of $\textsf{X}$, coassociativity coherence of $\varphi$, and comultiplication $\mathtt{f}$-naturality of $\textsf{X}$.

The graded bimonoid axiom of $\textsf{H}$ is the outside decagon of the diagram
\begin{center} \centerfloat
	\begin{tikzcd}[column sep=-1.2cm]
		&& {{\textsf{X}_{X_{\mu_{F,A}(\mathtt{a})}}}[A]} \\
		& {{\textsf{X}_{\mu_{F,A}(X_\mathtt{a})}}[A]} && {{\textsf{X}_{\Delta_{G,F}(X_{\mu_{F,A}(\mathtt{a})})}}[G]} \\
		{{\textsf{X}_{X_\mathtt{a}}}[F]} && {{\textsf{X}_{ \mu_{GF,G} (\wt{\Delta_{FG,F}} (X_\mathtt{a}))     }}[G]}\cong {{\textsf{X}_{\Delta_{G,A}(\mu_{F,A}(X_\mathtt{a}))}}[G]}  && {{\textsf{X}_{X_{\Delta_{G,F}(\mu_{F,A}(\ta))}}}[A] \cong  {{\textsf{X}_{X_{  \mu_{GF,G}( \wt{\Delta_{FG,F} (\ta)}  ) }}}}[A]}   . \\
		& {{\textsf{X}_{\widetilde{\Delta_{FG,G}(X_\mathtt{a})}}}[GF]} && {{\textsf{X}_{ \mu_{GF,G}( \widetilde{{X}_{\Delta_{FG,G}(\mathtt{a})}} )   }}[G]} \\
		{{\textsf{X}_{\Delta_{FG,G}(X_\mathtt{a})}}[FG]} &&&& {{\textsf{X}_{  \mu_{GF,G}( {X}_{ \widetilde{  \Delta_{FG,G}(\mathtt{a}})})}}[G]} \\
		{{\textsf{X}_{{X}_{\Delta_{FG,G}(\mathtt{a})}}}[FG]} && 
		{{\textsf{X}_{\widetilde{{X}_{\Delta_{FG,G}(\mathtt{a})}}}}[GF]} 
		&& {{\textsf{X}_{{X}_{ \widetilde{  \Delta_{FG,G}(\mathtt{a}})}}}[GF]}
		\arrow["\beta"', from=6-1, to=6-3]
		\arrow["{{\beta}_{\mathtt{a}}}"', from=6-3, to=6-5]
		\arrow["{{\mu_{GF,G}}_{  {X}_{ \widetilde{  \Delta_{FG,G}(\mathtt{a}})} }}", from=6-5, to=5-5]
		\arrow["{{\mathtt{\Phi}_{GF,G}}_{  \widetilde{ X_{ \Delta_{FG,G}  (\mathtt{a})    }  } }}"{description}, from=5-5, to=3-5]
		\arrow["{{\Delta_{FG,F}}_{X_\mathtt{a}}}"', from=3-1, to=5-1]
		\arrow["{{\mathtt{\Psi}_{GF,F}}_{\mathtt{a}}}", from=5-1, to=6-1]
		\arrow["{{\mu_{F,A}}_{X_\mathtt{a}}}", from=3-1, to=2-2]
		\arrow["{{\mathtt{\Phi}_{F,A}}_{\mathtt{a}}}", from=2-2, to=1-3]
		\arrow["{{\Delta_{G,A}}_{X_{\mu_{F,A}(\mathtt{a})}}}", from=1-3, to=2-4]
		\arrow["{{\mathtt{\Psi}_{G,A}}_{\mathtt{a}}}", from=2-4, to=3-5]
		\arrow["{\Delta_{G,A}({\mathtt{\Phi}_{F,A}}_{\mathtt{a}})}", from=3-3, to=2-4]
		\arrow["\! \! \! \! \! \! \! \! {{\Delta_{G,A}}_{  \mu_{F,A}  ( X_\mathtt{a} )   }}", from=2-2, to=3-3]
		\arrow["{\mu_{GF,G}(\widetilde{\mathtt{\Psi}_{FG,F}}_{\mathtt{a}})}"', from=3-3, to=4-4]
		\arrow["{\mu_{GF,G}({\beta}_{\mathtt{a}})}"{description}, from=4-4, to=5-5]
		\arrow["\beta", from=5-1, to=4-2]
		\arrow["{\widetilde{\mathtt{\Psi}_{FG,F}}_{\mathtt{a}}}"{description}, from=4-2, to=6-3]
		\arrow["\! \! \! \!{{\mu_{GF,G}}_{\widetilde{\Delta_{FG,G}(X_\mathtt{a})}}}"', from=4-2, to=3-3]
		\arrow["{{\mu_{GF,G}}_{\widetilde{{X}_{\Delta_{FG,G}(\mathtt{a})}} }}"{description}, from=6-3, to=4-4]
	\end{tikzcd}
\end{center}
\noindent  The top-left  pentagon commutes by the graded bimonoid axiom of $\textsf{X}$. The bottom-left square commutes by $(\beta,\mathtt{f})$-functoriality of $\textsf{X}$. The top-middle square commutes by comultiplication \hbox{$\mathtt{f}$-naturality} of $\textsf{X}$. The top-right pentagon commutes by bimonoid coherence of $\varphi$. The \hbox{bottom-middle} square and the bottom-right square commute by multiplication $\mathtt{f}$-naturality of $\textsf{X}$. Thus, the outer decagon commutes.
\end{proof}

We leave implicit the constructions and definitions for cospecies which are analogous to those of this section.

\subsection{Joyal-Theoretic Graded Species and Bimonoids} \label{sec:Joyal-Theoretic Graded Species and Bimonoids}

Similar to \autoref{sec:Joyal-Theoretic Bimonoids}, throughout this section we assume all species $\textsf{p}$ and bimonoids $\textsf{h}$ are strict. One can develop the notion of \hbox{Joyal-theoretic} graded species and bimonoids in the non-strict setting, however we shall not need it in this paper. 

Let $\textsf{P}$ be a $\textsf{p}$-graded set species. We abbreviate
\[
\textsf{P}_{\ta}[ I ] :=     \textsf{P}_{\ta}[ (I) ] 
, \qquad
\textsf{P}_{\ta} [\sigma]:=    \textsf{P}_{\ta} [\sigma,(I)] 
,\qquad 
\textsf{P}_{\mathtt{f}}[ I ] :=     \textsf{P}_{\mathtt{f}}[ (I),(I) ]  
.\]
At the level of objects, let us denote 
\[
\{ 1 \}=1_{\textsf{Cat}}\qquad \text{and} \qquad\{ 1,\bullet \}=1_{\textsf{Cat}_\bullet}
\]
where `$\bullet$' is the distinguished object of $1_{\textsf{Cat}_\bullet}$ as usual. The \emph{Joyalization} $\breve{\textsf{P}}$ of $\textsf{P}$ is the $\breve{\textsf{p}}$-graded species given by
\[
\breve{\textsf{P}}_{1} \big [ (\, ) \big ]: =  1_{\textsf{Set}}
,\qquad 
\big (\text{and} \quad  
\breve{\textsf{P}}_{\bullet} \big [ (\, ) \big ] :=  \emptyset\quad \text{if } \textsf{C}=\textsf{Cat}_\bullet  \big )
,\]
\[
\breve{\textsf{P}}_{(\ta_1, \dots, \ta_k)} [F]: =   \textsf{P}_{\ta_1} [S_1] \times   \dots \times  \textsf{P}_{\ta_k} [S_k]
,\]
\[
\breve{\textsf{P}}_{(\ta_1, \dots, \ta_k)} [\sigma,F] :  =   \textsf{P}_{\ta_1}[\sigma|_{S'_1}] \times \dots \times \textsf{P}_{\ta_k}[\sigma|_{S'_k}] 
,\qquad
\breve{\textsf{P}}_{(\mathtt{f}_1, \dots,  \mathtt{f}_k )  }[ F,F ] : =  \textsf{P}_{\mathtt{f}_1}[ S_1 ] \times \dots \times    \textsf{P}_{\mathtt{f}_k}[ S_k ]  
\]
with the action of permutations $\beta$ given by permuting cartesian product factors. For $\textsf{p}$ a weakly Joyal-theoretic species, a $\textsf{p}$-graded species $\textsf{P}$ is called \emph{weakly Joyal-theoretic} if it is equipped with a morphism
\[
\text{prod} : \breve{\textsf{P}} \to \textsf{P}
\]
over $\text{prod}: \breve{\textsf{p}} \to \textsf{p}$ such that $\text{prod}_{(I,\ta)}=\text{id}$ for all $\ta\in \textsf{p}[I]$. If $\text{prod}$ is an isomorphism, respectively identity, then $\textsf{P}$ is called \emph{strongly}, respectively \emph{strictly}, Joyal-theoretic. We abbreviate strictly Joyal-theoretic to just \emph{Joyal-theoretic}.

Let $\textsf{H}$ be a Joyal-theoretic $\textsf{h}$-graded set species, where $\textsf{h}$ is a Joyal-theoretic bimonoid. Recall $\textsf{h}$ is equipped with functors $\mu_{S,T}$ and $\Delta_{S,T}$ for each decomposition $S\sqcup T=I$, which we often abbreviate 
\[
\ta \tb:= \mu_{S,T}(\ta , \tb)
\qquad \text{and} \qquad
(\ta|_S, \ta|_T) := \Delta_{S,T}(\ta)
.\] 

\begin{ex}\label{ex:boolean}
	If $\textsf{h}$ is $\textsf{Cat}_\bullet$-valued, then $\textsf{h}[\emptyset]= 1_{\textsf{Cat}_\bullet} =\{ 1,\bullet \}$. It follows from the monoidal structure of $\textsf{Cat}_\bullet$ that we always have
	\[
	\mu_{\emptyset,\emptyset} ( 1,1 ) = 1
	,\qquad
	\mu_{\emptyset,\emptyset} ( 1,\bullet ) =  \mu_{\emptyset,\emptyset} ( \bullet, 1 ) =\bullet 
	,\qquad
	\mu_{\emptyset,\emptyset} ( \bullet,\bullet ) = \bullet \, 
	.\]
	More generally, since functors must preserve the distinguished object, we have
	\[
	\mu_{S,T} ( \ta,\bullet )=\mu_{S,T} ( \bullet,\tb )  = \bullet
	\]
	for all objects $\ta\in\textsf{h}[S]$, $\tb\in\textsf{h}[T]$. Similarly, we always have
	\[
	\bullet|_S = \bullet \qquad \text{and} \qquad 	\bullet|_T = \bullet
 	.\]
\end{ex}

We can give $\textsf{H}$ the structure of an $\textsf{h}$-graded bimonoid as follows. Let $\textsf{H}$ take values on the empty set as follows
\[
\textsf{H}_{1}[ \emptyset ] :=  1_{\textsf{Set}}
\qquad \big (\text{and} \qquad 
\textsf{H}_{\bullet}[ \emptyset ] :=\emptyset \quad \text{if } \textsf{C}=\textsf{Cat}_\bullet  \big )
.\]
For each decomposition $S\sqcup T=I$ of a finite set $I$, suppose we have functions
\[
{\mu_{S,T}}_{\ta, \tb}= {\mu}_{\ta, \tb} :   \textsf{H}_\ta[S]\times \textsf{H}_\tb[T] \to \textsf{H}_{\ta \tb}[I]
\qquad \text{and} \qquad
{\Delta_{S,T}}_\ta: \textsf{H}_\ta[I] \to \textsf{H}_{\ta|_S}[S] \times \textsf{H}_{\tb|_T}[T]
\]
where $\ta\in \textsf{h}[S]$, $\tb\in  \textsf{h}[T]$ and $\ta\in \textsf{h}[I]$ respectively. When $S$ or $T$ are empty, we require these functions be the unitors of $\textsf{Set}$ (or the unique function $\emptyset \to \emptyset$ if $\textsf{C}=\textsf{Cat}_\bullet$ and $\ta=\bullet$ or $\tb=\bullet$).


For $(S,T)\in [I;2]$, we now set
\[
\mu_{(\ta,\tb)}:= \mu_{\ta, \tb} 
\qquad \text{and} \qquad 
{\Delta_{(S,T),(I)}}_\ta := {\Delta_{S,T}}_\ta
.\] 
Towards defining ${\mu_{F,G}}_{(\ta_1,\dots, \ta_k)}$ and ${\Delta_{F,G}}_{(\ta_1,\dots, \ta_l)}$ in general, create a sequence of compositions the form
\[
G =F_1 \leq  \dots \leq F_m = F
\]
where $F_{j-1}$ is obtained from $F_j$ by merging just two lumps, say $S_j$ and $T_j$. Put
\[
{\mu_{F_{j},F_{j-1}}}_{\mu_{F,F_{j}}(\ta_1,\dots, \ta_k)} :=   \text{id} \times\dots \times   \text{id}   \times   \mu_{ \tb_j, \tc_j } \times \text{id} \times \dots \times \text{id}
\]
where $\tb_j$ and $\tc_j$ are the $S_j$ and $T_j$ components of $\mu_{F,F_{j}}(\ta_1,\dots, \ta_k)$ respectively. Also put
\[
{\Delta_{F_{j},F_{j-1}}}_{\Delta_{F_{j-1},G}(\ta_1,\dots, \ta_l)} :=   \text{id} \times\dots \times   \text{id}   \times   {\Delta_{S_j,T_j}}_{\mathtt{d}_j} \times \text{id} \times \dots \times \text{id}
\]
where $\mathtt{d}_j$ is the $S_j\sqcup T_j$ component of $\Delta_{F_{j-1},G}(\ta_1,\dots, \ta_l)$. If we want $\textsf{H}$ to satisfy graded associativity and graded coassociativity, we must then put
\[
{\mu_{F,G}}_{(\ta_1,\dots, \ta_k)}: =  \]
\begin{equation}\label{eq:mul}
	{\mu_{F_{2},G}}_{ \mu_{F_,F_2} (\ta_1,\dots, \ta_k) }  \circ
{\mu_{F_{3},F_2}}_{ \mu_{F_,F_3} (\ta_1,\dots, \ta_k) } \circ \dots \circ {\mu_{F_{m-1},F_{m-2}}}_{ \mu_{F,F_{m-1}} (\ta_1,\dots, \ta_k) } 
\circ   {\mu_{F,F_{m-1}}}_{(\ta_1,\dots, \ta_k)}
\end{equation}
and
\[
{\Delta_{F,G}}_{(\ta_1,\dots, \ta_l)}: =  \]
\begin{equation}\label{eq:comul}
 {\Delta_{F,F_{m-1}}}_{\Delta_{F_{m-1},G}(\ta_1,\dots, \ta_l)}   \circ {\Delta_{F_{m-1},F_{m-2}}}_{\Delta_{F_{m-2},G}(\ta_1,\dots, \ta_l)} \circ  \dots \circ {\Delta_{F_{3},F_2}}_{\Delta_{F_2,G}(\ta_1,\dots, \ta_l)} \circ   {\Delta_{F_{2},G}}_{(\ta_1,\dots, \ta_l)}
.
\end{equation}
We can now ask what properties must $\mu_{\ta,\tb}$ and ${\Delta_{S,T}}_\ta$ satisfy in order for ${\mu_{F,G}}_{(\ta_1,\dots, \ta_k)}$ and ${\Delta_{F,G}}_{(\ta_1,\dots, \ta_k)}$ to be the well-defined multiplication and comultiplication of an $\textsf{h}$-graded bimonoid. 

Consider the diagrams \emph{graded Joyal multiplication} $\sigma$-\emph{naturality} and \emph{Joyal multiplication} \hbox{$\mathtt{f}$-\emph{naturality}}
\begin{figure}[H]
	\centering
	\begin{minipage}{.5\textwidth}
		\centering
		\begin{tikzcd}[row sep=large, column sep=large]  
			\textsf{H}_{\ta}[S]\times \textsf{H}_{\tb}[T]   \arrow[d, "\mu_{\ta,\tb}"'] \arrow[r,   "\sigma\times \tau"]   
			& 
			\textsf{H}_{ \ta' }[ S' ]\times \textsf{H}_{\mathtt{b}'}[T'] \arrow[d,   "\mu_{\mathtt{a}' , \mathtt{b}'}"]     
			\\
			\textsf{H}_{\ta \tb}[I]           \arrow[r,   "\sigma \tau"']       
			& 
			\textsf{H}_{\ta'  \mathtt{b}' }[J]
		\end{tikzcd}
	\end{minipage}%
	\begin{minipage}{.5\textwidth}
		\centering
		\begin{tikzcd}[row sep=large, column sep=large]  
			\textsf{H}_{\ta}[S]\times \textsf{H}_{\tb}[T]   \arrow[d, "\mu_{\ta,\tb}"'] \arrow[r,   "\mathtt{f}  \times \mathtt{g}"]   
			& 
			\textsf{H}_{ \tc }[ S ]\times \textsf{H}_{\mathtt{d}}[T]   \arrow[d,   "\mu_{\mathtt{c} , \mathtt{d}}"]     
			\\
			\textsf{H}_{\ta \tb}[I]           \arrow[r,   "\mathtt{f} \mathtt{g}"']       
			& 
			\textsf{H}_{\mathtt{c}  \mathtt{d} }[I]  
		\end{tikzcd}
	\end{minipage}
\end{figure}
\noindent the diagram \emph{graded Joyal associativity} 
\begin{figure}[H]
	\begin{tikzcd}[row sep=large, column sep=large]  
		\textsf{H}_{\ta}[S]\times \textsf{H}_{\tb}[T]\times \textsf{H}_{\tc}[U]    \arrow[d, " \mu_{\ta,\tb} \times  \text{id}"'] \arrow[r,   "\text{id}  \times  \mu_{\tb,\tc}"]   
		& 
		\textsf{H}_{\ta}[S]\times \textsf{H}_{\tb \tc}[T  U] \arrow[d,   "\mu_{\ta,\tb   \tc} "]     
		\\
		\textsf{H}_{\ta \tb}[S  T]\times \textsf{H}_{\tc}[U]   \arrow[r,   "\mu_{\ta  \tb ,\tc}"']       
		& 
		\textsf{H}_{\ta \tb \tc}[I]
	\end{tikzcd}
\end{figure}
\noindent  the diagrams \emph{graded Joyal comultiplication} $\sigma$-\emph{naturality} and \emph{Joyal comultiplication} $\mathtt{f}$-\emph{naturality}
\begin{figure}[H]
	\centering
	\begin{minipage}{.5\textwidth}
		\centering
		\begin{tikzcd}[row sep=large]  
			\textsf{H}_{\ta|_S}[S]\times \textsf{H}_{\ta|_T}[T] \      \arrow[r,   "\sigma|_{S'}  \times  \sigma|_{T'}"]   
			& 
			\  \textsf{H}_{\ta'|_{S'}}[S']\times \textsf{H}_{\ta'|_{T'}}[T']     
			\\
			\textsf{H}_{\ta}[I]  \arrow[u, "{\Delta_{S,T}}_{\ta}"]              \arrow[r,   "\sigma"']   
			& 
			\textsf{H}_{\ta'}[J]      \arrow[u,   "{\Delta_{S',T'}}_{\ta'}"']     
		\end{tikzcd}
	\end{minipage}%
	\begin{minipage}{.5\textwidth}
		\centering
		\begin{tikzcd}[row sep=large]  
			\textsf{H}_{\ta|_S}[S]\times \textsf{H}_{\ta|_T}[T]    \arrow[r, "\mathtt{f}|_S  \times \mathtt{f}|_T"]    
			& 
			\textsf{H}_{\mathtt{b}|_{S}}[S]\times \textsf{H}_{\mathtt{b}|_{T}}[T]       
			\\
			\textsf{H}_{\ta}[I]        \arrow[r,   "\mathtt{f}"']        \arrow[u,   "{\Delta_{S,T}}_{\ta}"]    
			& 
			\textsf{H}_{\mathtt{b}}[I]   \arrow[u,   "{\Delta_{S,T}}_{\mathtt{b}}"']    
		\end{tikzcd}
	\end{minipage}
\end{figure}
\noindent the diagram \emph{graded Joyal coassociativity} 
\begin{figure}[H]
	\begin{tikzcd}[row sep=large, column sep=large]  
		\textsf{H}_{   \ta|_S  }[S]\times \textsf{H}_{  (\ta|_{ST})|_T  }[T]\times \textsf{H}_{\ta|_U}[U]\     \arrow[d,leftarrow, "{\Delta_{S,T}}_{\ta|_{ST}} \times  \text{id}"'] \arrow[r,leftarrow,   "\text{id}  \times  {\Delta_{T,U}}_{\ta|_{TU}}"]   
		& 
		\ \textsf{H}_{\ta|_S}[S]\times \textsf{H}_{\ta|_{TU}}[T  U] \arrow[d,leftarrow,  "{\Delta_{S,T   U}}_{\ta} "]     
		\\
		\textsf{H}_{\ta|_{ST}}[S  T]\times \textsf{H}_{\ta|_U}[U]   \arrow[r, leftarrow,   "{\Delta_{ST,U}}_\ta"']       
		& 
		\textsf{H}_{\ta}[I]
	\end{tikzcd}
\end{figure}
\noindent and the diagram \emph{graded Joyal bimonoid axiom}
\[\begin{tikzcd}[row sep=large, column sep=small]  
	\textsf{H}_{\mathtt{a}}[ST] \times   \textsf{H}_{\mathtt{b}}[UV]     &  \textsf{H}_{\mathtt{a} \mathtt{b}}[I]  &  \textsf{H}_{  \mathtt{a} \mathtt{b}|_{SU}}[SU] \times  
	\textsf{H}_{   \mathtt{a} \mathtt{b}|_{TV}}[TV]     \\
	\textsf{H}_{\mathtt{a}|_S}[S] \times   \textsf{H}_{\mathtt{a}|_T}[T]   \times  \textsf{H}_{\mathtt{b}|_U}[U] \times   \textsf{H}_{\mathtt{b}|_V}[V]    &&  
	\textsf{H}_{\mathtt{a}|_S}[S] \times   \textsf{H}_{\mathtt{b}|_U}[U]   \times  \textsf{H}_{\mathtt{a}|_T}[T] \times   \textsf{H}_{\mathtt{b}|_V}[V] 
	\arrow["\mu_{\ta ,\tb }", from=1-1, to=1-2]
	\arrow["{\Delta_{SU,TV}}_{\ta \tb}", from=1-2, to=1-3]
	\arrow["{\Delta_{S,T}}_{\ta}\times {\Delta_{U,V}}_{\tb} "', from=1-1, to=2-1]
	\arrow[" \text{id} \times B \times \text{id} "', from=2-1, to=2-3]
	\arrow["\mu_{{\ta|}_{S},\tb|_U}\times \mu_{\ta|_T,\tb|_V}"', from=2-3, to=1-3].
\end{tikzcd}\]

\begin{thm}\label{thm:gradedjoyal}
	The maps ${\mu_{F,G}}_{(\ta_1,\dots, \ta_k)}$ and ${\Delta_{F,G}}_{(\ta_1,\dots, \ta_l)}$ as defined in \textcolor{blue}{(\refeqq{eq:mul})} and \textcolor{blue}{(\refeqq{eq:comul})} give $\textsf{H}$ the structure of an $\textsf{h}$-graded bimonoid if and only if the diagrams graded Joyal (co)multiplication $\sigma$-naturality, Joyal (co)multiplication $\mathtt{f}$-naturality, graded Joyal (co)associativity and the graded Joyal bimonoid axiom commute.
\end{thm}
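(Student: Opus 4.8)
The plan is to prove \autoref{thm:gradedjoyal} as the graded analogue of \autoref{thm:joyal}, keeping the same overall architecture but carrying the grading subscripts along. Throughout, recall that $\textsf{h}$ is a Joyal-theoretic bimonoid, so by \autoref{thm:joyal} its Joyal-theoretic operations $\mu_{S,T}$ and $\Delta_{S,T}$ satisfy Joyal (co)multiplication naturality, Joyal (co)associativity and the Joyal bimonoid axiom, and the induced maps $\mu_{F,G}$, $\Delta_{F,G}$ factorize over the lumps of $G$ as in \textcolor{blue}{(\refeqq{eq:fact})}. The useful viewpoint here is to fix an object $\ta$ in the relevant component of $\textsf{h}$: then every arrow occurring in a graded bimonoid diagram for $\textsf{H}$ is an honest morphism in $\textsf{Set}$ (resp.\ $\textsf{Vec}$), and the grading subscript on each wire is obtained by applying the appropriate structure map of the bimonoid $\textsf{h}$ to $\ta$, hence is automatically consistent throughout any diagram chase because $\textsf{h}$ is itself a bimonoid. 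So the whole proof reduces to checking that, move for move, each rewriting step in the proof of \autoref{thm:joyal} has a graded counterpart licensed by one of the graded Joyal axioms.

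For the ``only if'' direction, each graded Joyal axiom is an instance (or a small gluing of instances) of a graded bimonoid diagram, exactly mirroring the ``only if'' part of \autoref{thm:joyal}: graded Joyal (co)multiplication $\sigma$-naturality is graded (co)multiplication $\sigma$-naturality with $F=(S,T)$; Joyal (co)multiplication $\mathtt{f}$-naturality is (co)multiplication $\mathtt{f}$-naturality with $F=(S,T)$; graded Joyal (co)associativity is obtained by gluing two graded (co)associativity triangles along their common edge $\mu_{(S,T,U),(I)}$ (resp.\ its dual); and the graded Joyal bimonoid axiom is the case $l(F),l(G)\leq 2$ of the graded bimonoid axiom, where for empty $S,T,U,V$ the diagrams degenerate as in \autoref{ex:bimonoid}.

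For the ``if'' direction there are eleven graded bimonoid diagrams. Graded (co)multiplication $\sigma$-naturality and (co)multiplication $\mathtt{f}$-naturality follow by factorizing both legs over the lumps $T_j$ of $G$ using \textcolor{blue}{(\refeqq{eq:fact})} and applying the relevant graded Joyal axiom to each tensor factor, which is the grading-decorated version of the factorization computation in the proof of \autoref{thm:joyal}. Graded (co)multiplication $\beta$-naturality follows because the $\beta$-actions permute product factors and the maps involved are products of the same factors in permuted order, so naturality is that of the symmetric braiding of $\textsf{Set}$ (resp.\ $\textsf{Vec}$), transported across the permutation of grading objects effected by $\textsf{h}$. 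Graded (co)associativity is immediate from the definitions \textcolor{blue}{(\refeqq{eq:mul})} and \textcolor{blue}{(\refeqq{eq:comul})}, and graded Joyal (co)associativity is precisely what makes those definitions independent of the chosen refinement sequence $G=F_1\leq\dots\leq F_m=F$; graded (co)unitality holds by the convention $\mu_{F,F}=\id$, $\Delta_{F,F}=\id$.

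The main obstacle, as in \autoref{thm:joyal}, is the graded bimonoid axiom. First reduce to the totally lumped case $A=(I)$ by factorizing $\Delta_{G,A}\circ\mu_{F,A}$ over the lumps $A_m$ of $A$ via \textcolor{blue}{(\refeqq{eq:fact})}, so it suffices to treat each factor, which is of the form $A=(A_m)$. For $A=(I)$ one runs exactly the string-diagram argument of \autoref{thm:joyal}: use graded (co)associativity to expand the top and bottom nodes into binary-tree compositions of $\mu_{\ta,\tb}$ and ${\Delta_{S,T}}_\ta$; repeatedly apply the graded Joyal bimonoid axiom to commute each $\mu$ past the adjacent $\Delta$, replacing it with two $\Delta$'s followed by two $\mu$'s, until all $\Delta$'s precede all $\mu$'s; use graded (co)multiplication $\beta$-naturality to collect all braidings between the $\Delta$-block and the $\mu$-block; and finally use graded (co)associativity to collapse consecutive $\mu$'s and $\Delta$'s, arriving at $\mu_{GF,G}\circ\beta\circ\Delta_{FG,F}$ at the graded level. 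The only content beyond \autoref{thm:joyal} is the bookkeeping of the grading: at each move the subscript on every wire is determined by applying the matching structure map of $\textsf{h}$ to $\ta$, and since this very sequence of rewrites proves the ungraded bimonoid axiom for $\textsf{h}$ (by \autoref{thm:joyal} applied to $\textsf{h}$), the subscripts remain consistent throughout and the graded identity follows. This establishes that $\textsf{H}$ is an $\textsf{h}$-graded bimonoid, and together with the converse direction completes the proof; the bialgebra case is identical with $\times$ replaced by $\otimes$.
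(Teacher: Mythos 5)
Your proposal is correct and follows exactly the route the paper intends: the paper's own proof of \autoref{thm:gradedjoyal} simply states that it ``follows along the same lines as \autoref{thm:joyal}'', and your argument is a faithful elaboration of that — deriving the graded Joyal axioms as instances of the graded bimonoid diagrams for the ``only if'' direction, and re-running the factorization and string-diagram rewriting of \autoref{thm:joyal} with the grading subscripts tracked via the bimonoid structure of $\textsf{h}$ for the ``if'' direction, with the reduction of the graded bimonoid axiom to the case $A=(I)$.
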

\begin{proof}
The proof follows along the same lines as \autoref{thm:joyal}.
\end{proof}

The \emph{Joyalization} $\breve{\textbf{\textsf{P}}}$ of a $\textsf{p}$-graded vector species $\textbf{\textsf{P}}$ is defined analogously, 
\[
\breve{\textbf{\textsf{P}}}_{1} \big [ (\, ) \big ]: =  \Bbbk
,\qquad 
\big (\text{and} \quad  
\breve{\textbf{\textsf{P}}}_{\bullet} \big [ (\, ) \big ] :=  0 \quad \text{if } \textsf{C}=\textsf{Cat}_\bullet  \big )
,\]
\[
\breve{\textbf{\textsf{P}}}_{(\ta_1, \dots, \ta_k)} [F]: =   \textbf{\textsf{P}}_{\ta_1} [S_1] \otimes   \dots \otimes  \textbf{\textsf{P}}_{\ta_k} [S_k]
,\]
\[
\breve{\textbf{\textsf{P}}}_{(\ta_1, \dots, \ta_k)} [\sigma,F] :  =   \textbf{\textsf{P}}_{\ta_1}[\sigma|_{S'_1}] \otimes \dots \otimes \textbf{\textsf{P}}_{\ta_k}[\sigma|_{S'_k}] 
,\qquad
\breve{\textbf{\textsf{P}}}_{(\mathtt{f}_1, \dots,  \mathtt{f}_k )  }[ F,F ] : =  \textbf{\textsf{P}}_{\mathtt{f}_1}[ S_1 ] \otimes \dots \otimes   \textbf{\textsf{P}}_{\mathtt{f}_k}[ S_k ]  
.\]
The terms \emph{weakly/strongly/strictly Joyal-theoretic} are also defined analogously for $\textsf{p}$-graded vector species. As before, strictly Joyal-theoretic is abbreviated \emph{Joyal-theoretic}.

If $\textbf{\textsf{H}}$ is a Joyal-theoretic $\textsf{h}$-graded vector species, where $\textsf{h}$ is a Joyal-theoretic bimonoid, we can give $\textbf{\textsf{H}}$ the structure of an $\textsf{h}$-graded bialgebra by replacing sets/functions above with vector spaces/linear maps. Indeed, \autoref{thm:gradedjoyal} goes through unchanged.

If the algebraic structure of an $\textsf{h}$-graded bimonoid/bialgebra $\textsf{H}$ is obtained according to the above construction, then $\textsf{H}$ is called \emph{strictly Joyal-theoretic}, abbreviated \emph{Joyal-theoretic}. If $\textsf{h}$ is strongly Joyal-theoretic, then an \hbox{$\textsf{h}$-graded} bimonoid/bialgebra is called \emph{strongly Joyal-theoretic} if it is Joyal-theoretic up to the specified isomorphism.

We leave implicit the constructions and definitions for cospecies which are analogous to those of this section.
 


\part{Permutohedral Space}\label{part2}

We now use the theory we built in \autoref{part1} to study the bimonoid structure of the toric variety of the permutohedron, called permutohedral space.

\section{Preliminaries}\label{sec:prelim}



\subsection{Preposets} \label{sec:proset}

Given a finite set $I$, a \emph{preposet} $p$ of $I$ is a reflexive and transitive relation $\geq_p$ on $I$. Let $[2;I]$ denote the set of ordered pairs $(\text{i}_1,\text{i}_2)$ of distinct elements $\text{i}_1,\text{i}_2\in I$, $\text{i}_1\neq \text{i}_2$. We identify $p$ with the following subset of $[2;I]$,
	\begin{equation} \label{eq:modeling}          
p :=  \big \{ (\text{i}_1,\text{i}_2)\in [2;I] \ \big | \   \text{i}_1\geq_p \text{i}_2   \big  \} .
	\end{equation}
For preposets $p$ and $q$ of $I$, we write 
\begin{equation} \label{eq:ordering} 
	q\leq  p
	\qquad  
	\text{if} \quad p \subseteq q.
\end{equation}
For $p$ a preposet of $I$ and $S\subseteq I$, the \emph{restriction} $p|_S$ is the preposet of $S$ given by 
\[  
(\text{i}_1,\text{i}_2)\in  p|_S  \qquad  \text{if and only if} \qquad(\text{i}_1,\text{i}_2)\in p
.\]
Recall $\textsf{Cat}_\bullet$ denotes the monoidal category of pointed categories, with monoidal product the smash product of the cartesian product. We have the Joyal-theoretic $\textsf{Cat}_\bullet$-valued cospecies of \emph{augmented preposets} $\textsf{O}_\bullet$, given by 
\[
\textsf{O}_\bullet[I]:=  \big\{  p   \ \big | \   \text{$p$ is a preposet of $I$}    \big \}\sqcup\{  \bullet \},
\]
\[
\textsf{O}_\bullet[\sigma] (\bullet) :=  \bullet
,\qquad
\textsf{O}_\bullet[\sigma] (p) :=  \Big \{ \big(\sigma(\text{i}_1),\sigma(\text{i}_2)\big )\in [2;I] \ \Big | \   \text{i}_1\geq_p \text{i}_2   \Big  \}
.\]
We make the convention that $\bullet \leq p$ for all preposets $p$ of $I$, and turn $\textsf{O}_\bullet[I]$ into a category by including a single morphism $q\to p$ whenever $p\leq q$. At the level of sets, we have
\[     
\textsf{O}_\bullet[F]=   \underbrace{  \textsf{O}_\bullet[S_1] \times \dots \times   \textsf{O}_\bullet[S_k]}_{\text{smash product}}
=
\underbrace{  \big (\text{O}[S_1] \times \dots \times  \text{O}[S_k]\big )}_{\text{cartesian product}}   \sqcup\,   \{  \bullet  \}
\]
where $\text{O}[S_i]$ is the set of all preposets of $S_i$.

\subsection{Total Preposets and Compositions}  \label{sec:preposetcomp}

Preposets may be viewed as a generalization of compositions as follows. We say a preposet is \emph{total} if at least one of 
\[
(\text{i}_1,\text{i}_2)\in p \qquad  \text{and} \qquad (\text{i}_2,\text{i}_1)\in p
\] 
is true for all $(\text{i}_1,\text{i}_2)\in [2;I]$. We have the subcospecies of total preposets $\mathsf{\Sigma}^\ast\subset \textsf{O}_\bullet$, given by
\[
\mathsf{\Sigma}^\ast [I] :=  \big \{     p\in \textsf{O}_\bullet[I] \ \big | \ p \ \text{is a total preposet}        \big \}
.\] 
(The distinguished element `$\bullet$' is not contained in $\mathsf{\Sigma}^\ast$, so this is a subcospecies at the level of \hbox{$\textsf{Cat}$-valued} cospecies.) Given a composition $F\in \mathsf{\Sigma}[I]$, when there is no ambiguity we also let $F$ denote its corresponding total preposet $F\in \mathsf{\Sigma}^\ast [I]$ given by
\[
F   :=  
\big \{     (\text{i}_1,\text{i}_2) \in [ 2; I ]  \ \big | \    \text{`the lump containing $\text{i}_1$' } \geq    \text{ `the lump containing $\text{i}_2$'}      \big \}
.\] 
The function
\[
\mathsf{\Sigma}[I] \to \mathsf{\Sigma}^\ast[I]
,\qquad F \mapsto F
\]
is a bijection which preserves `$\leq$'. The set\footnote{\ where $(S,T)$ denotes its corresponding total preposet, and $(S,T) \leq p$ refers to the relation \textcolor{blue}{(\refeqq{eq:ordering})}}
	\begin{equation} \label{eq:modeling1}     
\big \{ (S,T)\in [I;2] \ \big | \     (S,T) \leq p    \big  \}  
\end{equation}
determines $p$, since it records all of $p$'s upward-closed subsets. Indeed, we have $(S,T)\leq p$ if and only if $S$, respectively $T$, is an upward-closed, respectively downward-closed, subset of $I$. 

\begin{remark}
Note the duality between \textcolor{blue}{(\refeqq{eq:modeling})} and \textcolor{blue}{(\refeqq{eq:modeling1})}; \textcolor{blue}{(\refeqq{eq:modeling})} contains all the $2$-probes $2\hookrightarrow I$ of $I$ which are compatible with $p$, whereas \textcolor{blue}{(\refeqq{eq:modeling1})} contains all the $2$-coprobes $I\twoheadrightarrow 2$ of $I$ which are compatible with $p$.
\end{remark}

\subsection{The Bimonoid of Augmented Preposets} \label{sec:proset2}

We now give augmented preposets $\textsf{O}_\bullet$ the structure of a \hbox{Joyal-theoretic} bimonoid. We saw in \autoref{sec:Joyal-Theoretic Bimonoids} that this amounts to defining a pair of functors $\mu_{S,T}$ and $\Delta_{S,T}$ for each two-lump composition $(S,T)$, and then checking the five Joyal diagrams. 

For $(S,T)\in [I;2]$ and elements $p\in \textsf{O}_\bullet[S]$ and $q\in \textsf{O}_\bullet[T]$, let
\[
(p\, |\, q):= 
\begin{cases}
\text{disjoint set union of $p$ and $q$} &\quad  \text{if } p,q\neq  \bullet \\
\bullet \in \textsf{O}_\bullet[I] &\quad \text{otherwise}.
\end{cases}
\]
For $(S,T)\in [I;2]$ and $p\in \textsf{O}_\bullet[I]$, let
\[
p\talloblong_S:=  
\begin{cases}
p|_S &\quad  \text{if $(S,T)\leq p$}\\
\bullet\in \textsf{O}_\bullet[S]  &\quad \text{otherwise}
\end{cases}\qquad \ \ \ \    \text{and} \  \ \ \  \qquad 
p\! \fatslash_{\, T}:=  
\begin{cases}
p|_T &\quad \text{if $(S,T)\leq p$}\\
\bullet\in \textsf{O}_\bullet[T]  &\quad \text{otherwise.}
\end{cases}
\]

\begin{prop}\label{prob:isbimon}
The $\textsf{Cat}_\bullet$-valued cospecies of augmented preposets $\textsf{O}_\bullet$ has the structure of a Joyal-theoretic bimonoid, with multiplication and comultiplication given by 
\[  
\mu_{S,T} (  p ,  q ): =  (p\, |\, q) 
\qquad \text{and} \qquad    
\Delta_{S,T}  ( p) := (  p\talloblong_S , p \!  \fatslash_{\, T} )  
\]
respectively. 
\end{prop}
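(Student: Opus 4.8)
By \autoref{thm:joyal} (Aguiar-Mahajan), since $\textsf{O}_\bullet$ is Joyal-theoretic, it suffices to verify that the proposed $\mu_{S,T}$ and $\Delta_{S,T}$ are well-defined functors (preserving the distinguished object and the order relation $\leq$) and that the five Joyal diagrams commute: Joyal (co)multiplication naturality, Joyal (co)associativity, and the Joyal bimonoid axiom. The plan is to dispatch these one at a time, in each case first reducing to the subtle situation where none of the relevant elements is $\bullet$ (the $\bullet$ cases being forced by the functoriality conventions: $\mu$ sends anything involving $\bullet$ to $\bullet$, and $\bullet\talloblong_S=\bullet\!\fatslash_{\,T}=\bullet$, as in \autoref{ex:boolean}), and then reasoning about the honest preposets via their identification with subsets of $[2;I]$ from \textcolor{blue}{(\refeqq{eq:modeling})}.

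First I would check $\mu_{S,T}$ and $\Delta_{S,T}$ are functors $\textsf{O}_\bullet[S]\times\textsf{O}_\bullet[T]\to\textsf{O}_\bullet[I]$ and $\textsf{O}_\bullet[I]\to\textsf{O}_\bullet[S]\times\textsf{O}_\bullet[T]$ of pointed categories. For $\mu$: the disjoint set union $(p\,|\,q)$ of a preposet of $S$ and a preposet of $T$ is again reflexive and transitive (there are no cross-relations, so transitivity cannot fail), hence a preposet of $I$; and if $q_S\leq p_S$, $q_T\leq p_T$, i.e. $p_S\subseteq q_S$ and $p_T\subseteq q_T$, then $(p_S\,|\,p_T)\subseteq(q_S\,|\,q_T)$, so $\mu$ sends the product morphism to a morphism. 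For $\Delta$: if $(S,T)\le p$ then $p\talloblong_S=p|_S$ and $p\!\fatslash_{\,T}=p|_T$ are restrictions, hence preposets; and if $q\le p$ (so $p\subseteq q$, and note $(S,T)\le p\Rightarrow(S,T)\le q$) then $p|_S\subseteq q|_S$ and $p|_T\subseteq q|_T$, so $\Delta(q)\to\Delta(p)$ is a morphism. The case $(S,T)\not\le p$ sends $\Delta(p)$ to $(\bullet,\bullet)$, which is compatible with morphisms since any $q\le p$ with $(S,T)\le q$ would force $(S,T)\le p$.

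Next come the two naturality squares. Joyal multiplication naturality is immediate: relabeling along $\sigma:J\to I$ with $\sigma=\sigma|_{S'}\sqcup\sigma|_{T'}$ commutes with disjoint union of relations, since $\textsf{O}_\bullet[\sigma]$ acts pairwise on $[2;I]$. Joyal comultiplication naturality reduces to the statement that restriction commutes with relabeling and that the condition $(S,T)\le p$ transports correctly along $\sigma$, which it does because upward/downward-closedness is preserved by bijective relabeling (using the characterization of \textcolor{blue}{(\refeqq{eq:modeling1})}). Joyal associativity says $(p\,|\,q)\,|\,r=p\,|\,(q\,|\,r)$ as preposets of $I=S\sqcup T\sqcup U$, which is just associativity of disjoint set union; the $\bullet$ subcases all collapse to $\bullet$ on both sides. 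Joyal coassociativity says that restricting $p$ to $S\sqcup T$ then to $S$ agrees with restricting directly to $S$, and that the various closedness conditions are consistent — concretely, $(S,T\sqcup U)\le p$ together with $(T,U)\le p|_{T\sqcup U}$ is equivalent to $(S\sqcup T,U)\le p$ together with $(S,T)\le p|_{S\sqcup T}$, both being equivalent to the chain condition $S\ge_p T\ge_p U$ (meaning $S$ is $p$-above $T$ is $p$-above $U$, with all three $p$-incomparable pairs allowed); when these hold all three restrictions agree, and when they fail both composites output a tuple containing $\bullet$.

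\textbf{The main obstacle.} The heart of the proof is the Joyal bimonoid axiom for $F=(ST,UV)$, $G=(SU,TV)$: starting from $(a\,|\,b)\in\textsf{O}_\bullet[I]$ with $a$ a preposet of $S\sqcup T$ and $b$ of $U\sqcup V$, one path first forms the union then computes $\Delta_{SU,TV}$, while the other path restricts $a$ to $S$ and $T$, restricts $b$ to $U$ and $V$, braids the middle factors, and recombines via $\mu_{S,U}$ and $\mu_{T,V}$. The key point to nail down is when the condition ``$(SU,TV)\le(a\,|\,b)$'' holds: since $(a\,|\,b)$ has no relations between $S\sqcup T$ and $U\sqcup V$, the subset $SU=S\sqcup U$ is upward-closed in $(a\,|\,b)$ if and only if $S$ is upward-closed in $a$ and $U$ is upward-closed in $b$, i.e. iff $(S,T)\le a$ and $(U,V)\le b$. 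Thus the two paths are ``active'' on exactly the same preposets, and on those, $\Delta_{SU,TV}(a\,|\,b)=\big((a|_S\,|\,b|_U),(a|_T\,|\,b|_V)\big)$, which is precisely what the lower path produces after braiding — because disjoint union of relations commutes with everything in sight. When the condition fails, both sides land on a tuple involving $\bullet$ (the top path directly; the bottom path because at least one of $a\talloblong_S$, $b\talloblong_U$ is $\bullet$, and $\mu$ propagates $\bullet$). One must also treat the degenerate cases where some of $S,T,U,V$ are empty, via the smaller diagrams of \autoref{ex:bimonoid}, but these are strictly easier and reduce to the same disjoint-union bookkeeping. I expect the bookkeeping of ``which closedness conditions are equivalent'' in coassociativity and the bimonoid axiom to be the only place where care is genuinely required.
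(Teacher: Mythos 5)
Your proposal is correct and follows essentially the same route as the paper's proof: the naturality and associativity diagrams dismissed as immediate, coassociativity handled through the equivalence of the two nested conditions with $(S,T,U)\le p$ (the paper instead spells out the failure case via the three bad pairs $(\text{t},\text{s}),(\text{t},\text{u}),(\text{u},\text{s})$, which is the same bookkeeping), the bimonoid axiom via the key equivalence $(SU,TV)\le (p\,|\,q)\iff (S,T)\le p$ and $(U,V)\le q$, and the closing observation that $\mu_{S,T}$ and $\Delta_{S,T}$ preserve $\le$ and $\bullet$. (One cosmetic quibble: in your functoriality check the implication actually needed is $(S,T)\le q\Rightarrow(S,T)\le p$ when $p\subseteq q$ — your parenthetical states the reverse — but you state the correct direction in the following sentence, and the case with a $\bullet$ target is harmless since every object maps to $\bullet$.)
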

\begin{proof}
We have five diagrams to check. Joyal multiplication naturality, Joyal comultiplication naturality, and Joyal associativity are immediate. Recall Joyal coassociativity says
\[
\big ( (p\talloblong_{ST})\talloblong_{S}  ,  (p\talloblong_{ST})\!   \fatslash_{\, T} , p \! \!  \fatslash_{\, U}  \big   )
=
\big (  p  \talloblong_{S},   (p\!  \fatslash_{\, TU})\talloblong_{T} ,  (p\!  \fatslash_{\, TU})\! \!  \fatslash_{\, U}     \big )
.\]
If $(S,T,U)\leq p$, then
\[
\big ( (p\talloblong_{ST})\talloblong_{S}  ,  (p\talloblong_{ST})\!   \fatslash_{\, T} , p \! \!  \fatslash_{\, U}  \big   )
=
\big (  p|_S, p|_T, p|_U  \big ) 
=
\big (  p  \talloblong_{S},   (p\!  \fatslash_{\, TU})\talloblong_{T} ,  (p\!  \fatslash_{\, TU})\! \!  \fatslash_{\, U}     \big )
.\]
We claim that if $(S,T,U)\nleq p$, then 
\[
\big ( (p\talloblong_{ST})\talloblong_{S}  ,  (p\talloblong_{ST})\!   \fatslash_{\, T} , p \! \!  \fatslash_{\, U}  \big   )
=
\, \bullet \, 
=
\big (  p  \talloblong_{S},   (p\!  \fatslash_{\, TU})\talloblong_{T} ,  (p\!  \fatslash_{\, TU})\! \!  \fatslash_{\, U}     \big )
.\]
To see this, first note that if $(S,T,U)\nleq p$, then at least one of $(\text{t},\text{s}),(\text{t},\text{u}),(\text{u},\text{s})\in p$ is true for $\text{s}\in S$, $\text{t}\in T$, $\text{u}\in U$. We have
\[
(\text{t},\text{s})\in p\qquad  \implies \qquad   (p\talloblong_{ST})\talloblong_{S} =\bullet \quad \text{and} \quad   p  \talloblong_{S}=\bullet
.\]
We have
\[
(\text{t},\text{u})\in p\qquad  \implies \qquad   p \!   \fatslash_{\, U} =\bullet \quad \text{and} \quad    (p\!  \fatslash_{\, TU})\! \!  \fatslash_{\, U}=\bullet
.\]
We have
\[
(\text{u},\text{s})\in p\qquad  \implies \qquad   p \!   \fatslash_{\, U}  =\bullet  \quad \text{and} \quad   p  \talloblong_{S}=\bullet
.\]
Recall the Joyal bimonoid axiom says
\[
\big ( ( p\,  |\,  q) \talloblong_{SU}    , (p\, |\, q)  \! \!  \!   \fatslash_{\, TV} \big )
  \, = \, 
    \big (  ( p\! \talloblong_{S}    |\,  q\talloblong_{U} ) ,  ( p\! \!  \fatslash_{\, T}  |\,    q\!  \fatslash_{\, V} )  \big )
.\]
We have 
\[
(SU,TV)\leq  (p\, |\, q)    \qquad \iff \qquad  (S,T) \leq p \quad \text{and} \quad (U,V) \leq q
.\]
Therefore, if $(SU,TV)\nleq  (p\, |\, q)$ then
\[
( p\,  |\,  q) \talloblong_{SU} =  \bullet 
\qquad \text{and} \qquad   ( p\! \talloblong_{S}    |\,  q\talloblong_{U} ) = \bullet 
\]
and if $(SU,TV)\leq  (p\, |\, q)$ then
\[
 ( p\,  |\,  q) \talloblong_{SU}=   ( p\,  |\,  q)|_{SU}      = \big  ( p|_{S}   \big  |\,  q|_{U} \big )   = ( p\! \talloblong_{S}    |\,  q\talloblong_{U} )
\]
and
\[
 (p\, |\, q)  \! \!    \fatslash_{\, TV}  
 =   ( p\,  |\,  q)|_{TV}      = \big  ( p|_{T}   \big  |\,  q|_{V} \big )   = 
( p\! \!  \fatslash_{\, T}  |\,    q\!  \fatslash_{\, V} )
.\]
Finally, notice $\mu_{S,T}$ and $\Delta_{S,T}$ preserve `$\leq$' and `$\bullet$', and so do indeed define functors of pointed categories. 
\end{proof}

For the higher comultiplication, we denote e.g.
\[
(p\talloblong_S , p\talloblong_T , p\talloblong_U):= \Delta_{(S,T,U)} (p) 
.\]

\subsection{Root Datum of $\textmd{SL}_I(\bC)$}

For each finite set $I$, we have the lattice $\bZ^I$ of $\bZ$-valued functions on $I$,
\[      
\bZ^I:=\big \{  \text{functions}\     \lambda: I\to \bZ  \big  \}     
.\]
For $S\subseteq I$, let $\lambda_S\in \bZ^I$ denote the characteristic function of $S$,
\[
\lambda_S: I \to \bZ
,\qquad
i \mapsto  
\begin{cases}
1 &\quad \text{if} \quad \text{i}\in S  \\
0 &\quad     \text{if} \quad  \text{i}\notin S.
\end{cases} 
\] 
If we consider functions only up to translations of $\bZ$, we obtain the quotient lattice 
\[  
\text{N}^I:=    \bigslant{ \big  \{  \text{functions}\     \lambda: I\to \bZ   \big  \} }{\bZ}                = \bZ^I/ \bZ \lambda_I  
.\]
The lattice $\text{N}^I$ is called the \emph{weight lattice} of (the Lie algebra of) $\text{SL}_I(\bC)$. We have the Joyal-theoretic species $I \mapsto \text{N}^I$ with the action of bijections given by $\sigma(\lambda):= \lambda \circ  \sigma$.

We also have the lattice $\bZ I$ of formal $\bZ$-linear combinations of elements of $I$,
\[
\bZ I:=\big \{ h \ \big | \   h=(h_\text{i})_{\text{i}\in I},\, h_\text{i}\in \bZ   \big \}
.\] 
Then $\bZ I$ is the dual lattice of $\bZ^I$ via the perfect pairing
\[ 
\la-, - \ra:\bZ I\times \bZ^I\to \bZ
,\qquad   
\la h, \lambda  \ra := \sum_{\text{i}\in I}h_\text{i} \,  \lambda(\text{i})    
.\]
Since $\text{N}^I$ is a quotient of $\bZ^I$, its dual lattice $\text{M}_I=\Hom(\text{N}^I,\bZ)$ is naturally a sublattice of $\bZ I$, consisting of those formal $\bZ$-linear combinations whose coefficients sum to zero
\[
\text{M}_I :=\big \{  h\in \bZ I  \ \big | \ \la h, \lambda_I \ra =0    \big   \}
.\]
The lattice $\text{M}_I$ is called the \emph{coweight lattice} of $\text{SL}_I(\bC)$. We have the Joyal-theoretic cospecies $I \mapsto \text{M}^I$ with the action of bijections given by 
\[
\la\sigma(h),\lambda\ra:= \la    h,\sigma(\lambda)     \ra
,\]
i.e. $(\sigma(h))_\text{i} = h_{\sigma^{-1}(\text{i})}$ for $\text{i}\in I$. We abbreviate $h':=\sigma(h)$ as usual.

For $(S,T)\in [I;2]$, the \emph{fundamental weight} $\lambda_{ST}\in \text{N}^I$ is defined to be the image of $\lambda_S$ in $\text{N}^I$. We have $\lambda_{TS}=-\lambda_{ST}$. For $(\text{i}_1,\text{i}_2)\in [2;I]$, the \emph{coroot} $h_{\text{i}_1 \text{i}_2} \in \text{M}_I$ is given by
\[
\la h_{\text{i}_1 \text{i}_2},\lambda\ra:=\lambda(\text{i}_1)-\lambda(\text{i}_2)
.\]
We have $h_{\text{i}_2 \text{i}_1}=-h_{\text{i}_1 \text{i}_2}$. 

\begin{remark} \label{cyclic}
We have the following interesting isomorphisms. Given a cyclic ordering of $I$ and $\text{i}_1,\text{i}_2\in I$, let $[\text{i}_1,\text{i}_2]\subseteq I$ denote the interval from $\text{i}_1$ inclusive to $\text{i}_2$ exclusive. Then, for each cyclic ordering of $I$, we have the isomorphism given by
\[
\text{M}_I \to \text{N}^I
,\qquad
h_{\text{i}_1 \text{i}_2} \mapsto  \lambda_{[\text{i}_1,\text{i}_2][\text{i}_2,\text{i}_1]}
.\]
These isomorphisms underlie objects such as alcove polytopes \cite{MR2352704} and blades \cite{early2020weighted}. 
\end{remark}


\subsection{Algebraic Tori}

Let $\bC \text{T}^I$ denote the diagonal maximal torus of $\text{PGL}_I(\bC)$, given by
\[
\bC\text{T}^I:=(\bC^\times)^I/\bC^\times=\bigslant{\{\text{functions}\ x:I\to\bC^\times\}}{\bC^\times} \cong \text{N}^I \otimes_{\bZ} \bC^\times
.\] 
Let $\bC \text{T}^\vee_I$ denote the diagonal maximal torus of $\text{SL}_I(\bC)$, given by
\[
\bC \text{T}^\vee_I
:=
\Big  \{     y \ \Big | \  y= (y_i)_{\text{i}\in I} \in \bC^\times I,\  \prod_{\text{i}\in I} y_\text{i} =1     \Big   \}\cong  \text{M}_I \otimes_{\bZ} \bC^\times 
.\]
We have the following natural identifications of (co)character lattices,
\[
\text{N}^I = \Hom(\bC^\times, \bC \text{T}^I)= \Hom(\bC \text{T}^\vee_I, \bC^\times)  \qquad \text{and} \qquad   \text{M}_I    =   \Hom( \bC \text{T}^I,\bC^\times) =  \Hom( \bC^\times, \bC \text{T}^\vee_I)          
.\]
In particular, for each $h\in \text{M}_I$ we have the monomial
\[
  f^h: \bC\text{T}^I \to \bC
,\qquad 
x \mapsto    \prod_{\text{i}\in I} x(\text{i})^{h_\text{i}}
\]
and for each $\lambda \in \text{N}^I$ we have the monomial
\[
 f^\lambda : \bC\text{T}^\vee_I \to \bC
,\qquad
y \mapsto \prod_{\text{i}\in I} y^{\lambda(\text{i})}_\text{i}  
.\]

\begin{remark}
	We may interpret $\bC\text{T}^I$ as the `worldsheet' $\bC^\times$ with $I$-marked points, modulo scaling. Complex permutohedral space will be a compactification of $\bC\text{T}^I$ with boundary divisors in \hbox{one-to-one} correspondence with elements $(S,T)\in [I;2]$, which are approached by moving the points of $S$ to the infinite future, or the points of $T$ to the infinite past. This interpretation is suggested by the role the bimonoid of compositions $\mathsf{\Sigma}$ plays in casual perturbation theory, see \cite{norledge2020species}.
\end{remark}

Recall one can also do algebraic geometry over the \emph{tropical rig} 
\[
\bT:=\bR\sqcup \{ -\infty \}
.\] 
This is the rig with addition given by maximum and multiplication given by addition. Thus, $\bT^\times =\bR$. We have the following tropical analogs of the tori $\bC\text{T}^I$ and $\bC\text{T}^\vee_I$. Let $\text{T}^I=\bT \text{T}^I$ denote the tropical algebraic torus given by 
\[
\text{T}^I:=(\bT^\times)^I/\bT^\times=\bR^I/ \bR \lambda_I= \bigslant{\{\text{functions}\ \lambda:I\to\bR\}}{\bR}  \cong \text{N}^I \otimes_{\bZ} \bT^\times
.\] 
Let $\text{T}^\vee_I=\bT\text{T}^\vee_I$ denote the tropical algebraic torus given by
\[
\text{T}^\vee_I  :=\bigg \{  h  \  \bigg | \  h=(h_\text{i})_{\text{i}\in I}\in \bR I,\    \sum_{\text{i}\in I} h_\text{i}=0    \bigg   \}  \cong \text{M}_I \otimes_{\bZ} \bT^\times
.\]
This is the sum-zero hyperplane of $\bR I$. Similar to $\bC\text{T}^I$, we may interpret $\text{T}^I$ as the worldline $\bR$ with $I$-marked points, modulo \hbox{time-translational} symmetry. 

At the level of real vector spaces, we have the identification $\text{T}^I= \Hom(  \text{T}^\vee_I, \bR  )$ given by the pairing
\[ 
\la-, - \ra:  \text{T}^\vee_I   \times  \text{T}^I  \to \bR
,\qquad   
\la h, \lambda  \ra := \sum_{\text{i}\in I}h_\text{i} \,  \lambda(\text{i})    
.\]
In particular, for each $h\in \text{M}_I$ we have the tropical monomial
\[
  h: \text{T}^I \to \bT
,\qquad 
\lambda \mapsto \la h, \lambda  \ra 
\]
and for each $\lambda \in \text{N}^I$ we have the tropical monomial
\[
 \lambda: \text{T}^\vee_I \to \bT
,\qquad
h \mapsto \la h,\lambda  \ra 
.\]


\subsection{Hyperplane Arrangements and the Monoid of Semisimple Flats} \label{sec:monoid}

For $(\text{i}_1,\text{i}_2)\in [2;I]$, the \emph{reflection hyperplane} \hbox{$\mathcal{H}_{(\text{i}_1|\text{i}_2)}\subset \text{T}^I$} is given by
\[      
\mathcal{H}_{(\text{i}_1|\text{i}_2)}  :=  \big  \{   \lambda\in \text{T}^I \ \big | \   \la h_{\text{i}_1  \text{i}_2},\lambda\ra =0\big       \}
.\]
This is the tropical variety cut out by the tropical polynomial ``$0+h_{\text{i}_1  \text{i}_2}$''. The collection of all reflection hyperplanes in $\text{T}^I$ is called the \emph{braid arrangement} over $I$. For $(S,T)\in [I;2]$, the \emph{special hyperplane} $\mathcal{H}_{(S|T)}\subset \text{T}^\vee_I$ is given by
\[
\mathcal{H}_{(S|T)}:=  \big  \{  h\in \text{T}^\vee_I \ \big | \  \la h,\lambda_{ST}\ra =0\big       \}              
.\]
This is the tropical variety cut out by the tropical polynomial ``$0+\lambda_{ST}$''. The collection of all special hyperplanes in $\text{T}^\vee_I$ is called the \emph{adjoint braid arrangement} over $I$, which is the adjoint of the braid arrangement in the sense of \cite{aguiar2017topics}. 

For $F=(S_1,\dots, S_k)$ a composition of $I$, the \emph{semisimple flat} $\textsf{T}^\vee_F \subseteq \text{T}^\vee_I$ is given by
\[
\textsf{T}^\vee_F:=  \big \{  h\in \text{T}^\vee_I \ \big | \     \la  h,\lambda_{S_i} \ra  =0  \ \text{for} \  1\leq i \leq k      \big \}         
.\]
We have the set cospecies $\textsf{T}^\vee$ given by
\[
\textsf{T}^\vee[F]:= \textsf{T}^\vee_F 
\]
\[
(\sigma(h))_{\text{i}} := h_{\sigma^{-1}(\text{i})}
,\qquad 
\beta=\text{id}
.\]
Moreover, $\textsf{T}^\vee$ is a monoid with multiplication the inclusion of semisimple flats,
\[
\mu_{F,G}  : \textsf{T}^\vee_F \hookrightarrow \textsf{T}^\vee_G 
,\qquad
h\mapsto h
.\]
Let $\text{T}^\vee$ denote the Joyalization of $\textsf{T}^\vee$. A key fact is that semisimple flats factorize, in the sense that $\textsf{T}^\vee$ is strongly Joyal-theoretic; we have the isomorphism
\[
\eta: \text{T}^\vee \to \textsf{T}^\vee 
\]
given by
\[
\eta_{F}:   \text{T}^\vee_{S_1}    \times  \dots \times   \text{T}^\vee_{S_k}   \xrightarrow{\sim}   \textsf{T}^\vee_F 
,\qquad
\big ( (h_\text{i})_{\text{i}\in S_1} ,\dots ,(h_\text{i})_{\text{i} \in S_k} \big )  \mapsto  (h_\text{i})_{\text{i}\in I} 
.\]
We denote the corresponding multiplication of $\text{T}^\vee$ by $\text{m}_{F,G}$. Particularly important for us is the multiplication $\text{m}_{F}=\text{m}_{F,(I)}=\mu_{F,(I)} \circ \eta_{F}$, which is given by
\[
\text{m}_{F}: \text{T}^\vee_{S_1}    \times  \dots \times   \text{T}^\vee_{S_k}  \hookrightarrow \text{T}^\vee_I
,\qquad 
\big ( (h_\text{i})_{\text{i}\in S_1} ,\dots ,(h_\text{i})_{\text{i} \in S_k} \big )  \mapsto  (h_\text{i})_{\text{i}\in I} 
.\]


\begin{remark}
	Dual to $\textsf{T}^\vee$, there is a comonoid $\textsf{T}$, which is the restriction of the comultiplication of tropical permutohedral space to its embedded torus.
\end{remark}

\subsection{Cones}\label{sec:cones}

Given a subset $X$ of $\text{T}^I$ or $\text{T}_I^\vee$, we have the conical subspace $\text{Coni}\, X$ consisting of all non-negative $\bR$-linear combinations of elements of $X$. For $p\in \textsf{O}_\bullet[I]$ a preposet, let
\[
\sigma^\vee_p: =  \text{Coni}\big   \{ h_{\text{i}_1 \text{i}_2}  \ \big | \  (\text{i}_1,\text{i}_2)\in p\big \} 
\qquad \text{and} \qquad 
\sigma^o_p : =-\sigma^\vee_p =    \text{Coni}\big   \{ h_{\text{i}_1 \text{i}_2} \ \big | \ (\text{i}_2,\text{i}_1)\in p\big \} 
.\]
For the distinguished element $\bullet\in \textsf{O}_\bullet[I]$, we put $\sigma^o_\bullet:=\emptyset$. We have
\[
\sigma^o_q  \subseteq   \sigma^o_p\qquad  \iff   \qquad  p\leq q
.\]
The set of coroots given by $p$ is closed under conical combinations:

\begin{prop}
For $p\in \textsf{O}_\bullet[I]$ a preposet, we have 
\[
\sigma^o_p  \    \cap \   \big  \{ h_{\text{i}_1 \text{i}_2} \ \big | \  (\text{i}_1,\text{i}_2)\in [ 2 ;I ]\big \}\  =\      \big  \{ h_{\text{i}_1 \text{i}_2}  \ \big | \  (\text{i}_2,\text{i}_1)\in p \big \} 
.\]
\end{prop}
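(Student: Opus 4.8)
The inclusion $\supseteq$ is immediate from the definition of $\sigma^o_p$, so the entire content is the inclusion $\subseteq$: if a coroot $h_{\text{i}_1\text{i}_2}$ lies in the cone $\sigma^o_p$, then the pair $(\text{i}_2,\text{i}_1)$ must already belong to $p$. The plan is to write a generic element of $\sigma^o_p$ as a non-negative linear combination $h_{\text{i}_1\text{i}_2} = \sum_{(\text{j}_2,\text{j}_1)\in p} c_{\text{j}_1\text{j}_2}\, h_{\text{j}_1\text{j}_2}$ with $c_{\text{j}_1\text{j}_2}\geq 0$, and then extract arithmetic constraints by pairing both sides with suitable fundamental weights $\lambda_{ST}$ (equivalently, evaluating on characteristic functions $\lambda_S$). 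Recall $\la h_{\text{j}_1\text{j}_2},\lambda\ra = \lambda(\text{j}_1)-\lambda(\text{j}_2)$, so pairing with $\lambda_S$ for $S\subseteq I$ counts (with the coefficients $c$) the signed crossings of the pairs across the ``cut'' $S \mid I\setminus S$.

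First I would fix the target coroot $h_{\text{i}_1\text{i}_2}$ in $\sigma^o_p$ and choose $S = \{\text{i}_1\}$ (so $T = I\setminus\{\text{i}_1\}$, and note $\text{i}_2\in T$). Pairing the expression with $\lambda_S$ gives $1 = \sum c_{\text{j}_1\text{j}_2}(\lambda_S(\text{j}_1)-\lambda_S(\text{j}_2))$; every term is $c_{\text{j}_1\text{j}_2}$ times $+1$, $0$, or $-1$, with $+1$ occurring exactly when $\text{j}_1=\text{i}_1\neq\text{j}_2$ and $-1$ exactly when $\text{j}_2=\text{i}_1\neq\text{j}_1$. Since all $c\geq 0$, this forces at least one pair $(\text{j}_2,\text{i}_1)\in p$ with coefficient $c_{\text{i}_1\text{j}_2}>0$. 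To pin down that $\text{j}_2$ can be taken to be $\text{i}_2$, I would additionally pair with $\lambda_{\{\text{i}_2\}}$ (giving $-1$ on the left), which forces some pair with $\text{j}_2=\text{i}_2$ and positive coefficient appearing with sign $-1$, i.e. $(\text{i}_2,\text{j}_1)\in p$; combined with transitivity of $p$ and a careful bookkeeping of which pairs can carry positive weight, one concludes $(\text{i}_2,\text{i}_1)\in p$. The cleanest route is probably to argue by a ``support'' / extreme-ray analysis: decompose $h_{\text{i}_1\text{i}_2}$ using only the coroots that are extreme rays of $\sigma^o_p$, use the transitivity of $p$ to see that $\sigma^o_p$ is exactly the cone of the poset's comparability digraph, and invoke the standard fact that the only $\{0,\pm1\}$-coroots inside such a cone are those coming from actual comparabilities (a directed-path argument: if $h_{\text{i}_1\text{i}_2}\in\text{Coni}\{h_{\text{j}_1\text{j}_2}: (\text{j}_2,\text{j}_1)\in p\}$ then there is a directed path in $p$ from $\text{i}_1$ to $\text{i}_2$ via the edges with positive coefficient, hence $\text{i}_1 \leq_p \text{i}_2$, i.e. $(\text{i}_2,\text{i}_1)\in p$ by the convention \eqref{eq:modeling}).

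The main obstacle I anticipate is the sign/indexing bookkeeping: the paper's convention identifies $p$ with pairs $(\text{i}_1,\text{i}_2)$ meaning $\text{i}_1\geq_p\text{i}_2$, while $\sigma^o_p$ is generated by $h_{\text{i}_1\text{i}_2}$ with $(\text{i}_2,\text{i}_1)\in p$, so one must be scrupulous about which direction each coroot points and which inequality each pairing produces. I would handle the bullet case $p=\bullet$ trivially since $\sigma^o_\bullet = \emptyset$ and there are no comparabilities, making both sides empty. The telescoping directed-path argument is the conceptual heart; once the digraph reformulation is in place, the conclusion is a one-line consequence of transitivity, and the only real work is justifying that a positive conical combination of ``comparability'' coroots equal to a single coroot must be supported on a directed path — which follows by applying the pairing with $\lambda_{\{\text{i}_1\}}$ and then induction on the number of vertices between $\text{i}_1$ and $\text{i}_2$ in the support digraph, repeatedly peeling off an outgoing edge from $\text{i}_1$.
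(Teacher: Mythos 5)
Your argument is essentially correct, but it is worth noting that the paper does not prove this proposition at all: its ``proof'' is a citation to an external reference (\cite[Section 3]{vic}), so your self-contained argument is a genuine alternative rather than a reproduction. The conceptual core you identify is the right one: writing $h_{\text{i}_1\text{i}_2}=\sum c_{\text{j}_1\text{j}_2}h_{\text{j}_1\text{j}_2}$ over generators with $(\text{j}_2,\text{j}_1)\in p$, viewing the support as a digraph with edges $\text{j}_1\to\text{j}_2$ (each edge recording $\text{j}_2\geq_p\text{j}_1$), extracting a directed path from $\text{i}_1$ to $\text{i}_2$, and then telescoping with transitivity of $p$ to get $\text{i}_2\geq_p\text{i}_1$, i.e. $(\text{i}_2,\text{i}_1)\in p$. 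The sign bookkeeping you worry about is handled correctly, and the $\supseteq$ inclusion and the $p=\bullet$ remark are fine (the latter is vacuous since the statement assumes $p$ is a preposet).

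The one step that needs more care than your sketch provides is the path extraction itself. Pairing with $\lambda_{\{\text{i}_1\}}$ and $\lambda_{\{\text{i}_2\}}$ separately only shows there is a positive-coefficient edge leaving $\text{i}_1$ and one entering $\text{i}_2$, and the ``repeatedly peel off an outgoing edge from $\text{i}_1$'' walk can in principle wander around a directed cycle of the support without ever reaching $\text{i}_2$, so the induction as literally stated is not yet a proof. The clean fix stays entirely within your toolkit: let $R$ be the set of vertices reachable from $\text{i}_1$ along positive-coefficient edges and pair the identity with $\lambda_R$; if $\text{i}_2\notin R$ the left side is $1$ while every term on the right is $\leq 0$ (edges with tail in $R$ have head in $R$, hence contribute $0$; edges with tail outside $R$ contribute $0$ or $-c$), a contradiction. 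Equivalently, one can first cancel cycles in the support and then run your peeling induction. With that patch the proof is complete, and it is arguably preferable to the paper's treatment in that it makes the statement self-contained.
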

\begin{proof}
This follows from e.g. \cite[Section 3]{vic}. 
\end{proof}
\begin{cor}
The map $p\mapsto \sigma^o_p$ is a one-to-cone correspondence between preposets and cones which are generated by coroots. 
\end{cor}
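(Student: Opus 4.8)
The statement to prove is the corollary asserting that $p \mapsto \sigma^o_p$ is a bijection between preposets of $I$ (including the distinguished element $\bullet$, whose cone is $\emptyset$) and cones generated by coroots. The plan is to establish this by constructing an explicit inverse map and checking that the two composites are the identity. The two ingredients already available are: (i) the preceding proposition, which says that for a genuine preposet $p$, the coroot generators lying inside $\sigma^o_p$ are exactly $\{h_{\mathrm{i}_1\mathrm{i}_2} : (\mathrm{i}_2,\mathrm{i}_1)\in p\}$; and (ii) the observation recorded just before that proposition that $\sigma^o_q \subseteq \sigma^o_p \iff p \leq q$, which in particular gives injectivity once we know both sides are order-reversing embeddings. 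So really the content is surjectivity together with injectivity, and the cleanest route is via the inverse.

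First I would define, for any cone $\tau$ generated by some set of coroots, the relation $p_\tau$ on $I$ by declaring $(\mathrm{i}_2,\mathrm{i}_1)\in p_\tau$ precisely when $h_{\mathrm{i}_1\mathrm{i}_2}\in\tau$ (and adding reflexivity by fiat, or rather noting that $h_{\mathrm{i}\mathrm{i}}=0\in\tau$ automatically, so reflexivity is free once we also include the diagonal pairs in the set-theoretic model via \eqref{eq:modeling}); if $\tau=\emptyset$ set $p_\tau:=\bullet$. The two things to verify are: (a) $p_\tau$ is transitive — this is where the conical-closure structure does the work, since $(\mathrm{i}_3,\mathrm{i}_2),(\mathrm{i}_2,\mathrm{i}_1)\in p_\tau$ means $h_{\mathrm{i}_1\mathrm{i}_2},h_{\mathrm{i}_2\mathrm{i}_3}\in\tau$, hence $h_{\mathrm{i}_1\mathrm{i}_3}=h_{\mathrm{i}_1\mathrm{i}_2}+h_{\mathrm{i}_2\mathrm{i}_3}\in\tau$, giving $(\mathrm{i}_3,\mathrm{i}_1)\in p_\tau$; and (b) $\sigma^o_{p_\tau}=\tau$. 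For (b), the inclusion $\sigma^o_{p_\tau}\subseteq\tau$ is immediate because every generator of $\sigma^o_{p_\tau}$ lies in $\tau$ by construction and $\tau$ is a cone; the reverse inclusion $\tau\subseteq\sigma^o_{p_\tau}$ needs that $\tau$, being generated by coroots, is generated by \emph{all} the coroots it contains — i.e. that $\tau = \mathrm{Coni}(\tau\cap\{h_{\mathrm{i}_1\mathrm{i}_2}\})$ — which is exactly the hypothesis ``generated by coroots'' unpacked, so each generating coroot $h_{\mathrm{i}_1\mathrm{i}_2}$ of $\tau$ satisfies $(\mathrm{i}_2,\mathrm{i}_1)\in p_\tau$ and hence lies in $\sigma^o_{p_\tau}$.

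Then I would close the loop in the other direction: starting from a preposet $p$, form $\tau=\sigma^o_p$ and check $p_{\sigma^o_p}=p$. By definition $(\mathrm{i}_2,\mathrm{i}_1)\in p_{\sigma^o_p}$ iff $h_{\mathrm{i}_1\mathrm{i}_2}\in\sigma^o_p$, and the preceding proposition identifies $\sigma^o_p\cap\{\text{coroots}\}$ with $\{h_{\mathrm{i}_1\mathrm{i}_2}:(\mathrm{i}_2,\mathrm{i}_1)\in p\}$, so $h_{\mathrm{i}_1\mathrm{i}_2}\in\sigma^o_p$ iff $(\mathrm{i}_2,\mathrm{i}_1)\in p$; this is precisely the statement $p_{\sigma^o_p}=p$. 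Handling the distinguished element is trivial since $\sigma^o_\bullet=\emptyset$ is the unique coroot-generated cone containing no coroot (in particular not the zero cone $\{0\}$, which is $\sigma^o_p$ for the complete preorder — one should note the convention that $\emptyset$ and $\{0\}$ are distinguished, matching $\bullet$ versus the total preposet where everything is related to everything). Assembling: $p\mapsto\sigma^o_p$ and $\tau\mapsto p_\tau$ are mutually inverse, hence the former is a bijection.

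The main obstacle is step (b), specifically the reverse inclusion $\tau\subseteq\sigma^o_{p_\tau}$, which hinges on reading the phrase ``cone generated by coroots'' correctly: one must argue that such a cone is \emph{saturated} in the sense of equalling the conical hull of the set of all coroots it contains, not merely the conical hull of some chosen generating subset. This is really a small lemma about the braid arrangement's coroots — that no coroot can be a non-negative combination of other coroots unless it is visibly ``between'' them in the preorder sense — and it is exactly the content the cited reference \cite[Section 3]{vic} supplies via the preceding proposition; so I would invoke that proposition for the specific $p=p_\tau$ to get the saturation for free rather than re-deriving it. Everything else is bookkeeping with the set-theoretic model \eqref{eq:modeling} of preposets and the elementary cone arithmetic $h_{\mathrm{i}_1\mathrm{i}_2}+h_{\mathrm{i}_2\mathrm{i}_3}=h_{\mathrm{i}_1\mathrm{i}_3}$.
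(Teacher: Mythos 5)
Your proof is correct and is essentially the argument the paper leaves implicit: the corollary is deduced from the preceding proposition, which you invoke (correctly) to close the loop $p_{\sigma^o_p}=p$, while the other composite $\sigma^o_{p_\tau}=\tau$ and the transitivity of $p_\tau$ are elementary cone arithmetic with $h_{\text{i}_1\text{i}_2}+h_{\text{i}_2\text{i}_3}=h_{\text{i}_1\text{i}_3}$. Two small corrections. First, your closing paragraph misplaces where the real content sits: the inclusion $\tau\subseteq\sigma^o_{p_\tau}$ is immediate from the definition of ``generated by coroots'' (every generating coroot of $\tau$ lies in $\tau$, hence indexes a generator of $\sigma^o_{p_\tau}$), so no saturation lemma is needed there; the proposition is genuinely needed only for $p_{\sigma^o_p}=p$, i.e.\ to rule out extra coroots creeping into $\sigma^o_p$ — which is exactly where you in fact use it. Second, the parenthetical about distinguished cases is misstated: the zero cone $\{0\}$ is $\sigma^o_p$ for the \emph{discrete} preposet (no relations between distinct elements, the empty subset of $[2;I]$), not for the complete preorder, whose cone is all of $\text{T}^\vee_I$; the point you actually need — that the empty cone corresponds to $\bullet$ and to nothing else — is nevertheless correct, so neither slip affects the validity of the bijection.
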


The $\sigma^o_p$, for $p$ a preposet, are generalized permutohedral tangent cones, and the $\sigma^o_F$, for $F$ a total preposet, are permutohedral tangent cones. We have the following important result, which describes the cone $\sigma^o_p$ dually using $[I;2]$ instead of $[2;I]$.

\begin{prop}\label{prop:dualdes}
For $p\in \textsf{O}_\bullet[I]$ a preposet, we have
\[
\sigma^o_p = \big \{   h\in \text{T}_I^\vee \ \big |  \  \la h, \lambda_{ST}  \ra       \leq    0 \quad  \text{for all} \quad  (S,T)\leq  p   \big \}
.\]
\end{prop}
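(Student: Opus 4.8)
The plan is to prove the two inclusions separately, using the definition $\sigma^o_p = \mathrm{Coni}\{h_{\mathtt{i}_1\mathtt{i}_2} \mid (\mathtt{i}_2,\mathtt{i}_1)\in p\}$ together with the characterization in \textcolor{blue}{(\refeqq{eq:modeling1})} that $(S,T)\leq p$ precisely when $S$ is upward-closed (hence $T$ downward-closed) for $\geq_p$. Write $C_p := \{h\in \text{T}^\vee_I \mid \langle h,\lambda_{ST}\rangle \leq 0 \text{ for all } (S,T)\leq p\}$ for the right-hand side. The distinguished case $p=\bullet$ should be handled first: by convention $\sigma^o_\bullet = \emptyset$, and one checks that there is no $(S,T)\in[I;2]$ with $(S,T)\leq \bullet$ in the sense intended — or rather, one must verify the conventions make $C_\bullet = \emptyset$ as well; I would note that the statement is really about honest preposets $p$ and treat $\bullet$ as a degenerate bookkeeping case. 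So assume $p$ is a genuine preposet.

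\textbf{The easy inclusion $\sigma^o_p \subseteq C_p$.} Since $C_p$ is an intersection of closed half-spaces through the origin, it is a closed convex cone, so it suffices to check the generators $h_{\mathtt{i}_1\mathtt{i}_2}$ with $(\mathtt{i}_2,\mathtt{i}_1)\in p$ lie in $C_p$. Fix such a generator and fix $(S,T)\leq p$. Then $\langle h_{\mathtt{i}_1\mathtt{i}_2}, \lambda_{ST}\rangle = \lambda_S(\mathtt{i}_1) - \lambda_S(\mathtt{i}_2)$, which is nonpositive unless $\mathtt{i}_1\in S$ and $\mathtt{i}_2\notin S$. But $(\mathtt{i}_2,\mathtt{i}_1)\in p$ means $\mathtt{i}_2\geq_p\mathtt{i}_1$, and $S$ is upward-closed, so $\mathtt{i}_1\in S$ forces $\mathtt{i}_2\in S$ — contradiction. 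Hence $\langle h_{\mathtt{i}_1\mathtt{i}_2},\lambda_{ST}\rangle \leq 0$ for all $(S,T)\leq p$, so each generator lies in $C_p$, and therefore $\sigma^o_p\subseteq C_p$.

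\textbf{The hard inclusion $C_p \subseteq \sigma^o_p$.} This is where the real work is, and where I expect the main obstacle. The cleanest route is to dualize: $\sigma^o_p = -\sigma^\vee_p$ and $\sigma^\vee_p = \mathrm{Coni}\{h_{\mathtt{i}_1\mathtt{i}_2}\mid (\mathtt{i}_1,\mathtt{i}_2)\in p\}$, so $C_p \subseteq \sigma^o_p$ is equivalent to $(\sigma^\vee_p)^\vee \subseteq -C_p = \{h \mid \langle h,\lambda_{ST}\rangle \geq 0 \text{ for all }(S,T)\leq p\}$ (working in $\text{T}^\vee_I$ with the induced inner product) — wait, more carefully: the standard duality says a closed convex cone equals its double dual, so $\sigma^o_p = (\sigma^o_p)^{\vee\vee}$, and I would instead show directly that the dual cone $(\sigma^o_p)^\vee$ is generated by the fundamental weights $\lambda_{TS}$ (equivalently $-\lambda_{ST}$) for $(S,T)\leq p$; then taking duals again gives $\sigma^o_p = C_p$. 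Concretely: $(\sigma^o_p)^\vee = \{\lambda \in \text{T}^I \mid \langle h_{\mathtt{i}_1\mathtt{i}_2},\lambda\rangle \geq 0 \text{ for all }(\mathtt{i}_2,\mathtt{i}_1)\in p\} = \{\lambda \mid \lambda(\mathtt{i}_2)\geq\lambda(\mathtt{i}_1) \text{ whenever } \mathtt{i}_2\geq_p\mathtt{i}_1\}$, i.e. the cone of $p$-monotone functions. The key combinatorial lemma is that this cone of monotone functions is generated (as a cone, mod constants) by the characteristic functions $\lambda_S$ of upward-closed sets $S$ — this is the classical fact that the monotone functions on a preposet form a cone whose extreme rays are the indicators of up-sets, provable by a straightforward "slice by level sets" argument: given a $p$-monotone $\lambda$, write it as a nonnegative combination $\sum_t (a_{t}-a_{t+1})\lambda_{S_t}$ where $S_t = \{\mathtt{i}\mid\lambda(\mathtt{i})\geq a_t\}$ over the distinct values $a_1 > a_2 > \cdots$, each $S_t$ being upward-closed by monotonicity, modulo adding a constant. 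Once this is established, $(\sigma^o_p)^\vee = \mathrm{Coni}\{\lambda_S \mid S \text{ up-set}\} = \mathrm{Coni}\{\lambda_{TS} \mid (S,T)\leq p\}$ in $\text{T}^I$, and dualizing once more yields $\sigma^o_p = (\sigma^o_p)^{\vee\vee} = C_p$. I would also invoke the cited reference \cite{vic} (Section 3), which the previous Proposition already uses, since the correspondence between preposets, coroot cones, and their half-space descriptions is standard there; this may shorten the argument considerably. The main obstacle is getting the monotone-function-cone generation lemma stated and applied cleanly, and being careful about the quotient by $\bR\lambda_I$ throughout (all the cones live in quotient/subspaces, so "modulo constants" must be tracked).
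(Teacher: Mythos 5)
Your argument is sound, but it is worth knowing that the paper does not prove this proposition at all: its ``proof'' is the single line ``See e.g.\ [Proposition 3.2, norledge2019hopf]''. So your self-contained route --- check the generators for the easy inclusion, then obtain the reverse inclusion by showing $(\sigma^o_p)^{\vee}$ is generated by the weights $\lambda_{TS}=-\lambda_{ST}$ with $(S,T)\leq p$ via the level-set (``layer cake'') decomposition of order-compatible functions, and conclude by $\sigma^o_p=(\sigma^o_p)^{\vee\vee}$ since a finitely generated cone is closed --- is exactly the standard argument the citation hides, and it is the right one. What the citation buys the paper is brevity; what your version buys is that the only nontrivial input (indicators of up-sets generate the monotone cone modulo constants) is made explicit, together with the bookkeeping in the quotient $\text{T}^I=\bR^I/\bR\lambda_I$ that you correctly promise to track.

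One slip to fix, though it does not break the proof: with the paper's conventions, $(\sigma^o_p)^\vee=\{\lambda \mid \la h_{\text{i}_1\text{i}_2},\lambda\ra\geq 0 \text{ for } (\text{i}_2,\text{i}_1)\in p\}$ is the set of $\lambda$ with $\lambda(\text{i}_1)\geq\lambda(\text{i}_2)$ whenever $\text{i}_2\geq_p\text{i}_1$, i.e.\ the \emph{antitone} functions, whose level-set generators (mod constants) are indicators of \emph{down}-sets, i.e.\ $\lambda_T=\lambda_{TS}=-\lambda_{ST}$ for $(S,T)\leq p$. Your parenthetical ``$p$-monotone functions'' and the displayed identity $\mathrm{Coni}\{\lambda_S\mid S\text{ up-set}\}=\mathrm{Coni}\{\lambda_{TS}\}$ mix the two sign conventions (the left-hand side would be $\mathrm{Coni}\{\lambda_{ST}\}$); but since the generators you actually feed into the final double-dualization are $\lambda_{TS}=-\lambda_{ST}$, the conclusion $\sigma^o_p=\{h\mid \la h,\lambda_{ST}\ra\leq 0 \text{ for all }(S,T)\leq p\}$ comes out correctly. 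If you prefer to keep the word ``monotone'', dualize $\sigma^\vee_p$ instead and negate at the end. Also note the proposition's hypothesis ``$p$ a preposet'' already excludes $\bullet$, so your degenerate-case caveat can simply be dropped.
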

\begin{proof}
See e.g. \cite[Proposition 3.2]{norledge2019hopf}.
\end{proof}

\begin{cor}
The polar cone $\sigma_p$ of $\sigma^o_p$ ($=\, $the dual cone of $\sigma_p^\vee$) is given by
\[
\sigma_p =   \text{Coni}\big   \{ \lambda_{ST} \ \big | \  (S,T) \leq p \big \} 
.\]
\end{cor}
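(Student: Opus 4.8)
The statement to prove is the Corollary: for $p\in \textsf{O}_\bullet[I]$ a preposet, the polar cone $\sigma_p$ of $\sigma^o_p$ equals $\text{Coni}\{\lambda_{ST}\mid (S,T)\leq p\}$. The plan is to deduce this from \autoref{prop:dualdes} by the standard polar-duality dictionary. First I would handle the degenerate case $p=\bullet$ separately if needed (there $\sigma^o_\bullet=\emptyset$, and one uses the convention for the polar of the empty cone); for the rest assume $p$ is an honest preposet. For the main case, recall that \autoref{prop:dualdes} presents $\sigma^o_p$ as an intersection of half-spaces, namely $\sigma^o_p=\{h\in \text{T}^\vee_I\mid \langle h,\lambda_{ST}\rangle\leq 0\ \text{for all}\ (S,T)\leq p\}$. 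This is exactly the polar dual of the conical hull of the vectors $\{\lambda_{ST}\mid (S,T)\leq p\}$, sitting inside the dual vector space $\text{T}^I=\Hom(\text{T}^\vee_I,\bR)$.

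The key step is then the biduality theorem for finitely generated (equivalently, polyhedral) cones: if $C=\text{Coni}\{v_1,\dots,v_m\}$ is a finitely generated cone in a finite-dimensional real vector space $V$, then $C$ is closed and $(C^\circ)^\circ=C$, where $C^\circ=\{h\in V^*\mid \langle h,v\rangle\leq 0\ \forall v\in C\}$. Since the set $\{(S,T)\in[I;2]\mid (S,T)\leq p\}$ is finite, the cone $C:=\text{Coni}\{\lambda_{ST}\mid (S,T)\leq p\}\subseteq \text{T}^I$ is finitely generated, hence polyhedral, hence equal to its own double polar. By \autoref{prop:dualdes}, $\sigma^o_p=C^\circ$. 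Taking polars once more, $\sigma_p:=(\sigma^o_p)^\circ=(C^\circ)^\circ=C=\text{Coni}\{\lambda_{ST}\mid (S,T)\leq p\}$, which is the claim. One should also remark that the parenthetical identification $\sigma_p=(\sigma^\vee_p)^\vee$ in the statement is consistent: since $\sigma^o_p=-\sigma^\vee_p$ and the polar of $-C$ is $-(C^\circ)$, and since the dual cone $(\,\cdot\,)^\vee$ differs from the polar $(\,\cdot\,)^\circ$ only by an overall sign, the sign bookkeeping works out to give the same cone $\sigma_p$ either way — I would include a one-sentence verification of this so the reader isn't left to chase signs.

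The only genuine obstacle is making the polar/dual-cone conventions fully consistent with those already fixed in the paper: the text uses both $\sigma^\vee_p$ (a "dual cone") and $\sigma^o_p=-\sigma^\vee_p$, and the pairing $\langle\,\cdot\,,\cdot\,\rangle:\text{T}^\vee_I\times\text{T}^I\to\bR$ is the one relative to which all of this is set up. I would be careful to state which of $\text{T}^I$ and $\text{T}^\vee_I$ is the ambient space for $\sigma_p$ (it is $\text{T}^I$, the space containing the fundamental weights $\lambda_{ST}$) versus $\sigma^o_p$ (which lives in $\text{T}^\vee_I$), and to note that finite generation is what licenses biduality — the cones here are polyhedral precisely because $[I;2]$ is finite, so no closure subtleties arise. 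Beyond that, the argument is a routine invocation of a standard fact from the theory of polyhedral cones (e.g. the Farkas–Minkowski–Weyl theorem), and I would cite it rather than reprove it.
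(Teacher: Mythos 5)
Your proposal is correct and takes essentially the same route as the paper: the paper's proof is the one-line assertion that the corollary "follows directly from the definition of polar (or dual) cones," which amounts exactly to your deduction from \autoref{prop:dualdes} (the halfspace description of $\sigma^o_p$ as the polar of $\mathrm{Coni}\{\lambda_{ST} \mid (S,T)\leq p\}$) via biduality of finitely generated cones. The only difference is that you spell out the biduality step and the sign/convention bookkeeping that the paper leaves implicit.
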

\begin{proof}
	This follows directly from the definition of polar (or dual) cones.
\end{proof}

Thus $p\mapsto \sigma_p$ is a one-to-cone correspondence between preposets and cones ($=$ an intersection of halfspaces) of the braid arrangement. The \emph{braid fan} over $I$ is the set of conical subspaces given by
\[
\big \{   \sigma_F : F\in \Sigma[I] \big   \}
.\]

The following gives us a geometric interpretation of the multiplication of the bimonoid $\textsf{O}_\bullet$; it corresponds to taking products of cones.


\begin{prop}\label{lem:iso}
Given preposets $p \in \textsf{O}_\bullet[S]$ and $q \in \textsf{O}_\bullet[T]$, the multiplication $\text{m}_{(S,T)}$ restricts to an isomorphism of cones
\[
\text{m}_{(S,T)}:    \sigma^o_{p}  \times      \sigma^o_{q} \xrightarrow{\sim}   \sigma^o_{(p\, |\, q)} 
.\]

\end{prop}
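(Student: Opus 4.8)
The plan is to unwind all the definitions and verify the set-theoretic equality directly, then observe that the map is automatically a bijection because it is a restriction of the lattice isomorphism (really linear isomorphism over $\bR$) $\text{m}_{(S,T)}\colon \text{T}^\vee_S\times \text{T}^\vee_T\xrightarrow{\sim} \text{T}^\vee_I$. First I would dispose of the degenerate case: if $p=\bullet$ or $q=\bullet$, then by definition $(p\mid q)=\bullet$, so $\sigma^o_{(p\mid q)}=\emptyset$, and likewise $\sigma^o_p\times\sigma^o_q=\emptyset$ since one of the factors is empty; the claim is then the trivial statement that $\text{m}_{(S,T)}$ restricts to the unique map $\emptyset\to\emptyset$. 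So assume $p,q\neq\bullet$, in which case $(p\mid q)$ is literally the disjoint set-union $p\sqcup q\subseteq [2;I]$ (using $[2;S]\sqcup[2;T]\subseteq[2;I]$).

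The main computation is then to show $\text{m}_{(S,T)}\big(\sigma^o_p\times\sigma^o_q\big)=\sigma^o_{p\sqcup q}$. I would use the generators-of-cones description: $\sigma^o_p=\text{Coni}\{h_{\text{i}_1\text{i}_2}\mid (\text{i}_2,\text{i}_1)\in p\}\subseteq\text{T}^\vee_S$, and similarly for $q$. Under the embedding $\text{m}_{(S,T)}$, a coroot $h_{\text{i}_1\text{i}_2}\in\text{T}^\vee_S$ with $\text{i}_1,\text{i}_2\in S$ maps to the coroot $h_{\text{i}_1\text{i}_2}\in\text{T}^\vee_I$ (same formula $\lambda\mapsto\lambda(\text{i}_1)-\lambda(\text{i}_2)$, now read on $I$), and likewise for coroots indexed by pairs in $T$. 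Since $\text{m}_{(S,T)}$ is linear and injective, the image of the product cone $\sigma^o_p\times\sigma^o_q$ is exactly the conical hull of the union of the two generating sets, i.e. $\text{Coni}\{h_{\text{i}_1\text{i}_2}\mid (\text{i}_2,\text{i}_1)\in p\ \text{or}\ (\text{i}_2,\text{i}_1)\in q\}=\text{Coni}\{h_{\text{i}_1\text{i}_2}\mid (\text{i}_2,\text{i}_1)\in p\sqcup q\}=\sigma^o_{p\sqcup q}$. The one genuine subtlety here is that a priori $\text{Coni}(\text{m}(A)\cup\text{m}(B))$ versus $\text{m}(\text{Coni}\,A\times\text{Coni}\,B)$: injectivity and linearity of $\text{m}$ give $\text{m}(\text{Coni}\,A\times\text{Coni}\,B)=\text{Coni}(\iota_S(A))+\text{Coni}(\iota_T(B))$ where $\iota_S,\iota_T$ are the two coordinate inclusions, and the sum of two conical hulls inside a common space is the conical hull of the union; this is a routine fact about convex cones I would cite or state in one line.

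Alternatively — and this is the cleaner route I would actually prefer to write — I would use the dual (halfspace) description from \autoref{prop:dualdes}, together with the fact (\autoref{sec:monoid}) that $\text{m}_{(S,T)}$ identifies $\text{T}^\vee_S\times\text{T}^\vee_T$ with the semisimple flat $\textsf{T}^\vee_{(S,T)}=\{h\in\text{T}^\vee_I\mid \la h,\lambda_S\ra=0\}$. A point $h\in\text{T}^\vee_I$ lies in the image of $\text{m}_{(S,T)}$ iff $\la h,\lambda_{ST}\ra=0$, and then its preimage $(h|_S,h|_T)$ lies in $\sigma^o_p\times\sigma^o_q$ iff $\la h,\lambda_{AB}\ra\le 0$ for all $(A,B)\le p$ with $A,B\subseteq S$ and $\la h,\lambda_{CD}\ra\le 0$ for all $(C,D)\le q$ with $C,D\subseteq T$. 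On the other hand, $h\in\sigma^o_{p\sqcup q}$ iff $\la h,\lambda_{XY}\ra\le 0$ for all $(X,Y)\le p\sqcup q$. The content of the proof is then the combinatorial identity: for the total preposet $(S,T)$ restricted appropriately, $(X,Y)\le p\sqcup q$ holds exactly when $X\supseteq S$ and $X\cap S$ is upward-closed in $p$ and $X\cap T$ is upward-closed in $q$... — more precisely, upward-closed subsets of $p\sqcup q$ (as a preposet on $I$, recalling $S$ dominates $T$ in $(S,T)$) are exactly sets of the form $A\sqcup T$ with $A$ upward-closed in $p$, together with sets $C\subseteq T$ with $C$ upward-closed in $q$, and from $\la h,\lambda_{ST}\ra=0$ one checks $\la h,\lambda_{(A\sqcup T)(S\setminus A)}\ra=\la h,\lambda_{A(S\setminus A)}\ra$, reducing the $p\sqcup q$-inequalities to the separate $p$- and $q$-inequalities. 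I expect this identification of upward-closed sets of the ordinal sum $p\sqcup q$ (combined with the sum-zero condition $\la h,\lambda_S\ra=0$ to match the two dual descriptions term by term) to be the only real obstacle; everything else is bookkeeping, and finally injectivity/surjectivity of the restricted map is inherited from the ambient linear isomorphism $\text{m}_{(S,T)}$, so no separate argument is needed there.
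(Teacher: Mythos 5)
Your first route is exactly the paper's proof: the paper observes that the product cone is generated by the union of the two coroot generating sets, that $\text{m}_{(S,T)}$ carries this generating set onto the generating set of $\sigma^o_{(p\,|\,q)}$ (giving surjectivity), and that injectivity is inherited from the ambient injection. So the proposal is correct and, in its primary form, coincides with the paper.

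One caution about the alternative dual-description route you say you would prefer: $(p\,|\,q)$ is the \emph{disjoint} union of the relations $p$ and $q$, with no relations between $S$ and $T$ — it is not the ordinal sum in which $S$ dominates $T$. Consequently its upward-closed subsets are exactly the sets $A\sqcup C$ with $A$ upward-closed in $p$ and $C$ upward-closed in $q$, not the sets $A\sqcup T$ and $C\subseteq T$ you describe. With the correct characterization the dual argument does go through (split $\la h,\lambda_{A\sqcup C}\ra=\la h|_S,\lambda_A\ra+\la h|_T,\lambda_C\ra$ and specialize $C=\emptyset$, $C=T$), and note also that no sum-zero constraint $\la h,\lambda_{ST}\ra=0$ enters, since every point of $\textsf{T}^\vee_{(S,T)}$ already satisfies it; the ordinal-sum picture you invoked is the one relevant to the face/comultiplication statement (\autoref{lem:face}), not to this product statement.
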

\begin{proof}
The product cone $ \sigma^o_{p}  \times      \sigma^o_{q}\subseteq \text{T}^\vee_S \times  \text{T}^\vee_T$ is generated by the set
\[
  \big \{  h_{s_1 s_2} \ \big | \   (\text{s}_1, \text{s}_2) \in p   \big \}  \ \sqcup \   \big \{  h_{\text{t}_1 \text{t}_2} \ \big | \   (\text{t}_1, \text{t}_2) \in q   \big \}
.\]
The $\text{m}_{(S,T)}$-image of this generating set is the generating set of $\sigma^o_{(p\, |\, q)} $,
\[
\text{m}_{(S,T)}  
 \Big (             \big \{  h_{\text{s}_1 \text{s}_2} \ \big | \   (\text{s}_1, \text{s}_2) \in p   \big \}  \ \sqcup \   \big \{  h_{\text{t}_1 \text{t}_2} \ \big | \   (\text{t}_1, \text{t}_2) \in q   \big \}   \Big )
=
  \big \{  h_{\text{i}_1 \text{i}_2} \ \big | \   (\text{i}_1, \text{i}_2) \in  (p\, |\, q)  \big \} 
.\]
Therefore $ \sigma^o_{p}  \times      \sigma^o_{q} \xrightarrow{\sim}   \sigma^o_{(p\, |\, q)}$ is surjective. It is injective because it is the restriction of an injective map.
\end{proof}
	
More generally, it follows that the multiplication $\text{m}_{F,G}$ restricts to an isomorphism
\[
\text{m}_{F,G}:    \sigma^o_{p}  \xrightarrow{\sim}   \sigma^o_{\mu_{F,G}(p)} 
\]
where $p=(p_1, \dots, p_k)\in \textsf{O}_\bullet[F]$ and $\sigma^o_{p}:=\sigma^o_{p_1}\times \dots \times \sigma^o_{p_k}$. 

We now attempt something similar for the comultiplication of $\textsf{O}_\bullet$. Let the \emph{face} of the cone $\sigma^o_p$ in the $\lambda_{ST}$-direction be the region of $\sigma^o_p$ on which the function $\lambda_{ST}$ is maximized.

\begin{lem}\label{prop:face}
	If $(S,T)\leq p$, then the face of $\sigma^o_p$ in the $\lambda_{ST}$-direction is given by the intersection
	\[
 \sigma^o_{p}     \cap  \mathcal{H}_{(S|T)} 
	.\] 
	If $(S,T)\nleq p$, then $\sigma^o_p$ does not have a face in the $\lambda_{ST}$-direction.
\end{lem}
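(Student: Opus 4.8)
The statement is a standard fact about faces of polyhedral cones, so the proof will be a direct computation using the dual description of $\sigma^o_p$ from \autoref{prop:dualdes}. Recall that
\[
\sigma^o_p = \big \{   h\in \text{T}_I^\vee \ \big |  \  \la h, \lambda_{S'T'}  \ra \leq 0 \ \text{ for all } (S',T')\leq  p   \big \},
\]
and that on $\text{T}^\vee_I$ the function $\lambda_{ST}$ satisfies $\la h,\lambda_{ST}\ra \leq 0$ for every $h \in \sigma^o_p$ precisely when $(S,T)\leq p$ (this is the defining inequality being one of those in the intersection). So when $(S,T)\leq p$ the linear functional $\la -, \lambda_{ST}\ra$ is bounded above by $0$ on $\sigma^o_p$ and attains the value $0$ at the cone point, hence its maximum over $\sigma^o_p$ is $0$; the face in the $\lambda_{ST}$-direction is by definition the locus where this maximum is attained, i.e. $\{ h \in \sigma^o_p : \la h,\lambda_{ST}\ra = 0\} = \sigma^o_p \cap \mathcal{H}_{(S|T)}$, using the definition of $\mathcal{H}_{(S|T)}$ from \autoref{sec:monoid}. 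That handles the first clause.

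For the second clause, suppose $(S,T)\nleq p$. First I would note that $\sigma^o_p$ is generated by the coroots $\{ h_{\text{i}_1\text{i}_2} : (\text{i}_2,\text{i}_1)\in p\}$, and I want to show $\la -,\lambda_{ST}\ra$ is unbounded above on this cone. Since $(S,T)\nleq p$, $S$ is not an upward-closed subset of $p$ (equivalently $T$ is not downward-closed), so there exist $\text{s}\in S$ and $\text{t}\in T$ with $(\text{t},\text{s})\in p$, i.e. $\text{t}\geq_p \text{s}$. Then $h_{\text{s}\text{t}}$ is a generator of $\sigma^o_p$ (as $(\text{t},\text{s})\in p$), and
\[
\la h_{\text{s}\text{t}}, \lambda_{ST}\ra = \lambda_{ST}(\text{s}) - \lambda_{ST}(\text{t}) = \lambda_S(\text{s}) - \lambda_S(\text{t}) = 1 - 0 = 1 > 0,
\]
using that the fundamental weight $\lambda_{ST}$ is the image of $\lambda_S$ and the coroot pairing formula $\la h_{\text{i}_1\text{i}_2},\lambda\ra = \lambda(\text{i}_1)-\lambda(\text{i}_2)$. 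Hence the ray $\bR_{\geq 0}\, h_{\text{s}\text{t}} \subseteq \sigma^o_p$ is one along which $\la -,\lambda_{ST}\ra \to +\infty$, so the function has no maximum on $\sigma^o_p$ and there is no face in the $\lambda_{ST}$-direction.

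I expect no serious obstacle here: the only mild subtlety is pinning down the combinatorial translation ``$(S,T)\nleq p \iff S$ is not upward-closed'', which is already recorded in \autoref{sec:preposetcomp} (the set \refeqq{eq:modeling1} records the upward-closed/downward-closed splittings compatible with $p$), and checking the sign conventions so that the relevant coroot really is a generator of $\sigma^o_p$ rather than of $\sigma^\vee_p$ — i.e. being careful that $\sigma^o_p = -\sigma^\vee_p = \text{Coni}\{h_{\text{i}_1\text{i}_2} : (\text{i}_2,\text{i}_1)\in p\}$. Once those are in place the argument is the two short displayed computations above.
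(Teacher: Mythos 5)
Your proof is correct and follows essentially the same route as the paper: the first clause via the dual description of $\sigma^o_p$ (Proposition \ref{prop:dualdes}) plus the observation that the maximum $0$ is attained on $\sigma^o_p\cap\mathcal{H}_{(S|T)}$, and the second clause by extracting $\text{s}\in S$, $\text{t}\in T$ with $(\text{t},\text{s})\in p$ so that the generator $h_{\text{s}\text{t}}\in\sigma^o_p$ makes $\lambda_{ST}$ unbounded above.
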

\begin{proof}
	Suppose $(S,T)\leq p$. Then $\lambda_{ST}$ is non-positive on $\sigma^o_{p} $ by \autoref{prop:dualdes}, and $\lambda_{ST}$ is $0$ on $\mathcal{H}_{(S,T)}  $ by definition. Finally, $\sigma^o_{p}     \cap  \mathcal{H}_{(S|T)}$ is nonempty because they both contain the origin.
	
	If $(S,T)\nleq p$, then $(\text{t},\text{s})\in p$ for some $\text{s}\in S$ and $\text{t}\in T$. Therefore $h_{\text{s} \text{t}}\in \sigma^o_{p}$, and so $\lambda_{ST}$ obtains arbitrary large values on $\sigma^o_{p}$.
\end{proof}

The following gives us a geometric interpretation of the comultiplication of $\textsf{O}_\bullet$; it corresponds to taking faces of cones.

\begin{prop} \label{lem:face}
Given a preposet $p\in \textsf{O}_\bullet[I]$ and $(S,T)\leq p$, the multiplication $\text{m}_{(S,T)}$ restricts to an embedding of cones
\[
\text{m}_{(S,T)}:              \sigma^o_{ p\talloblong_S}      \times        \sigma^o_{ p \!   \fatslash_{\, T}}         \hookrightarrow     \sigma^o_{p}  
\]
with image the face of $\sigma^o_p$ in the $\lambda_{ST}$-direction.
\end{prop}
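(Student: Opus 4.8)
The statement to prove is that for a preposet $p\in\textsf{O}_\bullet[I]$ with $(S,T)\leq p$, the multiplication map $\text{m}_{(S,T)}$ restricts to an embedding of the product cone $\sigma^o_{p\talloblong_S}\times\sigma^o_{p\fatslash_{\,T}}$ into $\sigma^o_p$ whose image is exactly the face of $\sigma^o_p$ in the $\lambda_{ST}$-direction. Since $(S,T)\leq p$, we have $p\talloblong_S=p|_S$ and $p\fatslash_{\,T}=p|_T$, so the left-hand side is $\sigma^o_{p|_S}\times\sigma^o_{p|_T}$. The overall strategy is to first identify the image of $\text{m}_{(S,T)}$ with the intersection $\sigma^o_p\cap\mathcal{H}_{(S|T)}$, and then invoke \autoref{prop:face} to identify that intersection with the face in the $\lambda_{ST}$-direction.

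\textbf{Step 1: set up generators.} First I would recall from \autoref{sec:cones} that $\sigma^o_{p|_S}$ is generated by $\{h_{\text{s}_1\text{s}_2}\mid(\text{s}_2,\text{s}_1)\in p|_S\}$ and similarly for $\sigma^o_{p|_T}$, so the product cone $\sigma^o_{p|_S}\times\sigma^o_{p|_T}\subseteq\text{T}^\vee_S\times\text{T}^\vee_T$ is generated by $\{h_{\text{s}_1\text{s}_2}\mid(\text{s}_2,\text{s}_1)\in p\}\sqcup\{h_{\text{t}_1\text{t}_2}\mid(\text{t}_2,\text{t}_1)\in p\}$ (using that $p|_S$, $p|_T$ are restrictions of $p$). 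Applying $\text{m}_{(S,T)}$, which just reassembles coordinates, sends this to the set of coroots $h_{\text{i}_1\text{i}_2}$ with $\text{i}_1,\text{i}_2$ both in $S$ or both in $T$, and with $(\text{i}_2,\text{i}_1)\in p$.

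\textbf{Step 2: identify the image.} Since $\text{m}_{(S,T)}$ is linear and injective (it is the restriction of the injective map $\text{m}_{(S,T)}:\text{T}^\vee_S\times\text{T}^\vee_T\hookrightarrow\text{T}^\vee_I$, cf.\ the proof of \autoref{lem:iso}), its restriction to the product cone is an embedding onto the cone generated by the image generating set described in Step 1. So I need to show that this cone equals $\sigma^o_p\cap\mathcal{H}_{(S|T)}$. For the inclusion $\subseteq$: each generator $h_{\text{i}_1\text{i}_2}$ in the image set lies in $\sigma^o_p$ (since $(\text{i}_2,\text{i}_1)\in p$), and lies in $\mathcal{H}_{(S|T)}$ because $\langle h_{\text{i}_1\text{i}_2},\lambda_{ST}\rangle=\lambda_{ST}(\text{i}_1)-\lambda_{ST}(\text{i}_2)=0$ when $\text{i}_1,\text{i}_2$ are on the same side of the partition $(S,T)$; hence the cone they generate is contained in $\sigma^o_p\cap\mathcal{H}_{(S|T)}$. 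For the reverse inclusion $\supseteq$: take $h\in\sigma^o_p\cap\mathcal{H}_{(S|T)}$. By the closure proposition (the one immediately before \autoref{prop:dualdes}), any element of $\sigma^o_p$ is a non-negative combination of coroots $h_{\text{i}_1\text{i}_2}$ with $(\text{i}_2,\text{i}_1)\in p$. The constraint $\langle h,\lambda_{ST}\rangle=0$ combined with the fact that $\langle h_{\text{i}_1\text{i}_2},\lambda_{ST}\rangle\leq 0$ for every such coroot when $(\text{i}_2,\text{i}_1)\in p$ and $\text{i}_1\in S$ would force $\text{i}_1\notin S$ whenever $\text{i}_2\in T$ — more carefully, for $(\text{i}_2,\text{i}_1)\in p$ one has $\langle h_{\text{i}_1\text{i}_2},\lambda_{ST}\rangle\in\{-1,0\}$, equal to $-1$ iff $\text{i}_2\in S,\text{i}_1\in T$, and equal to $0$ iff $\text{i}_1,\text{i}_2$ are on the same side; since $(S,T)\leq p$ rules out the case $\text{i}_1\in S,\text{i}_2\in T$ ($p$ has no arrow from $T$ to $S$). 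So in any representation $h=\sum c_{\text{i}_1\text{i}_2}h_{\text{i}_1\text{i}_2}$ with $c_{\text{i}_1\text{i}_2}\geq 0$, the $\lambda_{ST}$-value is a sum of non-positive terms that must vanish, forcing $c_{\text{i}_1\text{i}_2}=0$ for every pair straddling the partition; thus $h$ lies in the cone generated by the same-side coroots, i.e.\ in the image cone. This gives $\sigma^o_p\cap\mathcal{H}_{(S|T)}\subseteq\text{m}_{(S,T)}(\sigma^o_{p|_S}\times\sigma^o_{p|_T})$.

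\textbf{Step 3: conclude.} Combining the two inclusions, the image of $\text{m}_{(S,T)}$ restricted to $\sigma^o_{p\talloblong_S}\times\sigma^o_{p\fatslash_{\,T}}$ is exactly $\sigma^o_p\cap\mathcal{H}_{(S|T)}$, which by \autoref{prop:face} (using $(S,T)\leq p$) is the face of $\sigma^o_p$ in the $\lambda_{ST}$-direction. The injectivity of the restricted map was already noted, so $\text{m}_{(S,T)}$ is an embedding onto this face, as claimed. \textbf{The main obstacle} I anticipate is the reverse inclusion in Step 2: one needs to handle the subtlety that an element of $\sigma^o_p$ has many representations as a non-negative combination of coroots, and argue cleanly that the hyperplane constraint kills all straddling coroots in \emph{every} such representation (or at least in one chosen carefully); the cleanest route is to observe that each individual summand contributes a term of the same sign to $\langle\cdot,\lambda_{ST}\rangle$, so the constraint forces each straddling coefficient to vanish, and crucially that $(S,T)\leq p$ prevents the "wrong-sign" straddling coroots from appearing at all.
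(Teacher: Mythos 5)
Your proof is correct, and it follows the paper's overall skeleton (reduce to showing that the image of the product cone is $\sigma^o_p\cap\mathcal{H}_{(S|T)}$, then invoke \autoref{prop:face}), but the key inclusion is handled by a genuinely different mechanism. The paper proves $\sigma^o_p\cap\mathcal{H}_{(S|T)}\subseteq\sigma^o_{(p\talloblong_S\,|\,p\fatslash_{\,T})}$ dually, via the halfspace description of \autoref{prop:dualdes}: it takes $h$ in the intersection and verifies each defining inequality $\la h,\lambda_{S_1}\ra\leq 0$, $\la h,\lambda_{T_1}\ra\leq 0$ of the factor cones, using the trick $\la h,\lambda_{T_1}\ra=\la h,\lambda_{T_1}\ra+\la h,\lambda_S\ra=\la h,\lambda_{T_1\sqcup S}\ra\leq 0$ (which is where $(S,T)\leq p$ enters, since $(S_1,S_2)\leq p\talloblong_S$ implies $(S_1,S_2\sqcup T)\leq p$, etc.). You instead argue primally: expand $h$ as a nonnegative combination of the generating coroots of $\sigma^o_p$, observe that under $(S,T)\leq p$ every generator pairs nonpositively with $\lambda_{ST}$ (the $+1$ straddling coroots being excluded precisely because $p$ has no arrow from $T$ to $S$), and conclude from $\la h,\lambda_{ST}\ra=0$ that all $-1$ straddling coefficients vanish, leaving a combination of same-side coroots. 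Both arguments are sound; yours makes the role of $(S,T)\leq p$ more visible at the level of generators and disposes of your own worry about non-uniqueness of representations, since the sign argument applies to any single chosen representation. One small remark: the fact you need in Step 2 — that every element of $\sigma^o_p$ is a nonnegative combination of the coroots $h_{\text{i}_1\text{i}_2}$ with $(\text{i}_2,\text{i}_1)\in p$ — is just the definition of $\sigma^o_p$ as a conical hull, not the closure proposition preceding \autoref{prop:dualdes} (which asserts the converse, that no other coroots lie in the cone), so that citation is unnecessary though harmless.
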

\begin{proof}
In light of \autoref{lem:iso} and \autoref{prop:face}, this amounts to showing
\[
 \sigma^o_{p}     \cap  \mathcal{H}_{(S|T)}   =   \sigma^o_{ (p\talloblong_S | p \!   \fatslash_{\, T} )    }   
.\]
We have $ \sigma^o_{p}     \cap  \mathcal{H}_{(S|T)}   \supseteq   \sigma^o_{ (p\talloblong_S | p \!   \fatslash_{\, T} )    }  $ since
\[
(\text{i}_1 , \text{i}_2)\in  \sigma^o_{ p\talloblong_S}  \qquad  \implies \qquad  (\text{i}_1, \text{i}_2) \in p\quad  \text{and}  \quad  \text{i}_1,\text{i}_2\in S
\]
and
\[
(\text{i}_1 , \text{i}_2)\in  \sigma^o_{p \!   \fatslash_{\, T}}  \qquad  \implies \qquad  (\text{i}_1, \text{i}_2) \in p\quad  \text{and}  \quad  \text{i}_1,\text{i}_2\in T
.\]
To show $ \sigma^o_{p}     \cap  \mathcal{H}_{(S|T)}  \subseteq   \sigma^o_{ (p\talloblong_S | p \!   \fatslash_{\, T} )    }$, recall the cone $\sigma^o_{p}$ may be given as the region satisfying
\[ \la h, \lambda_{AB}  \ra \leq 0
,\qquad
\text{for all} \quad  (A,B) \leq p
.\]
The intersection $    \sigma^o_{p}     \cap  \mathcal{H}_{(S|T)}    $ is then given by adding the extra equality
	\[
	\la h, \lambda_{ST}  \ra = 0 
	.\]
	On the other hand, $ \sigma^o_{ p\talloblong_S}$ is the region given by
	\[ \la h, \lambda_{S_1  S_2}  \ra \leq 0
	,\qquad
	\text{for all} \quad  (S_1,S_2)\leq  p \talloblong_S
	\]
and $\sigma^o_{ p \!   \fatslash_{\, T}}  $ is the region given by
	\[ \la h, \lambda_{T_1  T_2}  \ra \leq 0
,\qquad
\text{for all} \quad  (T_1,T_2)\leq    p \! \!   \fatslash_{\, T}
.\]
Let $h\in   \sigma^o_{p}     \cap  \mathcal{H}_{(S|T)}  $. If $(S_1,S_2)\leq  p \talloblong_S$ then $(S_1, S_2\sqcup T)\leq p$, and so
\[
\la h, \lambda_{S_1}  \ra      \leq 0
.\]
Similarly, if $(T_1,T_2)\leq  p \!   \fatslash_{\, T}$ then $(T_1\sqcup S, T_2)\leq p$, and so
\[
 \la h, \lambda_{T_1}  \ra=   \la h, \lambda_{T_1}  \ra+ \underbrace{\la h, \lambda_{S}  \ra}_{=0}  = \la h, \lambda_{T_1\sqcup S}  \ra  \leq 0
.\qedhere \]
\end{proof}	

\begin{remark}\label{rem:braidconein}
What about an interpretation of the (co)multiplication of preposets $p$ in terms of cones of the braid arrangement? One can view braid arrangement cones $\sigma_p$ as non-positive loci $\sigma_p= (U_p)_{\leq 0}\cap \text{T}_I$ of certain open sets of tropical permutohedral space $U_p\subseteq \mathbb{T}\Sigma^I$ (see \autoref{Up} for the definition of $U_p$ in the complex case). Then the (co)multiplication of braid arrangement cones corresponds to the pullback of opens along the (co)multiplication of $\mathbb{T}\Sigma^I$. 
\end{remark}





\section{The Bimonoid of Permutohedral Space}\label{sec:The Bimonoid of Permutohedral Space}


\subsection{The $\mathsf{{\normalfont O}}_0$-Graded Bialgebra $\bC\textbf{\textsf{O}}$} \label{sec:Graded Bialgebra}



For $p\in \textsf{O}_\bullet[I]$, let
\[
\text{M}_p := \sigma^o_p \cap \text{M}_I
\qquad \text{and} \qquad
\bC[\text{M}_p]:= \bigoplus_{h\in \text{M}_p} \bC\cdot f^h
.\]
For preposets $p\neq \bullet$, $\bC[\text{M}_p]$ is a \hbox{$\bC$-algebra} with multiplication given by $f^{h_1} f^{h_2}:=f^{h_1+h_2}$. If $p=\bullet$, we have the zero vector space $\bC[\text{M}_{\bullet}]= 0$. We have the Joyal-theoretic $\textsf{O}_\bullet$-graded vector cospecies $\bC\textbf{\textsf{O}}$, given by
\[
\bC\textbf{\textsf{O}}_{p}[I]:=  \bC[\text{M}_{p	}] 
,\qquad 
\sigma(f^{h}) := f^{h'}     
\]
and for elements $p,q\in \textsf{O}_\bullet[I]$ with $p\leq q$ and $p\neq \bullet$, the action of the morphism $q\to p$ is the natural inclusion $\bC[\text{M}_q]\hookrightarrow  \bC[\text{M}_p]$. If $p=\bullet$, the action of $q\to p$ is the unique map into the zero vector space. 

We now give $\bC\textbf{\textsf{O}}$ the structure of a Joyal-theoretic $\textsf{O}_\bullet$-graded bialgebra. We shall then use the structure of $\bC\textbf{\textsf{O}}$ to give permutohedral space the structure of a bimonoid. Once this is done, we may then reinterpret $\bC\textbf{\textsf{O}}$ as the bialgebra structure which is induced on the sheaf of regular functions, pulled-back as in \autoref{thm} to the opens of permutohedral space which are indexed by preposets.

For \hbox{$(S,T)\in [I;2]$}, $p\in \textsf{O}_\bullet[S]$ and $q\in \textsf{O}_\bullet[T]$, we have the linear map given by
\[      
\Updelta^\ast_{p , q}:   \bC[\text{M}_{p}]   \otimes  \bC[\text{M}_{q}]  \to    \bC[\text{M}_{(p|q)}]  
,\qquad      
f^{h_1}\otimes f^{h_2}\mapsto     f^{ \text{m}_{(S,T)} (h_1,h_2)  }  
.\]
For $p,q\neq \bullet$, this is an isomorphism of $\bC$-algebras by \autoref{lem:iso}. For $(S,T)\in [I;2]$ and $p\in \textsf{O}_\bullet[I]$, we have the linear map given by
\[
{\upmu^\ast_{S,T}}_p: \bC[ \text{M}_p ] \to \bC[\text{M}_{p\talloblong_S}] \otimes \bC[\text{M}_{p\! \fatslash_{\, T}}] 
,\qquad   
f^{h} \mapsto 
\begin{cases}
 f^{h_1}\otimes f^{h_2}   &\quad  \text{if}\quad  h= \text{m}_{S,T}(h_1 , h_2) \in \sigma^o_{(p \talloblong_S|\,    p\! \fatslash_{\, T})}  \\
 0   &\quad \text{otherwise}.
\end{cases} 
\]
In light of \autoref{lem:face}, we see that in the case $(S,T)\leq p$ this map is sending monomials $f^h$ for which $h$ is in the $\lambda_{ST}$-face of $\sigma_p$ to their image in the decomposition of that face as a product, and is sending all other monomials to zero. Thus, we may view ${\upmu^\ast_{S,T}}_p$ as a projection onto the $\lambda_{ST}$-face of $\sigma_p$. 

\begin{prop} The linear maps $\Updelta^\ast_{p , q}$ and ${\upmu^\ast_{S,T}}_p$ give $\bC\textbf{\textsf{O}}$ the structure of a \hbox{Joyal-theoretic} $\textsf{O}_\bullet$-graded bialgebra. 
\end{prop}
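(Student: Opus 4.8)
By \autoref{thm:gradedjoyal}, it suffices to verify the six ``graded Joyal'' diagrams for the maps $\Updelta^\ast_{p,q}$ (playing the role of $\mu_{\ta,\tb}$, since $\bC\textbf{\textsf{O}}$ is a \emph{co}species so multiplication of the graded bialgebra corresponds to comultiplication of $\textsf{O}_\bullet$) and ${\upmu^\ast_{S,T}}_p$ (playing the role of ${\Delta_{S,T}}_\ta$). Thus the plan is: (1) set up the dictionary between the cospecies-side maps and the abstract graded Joyal data, being careful that the arrows reverse; (2) dispatch the two $\sigma$-naturality and two $\mathtt{f}$-naturality diagrams, which are essentially formal; (3) prove graded Joyal associativity and graded Joyal coassociativity, which reduce to the corresponding facts for $\textsf{O}_\bullet$ (\autoref{prob:isbimon}) together with the compatibility of $\text{m}_{(S,T)}$ with inclusions of lattices; (4) prove the graded Joyal bimonoid axiom, which is the substantive step.

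\textbf{The easy diagrams.} For graded Joyal $\sigma$-naturality of $\Updelta^\ast$: both composites send $f^{h_1}\otimes f^{h_2}$ to $f^{\sigma(\text{m}_{(S,T)}(h_1,h_2))} = f^{\text{m}_{(S',T')}(\sigma(h_1),\sigma(h_2))}$, using that $\sigma$ commutes with $\text{m}$ (both are ``concatenate coordinates''). For $\sigma$-naturality of $\upmu^\ast$, the key point is that $\sigma$ carries $\sigma^o_p$ isomorphically onto $\sigma^o_{\sigma(p)}$ and hence respects the decomposition ``lies in the $\lambda_{ST}$-face, versus not''; so monomials sent to $0$ on one side are sent to $0$ on the other, and monomials in the face are matched up. The two $\mathtt{f}$-naturality diagrams: here a morphism $\mathtt{f}\colon q\to p$ in $\textsf{O}_\bullet[I]$ corresponds to $p\leq q$, so $\sigma^o_q\subseteq \sigma^o_p$ and $\text{M}_q\subseteq \text{M}_p$; the induced maps on $\bC[\text{M}_{(-)}]$ are the evident inclusions, and one checks $\Updelta^\ast$ and $\upmu^\ast$ commute with these inclusions — for $\upmu^\ast$ one needs that if $h\in \text{M}_q$ lies in the $\lambda_{ST}$-face of $\sigma^o_p$ then it lies in the $\lambda_{ST}$-face of $\sigma^o_q$, which is immediate from \autoref{lem:face} since the face of $\sigma^o_q$ is cut out by the same hyperplane $\mathcal{H}_{(S|T)}$. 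One must also treat the $\bullet$ cases: whenever any argument is $\bullet$ the relevant vector space is $0$ and every diagram commutes trivially; this mirrors the $\bullet$-bookkeeping already done in \autoref{prob:isbimon}.

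\textbf{(Co)associativity and the bimonoid axiom.} Graded Joyal associativity of $\Updelta^\ast$ asserts that both ways around the square land a triple $f^{h_1}\otimes f^{h_2}\otimes f^{h_3}$ on $f^{\text{m}_{(S,T,U)}(h_1,h_2,h_3)}$; this follows because $\text{m}$ is ``associative'' (concatenating in two steps equals concatenating once) and because $\mu_{S,T}$ is associative on $\textsf{O}_\bullet$ so the gradings match. Graded Joyal coassociativity is the one requiring care: one must check that the two iterated projections $\upmu^\ast$ agree \emph{including their kernels}. The positive part (when $(S,T,U)\le p$) reduces to \autoref{lem:face} applied twice, giving $\sigma^o_p\cap\mathcal{H}_{(S|T)}\cap\mathcal{H}_{(T\cup S|U)}=\sigma^o_p\cap\mathcal{H}_{(S|T\cup U)}\cap\mathcal{H}_{(T|U)}$ together with the factorization $\text{m}_{(S,T,U)}$; the vanishing part reduces to the Joyal coassociativity computation for $\textsf{O}_\bullet$ already carried out in \autoref{prob:isbimon} (the case analysis $(\text{t},\text{s}),(\text{t},\text{u}),(\text{u},\text{s})\in p$), since the gradings of the relevant monomial components become $\bullet$ precisely when that computation says so. The \textbf{main obstacle} is the graded Joyal bimonoid axiom: one chases a basis element $f^{h_1}\otimes f^{h_2}$ with $h_1\in \text{M}_{\ta}$ (grading a preposet of $ST$), $h_2\in\text{M}_{\tb}$ (a preposet of $UV$), around the pentagon. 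Going one way: multiply to $f^{\text{m}(h_1,h_2)}\in \bC[\text{M}_{\ta\tb}]$ then project onto the $\lambda_{SU|TV}$-face; going the other: project each of $h_1,h_2$ onto faces, braid, remultiply. Both routes are controlled by the identity (a consequence of \autoref{lem:iso}, \autoref{lem:face}, and the compatibility of $\text{m}$ with the pairings $\la-,\lambda_{(-)}\ra$) that the $\lambda_{SU|TV}$-face of $\sigma^o_{\ta\tb}=\sigma^o_\ta\times\sigma^o_\tb$ equals the product of the $\lambda_{S|T}$-face of $\sigma^o_\ta$ with the $\lambda_{U|V}$-face of $\sigma^o_\tb$, reorganized by the braiding; the kernels match because $\text{m}(h_1,h_2)$ lies in the big face iff $h_1$ lies in its face and $h_2$ lies in its face, which is exactly the displayed equivalence $(SU,TV)\le(p\,|\,q)\iff (S,T)\le p$ and $(U,V)\le q$ from \autoref{prob:isbimon}. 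Once this geometric fact is isolated, the diagram chase is bookkeeping. Finally, I would remark that each $\Updelta^\ast_{p,q}$ with $p,q\ne\bullet$ is an algebra isomorphism and each $\upmu^\ast$ restricts to an algebra map onto its image, so $\bC\textbf{\textsf{O}}$ is genuinely a bialgebra object in the graded sense and not merely a linear one; this is what \autoref{lem:iso} and \autoref{lem:face} were set up to give.
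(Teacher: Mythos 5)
Your proposal is correct and proceeds essentially as the paper's own proof does: it reduces the statement to the seven graded Joyal diagrams of \autoref{thm:gradedjoyal} and verifies them from the same inputs — the naturality diagrams are formal, associativity comes from associativity of $\text{m}$ (the monoid $\textsf{T}^\vee$), and coassociativity and the bimonoid axiom come down to the face/product factorizations of \autoref{lem:iso} and \autoref{lem:face} together with the kernel-and-$\bullet$ matching already established in \autoref{prob:isbimon}. (Two cosmetic slips that do not affect the argument: there are seven diagrams rather than six, and your parenthetical justification of the dictionary is misstated — $\Updelta^\ast_{p,q}$ is graded over the \emph{multiplication} $\mu_{S,T}$ of $\textsf{O}_\bullet$, its name referring to the comultiplication of permutohedral space — though the dictionary you actually apply is the correct one.)
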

\begin{proof}
There are seven diagrams to check. The four (co)multiplication naturality diagrams are immediate. For associativity, we require the map
\[
f^{h_1} \otimes f^{h_2} \otimes f^{h_3} \mapsto f^{   \text{m}_{  S,T } (   h_1,h_2  )   } \otimes f^{h_3} \mapsto   f^{\text{m}_{  ST,U }( \text{m}_{  S,T } (   h_1,h_2  )  , h_3 ))  }
\]
to equal the map
\[
f^{h_1} \otimes f^{h_2} \otimes f^{h_3} \mapsto  f^{h_1} \otimes f^{   \text{m}_{  T,U } (   h_2,h_3  )   }  \mapsto   f^{\text{m}_{  S,TU }(  h_1, \text{m}_{  T,U } (   h_2,h_3 )  )  }
.\]
Indeed, we have 
\[
\text{m}_{  ST,U }\big (   \text{m}_{  S,T } (   h_1,h_2  )  , h_3  \big )   = \text{m}_{  S,TU }\big ( h_1,  \text{m}_{  T,U } (   h_2,h_3  )   \big ) 
\]
by associativity of the monoid $\text{T}^\vee$ (defined in \autoref{sec:monoid}). For coassociativity, both maps send $f^h$ to zero unless
\[
h=\text{m}_{  ST,U }\big (   \text{m}_{  S,T } (   h_1,h_2  )  , h_3  \big )   = \text{m}_{  S,TU }\big ( h_1,  \text{m}_{  T,U } (   h_2,h_3  )     \big )
\] 
for some $h_1,h_2,h_3$. Then the map
\[
f^h   =   f^{\text{m}_{  ST,U }(   \text{m}_{  S,T } (   h_1,h_2  )  , h_3  ) }  \mapsto  f^{  \text{m}_{S,T}( h_1 , h_2) } \otimes f^{  h_3 } \mapsto f^{ h_1  } \otimes f^{h_2} \otimes f^{  h_3 }
\] 
is equal to the map
\[
f^h = f^{ \text{m}_{  S,TU } ( h_1,  \text{m}_{  T,U } (   h_2,h_3  )    )} \mapsto  f^{ h_1  } \otimes f^{ \text{m}_{T,U} (h_2 , h_3) } \mapsto f^{ h_1  } \otimes f^{h_2} \otimes f^{  h_3 }
.\] 
For the bimonoid axiom, both maps send $f^h\otimes f^k$ to zero unless
\[
 h=\text{m}_{S,T} (h_1 , h_2 )  \qquad \text{and} \qquad k= \text{m}_{U,V}(k_1 , k_2 )
\]
for some $h_1,h_2$ and $k_1,k_2$. Then the map
\[
f^{ \text{m}_{S,T} (h_1 , h_2 ) } \otimes f^{  \text{m}_{U,V}(k_1 , k_2 ) } \mapsto   f^{    \text{m}_{ST,UV}(       \text{m}_{S,T} (h_1 , h_2 )   ,   \text{m}_{U,V}(k_1 , k_2)         ) }
   =     f^{    \text{m}_{SU,TV}(       \text{m}_{S,U} (h_1 , k_1 )   ,   \text{m}_{T,V}(h_2 , k_2)         ) }
   \]
   \[
    \mapsto   f^{ \text{m}_{S,U}(  h_1, k_1 )  }  \otimes f^{ \text{m}_{T,V}(  h_2, k_2 )  }
\]
is equal to the map
\[
f^{ \text{m}_{S,T} (h_1 , h_2 ) } \otimes f^{ \text{m}_{U,V}(k_1 , k_2 ) } \mapsto  f^{ h_1 } \otimes  f^{ h_2 } \otimes f^{ k_1 } \otimes f^{ k_2 }
 \mapsto
f^{ h_1 } \otimes  f^{  k_1 } \otimes f^{ h_2 } \otimes f^{ k_2 }   \]
\[
 \mapsto f^{ \text{m}_{S,U}(  h_1, k_1 )  }  \otimes f^{ \text{m}_{T,V}(  h_2, k_2 )  }
. \qedhere \]
\end{proof}

\subsection{The $\mathsf{\Sigma}$-Graded Bimonoid $\wt{\bC\Sigma}$}\label{sec:permspace1}

We now describe the bimonoid structure of permutohedral space on an atlas of affine charts, before gluing them together in the next section. We define a \hbox{$\mathsf{\Sigma}$-graded} bimonoid $\wt{\bC\Sigma}$ which consists of homomorphisms on the $\bC$-algebras of $\bC\textbf{\textsf{O}}$, with structure maps the pullback of homomorphisms along the corresponding structure maps of $\bC\textbf{\textsf{O}}$ (although the comultiplication ${\Updelta_{S,T}}_H$ is not quite just this, see below).

For $H$ a composition, we denote the affine variety which is the set of $\bC$-points of the affine scheme $\text{Spec}(\bC[\text{M}_{H}])$ by $U_{H}$, thus
\[
U_{H}:= \big   \{  \text{$\bC$-algebra homomorphisms }  x:  \bC[\text{M}_{H}] \to \bC \big \} 
.\]
We have the Joyal-theoretic $\mathsf{\Sigma}$-graded set species $\wt{\bC\Sigma}$, given by
\[
\wt{\bC\Sigma}_H[I]: = U_{H}
,\qquad
\wt{\bC\Sigma}_{H}[\sigma](x) := x\circ \bC\textbf{\textsf{O}}_{H}[\sigma] 
,\qquad 
\wt{\bC\Sigma}_{K\to H}[I](x) :=  x \circ  \bC\textbf{\textsf{O}}_{H\to K}[I]  = x|_{\bC[\text{M}_{H}]}
.\]
The functoriality of $\wt{\bC\Sigma}$ then follows directly from the functoriality of $\bC\textbf{\textsf{O}}$ and the fact that pullback of homomorphisms along functions $f$ is functorial in $f$.

We now give $\wt{\bC\Sigma}$ the structure of a Joyal-theoretic $\mathsf{\Sigma}$-graded bimonoid. For $(S,T)\in [I;2]$, $H\in \Sigma[S]$ and $K\in \Sigma[T]$, we have the function\footnote{\ here we use the fact that the (total preposet of the) concatenation $H; K$ satisfies $(H ; K)\talloblong_S =H$ and $(H ; K) \! \fatslash_{\, T} =K$, i.e. the restriction of the comultiplication of $\textsf{O}_\bullet$ to $\mathsf{\Sigma}^\ast$ is deconcatenation}
\[
\upmu_{H,K}:  U_H  \times U_K   \to  U_{H; K}  
,\qquad  \upmu_{H,K}(x_1, x_2):= ( x_1 \otimes  x_2 )   \circ {\upmu^\ast_{S,T}}_{H ; K}
.\]
For $(S,T)\in [I;2]$ and $H\in \Sigma[I]$, we have the function
\[
 {\Updelta_{S,T}}_H:  U_H \to U_{H|_S}   \times U_{H|_T} 
\]
\[
 {\Updelta_{S,T}}_H(x):= 
x \circ
 \big  ( (H|_S\,  | \,  H|_T)\to H\big   ) \circ \Updelta^\ast_{H|_S,H|_T} =( x|_{\bC[\text{M}_{H|_S}]} ,  x|_{\bC[\text{M}_{H|_T}]} )
\]
where $ ( (H|_S\,  | \,  H|_T)\to H  )$ abbreviates $\bC\textbf{\textsf{O}}_{(H|_S\,  | \,  H|_T)\to H  }[I]$. 

\begin{prop}
The maps $\upmu_{H,K}$ and ${\Updelta_{S,T}}_H$ give $\wt{\bC\Sigma}$ the structure of a \hbox{Joyal-theoretic} $\mathsf{\Sigma}$-graded bimonoid.
\end{prop}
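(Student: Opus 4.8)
By \autoref{thm:gradedjoyal}, it suffices to verify the seven Joyal diagrams for $\wt{\bC\Sigma}$: graded Joyal (co)multiplication $\sigma$-naturality, Joyal (co)multiplication $\mathtt{f}$-naturality, graded Joyal (co)associativity, and the graded Joyal bimonoid axiom. The crucial observation is that $\wt{\bC\Sigma}$ is built entirely by pulling back $\bC$-algebra homomorphisms along the structure maps of $\bC\textbf{\textsf{O}}$, so each diagram for $\wt{\bC\Sigma}$ reduces, via the contravariant functor $\Hom_{\bC\textsf{Alg}}(-,\bC)$, to the corresponding diagram for $\bC\textbf{\textsf{O}}$, which we have already proved commutes. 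Concretely, the plan is: first record that $\upmu_{H,K}(x_1,x_2) = (x_1\otimes x_2)\circ {\upmu^\ast_{S,T}}_{H;K}$ and ${\Updelta_{S,T}}_H(x) = (x|_{\bC[\text{M}_{H|_S}]}, x|_{\bC[\text{M}_{H|_T}]})$, where the latter formula uses the identity $(H;K)\talloblong_S = H$, $(H;K)\!\fatslash_{\,T} = K$ noted in the footnote (deconcatenation is the restriction of the comultiplication of $\textsf{O}_\bullet$ to $\mathsf{\Sigma}^\ast$), together with the fact that the relevant maps $\bC\textbf{\textsf{O}}_{(H|_S|H|_T)\to H}[I]$ and $\Updelta^\ast_{H|_S,H|_T}$ compose to the canonical inclusion. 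The point is that ${\Updelta_{S,T}}_H$ is genuinely just restriction of homomorphisms, even though its definition routes through both a grading morphism and $\Updelta^\ast$.

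\textbf{Key steps.} The naturality diagrams (four of them, plus the two $\mathtt{f}$-naturality diagrams) are immediate: pullback of homomorphisms along a function $f$ is functorial in $f$, and the corresponding naturality statements for $\bC\textbf{\textsf{O}}$ — which are themselves immediate — transpose directly. For graded Joyal associativity, I would take $x\in U_{H}$ with $H\in\Sigma[S]$, similarly on $T,U$, and chase both composites $\upmu_{HT',U}\circ(\upmu_{H,T'}\times\text{id})$ and $\upmu_{H,T'U}\circ(\text{id}\times\upmu_{T',U})$; each equals $(x_1\otimes x_2\otimes x_3)$ precomposed with an iterated ${\upmu^\ast}$, and the two iterated ${\upmu^\ast}$'s agree by (graded) associativity of $\bC\textbf{\textsf{O}}$, equivalently by associativity of the monoid $\text{T}^\vee$ of semisimple flats (\autoref{sec:monoid}). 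Graded Joyal coassociativity is the dual computation: ${\Updelta_{S,T}}_H$ being restriction, both composites send $x$ to the triple of restrictions $(x|_{\bC[\text{M}_{H|_S}]}, x|_{\bC[\text{M}_{H|_T}]}, x|_{\bC[\text{M}_{H|_U}]})$, and well-definedness (that the target components are the claimed preposet-restrictions $H|_S$ etc.) follows because restriction of compositions satisfies $(H|_{ST})|_S = H|_S$ and so on. The graded Joyal bimonoid axiom is obtained by transposing the bimonoid-axiom diagram of $\bC\textbf{\textsf{O}}$: on the $\bC\textbf{\textsf{O}}$ side this said that projecting onto a face and then factorizing agrees with factorizing, braiding, and then projecting; applying $\Hom_{\bC\textsf{Alg}}(-,\bC)$ and using that $\upmu$ is pullback along ${\upmu^\ast}$ while $\Updelta$ is pullback along (the composite involving) $\Updelta^\ast$ yields exactly the graded Joyal bimonoid axiom for $\wt{\bC\Sigma}$.

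\textbf{Main obstacle.} The only genuinely delicate point is bookkeeping: one must be careful that the comultiplication ${\Updelta_{S,T}}_H$ really is just restriction of homomorphisms and not something more complicated, since its definition passes through the grading map $\bC\textbf{\textsf{O}}_{(H|_S|H|_T)\to H}[I]$ and the factorization isomorphism $\Updelta^\ast_{H|_S,H|_T}$. Verifying that this composite is the natural inclusion $\bC[\text{M}_{H}]\hookrightarrow \bC[\text{M}_{H|_S}]\otimes\bC[\text{M}_{H|_T}]$ — which holds because $\text{M}_H$ sits inside the face $\sigma^o_{(H|_S | H|_T)}$ of $\sigma^o_H$ by \autoref{lem:face}, and on that face the factorization $\text{m}_{(S,T)}$ is the identifying isomorphism — is what makes the transposition argument go through cleanly. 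Once this identification is in hand, all seven diagrams follow formally by applying the contravariant $\Hom$-functor to the already-established commutative diagrams for $\bC\textbf{\textsf{O}}$, so I expect the proof to be short, with the bulk of the work being the routine but careful unwinding of definitions sketched above.
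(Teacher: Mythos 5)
Your proposal is correct and follows essentially the paper's own argument: the structure maps of $\wt{\bC\Sigma}$ are pullbacks of those of $\bC\textbf{\textsf{O}}$, so each Joyal diagram transposes via $\Hom_{\bC\textsf{Alg}}(-,\bC)$ to already-established diagrams for $\bC\textbf{\textsf{O}}$, the only delicate point being that ${\Updelta_{S,T}}_H$ pulls back along the grading morphism $(H|_S\,|\,H|_T)\to H$ composed with $\Updelta^\ast$ rather than along a single structure map. Two small corrections: where you claim the remaining diagrams then follow ``formally'', the checks involving the comultiplication (coassociativity and the bimonoid axiom) also require comultiplication $\mathtt{f}$-naturality of $\bC\textbf{\textsf{O}}$ to move that extra inclusion past $\upmu^\ast$ --- precisely the ingredient the paper's explicit verification of the bimonoid axiom invokes, and one that is available among your ``already-established'' diagrams --- and the relevant inclusion runs $\bC[\text{M}_{(H|_S|H|_T)}]\hookrightarrow\bC[\text{M}_H]$ (the face $\sigma^o_{(H|_S|H|_T)}$ sits inside $\sigma^o_H$), not $\bC[\text{M}_H]$ into the tensor product as you wrote.
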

\begin{proof}
This follows from the fact that the structure maps of $\wt{\bC\Sigma}$ are just pullbacks along the structure maps of $\bC\textbf{\textsf{O}}$, which we know satisfies the axioms of a bialgebra. This is immediate, except when the comultiplication is involved since this is not quite just the pullback, and we must make use of comultiplication $\mathtt{f}$-naturality of $\bC\textsf{\textbf{O}}$ as well.

We make the bimonoid axiom explicit, with coassociativity following similarly. We have
\[
{\upmu^\ast_{ST,UV}}_{H\ast K} \circ \big ( (H\ast K|_{SU}\,  | \,  H\ast K|_{TV})\to H\ast K\big ) \circ \Updelta^\ast_{H\ast K|_{SU},H\ast K|_{TV}}
\]
\[
=
\underbrace{\big( 
(H|_{S}\,  | \,  H|_{T}) \to H    \otimes  ( K|_{U}\,  | \,  K|_{V}) \to K
\big )\circ   {\upmu^\ast_{ST,UV}}_{(H\ast K)|_{SU}\,  | \,  (H\ast K)|_{TV})}}_{\text{comultiplication $\mathtt{f}$-naturality of $\bC\textsf{\textbf{O}}$}}  \circ \Updelta^\ast_{(H\ast K)|_{SU},(H\ast K)|_{TV}}
\]
\[
=
\big (  (H|_S\,  | \,  H|_T)\to H  \otimes  (K|_U\,  | \,  K|_V)\to K \big )  \circ \underbrace{\big ( \Updelta^\ast_{H|_S,H|_T} \otimes  \Updelta^\ast_{K|_U,K|_V} \big )  \circ B \circ 
\big ( {\upmu^\ast_{S,U}}_{H|_S, K_U} \otimes   {\upmu^\ast_{T,V}}_{H|_T, K|_V} \big )}_{\text{bimonoid axiom of $\bC\textsf{\textbf{O}}$}}
\]
\[
=\big ( (H|_S\,  | \,  H|_T)\to H \circ \Updelta^\ast_{H|_S,H|_T} \otimes(K|_U\,  | \,  K|_V)\to K \circ \Updelta^\ast_{K|_U,K|_V} \big )  \circ B \circ 
\big ( {\upmu^\ast_{S,U}}_{H|_S, K_U} \otimes   {\upmu^\ast_{T,V}}_{H_T, K|_V} \big )
.\]
To write the second line we used that e.g. $(H\ast K)|_S=H|_S$ because $K$ is a composition of $UV$, and similarly e.g. $(H\ast K)|_{ST}=H$. 
\end{proof}



\subsection{The Bimonoid of Complex Permutohedral Space}\label{sec:permspace2}


We call \emph{complex permutohedral space} over $I$ the $\bC$-scheme which is the colimit of the diagram 
\[
\Sigma[I] \to \textsf{Aff}_\bC
,\qquad
H \mapsto \text{Spec}\big (\bC[\text{M}_H]\big ), \quad (K\to H) \mapsto    (\bC\textbf{\textsf{O}}_{H\to K}[I])^{\ast}
.\] 
Let $\bC\Sigma^I$ denote the algebraic variety which is the set of $\bC$-points of complex permutohedral space, thus
\[ 
\bC\Sigma^I : =          \colim_{H\in \Sigma[I]} U_H  =\bigsqcup_{H\in \Sigma[I]} U_H \ \Big /  \sim
\]
where the equivalence is given by
\[
  x\in U_{H}\sim y\in U_{K}  \iff \exists z\in U_{H\cup K}  \text{ with }  z|_{\bC[\text{M}_{H}]} =x  \text{ and }   z|_{\bC[\text{M}_{K}]} =y 
.\]
The algebraic variety $\bC\Sigma^I$ is the toric variety which is the compactification of the torus $\bC \text{T}^I$ with respect to the braid fan $\{\sigma_H\  | \   H\in \Sigma[I]\}$. The affine charts $U_H\subset \bC\Sigma$ are open sets, however as open sets we shall come to index them by total preposets as opposed to compositions. 

We have the Joyal-theoretic set species $\bC\Sigma$, given by
\[
\bC\Sigma[I] := \bC\Sigma^I
,\qquad 
\bC\Sigma[\sigma](x) :=    x\circ \bC\textbf{\textsf{O}}_{H}[\sigma]
\]
where $x\in U_H$. The action of bijections is well-defined by $(\sigma,\mathtt{f})$-functoriality of $\wt{\bC\Sigma}$; if $x\in U_H$ and $y\in U_K$ such that there exists $z\in U_{H\cup K}$ with $z|_{\bC[\text{M}_{H}]} =x$ and $z|_{\bC[\text{M}_{K}]} =y$, then $z'\in U_{H'\cup K'}$ with $z'|_{\bC[\text{M}_{H'}]} =x'$ and $z'|_{\bC[\text{M}_{K'}]} =y'$.

We let
\[
\bC\Sigma^F:=  \bC\Sigma[F] = \bC\Sigma^{S_1}\times \dots \times \bC\Sigma^{S_k}
\]
and denote the projections by
\[
\pi_{S_j} : \bC\Sigma^F \to \bC\Sigma^{S_j}
.\]
We now give $\bC\Sigma$ the structure of a connected bimonoid by `quotienting out' the grading of $\wt{\bC\Sigma}$. Then the multiplication is given by
\[
\upmu_{S,T}  : \bC\Sigma^S \times \bC\Sigma^T \to \bC\Sigma^I, \qquad (x_1, x_2)\mapsto \upmu_{H,K}(x_1,x_2)
\]
where $H\in \Sigma[S]$, $K\in \Sigma[T]$ such that $(x_1,x_2)\in U_H\times U_K$, and the comultiplication is given by
\[
\Updelta_{S,T}  : \bC\Sigma^I \to \bC\Sigma^S \times \bC\Sigma^T
,\qquad 
x \mapsto   {\Updelta_{S,T}}_H (x) 
\]
where $H\in \Sigma[I]$ such that $x\in U_H$.  


\begin{thm}
Equipped with the maps $\upmu_{S,T}$ and $\Delta_{S,T}$ as defined above, the set species of permutohedral space $\bC\Sigma$ is a Joyal-theoretic bimonoid.
\end{thm}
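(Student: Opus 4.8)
The plan is to deduce this directly from the preceding construction by observing that $\bC\Sigma$ is obtained from the $\mathsf{\Sigma}$-graded bimonoid $\wt{\bC\Sigma}$ by ``forgetting the grading'', i.e. by taking the disjoint union of the graded pieces over each composition and then passing to the colimit over refinements. The key point is that all the structure maps of $\bC\Sigma$ — the action of bijections, the multiplication $\upmu_{S,T}$, and the comultiplication $\Updelta_{S,T}$ — are defined by choosing a representing composition $H$ and applying the corresponding structure map of $\wt{\bC\Sigma}$, so each bimonoid axiom for $\bC\Sigma$ will follow from the corresponding axiom for $\wt{\bC\Sigma}$ once we check the choices are immaterial. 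Concretely, the first step is to verify well-definedness: for the action of bijections this is exactly the $(\sigma,\mathtt{f})$-functoriality argument already given in the text; for $\upmu_{S,T}$ one checks that if $(x_1,x_2)\in (U_H\times U_K)\cap(U_{H'}\times U_{K'})$ then $\upmu_{H,K}(x_1,x_2)$ and $\upmu_{H',K'}(x_1,x_2)$ represent the same point of $\bC\Sigma^I$, using that both $H;K$ and $H';K'$ refine to a common composition of $I$ together with graded multiplication $\mathtt{f}$-naturality of $\wt{\bC\Sigma}$; for $\Updelta_{S,T}$ one argues similarly using comultiplication $\mathtt{f}$-naturality, noting that $\Updelta_{S,T}$ is essentially restriction of homomorphisms along $\bC[\text{M}_{H|_S}]\hookrightarrow\bC[\text{M}_H]$, which is manifestly compatible with refinement.

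Once well-definedness is in hand, the second step is to verify the nine bimonoid diagrams (multiplication $\sigma$- and $\beta$-naturality, associativity, unitality, comultiplication $\sigma$- and $\beta$-naturality, coassociativity, counitality, and the bimonoid axiom) in the Joyal-theoretic case $\textsf{C}=\textsf{Set}$, where everything is strict so it suffices to check the five Joyal diagrams of \autoref{thm:joyal}: Joyal (co)multiplication naturality, Joyal (co)associativity, and the Joyal bimonoid axiom. Each of these is obtained by evaluating at a point $x\in U_H$ for a suitable composition $H$ and invoking the corresponding identity in the $\mathsf{\Sigma}$-graded bimonoid $\wt{\bC\Sigma}$, which in turn was deduced from the bialgebra axioms of $\bC\textbf{\textsf{O}}$. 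For instance, Joyal multiplication naturality and Joyal associativity reduce immediately to graded Joyal multiplication naturality and graded Joyal associativity of $\wt{\bC\Sigma}$; the Joyal bimonoid axiom for $\bC\Sigma$ with $F=(ST,UV)$, $G=(SU,TV)$ follows by choosing representing compositions $H$ of $ST$ and $K$ of $UV$ (so that $(x_1,x_2)\in U_H\times U_K$) and applying the graded Joyal bimonoid axiom of $\wt{\bC\Sigma}$ proved in the previous subsection. One should also record that $\bC\Sigma$ is connected, $\bC\Sigma[\emptyset]=1_{\textsf{Set}}=\wt{\bC\Sigma}_{(\,)}[\emptyset]$, and that when $S$ or $T$ is empty the maps $\upmu_{S,T}$, $\Updelta_{S,T}$ are the identity, as required for the Joyal-theoretic normalization.

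The main obstacle, and the only place where real care is needed, is the well-definedness of the comultiplication and of the action of bijections across different charts, since $\bC\Sigma^I$ is genuinely a colimit and not just a disjoint union: a given point $x\in\bC\Sigma^I$ may lie in $U_H$ for several incomparable compositions $H$, and one must check that $\Updelta_{S,T}$ produces the same element of $\bC\Sigma^S\times\bC\Sigma^T$ regardless of the chosen $H$. The way I would handle this is to note that any two compositions $H_1,H_2$ with $x\in U_{H_1}\cap U_{H_2}$ admit a common refinement $H$ in the sense witnessed by a point $z\in U_H$ restricting to $x$ on both, and then to chase the restriction maps: $\Updelta_{S,T}$ computed from $H_i$ equals restriction of $x$ along $\bC[\text{M}_{H_i|_S}]\hookrightarrow\bC[\text{M}_{H_i}]$, and both of these factor through restriction along $\bC[\text{M}_{H|_S}]\hookrightarrow\bC[\text{M}_H]$ because $H|_S$ refines both $H_i|_S$; hence the two outputs agree in $U_{H|_S}$ and a fortiori as points of $\bC\Sigma^S$. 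A symmetric argument handles the $T$-component and, combined with $(\sigma,\mathtt{f})$-functoriality, the bijection action. Everything else is a routine transcription of the already-established identities in $\wt{\bC\Sigma}$ and $\bC\textbf{\textsf{O}}$.
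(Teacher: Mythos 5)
Your proposal is correct and follows essentially the same route as the paper: the paper's proof likewise observes that well-definedness on the colimit $\bC\Sigma^I=\colim_{H}U_H$ follows from (co)multiplication $\mathtt{f}$-naturality of $\wt{\bC\Sigma}$ (just as $(\sigma,\mathtt{f})$-functoriality handled the bijection action), and that the bimonoid diagrams for $\bC\Sigma$ commute because they commute for $\wt{\bC\Sigma}$. The only difference is that the paper is far terser and also remarks that the Losev--Manin moduli realization gives an alternative easy proof, but your more detailed chart-by-chart verification is the same argument spelled out.
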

\begin{proof}
The maps $\upmu_{S,T}$ and $\Delta_{S,T}$ are well-defined for the quotient $\bC\Sigma^I  =          \colim_{H\in \Sigma[I]} U_H$ by (co)multiplication $\mathtt{f}$-naturality of $\wt{\bC\Sigma}$, analogous to what we saw above with bijections $\sigma$. The diagrams for a bimonoid commute for $\bC\Sigma$ because they commute for $\wt{\bC \Sigma}$. 

Alternatively, one can use the Losev-Manin realization of permutohedral space as a moduli space to easily prove this theorem, mentioned in the introduction.
\end{proof}

\begin{remark}\label{quot}
Here, we are taking the colimit over the $\mathsf{\Sigma}$-grading of the bimonoid $\wt{\bC\Sigma}$ to give an ungraded bimonoid $\bC\Sigma$, analogous to taking the vector space $\bigoplus_i V_i\in \textsf{Vec}$ given a graded vector space $(V_i)\in \textsf{gVec}$. The functoriality and naturality axioms of a graded bimonoid means this is always possible; one can always `quotient out' the grading of a graded bimonoid to get an ordinary bimonoid. For example, the colimit of $\bC\textbf{\textsf{O}}$ is rather uninteresting; it has the zero vector space in each component.
\end{remark}

\subsection{Torus Orbits}

For each composition $H=(S_1,\dots,S_k)$ of $I$, we have the \emph{torus orbit} \hbox{$V_H\subseteq U_H\subset \bC \Sigma^I$} given by
\[
V_H :=  \big  \{   x\in U_H\  \big | \    x(f^h)=0 \quad \text{for all} \quad  h\notin \sigma^o_{(  S_1 | \dots | S_k )}       \big \}
\]
where $(  S_1 | \dots | S_k )$ denotes the preposet $p$ given by $(\text{i}_1,\text{i}_2)\in p$ if $\text{i}_1,\text{i}_2\in S_j$ for some $1\leq j \leq k$. We have
\[
\bC\Sigma^I = \bigsqcup_{H\in \Sigma[I]}  V_H
\qquad \text{and} \qquad
U_H = \bigsqcup_{K\leq H} V_K
.\]
Because torus orbits are disjoint, it is useful to describe the (co)multiplication of permutohedral space in terms of torus orbits. 



\begin{lem}\label{lem}
	Given $(S,T)\in [I;2]$, the multiplication $\upmu_{S,T}: \bC\Sigma^S \times \bC\Sigma^T \to \bC\Sigma^I$ restricts to isomorphisms
	\[
	\upmu_{S,T}: V_{H} \times V_{K}   \xrightarrow{\sim}   V_{H ;  K}
	.\] 
\end{lem}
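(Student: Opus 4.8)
The plan is to reduce the statement to the corresponding fact about cones, namely \autoref{lem:iso}, together with the compatibility between the torus orbit $V_H$ and the open chart $U_H$. First I would recall the description of points: a point $x\in U_H$ lies in the torus orbit $V_H$ precisely when $x(f^h)\neq 0$ exactly for those $h$ lying in the linear span $\sigma^o_{(S_1|\dots|S_k)}$ (equivalently, $x$ restricts to a character on the sublattice $\text{M}_{(S_1|\dots|S_k)}$ of $\text{M}_H$ and vanishes elsewhere). The key point is that the total preposet of the concatenation $H;K$ satisfies $(H;K)\talloblong_S=H$ and $(H;K)\!\fatslash_{\,T}=K$ (this is the footnote in \autoref{sec:permspace1}), so $\upmu_{S,T}$ restricted to $U_H\times U_K$ is exactly $\upmu_{H,K}$, which by definition is $(x_1\otimes x_2)\circ{\upmu^\ast_{S,T}}_{H;K}$. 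Since the two-lump composition $(S,T)$ satisfies $(S,T)\leq H;K$ when $S,T\neq\emptyset$, the map ${\upmu^\ast_{S,T}}_{H;K}\colon \bC[\text{M}_{H;K}]\to \bC[\text{M}_H]\otimes\bC[\text{M}_K]$ is, by \autoref{lem:iso} applied to $H$ and $K$ (viewed as preposets), the inverse of the $\bC$-algebra isomorphism $\Updelta^\ast_{H,K}$ induced by the cone isomorphism $\text{m}_{(S,T)}\colon\sigma^o_H\times\sigma^o_K\xrightarrow{\sim}\sigma^o_{H;K}$.

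Next I would establish the map is well-defined on torus orbits: given $x_1\in V_H$, $x_2\in V_K$, I need $\upmu_{H,K}(x_1,x_2)\in V_{H;K}$. By the description above, $x_1\otimes x_2$ is supported (nonzero) exactly on the lattice points of the linear span of $\sigma^o_H\times\sigma^o_K$; pushing forward along the isomorphism $\text{m}_{(S,T)}$ of \autoref{lem:iso}, the composite $\upmu_{H,K}(x_1,x_2)$ is supported exactly on the lattice points of the linear span of $\sigma^o_{H;K}=\sigma^o_{(S_1|\dots|S_{k}|T_1|\dots|T_l)}$, which is the defining condition for $V_{H;K}$ (here I use that the preposet $(S_1|\dots|S_k)\mathbin{|}(T_1|\dots|T_l)$ is exactly $(S_1|\dots|S_k|T_1|\dots|T_l)$, the one attached to the composition $H;K$). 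For the inverse, I would observe that any $y\in V_{H;K}$ is a character on $\text{M}_{(S_1|\dots|S_{k}|T_1|\dots|T_l)}$, which factors as $\text{M}_{(S_1|\dots|S_k)}\oplus\text{M}_{(T_1|\dots|T_l)}$ via $\text{m}_{(S,T)}$ (using the strong Joyal-theoretic factorization of semisimple flats, \autoref{sec:monoid}), so $y$ determines a unique pair $(x_1,x_2)\in V_H\times V_K$ with $\upmu_{H,K}(x_1,x_2)=y$; bijectivity is then immediate, and since $\upmu_{S,T}$ is a morphism of varieties the restriction is an isomorphism of varieties.

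The main obstacle I anticipate is bookkeeping the degenerate cases: when $S=\emptyset$ or $T=\emptyset$ the map $\upmu_{\emptyset,T}$ (resp.\ $\upmu_{S,\emptyset}$) is a unitor, $H;K$ is just $K$ (resp.\ $H$), and one should check the statement reduces to an identity isomorphism $V_K\xrightarrow{\sim}V_K$ — this is routine but must be noted. A second, more substantive point is verifying that the set-theoretic support condition cutting out $V_H$ inside $U_H$ really does transport correctly under ${\upmu^\ast_{S,T}}_{H;K}$; this is where one genuinely uses \autoref{lem:iso} rather than merely the bijection of lattice points, because one needs the cone $\sigma^o_{H;K}$ (not just its lattice) to decompose as the product $\sigma^o_H\times\sigma^o_K$. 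Once that is in hand, the isomorphism claim follows because $\upmu_{S,T}$ is already known to be a morphism of bimonoids (hence of varieties) and we have exhibited an inverse on the level of $\bC$-points; alternatively one can invoke the Losev--Manin moduli description, where this restriction is visibly the concatenation-of-strings identification on the boundary strata indexed by $H$ and $K$.
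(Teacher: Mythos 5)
Your proposal is correct and is essentially the paper's own argument: both reduce to identifying $V_H$, $V_K$, $V_{H;K}$ with algebra homomorphisms out of the group algebras $\bC[\text{M}_{(S_1|\dots|S_k)}]$, $\bC[\text{M}_{(T_1|\dots|T_l)}]$, $\bC[\text{M}_{(S_1|\dots|S_k|T_1|\dots|T_l)}]$ of the lineality lattices, and then observing that ${\upmu^\ast_{S,T}}_{H;K}$ restricted to that subalgebra is an isomorphism dual to $V_H\times V_K\to V_{H;K}$. Two slips are worth correcting, though neither affects the argument: the cone isomorphism of \autoref{lem:iso} applied to $H,K$ has image the $\lambda_{ST}$-face $\sigma^o_{(H|K)}$ of $\sigma^o_{H;K}$ rather than $\sigma^o_{H;K}$ itself, so ${\upmu^\ast_{S,T}}_{H;K}$ is not the inverse of an algebra isomorphism (it has a large kernel; it is projection onto that face followed by the inverse of $\Updelta^\ast_{H,K}$), and where you write ``linear span'' you mean the maximal linear subspace, the support computation really invoking \autoref{lem:iso} for the block-diagonal preposets $(S_1|\dots|S_k)$ and $(T_1|\dots|T_l)$ rather than for $H$ and $K$ as total preposets.
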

\begin{proof}
Recall the comultiplication of $\bC\textsf{\textbf{O}}$
\[
{\upmu^\ast_{S,T}}_{H ; K} : \bC[ \text{M}_{H ; K} ] \to  \bC[ \text{M}_{H} ] \otimes  \bC[ \text{M}_{K} ] 
\]
which maps the $\lambda_{S,T}$-face $\sigma^o_{(H | K)}$ to its decomposition as a product, and sends everything else to zero. We have that $\sigma^o_{ ( S_1|\dots |S_k|T_1 | \dots | T_l  ) }$ is contained within the $\lambda_{S,T}$-face, and so the restriction of ${\upmu^\ast_{S,T}}_{H ; K} $ to the subalgebra $\bC[\text{M}_{( S_1|\dots |S_k|T_1 | \dots | T_l  )}]$ is an isomorphism
\[   
\bC[\text{M}_{( S_1|\dots |S_k|T_1 | \dots | T_l  )}] \to   \bC[\text{M}_{( S_1|\dots |S_k)}] \otimes  \bC[\text{M}_{(T_1 | \dots | T_l  )}]
\]
which, via the identification
\[
V_H \xrightarrow{\sim} \Hom\! \big (    \bC[   \text{M}_{    (  S_1 | \dots | S_k )     } ] , \bC\big  )
,\qquad x \mapsto x|_{   \bC[   \text{M}_{    (  S_1 | \dots | S_k )     } ] }
\]
and similar identifications for $V_K$ and $ V_{H ;  K}$, is dual to $V_{H} \times V_{K}   \to  V_{H ;  K}$.

Alternatively, one can use the Losev-Manin realization to easily prove this result.
\end{proof}

\begin{lem}\label{lem2}
Given $(S,T)\in [I;2]$, the comultiplication $\Updelta_{S,T}: \bC\Sigma^I \to \bC\Sigma^S \times \bC\Sigma^T$ restricts to surjections
\[
\Updelta_{S,T}: V_{H} \to    V_{H|_S} \times   V_{H|_T}
.\] 
\end{lem}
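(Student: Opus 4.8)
The plan is to mirror the proof of \autoref{lem}, replacing the algebra isomorphism of $\bC\textbf{\textsf{O}}$ with the comultiplication linear map ${\Updelta^\ast_{H|_S, H|_T}}$ and tracking where surjectivity (rather than isomorphism) comes from. First I would fix $H=(S_1,\dots,S_k)\in\Sigma[I]$ and write out concretely which torus orbit $\Updelta_{S,T}$ lands in: for $x\in V_H$, the point $\Updelta_{S,T}(x)=(x|_{\bC[\text{M}_{H|_S}]}, x|_{\bC[\text{M}_{H|_T}]})$, and since $x$ vanishes on all $f^h$ with $h\notin\sigma^o_{(S_1|\dots|S_k)}$, its restrictions vanish on all $f^{h}$ with $h\notin\sigma^o_{(H|_S)}$ (resp.\ $\sigma^o_{(H|_T)}$), because those preposet cones are exactly the restrictions. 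Hence the image lies in $V_{H|_S}\times V_{H|_T}$, which gives the stated codomain.

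Next I would establish surjectivity. Using the identifications
\[
V_H\xrightarrow{\sim}\Hom\!\big(\bC[\text{M}_{(S_1|\dots|S_k)}],\bC\big),
\qquad
V_{H|_S}\xrightarrow{\sim}\Hom\!\big(\bC[\text{M}_{(H|_S)}],\bC\big)
\]
and similarly for $V_{H|_T}$, the map $\Updelta_{S,T}\colon V_H\to V_{H|_S}\times V_{H|_T}$ is dual to a linear map
\[
\bC[\text{M}_{(H|_S)}]\otimes\bC[\text{M}_{(H|_T)}]\to \bC[\text{M}_{(S_1|\dots|S_k)}]
\]
obtained from ${\upmu^\ast_{S,T}}$ (equivalently, from $\Updelta^\ast$ on the relevant subalgebras) by factorization of $\textsf{O}_\bullet$; dually this is the multiplication map $f^{h_1}\otimes f^{h_2}\mapsto f^{\text{m}_{(S,T)}(h_1,h_2)}$. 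By \autoref{lem:iso}, $\text{m}_{(S,T)}$ restricts to an isomorphism $\sigma^o_{(H|_S)}\times\sigma^o_{(H|_T)}\xrightarrow{\sim}\sigma^o_{(S_1|\dots|S_k|T_1|\dots|T_l)}$ (writing $H|_T=(T_1,\dots,T_l)$), and $\sigma^o_{(S_1|\dots|S_k|T_1|\dots|T_l)}$ is a face of $\sigma^o_{(S_1|\dots|S_k)}$. So on the nose the dual map $\bC[\text{M}_{(H|_S)}]\otimes\bC[\text{M}_{(H|_T)}]\to\bC[\text{M}_{(S_1|\dots|S_k)}]$ is injective, being the composite of the algebra isomorphism onto $\bC[\text{M}_{(S_1|\dots|S_k|T_1|\dots|T_l)}]$ with the inclusion of this as a subalgebra. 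A $\bC$-algebra map that is the dual of an injective linear map of coordinate rings of affine toric varieties need not be surjective in general, but here the relevant statement is the geometric one: dualizing an injection of semigroup rings corresponding to inclusion of a face gives a surjection of tori/orbits onto the sub-orbit, which is exactly \autoref{lem2}. Concretely, given $(x_1,x_2)\in V_{H|_S}\times V_{H|_T}$, the tensor product $x_1\otimes x_2$ defines a character of $\bC[\text{M}_{(S_1|\dots|S_k|T_1|\dots|T_l)}]$, and since this is a face subalgebra of $\bC[\text{M}_{(S_1|\dots|S_k)}]$ whose complementary monomials all map to $0$ in $V_H$, one can extend $x_1\otimes x_2$ (by $0$ off the face) to a character $x\in V_H$ with $\Updelta_{S,T}(x)=(x_1,x_2)$.

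The main obstacle, and the step I would write most carefully, is exactly this extension-by-zero: one must check that setting $x(f^h):=0$ for $h$ outside the face $\sigma^o_{(S_1|\dots|S_k|T_1|\dots|T_l)}$ and $x(f^h):=(x_1\otimes x_2)(f^h)$ for $h$ in the face actually defines a ring homomorphism $\bC[\text{M}_{(S_1|\dots|S_k)}]\to\bC$. This is the standard fact that, for a face $\tau$ of a cone $\sigma$, the assignment "evaluate on $\tau\cap M$, kill $(\sigma\setminus\tau)\cap M$" is multiplicative, because $h_1+h_2\in\tau$ forces $h_1,h_2\in\tau$ when $\tau$ is a face; I would invoke this and then note $\Updelta_{S,T}(x)=(x_1,x_2)$ by construction. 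Finally, as in \autoref{lem}, I would remark that the whole statement also follows immediately from the Losev--Manin realization of permutohedral space as a moduli space, where $\Updelta_{S,T}$ is forgetting marked points followed by stabilization, and forgetting marked points is visibly surjective on the relevant boundary strata.

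\begin{proof}
Fix a composition $H=(S_1,\dots,S_k)\in\Sigma[I]$, and write $H|_S=(A_1,\dots,A_k)_+$, $H|_T=(B_1,\dots,B_l)_+$ for the restrictions. For $x\in V_H$, we have $\Updelta_{S,T}(x)=(x|_{\bC[\text{M}_{H|_S}]},\, x|_{\bC[\text{M}_{H|_T}]})$, and since $x(f^h)=0$ for all $h\notin\sigma^o_{(S_1|\dots|S_k)}$, and $\sigma^o_{(A_1|\dots)}=\sigma^o_{(S_1|\dots|S_k)}\cap\textsf{T}^\vee_S$ (the restriction of the preposet cone), it follows that $x|_{\bC[\text{M}_{H|_S}]}$ vanishes on $f^h$ for $h\notin\sigma^o_{(A_1|\dots)}$, and similarly for $T$. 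Hence $\Updelta_{S,T}(x)\in V_{H|_S}\times V_{H|_T}$, so the map does restrict as claimed.

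For surjectivity, write $\text{q}$ for the preposet $(S_1|\dots|S_k|B_1|\dots|B_l)$, equivalently $(A_1|\dots|A_k|B_1|\dots|B_l)$ since the $A$'s and $B$'s exhaust $S$ and $T$. By \autoref{lem:iso}, the multiplication $\text{m}_{(S,T)}$ restricts to an isomorphism of cones $\sigma^o_{(A_1|\dots)}\times\sigma^o_{(B_1|\dots)}\xrightarrow{\sim}\sigma^o_{\text{q}}$, and by \autoref{prop:face} the cone $\sigma^o_{\text{q}}$ is a face of $\sigma^o_{(S_1|\dots|S_k)}$. Dually, ${\upmu^\ast_{S,T}}_{H;K}$ restricted to the subalgebra $\bC[\text{M}_{\text{q}}]$ is the algebra isomorphism $\bC[\text{M}_{\text{q}}]\xrightarrow{\sim}\bC[\text{M}_{(A_1|\dots)}]\otimes\bC[\text{M}_{(B_1|\dots)}]$, and on $\bC[\text{M}_{(S_1|\dots|S_k)}]$ it kills all $f^h$ with $h\notin\sigma^o_{\text{q}}$.

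Now let $(x_1,x_2)\in V_{H|_S}\times V_{H|_T}$ be given. Then $x_1\otimes x_2$ defines a character of $\bC[\text{M}_{(A_1|\dots)}]\otimes\bC[\text{M}_{(B_1|\dots)}]\cong\bC[\text{M}_{\text{q}}]$; transporting along this isomorphism, we get a $\bC$-algebra homomorphism $\chi\colon\bC[\text{M}_{\text{q}}]\to\bC$. Define $x\colon\bC[\text{M}_{(S_1|\dots|S_k)}]\to\bC$ by
\[
x(f^h):=\begin{cases}\chi(f^h)&\text{if }h\in\sigma^o_{\text{q}},\\[2pt]0&\text{if }h\notin\sigma^o_{\text{q}}.\end{cases}
\]
This is a $\bC$-algebra homomorphism: since $\sigma^o_{\text{q}}$ is a \emph{face} of $\sigma^o_{(S_1|\dots|S_k)}$, if $h_1+h_2\in\sigma^o_{\text{q}}$ with $h_1,h_2\in\sigma^o_{(S_1|\dots|S_k)}$ then $h_1,h_2\in\sigma^o_{\text{q}}$, so $x(f^{h_1}f^{h_2})=\chi(f^{h_1})\chi(f^{h_2})=x(f^{h_1})x(f^{h_2})$; if $h_1+h_2\notin\sigma^o_{\text{q}}$ then at least one of $h_1,h_2$ is outside $\sigma^o_{\text{q}}$, and both sides are $0$. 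Thus $x\in U_H$, and by construction $x$ vanishes off $\sigma^o_{\text{q}}\subseteq\sigma^o_{(S_1|\dots|S_k)}$, so $x\in V_H$. Finally $\Updelta_{S,T}(x)=(x|_{\bC[\text{M}_{H|_S}]},x|_{\bC[\text{M}_{H|_T}]})$, which via the isomorphism $\bC[\text{M}_{\text{q}}]\cong\bC[\text{M}_{(A_1|\dots)}]\otimes\bC[\text{M}_{(B_1|\dots)}]$ is exactly $(x_1,x_2)$. Hence $\Updelta_{S,T}\colon V_H\to V_{H|_S}\times V_{H|_T}$ is surjective.

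Alternatively, one can use the Losev--Manin realization of permutohedral space to prove this result directly, where $\Updelta_{S,T}$ is forgetting marked points followed by stabilization.
\end{proof}
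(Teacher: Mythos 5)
Your first paragraph (that $\Updelta_{S,T}$ really lands in $V_{H|_S}\times V_{H|_T}$) is fine and matches the paper, though the identity $\sigma^o_{(A_1|\dots)}=\sigma^o_{(S_1|\dots|S_k)}\cap \text{T}^\vee_S$ deserves a one-line justification via \autoref{prop:dualdes}. The genuine gap is in the surjectivity step: the claim that $\sigma^o_{\text{q}}$ is a face of $\sigma^o_{(S_1|\dots|S_k)}$, on which your extension-by-zero rests, is false in general. The cone $\sigma^o_{(S_1|\dots|S_k)}$ is a \emph{linear subspace} of $\text{T}^\vee_I$ (it is the semisimple flat $\textsf{T}^\vee_H$, being generated by the coroots $\pm h_{\text{i}_1\text{i}_2}$ with $\text{i}_1,\text{i}_2$ in a common lump), so its only face is itself; and \autoref{prop:face} does not give what you want, since $(S,T)\leq (S_1|\dots|S_k)$ holds only when every lump $S_j$ lies entirely in $S$ or entirely in $T$. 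Concretely, whenever some $S_j$ meets both $S$ and $T$, pick $h\in \sigma^o_{(S_1|\dots|S_k)}\setminus\sigma^o_{\text{q}}$; then $-h$ also lies in $\sigma^o_{(S_1|\dots|S_k)}\setminus\sigma^o_{\text{q}}$ while $h+(-h)=0\in\sigma^o_{\text{q}}$, so your multiplicativity check breaks: $x(f^h)x(f^{-h})=0\neq 1=x(f^hf^{-h})$. Indeed no element of $U_H$, let alone $V_H$, can vanish on such an $f^h$, because $h$ lies in the lineality lattice $\text{M}_{(S_1|\dots|S_k)}$ of $\sigma^o_H$ and hence $f^h$ is a unit of $\bC[\text{M}_H]$; so extension by zero off $\sigma^o_{\text{q}}$ cannot produce the desired preimage.

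The lemma is of course still true, and your reduction to extending the character $\chi=x_1\otimes x_2$ of $\bC[\text{M}_{\text{q}}]$ is the right move; the fix (and what the paper's terse ``can be extended'' is implicitly using) is to extend with \emph{nonzero} values rather than by zero. Since $\text{M}_{\text{q}}=\sigma^o_{\text{q}}\cap\text{M}_I$ is a saturated sublattice of the lattice $\text{M}_{(S_1|\dots|S_k)}$ (alternatively, since $\bC^\times$ is divisible), the group character $\text{M}_{\text{q}}\to\bC^\times$ determined by $\chi$ extends to a character of all of $\text{M}_{(S_1|\dots|S_k)}$. Extending \emph{that} by zero off $\sigma^o_{(S_1|\dots|S_k)}$ is legitimate, because the lineality space $\sigma^o_{(S_1|\dots|S_k)}$ genuinely is a face of $\sigma^o_H$; this yields a point $x\in V_H$. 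Your containment argument from the first paragraph then shows that for $h\in\text{M}_{H|_S}$ outside $\sigma^o_{(A_1|\dots)}$ both $x$ and $x_1$ vanish on $f^h$, while on $\sigma^o_{(A_1|\dots)}$ the extension restricts to $x_1$ (and similarly for $T$), so $\Updelta_{S,T}(x)=(x_1,x_2)$. With this repair your argument is the same route as the paper's, which states the surjectivity step only as the cone containment plus an unexplained extension, and likewise offers the Losev--Manin realization as an alternative.
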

\begin{proof}
First we show that $V_{H} \to    V_{H|_S} \times   V_{H|_T}$ is well-defined. Recall 
\[
\Updelta_{S,T}(x)  =   ( x|_{ \bC[M_{ H|_S}] }  ,   x|_{ \bC[M_{ H|_T}] }  )
.\]
Given $(\text{i}_1, \text{i}_2)\in H|_S$ such that $(\text{i}_2, \text{i}_1)\notin H|_S$, we have $(\text{i}_1 , \text{i}_2)\in H$ with $(\text{i}_2 , \text{i}_1)\notin H$. Therefore, for $x\in V_{H}$, we have 
\[
 x|_{ \bC[M_{ H|_S}]}(f^{  h_{\text{i}_1 \text{i}_2} })   = \underbrace{x(f^{  h_{\text{i}_1 \text{i}_2} })=0}_{\text{because $x\in V_{H}$}}
.\] 
Thus $x|_{ \bC[M_{ H|_S}] }\in  V_{H|_S} $. We have $x|_{ \bC[M_{ H|_T}] }\in  V_{H|_T} $ by the same argument. For surjectivity, we have
\[
\sigma^\circ_{\big (    S_1 \cap S | \dots | S_k \cap S   \big |    S_1 \cap T | \dots | S_k \cap T   \big)} \subset \sigma^\circ_{(  S_1 | \dots | S_k )}
,\]
and so every homomorphism in $ V_{H|_S} \times   V_{H|_T}$ can be extended to a homomorphism in $ V_{H}$.

Alternatively, one can use the Losev-Manin realization to easily prove this result.
\end{proof}

\begin{remark}
Thus, $\bC\Sigma$ is isomorphic to the species of \emph{closed torus orbits}
\[
F \mapsto \overline{V}_F:= \bigsqcup_{F\leq H} V_H
\] 
with multiplication given simply by inclusion of closed torus orbits. This way of doing things is nice because the key fact about permutohedral space being a space with factorization of its boundaries is formalized as the statement that the species $F \mapsto \overline{V}_F$ is strongly Joyal-theoretic.
\end{remark}



\section{Opens and Preposets}\label{sec:site}

We define the bimonoid in cospecies of Zariski open sets $\textsf{Op}^\text{op}$, and we realize the structure sheaf as an \hbox{$\textsf{Op}^\text{op}$-graded} bialgebra. This recovers $\bC\textbf{\textsf{O}}$ as the pullback of the grading along indexing certain opens by preposets.

\subsection{The Bimonoid of Opens}

For $F$ a composition, let $\textsf{Op}_F$ be the pointed category with objects Zariski open sets $U\subseteq \bC\Sigma^F$, morphisms inclusion of open sets $U\hookrightarrow V$, and distinguished object the empty set $\emptyset \subseteq \bC\Sigma^F$. We have the $\textsf{Cat}_\bullet$-valued cospecies of opens $\textsf{Op}^{\text{op}}$, given by 
\[
\textsf{Op}^{\text{op}}[F]: =  \textsf{Op}^{\text{op}}_F
,\]
\[
\textsf{Op}^{\text{op}}[\sigma,F](U):= \big(\bC\Sigma[\sigma,F]\big)^{-1}(U)
,\qquad
\textsf{Op}^{\text{op}}[F,\beta](U):=  \big(\bC\Sigma[F,\beta]\big)^{-1}(U)
.\]
This is a strict cospecies. We abbreviate $U'=\textsf{Op}^{\text{op}}[\sigma,F](U)$ and $\wt{U}=\textsf{Op}^{\text{op}}[F,\beta](U)$ as usual.

Let $\text{Op}^{\text{op}}$ denote the Joyalization of $\textsf{Op}^{\text{op}}$. We give $\textsf{Op}^{\text{op}}$ the structure of a weakly \hbox{Joyal-theoretic} cospecies by taking cartesian products of open sets,
\[
\text{prod}: \text{Op}^{\text{op}} \to  \textsf{Op}^{\text{op}}
,\qquad
(U_1,\dots, U_k)\,  \mapsto \,  U_1 \times \dots \times  U_k  :=    \pi_{S_1}^{-1} ( U_1) \cap \dots \cap  \pi_{S_k}^{-1} (U_k)
.\] 
Note this map commutes with the action of $\sigma$ and $\beta$ on the nose, so it is a strict morphism. 

We now give $\textsf{Op}^{\text{op}}$ the structure of a bimonoid. We have the functors the pullback of opens along the (co)multiplication of permutohedral space,
\[
 \Updelta^{-1}_{F,G} :\textsf{Op}^\text{op}_F \to \textsf{Op}^\text{op}_G
,\qquad
U \mapsto \Updelta^{-1}_{F,G}(U) 
\]
and
\[
\upmu^{-1}_{F,G} :\textsf{Op}^\text{op}_G \to \textsf{Op}^\text{op}_F
,\qquad
U \mapsto \upmu^{-1}_{F,G}(U) 
.\]

\begin{prop}
The functors $\Updelta^{-1}_{F,G}$ and $\upmu^{-1}_{F,G}$ give opens of permutohedral space $\textsf{Op}^{\text{op}}$ the structure of a bimonoid in \hbox{$\textsf{Cat}_\bullet$-valued} cospecies. This is a strict bimonoid. 
\end{prop}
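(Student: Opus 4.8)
The statement is that the pullback functors $\Updelta^{-1}_{F,G}$ and $\upmu^{-1}_{F,G}$ give $\textsf{Op}^{\text{op}}$ the structure of a strict bimonoid in $\textsf{Cat}_\bullet$-valued cospecies, and the heart of the matter is that all the diagrams defining a bimonoid commute \emph{strictly}. The plan is to deduce everything from the fact that $\bC\Sigma$ is already a Joyal-theoretic bimonoid (so its multiplication $\upmu_{S,T}$ and comultiplication $\Updelta_{S,T}$ satisfy the Joyal axioms), together with the elementary fact that taking preimages is a strict contravariant functor: $(g\circ f)^{-1} = f^{-1}\circ g^{-1}$, $(\text{id})^{-1} = \text{id}$, and $f^{-1}$ preserves inclusions of opens and the empty set (so it is a well-defined functor of pointed categories $\textsf{Op}^{\text{op}}_F \to \textsf{Op}^{\text{op}}_G$). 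First I would note that each $\Updelta^{-1}_{F,G}$ and $\upmu^{-1}_{F,G}$ indeed lands in opens, since $\upmu_{S,T}$ and $\Updelta_{S,T}$ are morphisms of varieties and hence continuous for the Zariski topology; preimages of opens are open, and the higher $\upmu_{F,G}$, $\Updelta_{F,G}$ are built by composing these, so they are continuous too. Preservation of $\emptyset$ and of inclusions is immediate, so $\Updelta^{-1}_{F,G}$ and $\upmu^{-1}_{F,G}$ are functors of pointed categories.

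Next I would verify the nine bimonoid diagrams by transporting each one from $\bC\Sigma$. Because $\textsf{Op}^{\text{op}}$ is a strict cospecies (the action of $\sigma$ and $\beta$ is literally preimage under $\bC\Sigma[\sigma,F]$, $\bC\Sigma[F,\beta]$), the functoriality $2$-cells are all identities, so I only need honest commuting squares. For (co)multiplication $\sigma$-naturality and $\beta$-naturality: these follow by applying $(-)^{-1}$ to the corresponding naturality squares for $\bC\Sigma$ — e.g. from $\upmu_{F',G'}\circ(\bC\Sigma[\sigma,F]) = (\bC\Sigma[\sigma,G])\circ\upmu_{F,G}$ one gets, upon taking preimages and reversing the order, exactly the multiplication $\sigma$-naturality square for $\textsf{Op}^{\text{op}}$. (Co)associativity of $\textsf{Op}^{\text{op}}$ follows from (co)associativity of $\bC\Sigma$ by the same device: $\upmu^{-1}_{F,G}\circ\upmu^{-1}_{G,E} = (\upmu_{G,E}\circ\upmu_{F,G})^{-1} = \upmu^{-1}_{F,E}$, and dually for $\Updelta$. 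Strict (co)unitality is trivial since $\upmu_{F,F}=\Updelta_{F,F}=\text{id}$ forces $\upmu^{-1}_{F,F}=\Updelta^{-1}_{F,F}=\text{id}$. The bimonoid axiom for $\textsf{Op}^{\text{op}}$ is obtained by applying $(-)^{-1}$ to the bimonoid axiom square of $\bC\Sigma$: from $\upmu_{GF,G}\circ\beta\circ\Updelta_{FG,F} = \Updelta_{G,A}\circ\upmu_{F,A}$ (the $A=(I)$ case, with the general case following by Joyal-theoretic factorization as in \autoref{thm:joyal}) we take preimages, note $\beta^{-1}$ is the inverse permutation action, and read off the required identity $\upmu^{-1}_{F,A}\circ\Updelta^{-1}_{G,A} = \Updelta^{-1}_{FG,F}\circ\beta^{-1}\circ\upmu^{-1}_{GF,G}$. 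I would also record that $\textsf{Op}^{\text{op}}$ is moreover weakly Joyal-theoretic via the cartesian-product-of-opens map already defined, which makes all these checks compatible with the product structure, though strictly speaking the proof only needs the ungraded identities above.

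The one genuinely substantive point — and the step I expect to need a sentence of care — is that the preimage functors are \emph{well-defined}, i.e. that the ambiguity in the Joyalized description of $\bC\Sigma$ (a point of $\bC\Sigma^F$ lives in various charts $U_H$) does not affect $\upmu^{-1}_{F,G}(U)$ or $\Updelta^{-1}_{F,G}(U)$. But this is automatic: $\upmu_{F,G}$ and $\Updelta_{F,G}$ are already established to be well-defined maps of varieties in \autoref{sec:permspace2} (using (co)multiplication $\mathtt{f}$-naturality of $\wt{\bC\Sigma}$), so their preimage operation is well-defined with no further work. Hence there is no real obstacle; the content is bookkeeping, and the proof amounts to the sentence ``apply the contravariant strict functor $(-)^{-1}$ to every structural identity of the Joyal-theoretic bimonoid $\bC\Sigma$.'' I would close by remarking that strictness of the bimonoid is exactly the assertion that $(-)^{-1}$ is a \emph{strict} functor, which is why $\textsf{Op}^{\text{op}}$ — unlike $\textsf{Mod}$ later, where one only has pullback up to coherent isomorphism — is an honest (strict) bimonoid rather than a $2$-bimonoid.
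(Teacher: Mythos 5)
Your proposal is correct and is essentially the paper's own argument: the paper proves this in one line by citing the bimonoid structure of $\bC\Sigma$ together with functoriality of preimages, $(g\circ f)^{-1}(X)=f^{-1}(g^{-1}(X))$, exactly the mechanism you spell out diagram by diagram. Your handling of the bimonoid axiom (taking preimages and reading off the identity with the roles of $F$ and $G$ interchanged, absorbing the permutation into the cospecies action) is precisely the "self-dual" point the paper records in its remark that $\text{B}_{G,F,A}$ of $\bC\Sigma$ yields $\text{B}_{F,G,A}$ of $\textsf{Op}^{\text{op}}$.
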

\begin{proof}
This follows directly from the fact that $\bC\Sigma$ is a bimonoid, together with the fact that pullback of sets $X$ along functions $f$ is functorial in $f$, i.e.
\[
 (g\circ f)^{-1}(X)   =  f^{-1}\big ( g^{-1}(X)\big )      
.\qedhere \]
\end{proof}

\begin{remark}
Note the self-dual nature of the bimonoid axiom; it is the diagram $\text{B}_{G,F,A}$ of $\bC\Sigma$ which implies the diagram $\text{B}_{F,G,A}$ of $\textsf{Op}^{\text{op}}$.
\end{remark}

\subsection{The Structure Sheaf Bialgebra and the Restriction to Preposets}\label{Up}

We can model the structure sheaf of permutohedral space as the $\textsf{Op}^{\text{op}}$-graded \hbox{vector cospecies} $\bC\textbf{\textsf{Op}}^{\text{op}}$ given by taking regular functions, pulling them back along the corresponding maps of $\bC\Sigma$, and restricting them along inclusions $V\hookleftarrow U$,
\[
\bC\textbf{\textsf{Op}}^{\text{op}}_U[F]  := \big\{  \text{regular functions } f:U\to \bC       \big\}
\]
\[
\bC\textbf{\textsf{Op}}^{\text{op}}_U[\sigma,F](f) := f \circ \bC\Sigma[\sigma,F]
,\quad
\bC\textbf{\textsf{Op}}^{\text{op}}_U[F,\beta](f) :=   f \circ \bC\Sigma[F,\beta]
,\quad 
\bC\textbf{\textsf{Op}}^{\text{op}}_{V\hookleftarrow U}[F,F] (f) := f|_U
.\]
The functoriality of $\bC\textbf{\textsf{Op}}^{\text{op}}$ follows directly from the fact that functions commute with inclusion of subsets and the functoriality of $\textsf{Op}^{\text{op}}$, together with the fact that pullback of functions along functions $f$ is functorial in $f$.

We have the linear maps the pullback of regular functions along the (co)multiplication of permutohedral space,
\[
\Updelta_{F,G}^{\ast}   :      \bC\textbf{\textsf{Op}}^{\text{op}}_U[F] \to \bC\textbf{\textsf{Op}}^{\text{op}}_{ \Updelta_{F,G}^{-1}(U) }[G]
,\qquad
f \mapsto   \Updelta_{F,G}^{\ast} f :=      f\circ \Updelta_{F,G} 
\]
and
\[
\upmu_{F,G}^{\ast}  :  \bC\textbf{\textsf{Op}}^{\text{op}}_U[G] \to \bC\textbf{\textsf{Op}}^{\text{op}}_{ \upmu_{F,G}^{-1}(U) }[F]
,\qquad
f \mapsto    \upmu_{F,G}^{\ast} f :=     f\circ \upmu_{F,G} 
.\]

\begin{prop}
The maps $\Updelta_{F,G}^{\ast}$ and $\upmu_{F,G}^{\ast}$ give $\bC\textbf{\textsf{Op}}^{\text{op}}$ the structure of a \hbox{$\textsf{Op}^{\text{op}}$-graded} bialgebra in cospecies. 
\end{prop}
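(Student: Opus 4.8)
The plan is to reduce everything to the corresponding structure on $\bC\textbf{\textsf{O}}$ via \autoref{thm} (the pullback theorem for graded bimonoids/bialgebras), exactly as was done for $\wt{\bC\Sigma}$ in \autoref{sec:permspace1}. First I would observe that $\bC\textbf{\textsf{Op}}^{\text{op}}$ is \emph{not} itself built from $\bC\textbf{\textsf{O}}$; rather, its grading cospecies $\textsf{Op}^{\text{op}}$ is already known (from the preceding proposition) to be a strict bimonoid in $\textsf{Cat}_\bullet$-valued cospecies, with multiplication/comultiplication the pullback of opens along the comultiplication/multiplication of $\bC\Sigma$. So what needs checking is that the assignment $U\mapsto \bC\textbf{\textsf{Op}}^{\text{op}}_U[F]$, together with the maps $\Updelta^\ast_{F,G}$ and $\upmu^\ast_{F,G}$, satisfies directly the seven (the four (co)multiplication $\sigma/\beta$-naturality squares, (co)associativity, and the bimonoid axiom; plus the $\mathtt{f}$-naturality squares, which here are trivial since $\textsf{Op}^{\text{op}}$ is strict so all $2$-cells are identities) defining diagrams of an $\textsf{Op}^{\text{op}}$-graded bialgebra in cospecies.

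Second, each of these diagrams follows from a single formal fact: pullback of functions along a map $g$ is contravariantly functorial, $(g\circ f)^\ast = f^\ast\circ g^\ast$ and $\mathrm{id}^\ast=\mathrm{id}$, combined with the bimonoid identities already established for $\bC\Sigma$. Concretely: graded multiplication $\sigma$-naturality of $\bC\textbf{\textsf{Op}}^{\text{op}}$ says $\big(\bC\Sigma[\sigma,F]\big)^\ast\circ\upmu^\ast_{F,G}=\upmu^\ast_{F',G'}\circ\big(\bC\Sigma[\sigma,G]\big)^\ast$ on regular functions, which is precisely $\big(\upmu_{F,G}\circ\bC\Sigma[\sigma,F]\big)^\ast=\big(\bC\Sigma[\sigma,G]\circ\upmu_{F',G'}\big)^\ast$, i.e.\ the dual of multiplication $\sigma$-naturality of $\bC\Sigma$. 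The same pattern handles the $\beta$-naturality squares (dual to (co)multiplication $\beta$-naturality of $\bC\Sigma$), (co)associativity (dual to (co)associativity of $\bC\Sigma$), and — the one diagram worth stating explicitly — the bimonoid axiom: the diagram $\text{B}_{F,G,A}$ for $\bC\textbf{\textsf{Op}}^{\text{op}}$ is obtained by applying $(-)^\ast$ to the diagram $\text{B}_{G,F,A}$ of $\bC\Sigma$ (note the self-dual role swap of $F$ and $G$, exactly as flagged in the remark preceding the statement for $\textsf{Op}^{\text{op}}$ itself). Throughout, one also needs that restriction of functions along an inclusion $V\hookleftarrow U$ commutes with pulling back along a map $g$ when $g^{-1}(V)\supseteq g^{-1}(U)$ in the evident way — this is the $\mathtt{f}$-naturality of the grading, and is immediate since restricting and pulling back are both just precomposition.

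Third, I would note that all maps are visibly $\bC$-linear, so these are genuinely bialgebra (not merely bimonoid) structures, and that the graded (co)unitality diagrams are trivial because on the length-one composition $F=(I)$ the maps $\upmu_{F,F}$, $\Updelta_{F,F}$ are identities on $\bC\Sigma$, hence their pullbacks are identities on regular functions. I would close by recording that $\bC\textbf{\textsf{Op}}^{\text{op}}$ is a \emph{strict} graded bialgebra (all the `$\cong$' coherence isomorphisms are identities) since $\textsf{Op}^{\text{op}}$ is strict, and remarking — as a consistency check rather than a step of the proof — that pulling back this grading along the homomorphism $\textsf{O}_\bullet\to\textsf{Op}^{\text{op}}$, $p\mapsto U_p$, recovers $\bC\textbf{\textsf{O}}$ by \autoref{thm}, since $\bC[\text{M}_p]$ is exactly the ring of regular functions on $U_p$.

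\textbf{Main obstacle.} There is no deep obstacle; the only thing requiring care is bookkeeping in the bimonoid axiom, where one must track which composition plays the role of $F$ versus $G$ after dualization (the pullback functor sends $\text{B}_{G,F,A}$ of $\bC\Sigma$ to $\text{B}_{F,G,A}$ of $\bC\textbf{\textsf{Op}}^{\text{op}}$), and making sure the opens on which the various regular functions live match up — i.e.\ that $\upmu_{FG,F}^{-1}$ and $\Updelta_{GF,G}^{-1}$ of a given open $U$ agree where required, which again is just the set-level bimonoid axiom for $\bC\Sigma$ pulled back. Everything else is routine contravariant functoriality of precomposition.
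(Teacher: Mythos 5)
Your proposal is correct and follows essentially the same route as the paper: every diagram except the $\mathtt{f}$-naturality squares is the image under contravariant pullback (precomposition) of the corresponding bimonoid identity of $\bC\Sigma$ — including the $F$/$G$ swap in the bimonoid axiom — and the $\mathtt{f}$-naturality squares reduce to the fact that restriction and pullback of functions are both precomposition, hence commute. The extra bookkeeping you spell out (linearity, strictness, the consistency check against $\bC\textbf{\textsf{O}}$) is fine but not needed beyond what the paper records.
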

\begin{proof}
We have that (co)multiplication $\mathtt{f}$-naturality follows directly from the fact that functions commute with inclusion of subsets, together with the fact that pullback of functions along functions $f$ is functorial in $f$. Everything else follows directly from the fact that $\bC\Sigma$ is a bimonoid, again together with the fact that pullback of functions along functions is functorial.
\end{proof}

We can index certain opens of permutohedral space $\bC\Sigma^I$ with preposets; we have the \hbox{Joyal-theoretic} morphism of cospecies given by
\[
 U_{(-)}: \textsf{O}_\bullet \to \text{Op}^\text{op}
,\qquad 
p\mapsto U_p:=\bigcup_{F\leq p} U_F  = \bigsqcup_{F\leq p} V_F
.\]
In particular, we have $U_\bullet = \emptyset$. This is a strict morphism. 

\begin{remark}
Notice $U_p$ is naturally the toric variety of the subfan $\{\sigma_F \ | \ F \leq p \}$ of the braid fan. 
\end{remark}

\begin{thm}\label{indexbyopens}
The composition of morphisms
\[
\varphi: \textsf{O}_\bullet \xrightarrow{U_{(-)}}   \text{Op}^\text{op}   \xrightarrow{\text{prod}} \textsf{Op}^\text{op} 
,\qquad
(p_1, \dots, p_k)   \mapsto U_{p_1} \times \dots \times U_{p_k}
\]
is a strict homomorphism of bimonoids.
\end{thm}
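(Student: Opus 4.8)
The plan is to verify that $\varphi$ preserves multiplication and comultiplication on the nose, using the already-established fact that both $\textsf{O}_\bullet$ and $\textsf{Op}^\text{op}$ are strict (so there are no coherence $2$-cells to chase, only set-level equalities of opens). Since $\textsf{O}_\bullet$ and $\textsf{Op}^\text{op}$ are both Joyal-theoretic and the morphisms $U_{(-)}$ and $\text{prod}$ are strict Joyal-theoretic morphisms, by the reduction explained after \autoref{ex:conect} it suffices to check preservation of the binary (co)multiplication, i.e. to verify $\mathtt{\Phi}_{(S,T),(I)}$ and $\mathtt{\Psi}_{(S,T),(I)}$ are identities. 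Concretely: for preposets $p\in\textsf{O}_\bullet[S]$, $q\in\textsf{O}_\bullet[T]$ I must show
\[
\upmu^{-1}_{S,T}\big(U_{(p\,|\,q)}\big) = U_p\times U_T,
\]
wait --- more precisely $\upmu^{-1}_{S,T}(U_{(p\,|\,q)}) = U_p\times U_q$, and dually, for $p\in\textsf{O}_\bullet[I]$, that $\Updelta^{-1}_{S,T}(U_p\times U_q)$ relates correctly to $U_{(p\talloblong_S\,|\,p\fatslash_{\,T})}$; in the strict setting this means $\Updelta^{-1}_{S,T}(U_{p\talloblong_S}\times U_{p\fatslash_{\,T}}) = U_p$ when $(S,T)\le p$, and $\Updelta^{-1}_{S,T}$ of the relevant open is empty (i.e. $U_\bullet$) otherwise.

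First I would reduce these open-set identities to the level of torus orbits, using the decompositions $U_p = \bigsqcup_{F\le p}V_F$ and $\bC\Sigma^F = \bigsqcup_H V_H$ established in \autoref{sec:permspace2} and the Torus Orbits subsection. The key point is that $\upmu_{S,T}$ and $\Updelta_{S,T}$ behave well on torus orbits: by \autoref{lem}, $\upmu_{S,T}$ restricts to an isomorphism $V_H\times V_K\xrightarrow{\sim}V_{H;K}$, and by \autoref{lem2}, $\Updelta_{S,T}$ restricts to a surjection $V_H\to V_{H|_S}\times V_{H|_T}$. So the preimage computations become purely combinatorial statements about compositions: $\upmu^{-1}_{S,T}(V_{H;K}) = V_H\times V_K$ exactly, and $\Updelta^{-1}_{S,T}(V_{H_1}\times V_{H_2})$ is the disjoint union of $V_H$ over compositions $H$ of $I$ with $H|_S = H_1$ and $H|_T = H_2$. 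Then, taking unions over $F\le p$ etc., the desired identities follow from the combinatorial facts: (i) $F\le (p\,|\,q)$ with $F$ of $I$ iff $F = H;K$ for $H\le p$, $K\le q$ (i.e. $F$ is a concatenation splitting at the $S$/$T$ divide) --- this uses that $(p\,|\,q)$ is the disjoint union preposet, together with the fact recorded in the footnote that the restriction of the comultiplication of $\textsf{O}_\bullet$ to $\mathsf{\Sigma}^\ast$ is deconcatenation; and (ii) when $(S,T)\le p$, the compositions $F\le p$ are exactly those of the form $H_1;H_2$ with $H_1\le p\talloblong_S$, $H_2\le p\fatslash_{\,T}$, reflecting \autoref{lem:face} that $\sigma^o_{p\talloblong_S}\times\sigma^o_{p\fatslash_{\,T}}$ is the $\lambda_{ST}$-face of $\sigma^o_p$. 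When $(S,T)\nleq p$, no $F\le p$ restricts compatibly to give something in the image, yielding the empty set $= U_\bullet$, matching $\Delta_{S,T}(p)=\bullet$.

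Finally I would package these into preservation of multiplication and comultiplication: the composite $\varphi = \text{prod}\circ U_{(-)}$ sends $(p_1,\dots,p_k)$ to $U_{p_1}\times\dots\times U_{p_k}$, and since both factors in the composite are strict Joyal-theoretic morphisms, the binary checks above suffice. One should also note that $\varphi$ preserves the distinguished object $\bullet$ (both sides give $\emptyset$), which is automatic since $U_\bullet = \emptyset$ and $\text{prod}$ of anything involving $\emptyset$ is $\emptyset$. The main obstacle I anticipate is \emph{not} any deep geometry --- the torus-orbit reduction makes everything combinatorial --- but rather bookkeeping care in the $\Updelta$ direction: one must be precise about which compositions $H$ of $I$ satisfy $H|_S = H_1$ and $H|_T = H_2$ (there can be many, corresponding to the interleavings, all of which lie in $U_p$ precisely when $(S,T)\le p$), and one must check the $(S,T)\nleq p$ case cleanly produces $U_\bullet$ rather than some smaller nonempty open. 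A secondary point worth stating explicitly is well-definedness on the colimit $\bC\Sigma^I = \colim_H U_H$, but this was already handled when $\bC\Sigma$ and $\textsf{Op}^\text{op}$ were shown to be bimonoids, so here it only needs to be invoked.
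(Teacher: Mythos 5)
Your torus-orbit strategy (via \autoref{lem}, \autoref{lem2} and disjointness of orbits) is exactly the device the paper uses, but the identities you set out to verify are not the ones the theorem requires, and one of them is false, because you have the two structure maps of $\textsf{Op}^{\text{op}}$ crossed. The multiplication of $\textsf{Op}^{\text{op}}$ is $\Updelta^{-1}_{F,G}$ and its comultiplication is $\upmu^{-1}_{F,G}$; so in the binary case preservation of multiplication reads $\Updelta^{-1}_{S,T}(U_{p_1}\times U_{p_2})=U_{(p_1|p_2)}$ for $p_1\in\textsf{O}_\bullet[S]$, $p_2\in\textsf{O}_\bullet[T]$, while preservation of comultiplication reads $\upmu^{-1}_{S,T}(U_p)=U_{p\talloblong_S}\times U_{p\fatslash_{\,T}}$ if $(S,T)\leq p$ and $=\emptyset$ otherwise, for an \emph{arbitrary} $p\in\textsf{O}_\bullet[I]$. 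Your first identity $\upmu^{-1}_{S,T}(U_{(p|q)})=U_p\times U_q$ is only the special case of the latter in which $p$ has no cross relations (so the emptiness case never occurs), and your second identity $\Updelta^{-1}_{S,T}(U_{p\talloblong_S}\times U_{p\fatslash_{\,T}})=U_p$ for $(S,T)\leq p$ (empty otherwise) is false: take $I=\{1,2\}$, $S=\{1\}$, $T=\{2\}$, $p$ the total preposet $(S,T)$; then $\bC\Sigma^S$ and $\bC\Sigma^T$ are points, so the left-hand side is all of $\bC\Sigma^I\cong\bP^1$ while $U_p$ is the affine chart, and $\Updelta^{-1}_{S,T}$ of a nonempty product open is never empty since $\Updelta_{S,T}$ surjects onto each product of orbits by \autoref{lem2} (the correct value is $U_{(p\talloblong_S|\,p\fatslash_{\,T})}$, which properly contains $U_p$ whenever $p$ has cross relations). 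The same crossing infects your combinatorial facts: $H\leq(p\,|\,q)$ is characterized by $H|_S\leq p$ and $H|_T\leq q$ (restriction), not by $H$ being a concatenation, and not every $H\leq p$ with $(S,T)\leq p$ is a concatenation (any antichain $p$ gives counterexamples). Concatenations enter only on the $\upmu^{-1}$ side: the image of $\upmu_{S,T}$ consists of the orbits $V_{H;K}$, and when $(S,T)\leq p$ one has $H;K\leq p$ iff $H\leq p\talloblong_S$ and $K\leq p\fatslash_{\,T}$ --- this is precisely how the paper's proof runs, together with the observation that when $(S,T)\nleq p$ no concatenation is $\leq p$, which is what produces $\emptyset$.

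Separately, your justification for reducing to the binary case does not hold: $\textsf{Op}^{\text{op}}$ is only \emph{weakly} Joyal-theoretic (prod is far from an isomorphism, since an open of $\bC\Sigma^{S_1}\times\dots\times\bC\Sigma^{S_k}$ need not be a product of opens), and the remark after \autoref{ex:conect} concerns $\sigma$-coherence of morphisms of species, not preservation of (co)multiplication for bimonoid homomorphisms. The paper reduces only to the case $G=(I)$, using that pullback of a \emph{product} open along $\Updelta_{F,G}$ (resp.\ $\upmu_{F,G}$) factorizes because the (co)multiplication of $\bC\Sigma$ is Joyal-theoretic, and then treats arbitrary $F$ directly by the orbit computation; if you insist on a binary reduction you must argue by induction on $l(F)$ via (co)associativity of $\bC\Sigma$, checking that each intermediate output is again a product open. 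Once the multiplication/comultiplication directions, the restriction-versus-concatenation characterizations, and this reduction are corrected, your orbit-level computation does go through and coincides with the paper's argument.
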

\begin{proof}
For compositions $G\leq F$, let
\[
F=(S_1,\dots, S_k) \qquad \text{and} \qquad G=(T_1,\dots, T_l)
.\]
Let $1\leq \mathtt{i}_j \leq k$ be the integer such that $S_{\mathtt{i}_j}$ is the $\mathtt{i}$th lump of the restriction of $F$ to $T_j$, $1\leq j \leq l$. Thus,
\[
\mathtt{i}_j := k(1)+\, \cdots\,  + k(j-1) +\mathtt{i} 
\]
where $k(j):=l(F|_{T_j})$. 
	
Preservation of multiplication says that given an element $p=(p_1,\dots, p_k)\in \textsf{O}_\bullet[F]$, we should have
\begin{equation}\label{presmult}
\Updelta^{-1}_{F,G}(U_{p_1}\times \dots \times U_{p_k})= U_{(p_{1_1 }   | \dots | p_{k(1)_1} )}   \times \dots \times   U_{(p_{ 1_l }   | \dots | p_{k(l)_l} )} 
.
\end{equation}
However, the pullback of product opens along the comultiplication $\Updelta_{F,G}$ factorizes as follows,
\[
\Updelta^{-1}_{F,G}  (U_{p_1}\times \dots \times U_{p_k})   =( \Delta_{F|_{T_1}}      \times \dots \times  \Updelta_{F|_{T_l}})^{-1}(U_{p_1}\times \dots \times U_{p_k}) 
\]
\[
=
\Updelta^{-1}_{F|_{T_1}}  (U_{p_{1_1}}  \times \dots \times U_{p_{k(1)_1}})   \times \cdots\cdots  \times   \Updelta^{-1}_{F|_{T_l}} ( U_{p_{ 1_l }}   \times \dots \times U_{p_{k(l)_l}})
.\]	
Therefore it is enough to check \textcolor{blue}{(\refeqq{presmult})} in the case $G=(I)$, which is 
\[
\Updelta^{-1}_{F}(U_{p_1}\times \dots \times U_{p_k}) =
U_{(p_{1}   | \dots | p_{k} )}   
.\]
We have
\[
U_{p_1}\times \dots \times U_{p_k}  =    \bigsqcup_{ H_i\leq p_i} V_{H_1} \times\dots \times  V_{H_k}
\qquad \text{and} \qquad
U_{(p_1|\dots | p_k)}    = \bigsqcup_{H\leq (p_1|\dots | p_k)} V_H
.\]
Let $x\in V_H\subset \bC\Sigma[I]$. If $H\leq (p_1|\dots | p_k)$, then $H_i|_{S_i} \leq p_i$ for $1\leq i \leq k$, and so
\[
 \Updelta_{F}(x)  \in  V_{H|_{S_1}} \times \dots \times V_{H|_{S_k}} \subset    U_{p_1} \times  \dots \times U_{p_k}
.\]
Conversely, if $ \Updelta_{F}(x) \in U_{p_1}\times \dots U_{p_k}$, then
\[
( V_{H|_{S_1}} \times \dots \times V_{H|_{S_k}} )  \ \cap \   (U_{p_1} \times \dots \times U_{p_k}) \neq \emptyset
.\]
But torus orbits are disjoint. Therefore $H|_{S_i} \leq p_i$ for $1\leq i \leq k$, and so $H\leq (p_1|\dots |p_k)$.

The pullback of product opens along the multiplication $\upmu_{F,G}$ factorizes similarly, and so it is enough to check the preservation of comultiplication in the case $G=(I)$. This says given an element $p\in \textsf{O}_\bullet[I]$, we have
\[
\upmu^{-1}_{F}( U_p )=  
\begin{cases}
	U_{p\talloblong_{S_1}} \times \dots \times  	U_{p\talloblong_{S_k}} &\quad  \text{if}\  F \leq p      \\
	\emptyset &\quad \text{otherwise.}
\end{cases}
\]
The image of the multiplication $\upmu_{F}$ is all those torus orbits $V_H\subset \bC\Sigma[I]$ such that $H$ is obtained from a concatenation over $F$, that is
\[
\image (\upmu_{F}) =   \bigsqcup_{H_i \in \Sigma[S_i]}  V_{H_1  ; \cdots ; H_k}
.\] 
We have $V_{H_1  ; \cdots ; H_k} \subseteq U_p$ if and only if $H_1  ; \cdots ; H_k \leq p$, therefore by \autoref{lem} we have 
\[
\upmu^{-1}_{F}(U_p)=     \bigsqcup_{ \substack{H_i \in \Sigma[S_i] \\H_1  ; \cdots ; H_k \leq p }}  V_{H_1} \times \dots \times  V_{H_k}  
.\]
If $F\leq p$, then
\[
H_1  ; \cdots ; H_k \leq p   \quad  \iff \quad  H_i \leq   p\! \talloblong_{S_i} \quad \text{for all} \quad 1\leq i \leq k 
.\]
Therefore
\[
\upmu^{-1}_{F}(U_p)=     \bigsqcup_{ H_i \leq   p\talloblong_{S_i}}  V_{H_1} \times \dots \times  V_{H_k}  
=
\bigsqcup_{ H_1 \leq   p\talloblong_{S_1}}  V_{H_1}   \times  \dots \times      \bigsqcup_{ H_k \leq   p\talloblong_{S_k}}  V_{H_k} 
\ =\ 
	U_{p\talloblong_{S_1}} \times \dots \times  	U_{p\talloblong_{S_k}}  
\]
as required. Now suppose $F\nleq p$. However, we always have
\[
H \leq H_1 ; \cdots ; H_k  \qquad \text{and} \qquad  H_1 ; \cdots ; H_k \leq p
.\]
Therefore in this case there can be no such concatenations $H_1 ; \cdots ; H_k $, and so
\[
\upmu^{-1}_{F}(U_p) = \emptyset
\]
as required. 
\end{proof}

\begin{remark}
Notice the pullback $\textsf{O}_\bullet$-graded bialgebra along the homomorphism $\textsf{O}_\bullet\to  \textsf{Op}^{\text{op}}$, as constructed in \autoref{thm}, is (naturally isomorphic to) $\bC\textbf{\textsf{O}}$.
\end{remark}


\section{Sheaves of Modules} \label{sec:topos}
\subsection{The Cospecies of Sheaves} 

We recall some basic aspects of sheaves on permutohedral space, in particular the \'etale space perspective on sheaves.

For $F$ a composition, let $\textsf{Top}/\bC \Sigma^F$ denote the slice category of topological bundles over $\bC\Sigma^F$, where $\bC\Sigma^F$ is equipped with the Zariski topology. Explicitly, objects of $\textsf{Top}/\bC \Sigma^F$ are topological spaces $E$ equipped with continuous maps 
\[
\pi: E\to \bC\Sigma^F
\]
and morphisms are continuous maps $\varphi: E_1\to E_2$ such that the diagram
\[\begin{tikzcd}
	{E_1} && {E_2} \\
	& {\mathbb{C}\Sigma^F}
	\arrow["\varphi", from=1-1, to=1-3]
	\arrow["{\pi_1}"', from=1-1, to=2-2]
	\arrow["{\pi_2}", from=1-3, to=2-2]
\end{tikzcd}\]
commutes. An \emph{\'etale space} over $\bC\Sigma^F$ is a topological bundle $E\to \bC\Sigma^F$ which is also a local homeomorphism. For each point $x\in \bC\Sigma^F$, we call 
\[
E(x):= \pi^{-1}(x)
\] 
the \emph{stalk} of $E$ at $x$. For each open $U\subseteq \bC\Sigma^F$, we have the set $\Gamma_U(E)$ of \emph{sections} of $E$ over $U$, given by
\[
\Gamma_U(E):=  \big \{   \text{continuous maps}\ s:U\to E \ \big |\   \pi \circ  s = \text{id}_U   \big \}
.\]
We have the presheaf $\mathcal{F}_E$ on $\textsf{Op}_F$ associated to $E$, given by
\[
\mathcal{F}_E: \textsf{Op}_F^\text{\text{op}} \to \textsf{Set}
,\qquad 
U\mapsto  \Gamma_U(E)
,\quad (V\hookrightarrow U) \mapsto (s \mapsto s|_V) 
.\]
Moreover, $\mathcal{F}_E$ satisfies locality and gluing, and so is in fact a sheaf. 

Going the other way, consider a generic presheaf on $\textsf{Op}_F$,
\[
\mathcal{F}: \textsf{Op}_F^\text{\text{op}} \to \textsf{Set}
.\] 
We still call elements $s\in \mathcal{F}(U)$ \emph{sections}, and denote $s|_V:= \mathcal{F}(V\hookrightarrow U)(s)\in \mathcal{F}(V)$. For each (possibly non-open) subset $X\subseteq \bC\Sigma^F$, let
\[
\mathcal{F}(X):= \colim_{X\subseteq U}\mathcal{F}(U) = \bigsqcup_{X \subseteq U} \mathcal{F}(U)\ \   \Big/  \sim
\]
where the equivalence is given by 
\[
s_1\in \mathcal{F}(U_1) \sim s_2\in \mathcal{F}(U_2) \iff \text{there exists $W\in \textsf{Op}_F$ such that $X\subseteq W\subseteq U_1 \cap U_2$ and $s_1 |_W = s_2 |_W$}
.\]
For each point $x\in \bC\Sigma^F$, the set 
\[
\mathcal{F}(x):=\mathcal{F}\big (\{x\}\big )
\] 
is called the \emph{stalk} of $\mathcal{F}$ at $x$. The image of a section $s\in \mathcal{F}(U)$ in the stalk at a point $x\in U$ is called the \emph{germ} $s_x\in \mathcal{F}(x)$ of $s$ at $x$. We have the topological bundle $E_\mathcal{F}\to \bC\Sigma^F$ associated to $\mathcal{F}$, which has stalks the stalks of $\mathcal{F}$
\[
E_\mathcal{F} (x) :=    \mathcal{F}(x)
\]
and topology with basis
\[
\Big \{ (U,s):= \big \{    s_x \ \big |\ x\in U    \big  \} \ \Big | \ U\in \textsf{Op}_F,\    s\in \mathcal{F}(U)   \Big \}
.\]
These constructions extend to an idempotent adjunction between topological bundles and presheaves, which restricts to an equivalence between \'etale spaces and sheaves. Thus, when working with sheaves, we can treat them as certain presheaves, or as certain topological bundles. From now on, let us also refer to \'etale spaces as sheaves. 

Let $\textsf{Sh}_F$ denote the full subcategory of $\textsf{Top}/\bC \Sigma^F$ of sheaves over $\bC\Sigma^F$. We have the \hbox{$\textsf{Cat}$-valued} cospecies of sheaves $\textsf{Sh}$, given by pulling back bundles with respect to the corresponding maps of $\bC\Sigma$ (i.e. the so-called inverse image functor),
\[
\textsf{Sh} [F] :=  \textsf{Sh}_F
\]
\[
\textsf{Sh} [F,\sigma](E):= \sigma^{-1} E=  E  \times_{\bC \Sigma^{F}}  \bC \Sigma^{F'} = \lim (    E \xrightarrow{\pi} \bC\Sigma^F \xleftarrow{\sigma} \bC\Sigma^{F'}   )
\]
\[
\textsf{Sh}[\beta,F](E):= \beta^{-1} E = E \times_{\bC\Sigma^F} \bC \Sigma^G    =    \lim (    E \xrightarrow{\pi} \bC\Sigma^F \xleftarrow{\beta} \bC\Sigma^{G}   )
.\]
In fact, this can easily be formalized as a strict cospecies, since pullback sheaves have stalks equal to stalks of the original sheaf. We may give $\textsf{Sh}$ the structure of a bimonoid by pulling back sheaves with respect to the (co)multiplication of of $\bC\Sigma$,
\[
\Updelta^{-1}_{F,G}: \textsf{Sh}_F \to \textsf{Sh}_G
,\qquad 
\Updelta^{-1}_{F,G}  E= E\times_{\bC\Sigma^F} \bC\Sigma^G   =    \lim \big (  E  \xrightarrow{\pi}   \bC\Sigma^F \xleftarrow{\Updelta_{F,G}} \bC\Sigma^G \big)
\]
and
\[
\upmu^{-1}_{F,G}  : \textsf{Sh}_G \to \textsf{Sh}_F
,\qquad 
\upmu^{-1}_{F,G} E= E \times_{ \bC\Sigma^G	 } \bC\Sigma^F  =   \lim \big (   E  \xrightarrow{\pi}   \bC\Sigma^G \xleftarrow{\upmu_{F,G}} \bC\Sigma^F\big )
.\]
We will not use this bimonoid structure. We are interested in the bimonoid structure on sheaves of modules, and in particular invertible sheaves.

\subsection{The Cospecies of Sheaves of Modules}

The category of sheaves $\textsf{Sh}_F$ is cartesian monoidal by taking fibered products,
\[
E_1 \times E_2 :=E_1\times_{\bC \Sigma^F} E_2
.\]
If we model sheaves as certain presheaves, this monoidal product corresponds to the usual cartesian product of presheaves, given by
\[
\mathcal{F}\times \mathcal{G}(U) = \mathcal{F}(U)\times \mathcal{G}(U)
.\]

Let $\mathcal{R}\in \textsf{Sh}_F$ be an internal ring. In particular, this means stalks $\mathcal{R}(x)$ and sections $\Gamma_U(\mathcal{R})$ have the structure of usual rings. Let $M$ and $N$ be $\mathcal{R}$-modules. In particular, this means stalks $M(x)$ and sections $\Gamma_U(M)$ have the structure of $\mathcal{R}(x)$-modules and $\Gamma_U(\mathcal{R})$-modules respectively. The \emph{tensor product} of $\mathcal{R}$-modules $M\otimes N=M\otimes_{\mathcal{R}} N$ is the sheaf (=\'etale space) of the presheaf
\[
U \mapsto   \Gamma_U(M)\otimes_{\Gamma_U(\mathcal{R})} \Gamma_U(N)
.\]
Explicitly, the stalks of $M\otimes N$ are given by pointwise tensoring 
\[
M\otimes N (x):= M(x) \otimes   N(x) =  M(x) \otimes_{\mathcal{R}(x)}   N(x)   
\]
and the topology has basis
\[
\bigg\{\Big\{ \sum_i s^i_x\otimes t^i_x \ \Big |\ x\in U    \Big  \} \ \bigg | \ U\in \textsf{Op}_F,\  \sum_i s^i\otimes t^i \in  \Gamma_U(M)\otimes_{\Gamma_U(\mathcal{R})}  \Gamma_U(N)  \bigg\}
.\]
Here $\sum_i s^i\otimes t^i$ is denoting a generic element of $\Gamma_U(M)\otimes_{\Gamma_U(\mathcal{R})} \Gamma_U(N)$ as a sum of simple tensors. 

Let $\mathcal{K}_F$ denote the function field of rational functions on $\bC\Sigma^F$. The \emph{structure sheaf} $\mathcal{O}_F\in \textsf{Sh}_F$ is the internal ring with stalks given by
\[
\mathcal{O}_F(x) :=  \big  \{   g/f\in \mathcal{K}_F \ \big | \ f(x)\neq 0 \big  \}
\]
and topology with basis
\[
\Big \{ (U,g):= \big \{    \text{germ}_x(g) \ \big |\ x\in U    \big  \} \ \Big | \ U\in \textsf{Op}_F,\  g:U\to \bC \text{ a regular function}   \Big \}
.\]
We have
\[
\Gamma_U( \mathcal{O}_F )  = \big \{ \text{regular functions } g:U\to \bC   \big \}
.\]
Let $\textsf{Mod}_F$ denote the category of $\mathcal{O}_F$-modules. We have the \hbox{$\textsf{Cat}$-valued} cospecies of modules $\textsf{Mod}$, given by pulling back modules along the corresponding maps of $\bC\Sigma$,
\[
\textsf{Mod}[F] :=  \textsf{Mod}_F
\]
\[
\textsf{Mod} [\sigma,F](M):=      \bC\Sigma[\sigma,F]^\ast M   =   \sigma^{-1} M \otimes_{ \sigma^{-1} \mathcal{O}_{F} } \mathcal{O}_{F'}
\]
\[
\textsf{Mod}[F,\beta](M):=     \bC\Sigma[F,\beta]^\ast M = \beta^{-1} M \otimes_{\beta^{-1} \mathcal{O}_{F} } \mathcal{O}_{\wt{F}}
.\]
We abbreviate $M'=\textsf{Mod} [\sigma,F](M)$ and $\wt{M}=\textsf{Mod}[\beta,F](M)$ as usual. We still need to define three \hbox{$2$-cells} $(\sigma,\beta)$, $(\sigma,\tau)$ and $(\beta,\gamma)$. They are just the canonical isomorphisms which express how pullback of sheaves of modules along maps $f$ is functorial in $f$. We now make them explicit.

Given a bijection $\sigma:J\to I$ and a permutation $\beta \in \text{Sym}_k$, for each $\mathcal{O}_{F}$-module $M$, $F\in \Sigma[J]$, $l(F)=k$, we need a natural isomorphism of $\mathcal{O}_{\wt{F}'}$-modules
\[
(\sigma, \beta)_M :  \wt{(M')} \to  (\wt{M})'
.\]
On the stalk at $x\in \bC\Sigma^{\wt{F}'}$, $(\sigma, \beta)_M$ is given by
\[
{ (\sigma, \beta)_M }|_{x}  : 
M(\tilde{x}') \otimes \mathcal{O}_{F'}(\tilde{x}) \otimes   \mathcal{O}_{\wt{F}'}(x)       \to    
 M(\tilde{x}') \otimes \mathcal{O}_{\wt{F}}(x') \otimes   \mathcal{O}_{\wt{F}'}(x) 
\]
\[
\text{g}\otimes \text{f}_1 \otimes \text{f}_2 \mapsto \text{g}  \otimes 1 \otimes  ( \beta^\ast \text{f}_1 ) \text{f}_2
.\]
Here, $\beta^\ast \text{f}_1$ denotes the pullback of the germ $\text{f}_1$ along $\beta=\bC\Sigma[\wt{F}',\beta]$. The inverse of $(\sigma, \beta)$, which we need for the bimonoid axiom, is given by
\[
\text{g}\otimes \text{f}_1 \otimes \text{f}_2 \mapsto \text{g}  \otimes 1 \otimes  ( \sigma^\ast \text{f}_1 ) \text{f}_2
.\]
Similarly, $\sigma^\ast \text{f}_1$ denotes the pullback of the germ $\text{f}_1$ along $\sigma=\bC\Sigma[\sigma, \wt{F}']$. Given composable bijections $K\xrightarrow{\tau} J \xrightarrow{\sigma} I$, for each $\mathcal{O}_{F}$-module $M$, $F\in \Sigma[K]$, we need a natural isomorphism of $\mathcal{O}_{F''}$-modules
\[
(\sigma,\tau)_M :  
(M' )' \to  M''
.\]
On the stalk at $x\in \bC \Sigma[F'']$, $(\sigma,\tau)_M$ is given by
\[
{(\sigma,\tau)_M}|_{x}  :  M(x'') \otimes \mathcal{O}_{F'}(x')  \otimes   \mathcal{O}_{F''}(x)       \to    M(x'')  \otimes   \mathcal{O}_{F''}(x) 
\]
\[
\text{g}\otimes \text{f}_1 \otimes \text{f}_2 \mapsto \text{g} \otimes (\sigma^\ast\text{f}_1 ) \text{f}_2
.\]
Given permutations $\beta,\gamma\in \text{Sym}_k$, for each $\mathcal{O}_{F}$-module $M$, $l(F)=k$, we need a natural isomorphism of $\mathcal{O}_{\wt{\wt{F}}}$-modules
\[
(\beta,\gamma)_M : \wt{(\wt{M})} \to         \wt{\wt{M}}
.\]
On the stalk at $x\in \bC \Sigma^{\wt{\wt{F}}}$, $(\beta,\gamma)_M$ is given by
\[
{(\beta,\gamma)_M}|_{x}  : M(\tilde{\tilde{x}}) \otimes \mathcal{O}_{\wt{F}}(\tilde{x})  \otimes   \mathcal{O}_{\wt{\wt{F}}}(x)       
\to    
M(\tilde{\tilde{x}})  \otimes   \mathcal{O}_{\wt{\wt{F}}}(x) 
\]
\[
\text{g}\otimes \text{f}_1 \otimes \text{f}_2 
\mapsto 
\text{g} \otimes (\beta^\ast\text{f}_1 ) \text{f}_2
.\]
We now need to check the four coherence laws of cospecies.

\begin{prop}
Equipped with the $2$-cells $(\sigma,\beta)_M$, $(\sigma,\tau)_M$ and $(\beta,\gamma)_M$ as defined above, sheaves of modules on permutohedral space form a $\textsf{Cat}$-valued cospecies $\textsf{Mod}$.
\end{prop}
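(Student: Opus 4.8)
The plan is to verify the four coherence $3$-cells of a cospecies — namely $(\sigma,\beta)$-coherence, $(\sigma,\tau,\beta)$-coherence (in its switched form for cospecies), $(\sigma,\tau,\upsilon)$-coherence, and $(\beta,\gamma,\delta)$-coherence — by passing to stalks. Since pullback of sheaves of modules has the property that the stalk of $f^\ast M$ at $x$ is $M(f(x))\otimes_{\mathcal{O}(f(x))}\mathcal{O}(x)$, each $2$-cell $(\sigma,\beta)_M$, $(\sigma,\tau)_M$, $(\beta,\gamma)_M$ restricts on stalks to an explicit isomorphism of tensor products of the shape ``$\mathtt{g}\otimes\mathtt{f}_1\otimes\mathtt{f}_2\mapsto \mathtt{g}\otimes 1\otimes(\cdots^\ast\mathtt{f}_1)\mathtt{f}_2$'' recorded in the excerpt. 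So each coherence law becomes, stalkwise, an identity between two composite maps on an iterated tensor product $M(\cdots)\otimes\mathcal{O}(\cdots)\otimes\mathcal{O}(\cdots)\otimes\mathcal{O}(\cdots)$, and checking it reduces to tracking where a simple tensor $\mathtt{g}\otimes\mathtt{f}_1\otimes\mathtt{f}_2\otimes\mathtt{f}_3$ (or its appropriate length-three analog) goes under each side. First I would set up the stalk description once and for all, recording that $\mathcal{O}_F[\sigma,F]$-pullback of a germ $\mathtt{f}$ is $\sigma^\ast\mathtt{f}$ and that these pullbacks compose contravariantly, i.e. $\tau^\ast(\sigma^\ast\mathtt{f})=(\sigma\circ\tau)^\ast\mathtt{f}$ and $\gamma^\ast(\beta^\ast\mathtt{f})=(\gamma\circ\beta)^\ast\mathtt{f}$ — this is just functoriality of pullback of rational/regular functions along the composable maps of $\bC\Sigma$, which we already know is a bimonoid. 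This single fact is the engine behind all four coherences.

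Next I would dispatch the two ``pure'' coherences $(\sigma,\tau,\upsilon)$ and $(\beta,\gamma,\delta)$, which by the excerpt are the ones that are \emph{unchanged} in the cospecies setting. For $(\sigma,\tau,\upsilon)$-coherence, on stalks both composites send $\mathtt{g}\otimes\mathtt{f}_1\otimes\mathtt{f}_2\otimes\mathtt{f}_3$ (living in $M(x''')\otimes\mathcal{O}_{F''}(x'')\otimes\mathcal{O}_{F'''}(x')\otimes\dots$ — I will suppress the precise stalk bookkeeping) to $\mathtt{g}$ tensored with the product of $\mathtt{f}_3$ with the appropriate iterated $\sigma$/$\tau$-pullbacks of $\mathtt{f}_1,\mathtt{f}_2$; the two orders of applying $(\sigma,\tau)_{(-)}$ and $(\tau,\upsilon)_{(-)}$ agree precisely because $\upsilon^\ast\circ\tau^\ast\circ\sigma^\ast=(\sigma\circ\tau\circ\upsilon)^\ast$ associates. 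The case $(\beta,\gamma,\delta)$ is identical with $\beta,\gamma,\delta$ in place of $\sigma,\tau,\upsilon$. Then I would handle $(\sigma,\beta)$-coherence, which mixes one bijection with one permutation: here one uses that pullback along $\bC\Sigma[\sigma,-]$ and along $\bC\Sigma[-,\beta]$ commute up to the canonical isomorphism coming from the $(\sigma,\beta)$-functoriality square of $\bC\Sigma$ itself (the underlying bimonoid), i.e. $\sigma^\ast$ of a $\beta$-pulled-back germ equals the $\beta$-pullback of the $\sigma$-pulled-back germ after the coordinate change; unwinding both composites on a simple tensor and invoking this one commutation yields the equality. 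Finally $(\sigma,\tau,\beta)$-coherence is the switched-form law for cospecies; it follows by the same stalk computation, now combining the contravariant composition $\tau^\ast\sigma^\ast=(\sigma\tau)^\ast$ with the $\sigma$/$\beta$ commutation, being careful that in the cospecies version the roles of $\sigma$ and $\tau$ (respectively $\beta$ and $\gamma$) are transposed relative to the species version, exactly as flagged in the excerpt's formula with the underbrace ``$\sigma$ and $\tau$ switched.''

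The main obstacle is purely bookkeeping: keeping straight which point each stalk sits over (the $x$, $x'$, $x''$, $\tilde{x}$, $\tilde{x}'$, etc.), and which of the several $\mathcal{O}$-tensor-factors a given germ belongs to, so that the claimed maps are even well-typed — the algebra itself is trivial once the typing is pinned down. I expect to argue that it suffices to do everything on stalks (since a morphism of sheaves of modules is an isomorphism iff it is a stalkwise isomorphism, and two morphisms agree iff they agree on stalks), then to remark that on each stalk the identities are immediate from functoriality of pullback of germs along composable maps of $\bC\Sigma$ plus naturality, and to explicitly write out only one representative case (say $(\sigma,\tau,\upsilon)$-coherence, or $(\sigma,\beta)$-coherence) in full, leaving the remaining three to the reader as ``the same computation.'' In other words, the proof will read: reduce to stalks; observe each $2$-cell is the evident rearrangement of tensor factors absorbing pulled-back germ-factors; conclude each coherence $3$-cell holds because it encodes an associativity/commutativity of iterated germ-pullback that already holds in the bimonoid $\bC\Sigma$.
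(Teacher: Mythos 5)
Your proposal takes essentially the same route as the paper: the paper likewise verifies the four coherence $3$-cells by passing to stalks, chasing a simple tensor $\text{g}\otimes\text{f}_1\otimes\text{f}_2\otimes\text{f}_3$ through both composites, and concluding from contravariant functoriality of germ pullback together with $(\sigma,\beta)$-functoriality of $\bC\Sigma$ (i.e. $\sigma^\ast\beta^\ast\text{f}=\beta^\ast\sigma^\ast\text{f}$), writing all four cases out explicitly rather than one representative. The only slip is terminological: the mixed law you call ``$(\sigma,\beta)$-coherence'' is the paper's $(\beta,\gamma,\sigma)$-coherence (two permutations, one bijection, in its switched cospecies form), and your argument for it is the right one.
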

\begin{proof}
We have four coherence $3$-cells to check. Recall $(\sigma,\tau,\beta)$-coherence says
\[
{(\sigma \circ \tau, \beta)}_M  \circ  { \wt{         (\sigma,\tau)  }  }_{M}  =  {(\sigma, \tau)}_{\wt{M}} \circ  (\tau,\beta)'_{M}  \circ  {(\sigma, \beta)}_{M'}
.\]
On the stalk at $x\in \bC\Sigma^{\wt{F}''}$, the left-hand map is given by
\[
\text{g}\otimes \text{f}_1 \otimes \text{f}_2     \otimes  \text{f}_3 
\mapsto 
\text{g} \otimes (\sigma^\ast\text{f}_1 ) \text{f}_2 \otimes  \text{f}_3 
\mapsto
(g\otimes 1) \otimes    ( \beta^\ast\sigma^\ast\text{f}_1 ) (\beta^\ast \text{f}_2)            \text{f}_3 
\]
and the right-hand map is given by
\[
\text{g}\otimes \text{f}_1 \otimes \text{f}_2     \otimes  \text{f}_3 
\mapsto
(\text{g}\otimes \text{f}_1 \otimes 1 )  \otimes         (   \beta^\ast    \text{f}_2   )  \text{f}_3 
\mapsto (\text{g}\otimes 1) \otimes (\beta^\ast f_1) \cdot 1    \otimes         (   \beta^\ast    \text{f}_2   )  \text{f}_3 
\mapsto (\text{g}\otimes 1) \otimes  ( \sigma^\ast\beta^\ast \text{f}_1)   (   \beta^\ast    \text{f}_2   )  \text{f}_3
\] 
and $\sigma^\ast\beta^\ast \text{f}_1= \beta^\ast\sigma^\ast\text{f}_1$ by $(\sigma,\beta)$-functoriality of $\bC\Sigma$. Recall $(\beta,\gamma,\sigma)$-coherence says
\[
( \sigma, \gamma\circ \beta   )_M \circ   (\beta ,\gamma )_{M'} = (\beta ,\gamma )'_{M} \circ (\sigma,\beta)_{\wt{M}} \circ \wt{(\sigma,\gamma)}_{M}  
.\]
On the stalk at $x\in \bC\Sigma^{\wt{\wt{F'}}}$, the left-hand map is given by
\[
\text{g}\otimes \text{f}_1 \otimes \text{f}_2 \otimes \text{f}_3 
\mapsto 
\text{g} \otimes \text{f}_1 \otimes (\beta^\ast\text{f}_2 ) \text{f}_3
\mapsto
(g\otimes 1) \otimes (\beta^\ast  \gamma^\ast \text{f}_1 )  (\beta^\ast\text{f}_2 ) \text{f}_3
\]
and the right-hand map is given by
\[
\text{g}\otimes \text{f}_1 \otimes \text{f}_2 \otimes \text{f}_3  
\mapsto 
(g\otimes 1) \otimes  (\gamma^\ast\text{f}_1)\text{f}_2  \otimes \text{f}_3
\mapsto 
(g\otimes 1) \otimes 1 \otimes (\beta^\ast  \gamma^\ast\text{f}_1)  (\beta^\ast \text{f}_2 )   \text{f}_3
\mapsto 
(g\otimes 1)  \otimes (\beta^\ast  \gamma^\ast\text{f}_1)  (\beta^\ast \text{f}_2 )   \text{f}_3
.\]
Recall $(\sigma,\tau,\upsilon)$-coherence says
\[
( \sigma \circ \tau , \upsilon )_M \circ ( \sigma , \tau )_{M'} = (\sigma, \tau \circ \upsilon)_{M} \circ  (\tau , \upsilon)'_{M}
.\]
On the stalk at $x\in \bC\Sigma^{F'''}$, the left-hand map is given by
\[
g\otimes \text{f}_1 \otimes  \text{f}_2 \otimes \text{f}_3 
\mapsto
g\otimes \text{f}_1 \otimes  (\sigma^\ast\text{f}_2) \text{f}_3
\mapsto
g\otimes        (   \sigma^\ast \tau^\ast  \text{f}_1 )  (\sigma^\ast\text{f}_2) \text{f}_3
.\]
and the right-hand map is given by
\[
g\otimes \text{f}_1 \otimes  \text{f}_2 \otimes \text{f}_3 
\mapsto
g\otimes  (\tau^\ast\text{f}_1 )  \text{f}_2 \otimes \text{f}_3 
\mapsto
g\otimes  (\sigma^\ast\tau^\ast\text{f}_1 ) (\sigma^\ast \text{f}_2)  \text{f}_3 
.\]
Finally, recall $(\beta,\gamma,\delta)$-coherence says 
\[
( \gamma \circ \beta , \delta )_M \circ ( \beta , \gamma )_{\wt{M}} = (\beta, \delta \circ \gamma )_{M} \circ  \wt{(\gamma , \delta)}_{M}
.\]
On the stalk at $x\in \bC\Sigma^{\wt{\wt{\wt{F}}}}$, the left-hand map is given by 
\[
g\otimes \text{f}_1 \otimes  \text{f}_2 \otimes \text{f}_3 
\mapsto
g\otimes \text{f}_1 \otimes  (\beta^\ast\text{f}_2) \text{f}_3
\mapsto
g\otimes        (   \beta^\ast \gamma^\ast  \text{f}_1 )  (\beta^\ast\text{f}_2) \text{f}_3
.\]
and the right-hand map is given by
\[
g\otimes \text{f}_1 \otimes  \text{f}_2 \otimes \text{f}_3 
\mapsto
g\otimes  (\gamma^\ast\text{f}_1 )  \text{f}_2 \otimes \text{f}_3 
\mapsto
g\otimes  (\beta^\ast\gamma^\ast\text{f}_1 ) (\beta^\ast \text{f}_2)  \text{f}_3 
.\qedhere \]
\end{proof}

We now show that $\textsf{Mod}$ is weakly Joyal-theoretic by taking external tensor products of modules. Let $\text{Mod}$ denote the Joyalization of $\textsf{Mod}$. Given an $F$-tuple of modules \hbox{$M=(M_1,\dots, M_k)\in \text{Mod}[F]$}, we have the pullback modules $\pi^{\ast}_{S_i} M_{i}\in \textsf{Mod}_F$ given by
\[
\pi^{\ast}_{S_i} M_{i} =  \pi^{-1}_{S_i} M_{i} \otimes_{   \pi^{-1}_{S_i} \mathcal{O}_{S_i} } \mathcal{O}_{F} 
.\]  
Then, we have the morphism of cospecies the external tensor product of modules, which is given by
\[
\text{Mod} \to \textsf{Mod}
,\qquad
(M_{1} , \dots , M_{k}) \mapsto M_{1} \boxtimes \dots \boxtimes M_{k}:= \pi^{\ast}_{S_1}  M_{1}   \otimes  \dots \otimes  \pi^{\ast}_{S_k}  M_{k} 
.\]
This is not a strict morphism, and so we need to define $2$-cells $\sigma_F$ and $\beta_F$. In particular, for each bijection $\sigma: J\to I$ and $F$-tuple $M=(M_1,\dots, M_k)\in \text{Mod}[F]$, $F\in \Sigma[J]$, we require a map of $\mathcal{O}_{F'}$-modules
\[
\sigma_{( M_1 , \dots , M_k  )}  :    ( M_1 \boxtimes \dots \boxtimes M_k )' \to     M'_1 \boxtimes \dots \boxtimes M'_k 
.\]
On the stalk at $x\in \bC\Sigma^{F'}$, $\sigma_{( M_1 , \dots , M_k  )}$ is given by
\[
\bigotimes_{1\leq i \leq k }  \Big ( M_i \big (x'|_{S_i})  \otimes  \mathcal{O}_F(x')   \Big ) \otimes \mathcal{O}_{F'}(x)
\to 
\bigotimes_{1\leq i \leq k }  \bigg ( \Big (  M_i (x'|_{S_i}   ) \otimes \mathcal{O}_{S'_i}(x|_{S_i}) \Big )  \otimes \mathcal{O}_{F'}(x) \bigg)
\]
\[
\bigotimes_{1\leq i \leq k}  (\text{g}_i \otimes \text{f}_i )    \otimes  \text{f}
\ \mapsto\ 
\text{f} \cdot \bigotimes_{1\leq i \leq k}   \big (    (\text{g}_i \otimes 1 ) \otimes \sigma^\ast\text{f}_i  \big )
.\]
For each permutation $\beta\in \text{Sym}_k$ and $F$-tuple $M=(M_1,\dots, M_k)\in \text{Mod}[F]$, $l(F)=k$, we require a map of $\mathcal{O}_{\wt{F}}$-modules
\[
\beta_{( M_1 , \dots , M_k  )}  :    \wt{ M_1 \boxtimes \dots \boxtimes M_k}    \to     \wt{M}_1 \boxtimes \dots \boxtimes \wt{M}_k 
.\]
On the stalk at $x\in \bC\Sigma^{\wt{F}}$, $\beta_{( M_1 , \dots , M_k  )}$ is given by
\[
\bigotimes_{1\leq i \leq k }  \Big ( M_i \big (\tilde{x}|_{S_i})  \otimes  \mathcal{O}_F(\tilde{x})   \Big ) \otimes \mathcal{O}_{\wt{F}}(x)
\to 
\bigotimes_{1\leq i \leq k }  \bigg ( \Big (  M_i (\tilde{x}|_{S_i}   ) \otimes \mathcal{O}_{S_{\beta(i)}}(x|_{S_i}) \Big )  \otimes \mathcal{O}_{\wt{F}}(x) \bigg)
\]
\[
\bigotimes_{1\leq i \leq k}  (\text{g}_i \otimes \text{f}_i )    \otimes  \text{f}
\ \mapsto\ 
\text{f} \cdot \bigotimes_{1\leq i \leq k}   \big (    (\text{g}_i \otimes 1 ) \otimes \beta^\ast\text{f}_i  \big )
.\]

\begin{prop}\label{exten}
Equipped with the $2$-cells $\sigma_{( M_1 , \dots , M_k  )}$ and $\beta_{( M_1 , \dots , M_k  )}$ as defined above,	the external product of modules is a lax morphism of $\textsf{Cat}$-valued cospecies
	\[
	\text{Mod} \to \textsf{Mod}
.\]
\end{prop}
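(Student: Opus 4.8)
\textbf{Proof plan for Proposition \ref{exten}.} The statement asserts that the external tensor product of modules assembles into a lax morphism of $\textsf{Cat}$-valued cospecies $\text{Mod}\to \textsf{Mod}$, equipped with the $2$-cells $\sigma_{(M_1,\dots,M_k)}$ and $\beta_{(M_1,\dots,M_k)}$ just defined. Recalling the explicit description of lax morphisms of cospecies from \autoref{sec1}, there are three things to verify: (i) each $\sigma_{(M_1,\dots,M_k)}$ and $\beta_{(M_1,\dots,M_k)}$ is a well-defined natural morphism of $\mathcal{O}_{F'}$-modules, respectively $\mathcal{O}_{\wt F}$-modules; (ii) the $(\sigma,\beta)$-coherence $3$-cell commutes; (iii) the $\sigma$-coherence and $\beta$-coherence $3$-cells commute, in the form appropriate to cospecies (where $\sigma$ and $\tau$, respectively $\beta$ and $\gamma$, are switched relative to the species case). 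The plan is to check these stalkwise, since everything in sight — pullback of modules, external tensor product, and the structure $2$-cells of $\textsf{Mod}$ — is defined at the level of stalks, and a morphism of sheaves of modules is an isomorphism (or is well-defined, natural, etc.) iff it is so on every stalk.

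First I would record that $\sigma_{(M_1,\dots,M_k)}$ and $\beta_{(M_1,\dots,M_k)}$ are well-defined $\mathcal{O}_{F'}$- (resp. $\mathcal{O}_{\wt F}$-) linear maps: the formula $\bigotimes_i(\text{g}_i\otimes \text{f}_i)\otimes \text{f}\mapsto \text{f}\cdot\bigotimes_i((\text{g}_i\otimes 1)\otimes\sigma^\ast\text{f}_i)$ is induced by the obvious multilinear assignment, and $\mathcal{O}_{F'}$-linearity is visible in the leading factor $\text{f}$; naturality in the tuple $(M_1,\dots,M_k)$ (i.e. compatibility with morphisms $M_i\to N_i$) is immediate because the map only touches the ring factors $\mathcal{O}_F(x')$, $\mathcal{O}_{S'_i}(x|_{S_i})$ and leaves the module germs $\text{g}_i$ alone except for tensoring with $1$. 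The same remarks apply to $\beta_{(M_1,\dots,M_k)}$, now using the reindexing $S_i\rightsquigarrow S_{\beta(i)}$ coming from the action of $\beta$ on compositions. This step is routine bookkeeping.

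The substance is in (ii) and (iii), and here the key mechanism is exactly the one already used in the proof that $\textsf{Mod}$ is a cospecies: after unwinding all the external-product and pullback structure $2$-cells stalkwise, each coherence diagram reduces to an identity among pullbacks of \emph{germs of regular functions} along the maps $\bC\Sigma[\sigma,-]$, $\bC\Sigma[-,\beta]$, and these identities are precisely the functoriality $2$-cells $(\sigma,\beta)$, $(\sigma,\tau)$, $(\beta,\gamma)$ of the Joyal-theoretic species $\bC\Sigma$, i.e. the facts that $\sigma^\ast\beta^\ast = \beta^\ast\sigma^\ast$ and that pullback of functions along bijections is functorial. Concretely: for $(\sigma,\beta)$-coherence I would write out both composites on a stalk at $x\in\bC\Sigma^{\wt F'}$; each side pushes the germs $\text{g}_i$ through untouched (up to $\otimes 1$'s) and rearranges the ring-factor germs, and the two rearrangements agree because $\sigma^\ast\beta^\ast\text{f} = \beta^\ast\sigma^\ast\text{f}$, which is the component of the $(\sigma,\beta)$-functoriality $2$-cell of $\bC\Sigma$ (the latter being trivial since $\bC\Sigma$ is a Joyal-theoretic, hence strict, species). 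For $\sigma$-coherence (in the cospecies form, with $\sigma,\tau$ switched) the relevant identity is $\sigma^\ast\tau^\ast\text{f} = (\sigma\circ\tau)^\ast\text{f}$ applied to the ring germs, together with the fact that the structure $2$-cell $(\sigma,\tau)_M$ of $\textsf{Mod}$ was defined by exactly this recipe; $\beta$-coherence is identical with $\beta,\gamma$ in place of $\sigma,\tau$ and the reindexing $\beta\mapsto\gamma\circ\beta$.

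The main obstacle I anticipate is purely organizational rather than conceptual: the diagrams mix three layers of structure ($\boxtimes$, the pullbacks $(-)'$ and $\wt{(-)}$, and the $2$-cells of $\textsf{Mod}$), so on a stalk one is composing several multilinear maps on a multi-factor tensor product $\bigotimes_i(M_i(\cdots)\otimes\mathcal{O}_F(\cdots))\otimes\mathcal{O}(\cdots)$, and it takes care to track which $\otimes 1$'s and which pulled-back germs land where. Once a consistent notation for stalks is fixed — as in the preceding proposition — each of the four $3$-cells collapses to a one-line germ identity, so I would present it by displaying the two stalkwise composites side by side and pointing to the corresponding functoriality $2$-cell of $\bC\Sigma$, exactly as done in the proof of the cospecies axioms above. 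Since $\bC\Sigma$ is strict, all these $2$-cells are identities, so the coherence $3$-cells are in fact trivial, which is what the statement (a lax morphism) requires.
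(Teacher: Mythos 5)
Your plan is correct and matches the paper's own proof: the paper verifies exactly the three coherence $3$-cells ($(\sigma,\beta)$-, $\sigma$- and $\beta$-coherence, in their cospecies form) by writing out both composites stalkwise and observing that the germ rearrangements agree via identities like $\beta^\ast\sigma^\ast\text{f}_i=\sigma^\ast\beta^\ast\text{f}_i$, i.e. the (strict) functoriality of $\bC\Sigma$. Your additional remarks on well-definedness and naturality of the $2$-cells are harmless extras that the paper leaves implicit.
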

\begin{proof}
	There are three coherence $3$-cells to check. Recall $(\sigma,\beta)$-coherence says
	\[
	\underbrace{\eta_{\wt{F}'} \big ( (\sigma,\beta)_{( M_1 , \dots , M_k  )  } \big )}_{=\, \text{id}} \circ  \beta_{( M'_1 , \dots , M'_k  )}   \circ \wt{\sigma}_{( M_1 , \dots , M_k  )} 
	=
	{\sigma}_{(M_{\beta(1)} , \dots , M_{\beta(k)})}  \circ \beta'_{( M_1 , \dots , M_k  )}\circ  (\sigma,\beta)_{M_1 \boxtimes \dots \boxtimes M_k}
	.\]
	The left-hand map is given by
	\[
	\bigotimes_{1\leq i \leq k}  (\text{g}_i \otimes \text{f}_i )    \otimes  \text{f} \otimes \text{h}
	\ \mapsto\ 
	\text{f} \cdot \bigotimes_{1\leq i \leq k}   \big (    (\text{g}_i \otimes 1 ) \otimes \sigma^\ast\text{f}_i  \big ) \otimes \text{h}
	\ \mapsto \
	\text{h} (\beta^\ast\text{f}) \cdot \bigotimes_{1\leq i \leq k}   \big (    (\text{g}_i \otimes 1 \otimes 1 ) \otimes\beta^\ast  \sigma^\ast\text{f}_i  \big ) 
	\]
	and the right-hand map is given by
	\[
	\bigotimes_{1\leq i \leq k}  (\text{g}_i \otimes \text{f}_i )    \otimes  \text{f} \otimes \text{h}
	\ \mapsto \ 
	\bigotimes_{1\leq i \leq k}  (\text{g}_i \otimes \text{f}_i )  \otimes 1    \otimes  (\beta^\ast\text{f})  \text{h}
	\ \mapsto\ 
	\bigotimes_{1\leq i \leq k}  \big ((\text{g}_i \otimes 1  ) \otimes \beta^\ast \text{f}_i   \big )  \otimes  (\beta^\ast\text{f})  \text{h}
	\]
	\[
	\mapsto  \ 
	(\beta^\ast\text{f}) \text{h}  \cdot \bigotimes_{1\leq i \leq k}   \big (    (\text{g}_i \otimes 1 \otimes 1 ) \otimes  \sigma^\ast\beta^\ast\text{f}_i  \big ) 
	.\]
	We have $\beta^\ast\sigma^\ast\text{f}_i=\sigma^\ast\beta^\ast\text{f}_i$ by $(\sigma,\beta)$-functoriality of $\bC\Sigma$. Recall $\sigma$-coherence says 
	\[
	(\sigma \circ \tau)_{( M_1 , \dots , M_k  )} \circ (\sigma,\tau)_{M_1 \boxtimes \dots \boxtimes M_k  } =   \eta_{F''}\big ( (\sigma,\tau)_{( M_1 , \dots , M_k  )}\big)  \circ   \sigma_{( M'_1 , \dots , M'_k  )} \circ \tau'_{( M_1 , \dots , M_k  )} 
	.\]
	The left-hand map is given by
	\[
	\bigotimes_{1\leq i \leq k} ( \text{g}_i\otimes \text{f}_i)  \otimes \text{h}_1 \otimes \text{h}_2 
	\ \mapsto\  
	\bigotimes_{1\leq i \leq k} ( \text{g}_i\otimes \text{f}_i) \otimes (\sigma^\ast\text{h}_1 ) \text{h}_2
	\ \mapsto\  
	(\sigma^\ast\text{h}_1 ) \text{h}_2 \cdot \bigotimes_{1\leq i \leq k}\big ( ( \text{g}_i  \otimes 1 )\otimes \sigma^\ast \tau^\ast \text{f}_i  \big )
	\]
	and the right-hand map is given by
	\[
	\bigotimes_{1\leq i \leq k} ( \text{g}_i\otimes \text{f}_i)  \otimes \text{h}_1 \otimes \text{h}_2 
	\ \mapsto\  
	\text{h}_1  \cdot \bigotimes_{1\leq i \leq k} \big ( (\text{g}_i\otimes 1) \otimes \tau^\ast  \text{f}_i\big )  \otimes \text{h}_2 
	\]
	\[
	\mapsto \ 
	\text{h}_2(\sigma^\ast\text{h}_1 )  \cdot  \bigotimes_{1\leq i \leq k} \big ( (  (\text{g}_i\otimes 1) \otimes 1 ) \otimes \sigma^\ast\tau^\ast  \text{f}_i\big  )  
	\ \mapsto \ 
	\text{h}_2 (\sigma^\ast\text{h}_1 )\cdot \bigotimes_{1\leq i \leq k}\big ( ( \text{g}_i  \otimes 1 )\otimes \sigma^\ast \tau^\ast \text{f}_i  \big )
	.\]
	Then $\beta$-coherence follows similarly.
\end{proof}

\subsection{The Bimonoid of Sheaves of Modules}

We now give $\textsf{Mod}$ the structure of a bimonoid by pulling back modules with respect to the (co)multiplication of $\bC\Sigma$, that is the multiplication and comultiplication of $\textsf{Mod}$ are given by
\[
\Updelta^{\ast}_{F,G}  M=  \Updelta^{-1}_{F,G}M \otimes_{  \Updelta_{F,G}^{-1}\mathcal{O}_{F} }  \mathcal{O}_G
\qquad \text{and} \qquad 
\upmu^{\ast}_{F,G} M= \upmu^{-1}_{F,G}M \otimes_{  \upmu_{F,G}^{-1}\mathcal{O}_{G} }  \mathcal{O}_F
\]
respectively. This is not strict, so we need to define seven invertible $2$-cells
\[
\Updelta^\ast_{F,G,\sigma},\ \Updelta^\ast_{F,G,\beta},\   \Updelta^\ast_{F,G,E},\    \upmu^\ast_{F,G,\sigma},\ \upmu^\ast_{F,G,\beta},\     \upmu^\ast_{F,G,E},\   \text{B}_{F,G,A} 
.\]
As in the case of the $2$-cells for $\textsf{Mod}$ as a cospecies, they are just the canonical isomorphisms which express how pullback of sheaves of modules is functorial. We now make them explicit.

We need a natural isomorphism of $\mathcal{O}_{G'}$-modules
\[
{\Updelta^\ast_{F,G,\sigma}}_M   :   (\Updelta^\ast_{F,G} M)'  \to       \Updelta^\ast_{F',G'}  (M')
.\]
On the stalk at $x\in \bC\Sigma^{G'}$, ${\Updelta^\ast_{F,G,\sigma}}_M$ is given by
\[
        M\big (  \Updelta_{F,G}(x') \big )  \otimes \mathcal{O}_{G}( x' )    \otimes \mathcal{O}_{G'}(x) 
\to
M\big (  \Updelta_{F,G}(x') \big )\otimes  \mathcal{O}_{F'}\big (\Updelta_{F',G'}(x)\big )  \otimes \mathcal{O}_{G'}(x)  
\]
\[
\text{g} \otimes \text{f}_1 \otimes \text{f}_2 \mapsto \text{g} \otimes 1 \otimes    (\sigma^\ast \text{f}_1  ) \text{f}_2
.\]
We need a natural isomorphism of $\mathcal{O}_{\wt{G}}$-modules
\[
{\Updelta^\ast_{F,G,\beta}}_M   :    \wt{\Updelta^\ast_{F,G} M}  \to       \Updelta^\ast_{\wt{F},\wt{G}}  \wt{M}
.\]
On the stalk at $x\in \bC\Sigma^{\wt{G}}$, ${\Updelta^\ast_{F,G,\beta}}_M$ is given by
\[
          M\big (  \Delta_{F,G}(\tilde{x}) \big )  \otimes \mathcal{O}_{G}( \tilde{x} ) \otimes \mathcal{O}_{\wt{G}}(x) 
\to
M\big (  \Delta_{F,G}(\tilde{x}) \big )  \otimes    \mathcal{O}_{\wt{F}}\big (\Updelta_{\wt{F},\wt{G}}(x)\big ) \otimes \mathcal{O}_{\wt{G}}(x) 
\]
\[
\text{g} \otimes \text{f}_1 \otimes \text{f}_2 \mapsto \text{g} \otimes  1 \otimes    (\beta^\ast \text{f}_1  ) \text{f}_2
.\]
We need a natural isomorphism of $\mathcal{O}_E$-modules
\[
\Updelta^\ast_{F,G,E}  : \Updelta^\ast_{G,E}  (\Updelta^\ast_{F,G} M ) \to    \Updelta^\ast_{F,E} M
.\]
On the stalk at $x\in \bC\Sigma^E$, $\Updelta^\ast_{F,G,E}$ is given by
\[
  M \big (  \Updelta_{F,E}(x) \big )  \otimes  \mathcal{O}_G\big  ( \Delta_{G,E}(x) \big )  \otimes \mathcal{O}_E(x)
\to
M \big (  \Updelta_{F,E}(x) \big ) \otimes  \mathcal{O}_E(x)
\]
\[
\text{g} \otimes \text{f}_1 \otimes \text{f}_2 \mapsto \text{g} \otimes (\Updelta_{G,E}^\ast \text{f}_1 )\text{f}_2
.\]
We need a natural isomorphism of $\mathcal{O}_{F'}$-modules
\[
{\upmu^\ast_{F,G,\sigma}}_M   :    (\upmu^\ast_{F,G} M)'  \to       \upmu^\ast_{F',G'}  (M')
\]
On the stalk at $x\in \bC\Sigma^{F'}$, ${\upmu^\ast_{F,G,\sigma}}_M$ is given by
\[
      M\big (  \upmu_{F,G}(x') \big )  \otimes \mathcal{O}_{F}( x' )    \otimes \mathcal{O}_{F'}(x) 
\to
M\big (  \upmu_{F,G}(x') \big )\otimes  \mathcal{O}_{G'}\big (\upmu_{F',G'}(x)\big )  \otimes \mathcal{O}_{F'}(x)  
\]
\[
\text{g} \otimes \text{f}_1 \otimes \text{f}_2 \mapsto \text{g} \otimes 1 \otimes    (\sigma^\ast \text{f}_1  ) \text{f}_2
.\]
We need a natural isomorphism of $\mathcal{O}_{\wt{F}}$-modules
\[
{\upmu^\ast_{F,G,\beta}}_M   :    \wt{\upmu^\ast_{F,G} M}  \to       \upmu^\ast_{\wt{F},\wt{G}}  \wt{M}
.\]
On the stalk at $x\in \bC\Sigma^{\wt{F}}$, ${\upmu^\ast_{F,G,\beta}}_M$ is given by
\[
       M\big (  \Delta_{F,G}(\tilde{x}) \big )  \otimes \mathcal{O}_{F}( \tilde{x} ) \otimes \mathcal{O}_{\wt{F}}(x) 
\to
M\big (  \Delta_{F,G}(\tilde{x}) \big )  \otimes    \mathcal{O}_{\wt{G}}\big (\upmu_{\wt{F},\wt{G}}(x)\big ) \otimes \mathcal{O}_{\wt{F}}(x) 
\]
\[
\text{g} \otimes \text{f}_1 \otimes \text{f}_2 \mapsto \text{g} \otimes  1 \otimes    (\beta^\ast \text{f}_1  ) \text{f}_2
.\]
We need a natural isomorphism of $\mathcal{O}_E$-modules
\[
\upmu^\ast_{F,G,E}  : \upmu^\ast_{F,G}  (\upmu^\ast_{G,E} M ) \to    \upmu^\ast_{F,E} M
.\]
On the stalk at $x\in \bC\Sigma^F$, $\upmu^\ast_{F,G,E}$ is given by
\[
\upmu^\ast :   M \big (  \upmu_{F,E}(x) \big )  \otimes  \mathcal{O}_G\big  ( \Delta_{F,G}(x) \big )  \otimes \mathcal{O}_F(x)
\to
M \big (  \upmu_{F,E}(x) \big ) \otimes  \mathcal{O}_F(x)
\]
\[
\text{g} \otimes \text{f}_1 \otimes \text{f}_2 \mapsto \text{g} \otimes (\upmu_{F,G}^\ast \text{f}_1 )\text{f}_2
.\]
Finally, we need a natural isomorphism of $\mathcal{O}_G$-modules
\[
{\text{B}_{F,G,A}}_M   :    
\Updelta^\ast_{GF,G}  \beta^\ast  \upmu^\ast_{FG,F}  M   
\to
\upmu^\ast_{G,A}  \Updelta^\ast_{F,A} M
\]
On the stalk at $x\in \bC\Sigma^G$, ${\text{B}_{F,G,A}}_M$ is given by
\[
    M( \Updelta_{F,A} (\upmu_{G,A} (x)) ) \otimes  \mathcal{O}_{FG}( \beta(\Updelta_{GF,G}(x)))   \otimes   \mathcal{O}_{GF}(\Updelta_{GF,G}(x)) \otimes  \mathcal{O}_{G}(x)
\]
\[
\to
  M( \Updelta_{F,A} \upmu_{G,A} (x) )    \otimes \mathcal{O}_{A}(\upmu_{G,A}(x)) \otimes  \mathcal{O}_{G}(x)
\]
\[
\text{g} \otimes \text{f}_1 \otimes \text{f}_2  \otimes \text{f}_3
\mapsto
g \otimes 1 \otimes  ( \Updelta^\ast_{GF,G} \beta^\ast  \text{f}_1) ( \Updelta^\ast_{GF,G} \text{f}_2) \text{f}_3
.\]

\begin{thm}
The pullback of sheaves of modules along the (co)multiplication of permutohedral space, together with $2$-cells the canonical isomorphisms which express how pullback of sheaves of modules is functorial, gives $\textsf{Mod}$ the structure of a bimonoid in $\textsf{Cat}$-valued cospecies. 
\end{thm}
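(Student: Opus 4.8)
The plan is to verify the nine $2$-cells and eleven coherence $3$-cells required by the definition of a bimonoid in $\textsf{Cat}$-valued cospecies, exactly as we did for the cospecies structure of $\textsf{Mod}$ in the preceding proposition. The conceptual content is that all the structure maps $\Updelta^\ast_{F,G}$, $\upmu^\ast_{F,G}$ are pullbacks of sheaves of modules along the (co)multiplication maps of the bimonoid $\bC\Sigma$, and all the $2$-cells are the canonical coherence isomorphisms witnessing that sheaf-of-modules pullback $f\mapsto f^\ast$ is pseudofunctorial. Thus every required equation of $2$-cells is, after unwinding, an instance of the corresponding equation already known to hold in $\bC\Sigma$ (which \emph{is} a strict Joyal-theoretic bimonoid, so those equations hold on the nose), together with functoriality of pullback of germs of regular functions. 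Concretely, I would work stalkwise: for each point $x$ in the relevant component, a stalk of a pullback module $f^\ast M$ at $x$ is $M(f(x))\otimes \mathcal{O}(x)$ with the tensor balanced over the pullback of the structure ring, and each $2$-cell acts by the formula $\text{g}\otimes \text{f}_1\otimes\cdots \mapsto \text{g}\otimes 1\otimes (f^\ast \text{f}_1)\cdots$, collapsing the intermediate tensor factor via the pullback map on functions. Writing both sides of each coherence law as such a stalkwise formula, the two composites differ only by a reassociation of pullbacks of germs, e.g. $\sigma^\ast\beta^\ast \text{f} = \beta^\ast\sigma^\ast \text{f}$, which is precisely $(\sigma,\beta)$-functoriality of $\bC\Sigma$, or by the analogous identities for $(\sigma,\tau)$, $(\beta,\gamma)$, and the various compositions $\Updelta_{G,E}\circ\Updelta_{F,G}=\Updelta_{F,E}$, $\upmu_{F,G}\circ\upmu_{G,E}=\upmu_{F,E}$, $\upmu_{GF,G}\circ(\beta)\circ\Updelta_{FG,F} = \Delta_{G,A}\circ\mu_{F,A}$ coming from associativity, coassociativity, and the bimonoid axiom of $\bC\Sigma$.

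The steps, in order, would be: (i) check multiplication $\sigma$-naturality and $\beta$-naturality by the stalk computation above, reducing to $(\sigma,\mathtt f)$- and $(\beta,\mathtt f)$-type functoriality of $\bC\Sigma$; (ii) check associativity and strict unitality — associativity reduces to $\Updelta_{F,G}$-composition compatibility exactly as in the $(\sigma,\tau,\upsilon)$-coherence computation already written out, and strict unitality holds because $\upmu^\ast_{F,F}$ is canonically the identity functor (the pullback along the identity map with its trivial comparison); (iii) check comultiplication $\sigma$- and $\beta$-naturality and coassociativity, which are the formal duals of (i) and (ii); (iv) check the bimonoid axiom $2$-cell $\text{B}_{F,G,A}$ is well-defined, i.e. that the explicit stalk formula given for ${\text{B}_{F,G,A}}_M$ respects the tensor balancing — this uses the identity $\upmu_{GF,G}\circ\beta\circ\Updelta_{FG,F}=\Delta_{G,A}\circ\mu_{F,A}$ of underlying maps of $\bC\Sigma$, which is the Joyal bimonoid axiom from \autoref{thm:joyal}; (v) verify the eleven coherence $3$-cells. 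Here the pattern is uniform: each $3$-cell asserts equality of two composites of the above $2$-cells; writing each composite as a single stalkwise formula $\text{g}\otimes(\text{some product of pullbacks of germs})$, the two formulas agree because the relevant diagram of maps among the spaces $\bC\Sigma^{(-)}$ commutes. For the purely $\sigma$-indexed and $\beta$-indexed coherences (associativity/coassociativity $\sigma$- and $\beta$-coherence, $\sigma$/$\beta$-functoriality multiplication/comultiplication coherence, associativity and coassociativity coherence) this is functoriality of pullback; for bimonoid $\sigma$-coherence it additionally uses naturality of the bimonoid-axiom comparison in $\bC\Sigma$ and the inverse formula for $(\sigma,\beta)$ recorded above.

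\textbf{Main obstacle.} The genuine difficulty is not any single computation but bookkeeping: keeping track, in the bimonoid $\sigma$-coherence $3$-cell, of which prime or tilde refers to which bijection and in which order the many pullbacks of germs must be applied, and confirming that every intermediate tensor factor is collapsed consistently so that the two sides of the equation land in the same stalk with the same formula. Concretely I expect the bimonoid $\sigma$-coherence check (and, to a lesser extent, the two bimonoid-axiom-involving functoriality coherences) to require the most care, precisely because the underlying diagram mixes $\Updelta$, $\upmu$, a braiding $\beta$, and a bijection $\sigma$, and one must invoke both the bimonoid $\sigma$-coherence of $\bC\Sigma$ and the inverse of the $(\sigma,\beta)$ comparison $2$-cell for $\textsf{Mod}$. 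I would therefore organize the proof by first proving a single lemma — that for any composable chain of structure maps of $\bC\Sigma$, the iterated pullback of a module, equipped with the composite comparison $2$-cell, is canonically independent of the bracketing — and then deduce all eleven coherences as immediate corollaries, rather than recomputing eleven stalkwise identities by hand. The unitality and counitality diagrams are trivial since pullback along an identity map is strictly the identity functor in this model.
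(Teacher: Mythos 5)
Your proposal is correct and takes essentially the same approach as the paper: the paper's proof verifies the eleven coherence laws precisely by the stalkwise computations you describe, with each identity reducing to (co)multiplication $\sigma$/$\beta$-naturality, (co)associativity, $(\sigma,\beta)$-functoriality, or the bimonoid axiom of $\bC\Sigma$, all of which hold strictly since $\bC\Sigma$ is a strict Joyal-theoretic bimonoid. The only difference is organizational — the paper checks representative coherences by hand (with the rest ``following similarly'') rather than deriving them from a single rebracketing/pseudofunctoriality lemma as you suggest — which is a stylistic choice rather than a different argument.
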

\begin{proof}
There are eleven coherence laws to check. Recall associativity $\sigma$-coherence says
\[
{\Updelta^\ast_{F',G',E'}}_{M'}  \circ   \Updelta^\ast_{G',E'} \big ({\Updelta^\ast_{F,G,\sigma}}_M \big )  \circ  {\Updelta^\ast_{G,E,\sigma}}_{\Updelta^\ast_{F,G}M}  
=
{\Updelta^\ast_{F,E,\sigma}}_{M} \circ ({\Updelta^\ast_{F,G,E}}_{M})'
.\]
The left-hand map is given by
\[
\text{g} \otimes \text{f}_1 \otimes \text{f}_2  \otimes \text{f}_3  
\mapsto   
\text{g}  \otimes \text{f}_1 \otimes 1 \otimes (\sigma^\ast\text{f}_2)  \text{f}_3
\mapsto 
\text{g}  \otimes 1 \otimes \sigma^\ast   \text{f}_1 \otimes (\sigma^\ast\text{f}_2)  \text{f}_3
\mapsto 
\text{g}  \otimes 1 \otimes   (\Updelta_{G',E'}^\ast  \sigma^\ast   \text{f}_1 ) (\sigma^\ast\text{f}_2)  \text{f}_3
\]
and the right-hand map is given by
\[
\text{g} \otimes \text{f}_1 \otimes \text{f}_2  \otimes \text{f}_3
\mapsto
\text{g} \otimes   (\Updelta_{G,E}^\ast \text{f}_1 ) \text{f}_2  \otimes      \text{f}_3
\mapsto
\text{g} \otimes 1 \otimes   (\sigma^\ast \Updelta_{G,E}^\ast \text{f}_1 )( \sigma^\ast \text{f}_2 )      \text{f}_3
\]
and $\Updelta_{G',E'}^\ast  \sigma^\ast   \text{f}_1= \sigma^\ast \Updelta_{G,E}^\ast \text{f}_1$ by comultiplication $\sigma$-naturality of permutohedral space $\bC\Sigma$. Then associativity $\beta$-coherence follows similarly by comultiplication $\beta$-naturality of $\bC\Sigma$. Also, coassociativity \hbox{$\sigma$-coherence} and coassociativity \hbox{$\beta$-coherence} follow similarly by multiplication \hbox{$\sigma$-naturality} and multiplication \hbox{$\beta$-naturality} of $\bC\Sigma$. Recall $\sigma$-functoriality multiplication coherence says
\[
\Updelta^\ast_{F'',G''}\big((\sigma,\tau)_M\big) \circ {\Updelta^\ast_{F',G',\sigma}}_{M'} \circ ({\Updelta^\ast_{F,G,\tau}}_{M})'    =    {\Updelta^\ast_{F,G,\sigma \circ \tau}}_M  \circ    (\sigma,\tau)_{\Updelta^\ast_{F,G} M}
.\]
The left-hand map is given by
\[
\text{g} \otimes \text{f}_1 \otimes \text{f}_2  \otimes \text{f}_3  
\mapsto 
\text{g} \otimes 1 \otimes  (\tau^\ast\text{f}_1 ) \text{f}_2                \otimes \text{f}_3  
\mapsto
\text{g} \otimes 1 \otimes 1 \otimes   (\sigma^\ast \tau^\ast\text{f}_1 ) (\sigma^\ast \text{f}_2)          \text{f}_3 
\mapsto
\text{g} \otimes 1 \otimes    (\sigma^\ast \tau^\ast\text{f}_1 ) (\sigma^\ast \text{f}_2)          \text{f}_3 
\]
and the right-hand map is given by
\[
\text{g} \otimes \text{f}_1 \otimes \text{f}_2  \otimes \text{f}_3
\mapsto
\text{g} \otimes \text{f}_1 \otimes   (\sigma^\ast \text{f}_2 ) \text{f}_3
\mapsto 
\text{g} \otimes 1 \otimes  (\sigma^\ast \tau^\ast \text{f}_1)    (\sigma^\ast \text{f}_2 ) \text{f}_3
.\]
Then $\beta$-functoriality multiplication coherence follows similarly. Also, $\sigma$-functoriality comultiplication coherence and $\beta$-functoriality comultiplication coherence follow similarly.

Recall associativity coherence says
\[
{\Updelta^\ast_{F,E,D}}_M \circ \Updelta^\ast_{E,D}( {{\Updelta^\ast_{F,G,E}}_M} )={\Updelta^\ast_{F,G,D}}_M \circ  {\Updelta^\ast_{G,E,D}}_{\Updelta^\ast_{F,G}M} 
.\]
The left-hand map is given by
\[
\text{g} \otimes \text{f}_1 \otimes  \text{f}_2 \otimes \text{f}_3 
\mapsto
\text{g} \otimes (\Updelta^\ast_{G,E}\text{f}_1 )  \text{f}_2 \otimes \text{f}_3 
\mapsto
\text{g} \otimes (\Updelta^\ast_{E,D} \Updelta^\ast_{G,E}\text{f}_1 ) (\Updelta^\ast_{E,D} \text{f}_2)  \text{f}_3 
\]
and the right-hand map is given by
\[
\text{g} \otimes \text{f}_1 \otimes  \text{f}_2 \otimes \text{f}_3 
\mapsto
\text{g} \otimes \text{f}_1 \otimes   (\Updelta^\ast_{E,D}\text{f}_2 ) \text{f}_3 
\mapsto
\text{g} \otimes   (\Updelta^\ast_{G,D} \text{f}_1 )   (\Updelta^\ast_{E,G}\text{f}_2 ) \text{f}_3 
\]
and
\[
\Updelta^\ast_{E,D} \Updelta^\ast_{G,E}\text{f}_1 =   \Updelta^\ast_{G,D} \text{f}_1 
\]
by coassociativity of $\bC\Sigma$. Then coassociativity coherence follows similarly by associativity of $\bC\Sigma$. Finally, recall bimonoid $\sigma$-coherence says
\[
\upmu_{G,A}({\Updelta^\ast_{F,A,\sigma }}_{M}) \circ  {\upmu_{G,A,\sigma}}_{\Updelta^\ast_{F,A} M} \circ {\text{B}'_{F,G,A}}_{M}
\]
\[
=
{\text{B}_{F',G',A'}}_{M'} \circ  \Delta^\ast_{GF,G} ( { \wt{\upmu_{FG,F,\sigma}}  }_{M} )\circ \Updelta^\ast_{GF,G} \big({(\sigma,\beta)}^{-1}_{\upmu^\ast_{FG,F} M}\big ) \circ  {\Updelta^\ast_{GF,G,\sigma}}_{\wt{\upmu^\ast_{FG,F} M}}
.\]
The left-hand map is given by
\[
\text{g} \otimes \text{f}_1 \otimes \text{f}_2  \otimes \text{f}_3  \otimes \text{f}_4
\mapsto 
g \otimes 1 \otimes    ( \Updelta^\ast_{GF,G} \beta^\ast  \text{f}_1) ( \Updelta^\ast_{GF,G} \text{f}_2) \text{f}_3 \otimes \text{f}_4
\]
\[
\mapsto
g \otimes 1  \otimes 1  \otimes  (\sigma^\ast \Updelta^\ast_{GF,G} \beta^\ast  \text{f}_1) (\sigma^\ast \Updelta^\ast_{GF,G} \text{f}_2)(\sigma^\ast \text{f}_3) \text{f}_4
\mapsto
g   \otimes  1 \otimes 1 \otimes (\sigma^\ast \Updelta^\ast_{GF,G} \beta^\ast  \text{f}_1) (\sigma^\ast \Updelta^\ast_{GF,G} \text{f}_2)(\sigma^\ast \text{f}_3) \text{f}_4
\]
and the right-hand map is given by
\[
\text{g} \otimes \text{f}_1 \otimes \text{f}_2  \otimes \text{f}_3  \otimes \text{f}_4
\mapsto 
\text{g} \otimes \text{f}_1 \otimes \text{f}_2      \otimes 1        \otimes (\sigma^\ast \text{f}_3) \text{f}_4
\mapsto 
\text{g} \otimes \text{f}_1 \otimes 1 \otimes \sigma^\ast \text{f}_2 \cdot 1 \otimes   (\sigma^\ast \text{f}_3) \text{f}_4
\]
\[
\mapsto
\text{g} \otimes 1 \otimes  \sigma^\ast\text{f}_1 \otimes  \sigma^\ast \text{f}_2 \otimes   (\sigma^\ast \text{f}_3) \text{f}_4
\mapsto
\text{g}\otimes 1 \otimes 1 \otimes (\Updelta^\ast_{G'F',G'} \beta^\ast \sigma^\ast\text{f}_1) (\Updelta^\ast_{G'F',G'}\sigma^\ast\text{f}_2)       (\sigma^\ast \text{f}_3) \text{f}_4
.\]
We have
\[
\sigma^\ast \Updelta^\ast_{GF,G} \beta^\ast  \text{f}_1 = \Updelta^\ast_{G'F',G'} \beta^\ast \sigma^\ast\text{f}_1
\qquad \text{and} \qquad
 \sigma^\ast \Updelta^\ast_{GF,G} \text{f}_2=\Updelta^\ast_{G'F',G'}\sigma^\ast\text{f}_2
\]
by multiplication $\sigma$-naturality and $(\sigma,\beta)$-functoriality of $\bC\Sigma$.
\end{proof}



\subsection{The {\normalfont $\textsf{Mod}$}-Graded Bialgebra of Global Sections} \label{Global Sections}

Given an $\mathcal{O}_F$-module $M$, a \emph{global section} $s\in M$ is a map of $\mathcal{O}_F$-modules of the form
\[
s:  \mathcal{O}_F \to M
.\]
We have the natural identification
\[
\Hom_{\textsf{Mod}_F}(  \mathcal{O}_F ,M) \xrightarrow{\sim}  \Gamma_{\bC\Sigma^F}(M)
,\qquad
s \ \mapsto\  \big ( x\mapsto s(1_{\mathcal{O}_F(x)}) \big  )
.\]
We have the $\textsf{Mod}$-graded vector cospecies $\bC\textbf{\textsf{Mod}}$ given by taking global sections, pulling them back along the corresponding maps of $\bC\Sigma$, and pushing them forward along maps of $\mathcal{O}_F$-modules,
\[
\bC\textbf{\textsf{Mod}}_M[F]:=  \Gamma_{\bC\Sigma^F}(M),
\]
\[
\bC\textbf{\textsf{Mod}}_M[\sigma, F](s) := \sigma^\ast s  = (\! \! \! \! \! \! \! \! \! \! \! \! \! \! \! \! \! \!  \! \! \! \! \! \! \! \! \!  \! \! \! \! \! \! \! \! \!   \! \! \! \!   \underbrace{\mathcal{O}_{F'}    \xrightarrow{\sim}  \mathcal{O}'_{F}}_{\text{`pullback of the structure sheaf is the structure sheaf'}}
\! \! \! \! \! \! \! \! \! \! \! \! \! \! \! \! \! \! \! \! \! \! \! \! \! \! \!   \! \! \! \! \! \! \! \! \!   \! \! \! \! 
\xrightarrow{s'}  M')
,\qquad
\bC\textbf{\textsf{Mod}}_M[F,\beta](s):= \beta^\ast s  = (\mathcal{O}_{\wt{F}}    \xrightarrow{\sim}  \wt{\mathcal{O}}_{F}  \xrightarrow{\tilde{s}}  \wt{M})
,\]
\[
\bC\textbf{\textsf{Mod}}_f[F,F](s):=f(s)= f\circ s
.\]
Here we have used the abbreviations $s'=\textsf{Mod}[\sigma,F](s)$ and $\tilde{s}=\textsf{Mod}[F,\beta](s)$. To avoid ambiguity, from now on let 
\[
\sigma^\ast (s) :=\textsf{Mod}[\sigma,F](s)
\qquad \text{and}
\qquad 
\sigma^\ast s:=\bC\textbf{\textsf{Mod}}_M[\sigma, F](s)
.\]
Similarly with the $\beta$'s. The verification that $\bC\textbf{\textsf{Mod}}$ is functorial, and so does indeed define a cospecies, is straightforward and follows along the lines of the proof of \autoref{prop:isbimonoid} below.

We can also pullback global sections with respect to the (co)multiplication of $\bC\Sigma$, giving maps of vector spaces
\[
{\Updelta^\ast_{F,G}}_M :\Gamma_{\bC\Sigma^F}(M) \to \Gamma_{\bC\Sigma^G}(\Updelta^\ast_{F,G}M)
,\qquad
s \mapsto  \Updelta^\ast_{F,G} s  =  (\mathcal{O}_G  \xrightarrow{\sim}  \Updelta^\ast_{F,G} \mathcal{O}_F   \xrightarrow{\Updelta^\ast_{F,G}(s)}  \Updelta^\ast_{F,G} M)
\]
and
\[
{\upmu^\ast_{F,G}}_M :\Gamma_{\bC\Sigma^G}(M) \to \Gamma_{\bC\Sigma^F}(\upmu^\ast_{F,G} M)
,\qquad
s \mapsto  \upmu^\ast_{F,G} s  =  (\mathcal{O}_F  \xrightarrow{\sim}  \upmu^\ast_{F,G} \mathcal{O}_G   \xrightarrow{\upmu^\ast_{F,G}(s)}  \upmu^\ast_{F,G} M)
.\]

\begin{prop}\label{prop:isbimonoid}
The pullback of global sections along the (co)multiplication of permutohedral $\bC\Sigma$ space equips $\bC\textbf{\textsf{Mod}}$ with the structure of a $\textsf{Mod}$-graded bialgebra in cospecies.  
\end{prop}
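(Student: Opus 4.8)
The plan is to deduce everything from the already-established fact that $\textsf{Mod}$ is a bimonoid in $\textsf{Cat}$-valued cospecies, together with the standard functoriality of pullback of modules and pushforward of sections. Concretely, the structure maps of $\bC\textbf{\textsf{Mod}}$ are defined by composing a section $s\colon \mathcal{O}_F\to M$ with the pullback functors $\Updelta^\ast_{F,G}$, $\upmu^\ast_{F,G}$ and precomposing with the canonical isomorphism identifying the pullback of the structure sheaf with the structure sheaf; since these are precisely the functors carrying the bimonoid structure of $\textsf{Mod}$, the axioms transfer. By \autoref{thm:gradedjoyal} (in its $\textsf{Mod}$-graded, cospecies form) it suffices to check: graded Joyal (co)multiplication $\sigma$-naturality, Joyal (co)multiplication $\mathtt{f}$-naturality, graded Joyal (co)associativity, and the graded Joyal bimonoid axiom. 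However, since $\bC\textbf{\textsf{Mod}}$ is not being presented Joyal-theoretically here, I will instead simply verify the eleven graded-bimonoid-axiom diagrams of \autoref{Bimonoids in Graded Braid Arrangement Species} directly, each one reducing to the corresponding coherence $2$- or $3$-cell of $\textsf{Mod}$ after applying the section functor $\Gamma_{\bC\Sigma^{(-)}}$.

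First I would record the key elementary fact used throughout: pullback of sheaves of modules along a morphism $f$ is functorial in $f$, so that for composable $f,g$ one has a canonical isomorphism $(g\circ f)^\ast\cong f^\ast g^\ast$, and these isomorphisms satisfy the pentagon/triangle coherences — these are exactly the $2$-cells $\Updelta^\ast_{F,G,\sigma}$, $\Updelta^\ast_{F,G,\beta}$, $\Updelta^\ast_{F,G,E}$, $\upmu^\ast_{F,G,\sigma}$, $\upmu^\ast_{F,G,\beta}$, $\upmu^\ast_{F,G,E}$ and $\text{B}_{F,G,A}$ already constructed for $\textsf{Mod}$. Applying $\Gamma_{\bC\Sigma^{(-)}}=\Hom_{\textsf{Mod}_{(-)}}(\mathcal{O}_{(-)},-)$ to a module map $M\to N$ and to a pullback functor yields the linear maps $\bC\textbf{\textsf{Mod}}_f$, $\Updelta^\ast_{F,G}$ (on sections), $\upmu^\ast_{F,G}$ (on sections). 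Then graded multiplication/comultiplication $\sigma$- and $\beta$-naturality of $\bC\textbf{\textsf{Mod}}$ follow by pushing the corresponding $\textsf{Mod}$-naturality $2$-cells through $\Gamma$; multiplication/comultiplication $\mathtt{f}$-naturality follows from functoriality of $\Gamma$ in the module argument together with the fact that pullback of module maps commutes with the canonical structure-sheaf identifications; graded (co)associativity follows from (co)associativity of $\textsf{Mod}$ (i.e. the $2$-cells $\Updelta^\ast_{F,G,E}$, $\upmu^\ast_{F,G,E}$); and the graded bimonoid axiom of $\bC\textbf{\textsf{Mod}}$ follows by applying $\Gamma$ to the bimonoid-axiom $2$-cell $\text{B}_{F,G,A}$ of $\textsf{Mod}$ — unwinding the definitions shows the two composite section maps agree on the nose after inserting the canonical isomorphisms. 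Each verification is a short diagram chase at the level of stalks, exactly as in the proof that $\textsf{Mod}$ is a bimonoid and in the proof of \autoref{thm}.

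The main obstacle is bookkeeping rather than mathematical content: one must be careful that the pushforward $\Gamma$ of a module map, applied to a pullback, correctly interacts with the precomposition by the isomorphism $\mathcal{O}_G\xrightarrow{\sim}\Updelta^\ast_{F,G}\mathcal{O}_F$ (and similarly for $\upmu^\ast$ and for $\sigma,\beta$) — that is, one has to check the several coherence triangles asserting that ``pullback of the structure sheaf is the structure sheaf'' is compatible with the $2$-cells $\Updelta^\ast_{F,G,E}$ etc. This is where the monoidal unit $\mathcal{O}_F$ for $\otimes_{\mathcal{O}_F}$ plays its role, and it is the analogue of the point in the proof of the preceding theorem where one notes pullback sheaves have stalks equal to stalks of the original sheaf. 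All of these compatibilities are instances of the coherence of the closed monoidal structure on $\textsf{Mod}_F$ and the functoriality of inverse image, so no genuinely new verification beyond what \autoref{sec:topos} already establishes is required; I would phrase the proof as ``this follows directly from the fact that $\textsf{Mod}$ is a bimonoid, together with functoriality of $\Gamma_{\bC\Sigma^{(-)}}$ and of pullback of sheaves of modules,'' and then make the two or three least obvious diagrams explicit at the stalk level, in the same style as the surrounding propositions.
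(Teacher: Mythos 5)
Your proposal is correct and follows essentially the same route as the paper: the structure maps are defined by precomposing sections with the canonical identification of the pulled-back structure sheaf with the structure sheaf and applying the pullback of the section, and then all eleven graded-bimonoid diagrams are verified by short stalk-level chases that reduce to the coherence $2$-cells of $\textsf{Mod}$ (in particular $\Updelta^\ast_{F,G,E}$, $\upmu^\ast_{F,G,E}$ and $\text{B}_{F,G,A}$) together with the compatibility you single out, namely that the unit identification $\mathcal{O}_G\xrightarrow{\sim}\Updelta^\ast_{F,G}\mathcal{O}_F$ interacts correctly with these cells. The only difference is one of completeness: the paper writes out each of the stalk computations (e.g. $\text{f}\mapsto 1\otimes\text{f}\mapsto s(1)\otimes\text{f}$ on both sides of each diagram) rather than deferring them, but the underlying argument is the same.
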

\begin{proof}
There are eleven diagrams to check. For multiplication \hbox{$\sigma$-naturality}, we need the map
\[
\Delta^\ast_{F,G,\sigma}\, \circ\,  \sigma^\ast \Delta^\ast_{F,G}  s  = \mathcal{O}_{G'}  \xrightarrow{\sim}    \mathcal{O}'_{G}  \xrightarrow{\sim}  (\Delta^\ast_{F,G} \mathcal{O}_F)' \xrightarrow{\sigma^\ast(\Delta^\ast_{F,G}(s))}  (\Delta^\ast_{F,G}  M)'   \xrightarrow{\Delta^\ast_{F,G,\sigma}} \Delta^\ast_{F',G'} M'
\]
to equal the map
\[
\Delta^\ast_{F',G'}  \sigma^\ast  s  =  \mathcal{O}_{G'}\xrightarrow{\sim}   \Delta^\ast_{F',G'} \mathcal{O}_{F'}  \xrightarrow{\sim}  \Delta^\ast_{F',G'}   \mathcal{O}'_F \xrightarrow{\Delta^\ast_{F',G'}(\sigma^\ast(s))} \Delta^\ast_{F',G'} M'
.\]
The first map is given on stalks by
\[
\text{f} \mapsto 1 \otimes \text{f} \mapsto 1 \otimes 1 \otimes \text{f} \mapsto   s(1) \otimes 1 \otimes \text{f}  \mapsto s(1) \otimes 1 \otimes \text{f} 
\]
and the second map by
\[
\text{f} \mapsto \text{f} \otimes 1 \mapsto   \text{f} \otimes 1 \otimes 1\mapsto  s(1) \otimes 1 \otimes \text{f} 
.\]
Comultiplication $\sigma$-naturality and (co)multiplication $\beta$-naturality follow similarly. For multiplication $\mathtt{f}$-naturality, we have
\[
\Delta^\ast_{F,G} (f) \big ( {\Delta^\ast_{F,G}}s \big ) =  \Delta^\ast_{F,G} (f)\circ  {\Delta^\ast_{F,G}}s 
=
\Delta^\ast_{F,G}  (f \circ s) \circ \Psi= \Delta^\ast_{F,G}  f(s)  
\]
where $\Psi$ denotes the canonical isomorphism $\Psi: \mathcal{O}_G \xrightarrow{\sim}  {\Delta^\ast_{F,G}} \mathcal{O}_F$. Comultiplication $\mathtt{f}$-naturality follows similarly. For associativity, we need the map
\[
\Delta^\ast_{F,E} s = \mathcal{O}_E  \xrightarrow{\sim}  \Delta^\ast_{F,E} \mathcal{O}_F \xrightarrow{\Delta^\ast_{F,E} (s)}\Delta^\ast_{F,E}  M
\]
to equal the map
\[
{\Delta^\ast_{F,G,E}}_M \circ\Delta^\ast_{G,E} \Delta^\ast_{F,G} s =\mathcal{O}_E \xrightarrow{\sim}  \Delta^\ast_{G,E} \mathcal{O}_G  \xrightarrow{\sim} \Delta^\ast_{G,E} \Delta^\ast_{F,G} \mathcal{O}_F \xrightarrow{\Delta^\ast_{G,E}( \Delta^\ast_{F,G} (s))}\Delta^\ast_{G,E} \Delta^\ast_{F,G}  M\xrightarrow{{\Delta^\ast_{F,G,E}}_M} \Delta^\ast_{F,E}  M
.\]
The first map is given on stalks by
\[
\text{f} \mapsto 1 \otimes \text{f} \mapsto  s(\text{1}) \otimes \text{f} 
\]
and the second map by
\[
\text{f} \mapsto 1 \otimes \text{f} \mapsto 1 \otimes \text{f}  \mapsto 1 \otimes 1 \otimes \text{f}  \mapsto  s(1) \otimes 1 \otimes \text{f}  \mapsto    s(1) \otimes  \text{f}
.\]
Coassociativity follows similarly. (Co)unitality is clear. For the bimonoid axiom, we need the map
\[
\mu^\ast_{G,A} \Delta^\ast_{F,A} s   =  \mathcal{O}_G \xrightarrow{\sim} \mu^\ast_{G,A} \mathcal{O}_A  \xrightarrow{\sim} \mu^\ast_{G,A}  \Delta^\ast_{F,A} \mathcal{O}_F \xrightarrow{\mu^\ast_{G,A}(\Delta^\ast_{F,A}(s))} \mu^\ast_{G,A}\Delta^\ast_{F,A} M
\]
to equal the map
\[
{\text{B}_{F,G,A}}_M \circ \Delta^\ast_{GF,G}\beta^\ast\upmu_{FG,F} s  
=
\mathcal{O}_G \xrightarrow{\sim} \Delta^\ast_{GF,G}\mathcal{O}_{GF}  \xrightarrow{\sim} \Delta^\ast_{GF,G}  \beta^\ast \mathcal{O}_{FG}  \xrightarrow{\sim} \Delta^\ast_{GF,G} \beta^\ast  \upmu^\ast_{FG,F}  \mathcal{O}_F 
\]
\[\xrightarrow{\Delta^\ast_{GF,G}  (\beta^\ast(\upmu^\ast_{FG,F}  (s)))} \Delta^\ast_{GF,G}  \beta^\ast \upmu^\ast_{FG,F}  M \xrightarrow{{\text{B}_{F,G,A}}_M}  \mu^\ast_{G,A}\Delta^\ast_{F,A} M
.\]
The first map is given on stalks by
\[
\text{f} \mapsto 1 \otimes \text{f} \mapsto  1 \otimes 1 \otimes \text{f} \mapsto   s(\text{1}) \otimes 1 \otimes \text{f} 
\]
and the second map by
\[
\text{f} \mapsto 1 \otimes \text{f} \mapsto  1 \otimes 1 \otimes \text{f} \mapsto 1 \otimes  1 \otimes 1 \otimes \text{f}\mapsto s(\text{1}) \otimes1 \otimes 1 \otimes \text{f}  \mapsto s(\text{1})  \otimes 1 \otimes \text{f} 
.\qedhere \]
\end{proof}

\section{Invertible Sheaves and Boolean Functions}\label{Invertible Sheaves and Boolean Functions}

We now study an indexing of sheaves of modules which has essential image the invertible sheaves. Invertible sheaves are often constructed as subsheaves of the sheaf of rational functions, and may be indexed by torus invariant Cartier divisors. However, we require a slightly different approach in order to expose the bimonoid structure of invertible sheaves.

\subsection{The Bimonoid of Boolean Functions}


Let $[ I,\text{2} ]$ denote the set of all subsets $A\subseteq I$. We have the Joyal-theoretic cospecies of \hbox{integer-valued} \emph{Boolean functions} $\textsf{BF}$, given by
\[
\textsf{BF}[I]:= \big \{  \text{functions}\ z: [ I,\text{2} ] \to \bZ  \ \big |\ z(\emptyset)=0 \big \}
,\qquad   \textsf{BF}[\sigma](z)\, (A) :=  z( A')
.\]
In particular, for each integer point $h=(h_\text{i})_{\text{i}\in I}\in \bZ I$ we have the Boolean function $z_h$ given by
\[
z_h(A) : =  \la h, \lambda_A \ra =  \sum_{\text{i}\in A} h_\text{i}
.\]
We let $\textsf{BF}[I]$ be a category by including a single morphism $z_1\to z_2$ whenever $z_2=z_1 + z_h$ for some $h\in \bZ I$. This is an equivalence relation.

We call $\text{hei}(z) := z(I)$ the \emph{height} of $z$. For $z=(z_1,\dots,z_k)\in \textsf{BF}[F]$, height is a $k$-tuple 
\[
\text{hei}(z_1,\dots,z_k):= \big(\text{hei}(z_1),\dots,\text{hei}(z_k)\big)
.\] 
We now give $\textsf{BF}$ the structure of a bimonoid, following \cite{aguiar2017hopf}. Given Boolean functions $z_1\in \textsf{BF}[S]$ and $z_2\in \textsf{BF}[T]$, let $(z_1|z_2)\in \textsf{BF}[I]$ be the Boolean function given by
\[
(z_1|z_2)(A):=z_1( A \cap S ) + z_2(  A \cap T)
.\]
Given a Boolean function $z\in \textsf{BF}[I]$, let $z\talloblong_S\in \textsf{BF}[S]$ and $z \! \!  \fatslash_{\, T}\in \textsf{BF}[T]$ be the Boolean functions given by
\[
z\! \talloblong_S(A):= z(A) 
\qquad \text{and} \qquad
z \! \! \!  \fatslash_{\, T}(A):=  z(A \sqcup S) -z(S)
.\]
Then $\textsf{BF}$ is a Joyal-theoretic bimonoid in $\textsf{Cat}$-valued cospecies, with multiplication and comultiplication given by
\[
\mu_{S,T}(z_1,z_2) :=(z_1|z_2) 
\qquad \text{and} \qquad 
\Delta_{S,T} (z) := (z\talloblong_S, z \! \!  \fatslash_{\, T})
\]
respectively. For the higher comultiplication, we denote e.g.
\[
(z\talloblong_S , z\talloblong_T , z\talloblong_U):= \Delta_{(S,T,U)} (z) 
.\]

\subsection{Invertible Sheaves From Cartier Divisors} \label{sec:cartdiv}

We also have the Joyal-theoretic cospecies $\bZ^{[ -;\text{2} ]}$, given by
\[
\bZ^{[ I;\text{2} ]}:= \big\{ \text{functions}\ w: [ I;\text{2} ] \to \bZ  \big\}
,\qquad
\sigma(w)\big ((S,T)\big ):= w\big( (S',T')  \big)
.\]
The functions $w\in \bZ^{[ I;\text{2} ]}$ are in one-to-one correspondence with torus invariant Cartier divisors $D_w$ on permutohedral space $\bC\Sigma^I$. Recall the invertible sheaf $\mathcal{O}_w\in \text{Mod}_I$ associated to $D_w$ is the submodule of the sheaf of rational functions given by
\[
\mathcal{O}_w(U) :=      \big \{g\in   \mathcal{K}_I      
\ \big |\    
g=0 \text{ or }    \text{div}g +D_w |_U \geq 0   \big \}
.\]
See e.g. \cite[Chapter 6]{toricvarieties}. The essential image of the indexing $w\mapsto \mathcal{O}_w$ is all the invertible sheaves. 

We have the inclusion of height zero Boolean functions
\[
\bZ^{[-;2]}\hookrightarrow \textsf{BF}
,\qquad
w\,  \mapsto\,   
\begin{cases}
	\big ( A\mapsto w(  A, B) \big  ), \quad \text{for $A\neq I,\emptyset$}     \\
	(I\mapsto 0) \\
	(\emptyset \mapsto 0)
\end{cases}	
\]
where $B=I-A$. The idea is to now generalize this construction of invertible sheaves to all of $\textsf{BF}$.

\subsection{Affine Semisimple Flats}

We give a geometric interpretation of the algebraic structure of $\textsf{BF}$, analogous to what we found for augmented preposets $\textsf{O}_\bullet$. First, we need an analog of the monoid $\text{T}^\vee$. 

For $a \in \bZ$, let $\text{T}^\vee_{I, a }$ denote the `sum equal to $a$' hyperplane of $\bR I$, given by
\[
\text{T}^\vee_{I, a } = 
\Big \{
h\in \bR I  \ \Big | \  \sum_{\text{i}\in I} h_\text{i} = a
\Big \}
.\]
For $F=(S_1,\dots, S_k)$ a composition of $I$ and a $k$-tuple of integers $(a_1 , \dots , a_k) \in \bZ^k$, we have the affine \emph{semisimple flat} $\textsf{T}^\vee_{F,(a_1 , \dots , a_k)}$, given by
\[
\textsf{T}^\vee_{F,(a_1 , \dots , a_k)} :=  \Big \{    h\in \bR I \ \Big | \   \sum_{\text{i}\in S_i} h_\text{i}   = a_i    \quad \text{for all} \quad 1\leq i \leq k    \Big \}
.\]
A key fact is that semisimple flats factorize; we have the bijective map $\eta_{F,(a_1 , \dots , a_k)}$ given by
\[
\eta_{F,(a_1 , \dots , a_k)}:
\text{T}^\vee_{S_1, a_1 }  \times   \dots \times \text{T}^\vee_{S_k, a_k } 
\xrightarrow{\sim}     \textsf{T}^\vee_{ F,(a_1 , \dots , a_k) }
,\qquad
\big ( (h_\text{i})_{\text{i}\in S_1} ,\dots ,(h_\text{i})_{\text{i} \in S_k} \big )  \mapsto  (h_\text{i})_{\text{i}\in I} 
.\]
For compositions $F, G$ with $G\leq F$, let
\[
F=(S_1,\dots,  S_k)\qquad \text{and} \qquad G=(T_1,\dots,  T_l)
\]
and let $1\leq \mathtt{i}_j \leq k$ be the integer such that $S_{\mathtt{i}_j}$ is the $\mathtt{i}$th lump of $F|_{T_j}$, $1\leq j \leq l$. Let $k(j)$ be the length of $F|_{T_j}$. Then we have the function ${\mu_{F,G}}_{(  a_1,\dots,a_k )}$ the inclusion of semisimple flats,
\[
{\mu_{F,G}}_{(  a_1,\dots,a_k )}   : \textsf{T}^\vee_{F,(  a_1,\dots,a_k )} \hookrightarrow \textsf{T}^\vee_{G,(  a_{1_1}+\,  \cdots\,  + a_{k(1)_{1}}  \,  , \dots \dots   ,  \,  a_{1_l}+\, \cdots \, + a_{k(l)_l} )}
,\qquad
h\mapsto h
.\]
We have the function the composition $\text{m}_{F,(a_1 , \dots , a_k)}:={\mu_{F,(I)}}_{(a_1 , \dots , a_k)} \circ {\eta_F}_{(a_1 , \dots , a_k)}$, which is given by
\[
\text{m}_{F,(a_1 , \dots , a_k)}:   
\text{T}^\vee_{S_1, a_1}  \times   \dots \times \text{T}^\vee_{S_k, a_k} 
\to  \text{T}^\vee_{I,a_1 + \dots + a_k}
,\qquad
\big ( (h_\text{i})_{\text{i}\in S_1} ,\dots ,(h_\text{i})_{\text{i} \in S_k} \big )  \mapsto  (h_\text{i})_{\text{i}\in I} 
.\]

\begin{remark}\label{rem:mon}
	In fact, $(F,a)\mapsto \textsf{T}^\vee_{F,a}$ defines a $\bZ$-graded set cospecies, where $\bZ$ denotes the constant cospecies $F\mapsto \bZ$. Moreover, this cospecies is a $\bZ$-graded monoid, with multiplication the inclusion of semisimple flats. Then $\text{m}_{F,(a_1 , \dots , a_k)}$ is the multiplication ${\mu_{F,(I)}}_{(a_1 , \dots , a_k)}$ of its Joyalization.
\end{remark}

\subsection{Plate Arrangements}

To each Boolean function $z\in \textsf{BF}[I]$ we associate the halfspace arrangement in $\text{T}_{I,\text{hei}(z)}^\vee$ which consists of the halfspaces 
\begin{align*}
	[[A,B]]_z  :=&\  \big \{     h\in \text{T}^\vee_{I, \text{hei}(z)}  \ \big | \       \la h, \lambda_{A}  \ra \leq z(A)  \big  \}  \\
	=&\  \big \{     h\in \text{T}^\vee_{I, \text{hei}(z)}  \ \big | \        \la h, \lambda_{B}  \ra \geq z(I)-z(A)   \big  \}
\end{align*}
where $(A,B)\in [I; 2]$. More generally, we have the associated plate arrangement, which consists of the \emph{plates}
\[
[[ H ]]_z:=  \bigcap_{(A,B) \leq H}  [[A,B]]_z   
\]
\[
=\ \big \{     h\in \text{T}^\vee_{I, \text{hei}(z)}       \ \big | \      \la h, \lambda_{\overline{A}_j}  \ra \leq z\big (\overline{A}_j\big ), \quad 0< j< \ell     \big \}
\]
\[ 
	=\ \big \{     h\in \text{T}^\vee_{I, \text{hei}(z)}       \ \big | \      \la h, \lambda_{\overline{A}_j}  \ra \leq z\big (\overline{A}_j\big ), \quad 0\leq j\leq \ell     \big \}
\]
where $H= (A_1,\dots, A_\ell) \in \Sigma[I]$ and 
\[
\overline{A}_j:=A_1\sqcup\dots \sqcup  A_j, \qquad  0 \leq j \leq \ell
.\]
Plates were introduced in \cite{oc17}, and are studied in \cite{early2017canonical}, \cite{ardilaplates}. A set of the form $\overline{A}_j$, $0\leq j \leq \ell$, is called an \emph{initial segment} of $H$. In particular, we have $\overline{A}_0=\emptyset$ and $\overline{A}_\ell=I$. Thus, as a region $[[ H ]]_z\subset \bR I$, the plate is given by the inequalities
\begin{equation}\label{eq:inequal}
  \la h, \lambda_{\overline{A}_j}  \ra \leq z\big (\overline{A}_j\big )
\end{equation}
for $0< j< \ell$, together with the equality for the ambient hyperplane
\begin{equation}\label{eq:equal}
\la h, \lambda_{I}  \ra =   z(I)
.
\end{equation}
If we let $j=0$ in \textcolor{blue}{(\refeqq{eq:inequal})} we obtain the trivial inequality
\[
 0= \la h, \lambda_{  \emptyset }  \ra \leq   z(\emptyset)=0
\]
and if we let $j=\ell$ in \textcolor{blue}{(\refeqq{eq:inequal})} we obtain  
\[
\la h, \lambda_{I}  \ra \leq   z(I)
\]
which is implied by \textcolor{blue}{(\refeqq{eq:equal})}. 

The plate $[[ H ]]_z$ is a module of the monoid $\sigma_H^o$, with the action given by addition of vectors in $\bR I$. If $z_2=z_1 + z_{h}$, we have the bijection
\[
[[H ]]_{z_1} \xrightarrow{\sim} [[H ]]_{z_2}
,\qquad
(-)\, \mapsto \,  (-)+h
.\]

\begin{prop}
The maximum affine subspace of the plate $[[H]]_z$ is the semisimple flat 
\[
\text{max aff} \big ( [[H]]_z \big ) = \textsf{T}^\vee_{H, \text{hei} (\Delta_H(z)  ) }
.\]
\end{prop}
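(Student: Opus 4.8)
The plan is to unwind both sides of the claimed equality to concrete linear-algebraic descriptions and then show they coincide. The plate $[[H]]_z$ is, by definition, the subset of $\text{T}^\vee_{I,\text{hei}(z)}$ cut out by the inequalities $\la h,\lambda_{\overline A_j}\ra \le z(\overline A_j)$ for $0<j<\ell$ together with the ambient equality $\la h,\lambda_I\ra = z(I)$, where $H=(A_1,\dots,A_\ell)$ and $\overline A_j$ are the initial segments. The maximum affine subspace of a polyhedron defined by a finite system of inequalities and equalities is obtained by turning every inequality that is \emph{not} redundant into an equality — equivalently, it is the affine span of the polyhedron, which consists of exactly those points satisfying $\la h,\lambda_{\overline A_j}\ra = z(\overline A_j)$ for all $0 \le j \le \ell$. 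So the first step is to justify that $\text{max aff}([[H]]_z)$ equals $\{h : \la h,\lambda_{\overline A_j}\ra = z(\overline A_j),\ 0<j<\ell\} \cap \text{T}^\vee_{I,\text{hei}(z)}$; this is a general fact about plates, using that the plate is nonempty and that the functionals $\lambda_{\overline A_j}$ cut it down to a translate of the lineality space, which for a plate is exactly the affine flat in question (this can be extracted from the references \cite{oc17}, \cite{early2017canonical}, \cite{ardilaplates}, or argued directly since $[[H]]_z$ is a module over $\sigma^o_H$ and $\sigma^o_H$ spans the flat $\textsf{T}^\vee_H$).

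Next I would identify the right-hand side. By definition $\Delta_H(z) = (z\talloblong_{A_1},\ z\talloblong_{A_2},\ \dots)$ where each successive lump is extracted via the iterated comultiplication; unwinding the formulas $z\talloblong_S(X) = z(X)$ and $z\fatslash_T(X) = z(X\sqcup S) - z(S)$, the $i$-th component of $\Delta_H(z)$ is the Boolean function on $A_i$ given by $X \mapsto z(\overline A_{i-1}\sqcup X) - z(\overline A_{i-1})$, so its height is $z(\overline A_i) - z(\overline A_{i-1})$. Therefore $\text{hei}(\Delta_H(z))$ is the $\ell$-tuple $a = (a_1,\dots,a_\ell)$ with $a_i = z(\overline A_i) - z(\overline A_{i-1})$, and by the definition of affine semisimple flats, $\textsf{T}^\vee_{H,a} = \{h \in \bR I : \sum_{\text{i}\in A_i} h_\text{i} = a_i \text{ for all } 1\le i\le \ell\}$. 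The third step is the purely arithmetic translation: the conditions $\la h,\lambda_{\overline A_j}\ra = z(\overline A_j)$ for all $0\le j\le \ell$ are equivalent to the conditions $\sum_{\text{i}\in A_i} h_\text{i} = z(\overline A_i) - z(\overline A_{i-1}) = a_i$ for all $1\le i\le \ell$, since the partial sums telescope and $z(\overline A_0) = z(\emptyset) = 0$. Note the case $j=\ell$ gives $\la h,\lambda_I\ra = z(I) = \text{hei}(z) = \sum_i a_i$, which is automatically the ambient-hyperplane condition, so it is consistent to include or omit it.

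Putting these together: $\text{max aff}([[H]]_z) = \{h : \la h,\lambda_{\overline A_j}\ra = z(\overline A_j),\ 1\le j\le \ell\} = \{h : \sum_{\text{i}\in A_i}h_\text{i} = a_i,\ 1\le i\le \ell\} = \textsf{T}^\vee_{H,\text{hei}(\Delta_H(z))}$, which is the assertion. The main obstacle is the first step — carefully establishing that the maximum affine subspace of the plate is exactly the locus where the defining inequalities become equalities, i.e.\ that none of the $\lambda_{\overline A_j}$-inequalities is redundant in the sense of failing to be tight on the affine hull. This needs the non-degeneracy/nonemptiness of the plate and the fact that the $\lambda_{\overline A_j}$, $0<j<\ell$, are linearly independent modulo $\lambda_I$ (they are, being characteristic functions of a strictly increasing chain of subsets), which forces the affine hull to have the expected codimension $\ell-1$ inside $\text{T}^\vee_{I,\text{hei}(z)}$ and hence to be precisely $\textsf{T}^\vee_{H,a}$. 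The remaining steps are routine bookkeeping with the comultiplication formulas and telescoping sums.
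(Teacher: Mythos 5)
You take the same route as the paper: pass to the locus where the defining inequalities of $[[H]]_z$ become equalities, and identify it, via the telescoping sums and the computation $\text{hei}(\Delta_H(z))=\big(z(\overline{A}_1),\,z(\overline{A}_2)-z(\overline{A}_1),\,\dots,\,z(I)-z(\overline{A}_{\ell-1})\big)$, with the flat $\textsf{T}^\vee_{H,\text{hei}(\Delta_H(z))}$; this is exactly the content of the paper's two-line proof, and your unwinding of the comultiplication to get those heights is correct. The one point to repair is your justification of the first step: you assert that the maximum affine subspace is ``equivalently the affine span of the polyhedron,'' and later that linear independence of the $\lambda_{\overline{A}_j}$ forces ``the affine hull'' to have codimension $\ell-1$. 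That identification is false: the plate is carved out of the hyperplane $\text{T}^\vee_{I,\text{hei}(z)}$ by $\ell-1$ inequalities whose strict locus is nonempty, so its affine hull is the whole ambient hyperplane, not a codimension-$(\ell-1)$ flat. The correct justification --- which you partly gesture at --- is the recession-cone one: the recession cone of $[[H]]_z$ is $\sigma^o_H$, whose lineality space is $\textsf{T}^\vee_H$, so any affine subspace contained in the plate is a translate of a subspace of $\textsf{T}^\vee_H$; since the $\lambda_{\overline{A}_j}$, $0<j<\ell$, are linearly independent modulo $\lambda_I$, the all-equalities locus is nonempty, is such a translate, and is the unique face of $[[H]]_z$ that is an affine subspace, which is what the paper means by $\text{max aff}$ (the paper itself takes this step for granted). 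With the affine-hull phrasing removed and replaced by this, your argument is complete and coincides with the paper's.
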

\begin{proof}
The maximum affine subspace of the plate $	[[ H ]]_z$ is obtained by replacing inequalities with equalities in \textcolor{blue}{(\refeqq{eq:inequal})},
\[
\text{max aff} \big ( [[H]]_z \big ) =\big \{     h\in \text{T}^\vee_{I, \text{hei}(z)}       \ \big | \      \la h, \lambda_{\overline{A}_j}  \ra = z\big (\overline{A}_j\big ), \quad 0< j< \ell     \big \}.\]
Then notice this is just
\[
=  \textsf{T}^\vee_{H,(  z({A}_1),  z(\overline{{A}}_2- \overline{{A}}_1   ),      \dots,  z(\overline{{A}}_{\ell-1}- \overline{{A}}_{\ell-2}),  z(I- \overline{{A}}_{\ell-1}   )  )}
=  \textsf{T}^\vee_{H, \text{hei} (\Delta_H(z)  ) }
.\qedhere \]
\end{proof}

\begin{prop} \label{lem1}
For $F=(S_1,\dots, S_k)$ a composition of $I$, given an $F$-tuple of Boolean functions $z=(z_1,\dots, z_k) \in \textsf{BF}[F]$ and a composition $H\in \Sigma[I]$, the multiplication $\text{m}_{F,\text{hei}(z)}$ restricts to an embedding
\[
\text{m}_{F,\text{hei}(z)}:  
 [[    H|_{S_1}  ]]_{z_1}  \times \dots \times       [[    H|_{S_k}  ]]_{z_k}  \hookrightarrow   [[    H  ]]_{(z_1 | \dots | z_k)} 
\]
with image given by
\[
\bigcap_{(A,B)\leq (   H|_{S_1} | \dots |    H|_{S_k}   )}   [[    A,B  ]]_{(z_1 | \dots | z_k)}
.\]
\end{prop}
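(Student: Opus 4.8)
The plan is to translate everything into explicit inequality descriptions and then invoke the factorization of semisimple flats, exactly as in the treatment of coroot cones in \autoref{lem:iso} and \autoref{lem:face}. First I would recall that each plate $[[H]]_z$ is cut out inside $\text{T}^\vee_{I,\text{hei}(z)}$ by the inequalities $\la h,\lambda_{\overline{A}_j}\ra \le z(\overline{A}_j)$ ranging over initial segments $\overline{A}_j$ of $H$, equivalently by the inequalities $\la h,\lambda_A\ra \le z(A)$ for all $(A,B)\le H$. So the target plate $[[H]]_{(z_1|z_2|\dots|z_k)}$ sits inside $\text{T}^\vee_{I,\,\text{hei}(z_1)+\dots+\text{hei}(z_k)}$, and by the definition of $(z_1|\dots|z_k)$ and of $\text{m}_{F,\text{hei}(z)}$, the map $\text{m}_{F,\text{hei}(z)}$ is simply the inclusion of semisimple flats $\text{T}^\vee_{S_1,\text{hei}(z_1)}\times\dots\times\text{T}^\vee_{S_k,\text{hei}(z_k)}\hookrightarrow \text{T}^\vee_{I,\,\sum_i\text{hei}(z_i)}$ (identified along $\eta_{F,\text{hei}(z)}$), which is injective; so injectivity of the claimed map is automatic, just as in \autoref{lem:iso}.

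Next I would identify the image. A point of $[[H|_{S_1}]]_{z_1}\times\dots\times[[H|_{S_k}]]_{z_k}$ is a tuple $\big((h_\text{i})_{\text{i}\in S_1},\dots,(h_\text{i})_{\text{i}\in S_k}\big)$ satisfying, for each $i$, the inequalities $\la (h_\text{i})_{\text{i}\in S_i},\lambda_{A\cap S_i}\ra \le z_i(A\cap S_i)$ for all $(A\cap S_i, B\cap S_i)\le H|_{S_i}$, together with $\sum_{\text{i}\in S_i}h_\text{i}=\text{hei}(z_i)$. Concatenating via $\text{m}_{F,\text{hei}(z)}$ gives $h=(h_\text{i})_{\text{i}\in I}$ with $\sum_{\text{i}\in I}h_\text{i}=\sum_i\text{hei}(z_i)$. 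The key combinatorial fact is that the compositions $(A,B)$ of $I$ with $(A,B)\le (H|_{S_1}|\dots|H|_{S_k})$ are exactly those of the form $A=A_1\sqcup\dots\sqcup A_k$ with $(A_i,B_i)\le H|_{S_i}$, where $A_i=A\cap S_i$, and that for such $(A,B)$ one has $(z_1|\dots|z_k)(A)=\sum_i z_i(A_i)$. Hence the system of inequalities defining $\bigcap_{(A,B)\le (H|_{S_1}|\dots|H|_{S_k})}[[A,B]]_{(z_1|\dots|z_k)}$ (inside the correct hyperplane) is literally the sum of the $k$ systems defining the factors, so a concatenated $h$ lies in that intersection if and only if each $(h_\text{i})_{\text{i}\in S_i}$ lies in $[[H|_{S_i}]]_{z_i}$. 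This pins down the image precisely as stated, and also shows it is contained in $[[H]]_{(z_1|\dots|z_k)}$ because the defining inequalities of $[[H]]_{(z_1|\dots|z_k)}$ (indexed by $(A,B)\le H$) are a subset of, in fact implied by, those indexed by $(A,B)\le(H|_{S_1}|\dots|H|_{S_k})$, since $H\le (H|_{S_1}|\dots|H|_{S_k})$.

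I would structure the writeup as: (i) recall the halfspace description of plates, (ii) observe $\text{m}_{F,\text{hei}(z)}$ restricted to the product of plates is the relevant inclusion of semisimple flats, hence injective, (iii) prove the combinatorial lemma on how $(A,B)\le (H|_{S_1}|\dots|H|_{S_k})$ decomposes and how $(z_1|\dots|z_k)$ splits accordingly, (iv) conclude the image equality and the containment in $[[H]]_{(z_1|\dots|z_k)}$ from the previous point together with $H\le (H|_{S_1}|\dots|H|_{S_k})$. The main obstacle I expect is purely bookkeeping: being careful that the ambient hyperplanes match up (the factor hyperplanes are $\sum_{\text{i}\in S_i}h_\text{i}=\text{hei}(z_i)$ whereas the target is $\sum_{\text{i}\in I}h_\text{i}=\sum_i\text{hei}(z_i)$, and the image is exactly the locus where each partial sum equals the corresponding $\text{hei}(z_i)$, i.e.\ where $h$ lies in the semisimple flat $\textsf{T}^\vee_{F,\text{hei}(z)}$), and that the trivial inequalities ($j=0$ and $j=\ell$) cause no trouble — exactly the same kind of care needed in \autoref{lem:face}. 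There is no deep geometric content beyond the factorization of semisimple flats already established; everything reduces to matching up linear inequality systems.
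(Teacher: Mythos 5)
Your overall strategy is the same as the paper's: reduce everything to explicit linear inequality systems, use the injectivity of the inclusion of affine semisimple flats for the embedding, and get the containment of the image in $[[H]]_{(z_1|\dots|z_k)}$ from $H\leq (H|_{S_1}|\dots|H|_{S_k})$. The combinatorial observation that $(A,B)\leq (H|_{S_1}|\dots|H|_{S_k})$ exactly when each $A\cap S_i$ is upward closed in $H|_{S_i}$, with $(z_1|\dots|z_k)(A)=\sum_i z_i(A\cap S_i)$, is also exactly what the paper's proof exploits.

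However, there is a genuine gap in your identification of the image, and it is precisely the point you dismiss as bookkeeping. Your argument establishes that a \emph{concatenated} point $h=\text{m}_{F,\text{hei}(z)}(h_1,\dots,h_k)$ lies in $\bigcap_{(A,B)\leq (H|_{S_1}|\dots|H|_{S_k})}[[A,B]]_{(z_1|\dots|z_k)}$ if and only if each $h_i\in [[H|_{S_i}]]_{z_i}$; but this only shows that the image is contained in the intersection and characterizes which points \emph{of the flat} $\textsf{T}^\vee_{F,\text{hei}(z)}$ lie in it. The proposition asserts the image equals the whole intersection, and a point of the intersection is a priori only subject to the single ambient equality $\la h,\lambda_I\ra=\sum_i \text{hei}(z_i)$, not to the $k$ partial-sum equalities $\la h,\lambda_{S_i}\ra=\text{hei}(z_i)$. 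The missing step is to derive those equalities from the intersection's own constraints: since $(S_i, I\setminus S_i)$ and $(I\setminus S_i, S_i)$ are both admissible (each meets every lump $S_j$ in either all of $S_j$ or $\emptyset$), the choice $A=S_i$ gives $\la h,\lambda_{S_i}\ra\leq z_i(S_i)$, the choice $A=I\setminus S_i$ gives $\la h,\lambda_{I\setminus S_i}\ra\leq \sum_{j\neq i}z_j(S_j)$, and subtracting the latter from the total equality yields $\la h,\lambda_{S_i}\ra\geq z_i(S_i)$, hence equality. This two-sided use of complementary subsets is the one nontrivial idea in the image identification (it is the step \textcolor{blue}{(\refeqq{eq:3})} \& \textcolor{blue}{(\refeqq{eq:4})} $\implies$ \textcolor{blue}{(\refeqq{eq:2})} in the paper), and it is not the same manipulation as in \autoref{lem:face}, where the vanishing of $\la h,\lambda_S\ra$ is handed to you as part of the hypothesis rather than having to be extracted from inequalities plus a single global equality. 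Without it, your "this pins down the image precisely as stated" does not follow; with it, your writeup would coincide with the paper's proof.
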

\begin{proof}

Let $H=(A_1,\dots, A_\ell)$, and let
\[
A_{j,i} =  S_i\cap A_j
,\qquad
\overline {A}_{j,i} =  A_{1,i}     \sqcup \dots \sqcup   A_{j,i}
 .\]
We have
\[
H|_{S_i}=     (   A_{1,i}, \dots , A_{\ell,i} )_+
.\]
The plate $[[    H|_{S_i}  ]]_{z_i}  \subset \bR S_i$ is the region satisfying the inequalities
\begin{equation}\label{eq:1}
\la h, \lambda_{\overline{A}_{j,i}}  \ra \leq z_i(\overline{A}_{j,i}) 
\end{equation}
for $0< j <  \ell$, together with the equality for the ambient space
\begin{equation}\label{eq:2}
\la h, \lambda_{ S_i }  \ra = z_i(S_i) 
.\end{equation}
The product $ [[    H|_{S_1}  ]]_{z_1}  \times \dots \times       [[    H|_{S_k}  ]]_{z_k}$ is the region satisfying these equalities for all $1\leq i \leq k$. On the other hand, the plate $\bigcap_{(A,B)\leq (   H|_{S_1} | \dots |    H|_{S_k}   )}   [[    A,B  ]]_{(z_1 | \dots | z_k)} \subset \bR I$ is the region satisfying the inequalities
\begin{equation}\label{eq:3}
 \la h, \lambda_{ A }  \ra  \leq  (z_1| \dots | z_k) (A ) =     z_1 ( A\cap S_1 ) + \dots +z_k ( A\cap S_k ) 
\end{equation}
where $(A,B)\leq ( H|_{S_1} | \dots |    H|_{S_k}   )$, together with the equality for the ambient space 
\begin{equation}\label{eq:4}
\la   h, \lambda_I \ra = (z_1| \dots | z_k)(I)= z_1(S_1) +\dots  + z_k(S_k)
.\end{equation}
Let $\mathcal{A}_i:= A\cap S_i$, so that $A=\mathcal{A}_1 \sqcup \dots \sqcup \mathcal{A}_k$ where $\mathcal{A}_i$ is an initial segment of $H|_{S_i}$. Then \textcolor{blue}{(\refeqq{eq:1})} \& \textcolor{blue}{(\refeqq{eq:2})}$\implies$\textcolor{blue}{(\refeqq{eq:3})} since
\[
\la h, \lambda_{A}  \ra =  \la h, \lambda_{\mathcal{A}_1}\ra + \dots + \la h, \lambda_{\mathcal{A}_k} \ra    \leq z_1(\mathcal{A}_1) +\dots + z_k(\mathcal{A}_k)  
.\]
Clearly \textcolor{blue}{(\refeqq{eq:2})}$\implies$\textcolor{blue}{(\refeqq{eq:4})}. We have \textcolor{blue}{(\refeqq{eq:3})}$\implies$\textcolor{blue}{(\refeqq{eq:1})} by setting $A=\overline{A}_{j,i}$ in \textcolor{blue}{(\refeqq{eq:3})}. We prove \textcolor{blue}{(\refeqq{eq:3})} \& \textcolor{blue}{(\refeqq{eq:4})} $\implies$\textcolor{blue}{(\refeqq{eq:2})} in two parts. First, set $A=S_i$ in \textcolor{blue}{(\refeqq{eq:3})} to give
\[
\la h, \lambda_{ S_i }  \ra \leq z_i(S_i) 
.\]
Second, set $A=I-S_i$ in \textcolor{blue}{(\refeqq{eq:3})} to give\footnote{\ where the hat $\hat{z}_i(S_i)$ means the term $z_i(S_i)$ is \emph{not} included in the sum}
\begin{equation}\label{eq:5}
\la h, \lambda_{ I-S_i }  \ra \leq z_1(S_1) + \dots +  \hat{z}_i(S_i)  +   \dots + z_k(S_k) 
.\end{equation}
Then
\[
  \la h, \lambda_{ S_i }  \ra  = 
  \lefteqn{\overbrace{\phantom{          \la h, \lambda_{ I}  \ra  - \la h, \lambda_{ I-S_i }  \ra    =  z_1(S_1) +   \dots + z_k(S_k)- \la h, \lambda_{ I-S_i } \ra                    }}^{\text{by \textcolor{blue}{(\refeqq{eq:4})}}}} 
  \la h, \lambda_{ I}  \ra  - \la h, \lambda_{ I-S_i }  \ra    =       \underbrace{     z_1(S_1) +   \dots + z_k(S_k)- \la h, \lambda_{ I-S_i } \ra  \geq   z_i(S_i)            }_{\text{by \textcolor{blue}{(\refeqq{eq:5})}}}
.\]
Thus, we have shown $\la h, \lambda_{ S_i }  \ra \leq z_i(S_i) $ and $\la h, \lambda_{ S_i }  \ra \geq z_i(S_i) $, and so $\la h, \lambda_{ S_i }  \ra = z_i(S_i) $. This completes the proof of the image. 

Finally, we need to show the image is contained in $  [[    H  ]]_{(z_1 | \dots | z_k)} $. We have
\[
H \leq (   H|_{S_1} | \dots |    H|_{S_k}   )
\]
and so 
\[
(A,B)\leq H \quad \implies \quad (A,B) \leq (   H|_{S_1} | \dots |    H|_{S_k}   ) 
.\]	
Therefore
\[
\bigcap_{(A,B)\leq (   H|_{S_1} | \dots |    H|_{S_k}   )}   [[    A,B  ]]_{(z_1 | \dots | z_k)} \ \subseteq \   [[    H  ]]_{(z_1 | \dots | z_k)} 
.\qedhere \]
\end{proof}

Given $(S,T)\in [I;2]$, the $(S,T)$-\emph{face} of the plate $[[H]]_z$ is the region of $[[H]]_z$ on which the function $\lambda_{S}$ is maximized. Given $F\in \Sigma[I]$, the $F$-\emph{face} of $[[H]]_z$ is the intersection of all the $(S,T)$-faces for $(S,T)\leq F$, i.e. it is the region on which $\lambda_{S}$ is maximized for all $(S,T)\leq F$.

\begin{prop}\label{prop:Fface}
If $F\leq H$, then the $F$-face of the plate $[[ H ]]_z  $ is given by the intersection
\[  	
[[H]]_z   \cap      \textsf{T}^\vee_{F, \text{hei}(\Delta_F(z))  }  
.\]
If $F\nleq H$, then $[[ H ]]_z  $ does not have an $F$-face.
\end{prop}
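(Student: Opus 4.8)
The claim has two parts. For the first part, suppose $F\leq H$, say $F=(S_1,\dots,S_k)$, so that each lump $S_i$ is a union of contiguous initial-segment-differences of $H$. I would argue that maximizing $\lambda_{S}$ over $[[H]]_z$ for a fixed $(S,T)\leq F$ forces the inequality $\la h,\lambda_{S}\ra \leq z(S)$ (an initial-segment inequality available because $F\leq H$ implies $(S,T)\leq H$) to be tight; doing this simultaneously for all $(S,T)\leq F$ makes tight precisely the inequalities indexed by the initial segments of $F$. Because $F\leq H$, these initial segments of $F$ form a subset of the initial segments of $H$, so setting exactly those to equality and leaving the others as inequalities yields the region $[[H]]_z \cap \{h : \la h,\lambda_{\overline{A}_j^F}\ra = z(\overline{A}_j^F)\}$, and the latter intersected set is exactly $\textsf{T}^\vee_{F,\text{hei}(\Delta_F(z))}$ by (the proof of) the preceding proposition on $\text{max aff}$, since the $i$-th coordinate block of $\Delta_F(z)$ is $z(\overline{A}_i^F) - z(\overline{A}_{i-1}^F)$. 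The main point to verify carefully is that maximizing $\lambda_S$ along $[[H]]_z$ actually achieves $z(S)$ rather than a smaller value — i.e. that the face is nonempty — which follows because $\textsf{T}^\vee_{F,\text{hei}(\Delta_F(z))}$ meets $[[H]]_z$: one can translate a point of $\textsf{T}^\vee_{F,\cdot}$ by a suitable nonnegative combination of coroots $h_{\text{i}_1\text{i}_2}$ with $(\text{i}_1,\text{i}_2)\in \sigma_H^o$ to land inside $[[H]]_z$, using that $[[H]]_z$ is a module over $\sigma_H^o$.

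For the second part, suppose $F\nleq H$. Then there is a pair $(S,T)\leq F$ with $(S,T)\nleq H$; equivalently $S$ is not an initial segment of $H$, so there exist $\text{i}_1\in T$ and $\text{i}_2\in S$ with $\text{i}_1$ preceding $\text{i}_2$ in $H$, i.e. $(\text{i}_2,\text{i}_1)\in \sigma_H^o$ (matching the argument used in Lemma~\ref{prop:face}). Then $h_{\text{i}_2\text{i}_1}\in \sigma_H^o$, and since $[[H]]_z$ is stable under adding elements of $\sigma_H^o$, for any $h\in [[H]]_z$ the ray $h + t\,h_{\text{i}_2\text{i}_1}$, $t\geq 0$, stays in $[[H]]_z$, while $\la h + t\,h_{\text{i}_2\text{i}_1}, \lambda_S\ra = \la h,\lambda_S\ra + t$ grows without bound. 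Hence $\lambda_S$ is not bounded above on $[[H]]_z$, so the $(S,T)$-face does not exist, and a fortiori the $F$-face does not exist.

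\textbf{Expected main obstacle.} The bookkeeping in the first part — matching the tight inequalities of the $F$-face with the defining equations of the semisimple flat $\textsf{T}^\vee_{F,\text{hei}(\Delta_F(z))}$ — is the part requiring the most care, since it involves keeping straight the correspondence between $(S,T)\leq F$, initial segments of $F$, and the coordinate blocks of $\Delta_F(z)$, and using $F\leq H$ at exactly the right place. Nonemptiness of the face (so that ``the region on which $\lambda_S$ is maximized'' is what we claim, not something smaller) is the only genuinely substantive point; I would handle it via the module-over-$\sigma_H^o$ structure already invoked for plates, exactly as in the proof of Proposition~\ref{lem:face}, rather than by a direct polyhedral computation.
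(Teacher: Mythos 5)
Your overall route coincides with the paper's: you reduce to the two-lump constraints $(S,T)\leq F$, use $(S,T)\leq H$ to get the bound $\la h, \lambda_{S}\ra \leq z(S)$ on $[[H]]_z$, identify the simultaneous equality locus inside the ambient hyperplane with $\textsf{T}^\vee_{F,\text{hei}(\Delta_F(z))}$ via the preceding proposition on $\text{max aff}$, and, for $F\nleq H$, produce $h_{st}\in \sigma^o_H$ with $s\in S$, $t\in T$ so that $\lambda_S$ is unbounded on the $\sigma^o_H$-module $[[H]]_z$; that second half is essentially word-for-word the paper's argument.

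The one place you diverge is the attainment/nonemptiness step, which you yourself single out as the only substantive point, and as written it has a gap. Translating a point of $\textsf{T}^\vee_{F,\text{hei}(\Delta_F(z))}$ by a general element of $\sigma^o_H$ destroys the lump-sum equalities, so even if you land in $[[H]]_z$ you have no reason to still lie on the flat; moreover, the fact that $[[H]]_z$ is a module over $\sigma^o_H$ only says the plate is closed under adding $\sigma^o_H$, not that a point outside it can be pushed in. Appealing to ``exactly as in'' the coroot-cone lemma does not transfer either, since there nonemptiness was trivial (both sets contain the origin). Your argument can be repaired: translate only by coroots $h_{ab}$ with $a,b$ in the same lump of $F$, say $b$ in the first and $a$ in the last $H$-lump inside that lump; this preserves the flat, strictly decreases the left-hand side of every plate inequality indexed by an initial segment of $H$ cutting properly through that lump, and leaves the remaining constraints unchanged, so a large enough such translation for each lump lands in $[[H]]_z\cap \textsf{T}^\vee_{F,\text{hei}(\Delta_F(z))}$. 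The paper's fix is simpler and is what the preceding proposition was set up for: since $F\leq H$, every initial segment of $F$ is an initial segment of $H$, hence $\textsf{T}^\vee_{F,\text{hei}(\Delta_F(z))} \supseteq \textsf{T}^\vee_{H,\text{hei}(\Delta_H(z))} = \text{max aff}\big([[H]]_z\big) \subseteq [[H]]_z$, so the intersection is nonempty with no further computation.
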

\begin{proof}
Let
\[
H= (A_1,\dots, A_\ell)
.\]
First consider the case $F=(S,T)$. If $(S,T)\leq H$, then $\lambda_{S}\leq z(S)$ on $[[ H ]]_z  $. But \hbox{$\lambda_{S}=z(S)$} on $\textsf{T}^\vee_{(S,T), \text{hei}(\Delta_{S,T}(z))  } $ by definition, and $[[ H ]]_z      \cap  \textsf{T}^\vee_{(S,T), \text{hei}(\Delta_{S,T}(z))  } $ is nonempty because $ \textsf{T}^\vee_{(S,T), \text{hei}(\Delta_{S,T}(z))  } $ contains the maximal affine subspace of $[[ H ]]_z $.

For generic $F$, the result then follows since
\[
  \textsf{T}^\vee_{F, \text{hei}(\Delta_F(z))  } = \text{max aff} \big ([[F]]_z\big )
  =\text{max aff} \big (   \bigcap_{(S,T)\leq F} [[S,T]]_z             \big )
  \]
  \[
  =    \bigcap_{(S,T)\leq F}   \text{max aff} \big ( [[S,T]]_z \big )            
  =    \bigcap_{(S,T)\leq F}   \textsf{T}^\vee_{(S,T), \text{hei}(\Delta_{S,T}(z))  }
.\]
If $(S,T)\nleq H$, then $(t,s)\in H$ for some $s\in S$ and $t\in T$. Therefore $h_{s t}\in \sigma^o_H$, and so $\lambda_{S}$ obtains arbitrary large values on the $\sigma^o_H$-module $[[H]]_z$. If $F\nleq H$, then there exists $(S,T)\leq F$ with $(S,T)\nleq H$.
\end{proof}

\begin{prop}\label{secondprop}
For $F=(S_1,\dots, S_k)$ a composition of $I$, given an $F$-tuple of compositions $K=(K_1,\dots, K_k)\in \mathsf{\Sigma}[F]$ and a Boolean function $z\in \textsf{BF}[I]$, the function $\text{m}_{F,\text{hei}(\Delta_F(z))}$ restricts to an embedding of cones
\[
[[ K_1    ]]_{z\talloblong_{S_1}} 
 \times \dots \times
  [[ K_k   ]]_{z\talloblong_{S_k}  }  \hookrightarrow   [[ K_1 ; \dots ; K_k ]]_z  
\]
with image the $F$-face of $[[ K_1 ; \dots ; K_k ]]_z$.
\end{prop}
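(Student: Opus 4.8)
The statement to prove, Proposition~\ref{secondprop}, is the plate-theoretic analog of \autoref{lem:face} (the face-of-cone statement for augmented preposets), so the plan is to mirror that argument but with plates in place of cones. The key ingredients already available are \autoref{lem1} (the product-of-plates embedding via $\text{m}_{F,\text{hei}(z)}$) and \autoref{prop:Fface} (the description of the $F$-face of a plate as an intersection with an affine semisimple flat). The composition we are restricting is $\text{m}_{F,\text{hei}(\Delta_F(z))}$, which by \autoref{rem:mon} is the multiplication $\mu_{F,(I)}$ of the Joyalization of the $\bZ$-graded monoid $(F,a)\mapsto \textsf{T}^\vee_{F,a}$; the bookkeeping of heights must be handled carefully, since the $i$-th input plate $[[K_i]]_{z\talloblong_{S_i}}$ lives in $\text{T}^\vee_{S_i, z\talloblong_{S_i}(S_i)}=\text{T}^\vee_{S_i, z(S_i)}$, and $z(S_i)$ is precisely the $i$-th component of $\text{hei}(\Delta_F(z))$.

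First I would unwind the definitions. Write $H := K_1 ; \dots ; K_k$, which satisfies $F\leq H$ and $H|_{S_i}=K_i$ by construction of concatenation and restriction. Observe also that the restrictions of $z$ arrange into a factorization compatible with $\mu_{S,T}$: one checks $(z\talloblong_{S_1}\,|\,\dots\,|\,z\talloblong_{S_k})|_{(\text{up to the }F\text{-face})}$ agrees with $z$ where it matters, using the same identity $(H|_S|H|_T)\to H$ via $\Delta^\ast$ we have already used repeatedly; more precisely, the key algebraic fact is $(z\talloblong_{S_1}\,|\,\dots\,|\,z\talloblong_{S_k})(A)=z(A)$ for every $A$ which is a union of initial segments of the $K_i$, i.e. every $A$ with $(A,B)\leq (K_1|\dots|K_k)$. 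This is the plate analog of the computation $(S_1,S_2)\leq p\talloblong_S \implies (S_1, S_2\sqcup T)\leq p$ in the proof of \autoref{lem:face}.

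Then I would apply \autoref{lem1} with $K_i$ in place of $H|_{S_i}$: since $H|_{S_i}=K_i$, the multiplication $\text{m}_{F,\text{hei}(z')}$ — where $z'=(z\talloblong_{S_1}|\dots|z\talloblong_{S_k})$ and $\text{hei}(z')=\text{hei}(\Delta_F(z))$ — restricts to an embedding of $[[K_1]]_{z\talloblong_{S_1}}\times\dots\times[[K_k]]_{z\talloblong_{S_k}}$ into $[[H]]_{z'}$ with image $\bigcap_{(A,B)\leq(K_1|\dots|K_k)}[[A,B]]_{z'}$. By the algebraic fact from the previous paragraph, $z'$ and $z$ define the same halfspace $[[A,B]]$ for each such $(A,B)$, so this image equals $\bigcap_{(A,B)\leq(K_1|\dots|K_k)}[[A,B]]_z$. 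It remains to identify this intersection with the $F$-face of $[[H]]_z$. By \autoref{prop:Fface}, since $F\leq H$, the $F$-face is $[[H]]_z\cap \textsf{T}^\vee_{F,\text{hei}(\Delta_F(z))}$. So the final step is to prove the set equality
\[
[[H]]_z\ \cap\ \textsf{T}^\vee_{F,\text{hei}(\Delta_F(z))}\ =\ \bigcap_{(A,B)\leq(K_1|\dots|K_k)}[[A,B]]_z ,
\]
which is the plate analog of the identity $\sigma^o_p\cap\mathcal{H}_{(S|T)}=\sigma^o_{(p\talloblong_S|p\!\fatslash_{\,T})}$ proved inside \autoref{lem:face}. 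The inclusion $\supseteq$ follows because each $(A,B)\leq(K_1|\dots|K_k)$ forces the defining inequalities of $[[H]]_z$ (as $F\leq H\leq(K_1|\dots|K_k)$, actually $(A,B)\leq H$ gives those) plus, on the left-hand intersection, the equalities $\langle h,\lambda_{S_i}\rangle=z(S_i)$ that cut out $\textsf{T}^\vee_{F,\text{hei}(\Delta_F(z))}$; conversely, given $h$ on the left-hand side, for $(A,B)\leq(K_1|\dots|K_k)$ one writes $A=\mathcal A_1\sqcup\dots\sqcup\mathcal A_k$ with $\mathcal A_i$ an initial segment of $K_i$, and then $\langle h,\lambda_A\rangle=\sum_i\langle h,\lambda_{\mathcal A_i}\rangle\le\sum_i z\talloblong_{S_i}(\mathcal A_i)=z'(A)=z(A)$, using that $h\in[[H]]_z\cap\textsf{T}^\vee_{F,\cdot}$ restricts (via the inverse of $\eta_{F,\cdot}$) to a tuple lying in $\prod_i[[K_i]]_{z\talloblong_{S_i}}$ — exactly the argument at the end of the proof of \autoref{lem:face}, where one adds $0=\langle h,\lambda_S\rangle$ to upgrade a restricted inequality to a global one.

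\textbf{Main obstacle.} The conceptual content is entirely parallel to \autoref{lem:face}, so the real work is the affine bookkeeping: keeping straight which plate lives in which affine flat $\text{T}^\vee_{S_i,a_i}$, verifying that the heights $a_i=z(S_i)$ produced by $\Delta_F(z)$ are exactly the ones for which $\text{m}_{F,\text{hei}(\Delta_F(z))}$ is defined, and checking that passing from $z$ to the reassembled $z'=(z\talloblong_{S_1}|\dots|z\talloblong_{S_k})$ does not change any of the relevant halfspaces. None of this is deep, but it is the step most likely to hide a sign or an off-by-one error, and it is where I would be most careful; the cone case got to suppress all of it because cones are genuinely linear rather than affine.
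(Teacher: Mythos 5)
Your overall architecture---apply \autoref{lem1} to the $F$-tuple $\Delta_F(z)=(z\talloblong_{S_1},\dots,z\talloblong_{S_k})$ and then identify the resulting image with the $F$-face via \autoref{prop:Fface}---is close in spirit to the paper's argument, but the ``key algebraic fact'' you rest it on is false. Writing $z':=(z\talloblong_{S_1}|\dots|z\talloblong_{S_k})$ and $\overline{S}_i:=S_1\sqcup\dots\sqcup S_i$, one has $z'(A)=\sum_i\big(z(\overline{S}_{i-1}\sqcup(A\cap S_i))-z(\overline{S}_{i-1})\big)$, and this telescopes to $z(A)$ only when $A$ is an initial segment of $H=K_1;\dots;K_k$ (i.e.\ $(A,B)\leq H$), not for arbitrary $A$ with $(A,B)\leq(K_1|\dots|K_k)$. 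Concretely, take $I=\{1,2\}$, $F=(\{1\},\{2\})$, $K_1=(\{1\})$, $K_2=(\{2\})$, and $z$ with $z(\{1\})=5$, $z(\{2\})=7$, $z(I)=0$: then $z'(\{2\})=z(I)-z(\{1\})=-5\neq 7=z(\{2\})$ although $(\{2\},\{1\})\leq(K_1|K_2)$. Your displayed identity $[[H]]_z\cap\textsf{T}^\vee_{F,\text{hei}(\Delta_F(z))}=\bigcap_{(A,B)\leq(K_1|\dots|K_k)}[[A,B]]_z$ fails in the same example: the left side is the single point $h=(5,-5)$ (which is indeed the $F$-face and the image of the product of plates), while the right side is the segment $\{(h_1,-h_1)\ |\ -7\leq h_1\leq 5\}$. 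The breakdown is exactly in your ``$\supseteq$'' direction: with $z$ in place of $z'$, the inequalities for $A=S_i$ and $A=I-S_i$ no longer pinch to the equalities $\la h,\lambda_{S_i}\ra=z(\overline{S}_i)-z(\overline{S}_{i-1})$ cutting out the flat, because $z(S_i)+z(I-S_i)$ need not equal $z(I)$; that cancellation is special to the multiplicatively built $z'$, and it is what makes the corresponding step inside \autoref{lem1} work.

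The gap is repairable, but not by the silent substitution of $z$ for $z'$: keeping $z'$, \autoref{lem1} tells you the image is $\bigcap_{(A,B)\leq(K_1|\dots|K_k)}[[A,B]]_{z'}$, and what remains is to prove that this set equals $[[H]]_z\cap\textsf{T}^\vee_{F,\text{hei}(\Delta_F(z))}$. That comparison is precisely the affine inequality-chasing the paper performs: on the flat one uses $\la h,\lambda_{\overline{B}_{j,i}}\ra=\la h,\lambda_{\overline{S}_{i-1}\sqcup\overline{B}_{j,i}}\ra-\la h,\lambda_{\overline{S}_{i-1}}\ra$ to pass from the initial-segment inequalities of $[[H]]_z$ to the inequalities of the plates $[[K_i]]_{z\talloblong_{S_i}}$, and conversely sums the restricted inequalities (with the correcting terms $z(\overline{S}_{i-1})$) to recover the inequalities with $z'$ on the larger family. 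So the step you hoped to dispatch by the equality $z'(A)=z(A)$ is in fact the substantive content of the proposition, and as you stated it, it is false.
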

\begin{proof}
In light of \autoref{prop:Fface}, this amount to showing that the image is given by the intersection
\[  	
[[ K_1 ; \cdots ; K_k]]_z   \cap      \textsf{T}^\vee_{F, \text{hei}(\Delta_F(z))  }  
.\] 
Let 
\[
 K_{i}   = (  B_{1,i},\dots, B_{l(i),i} )  
 \qquad \text{and} \qquad 
 \overline{B}_{j,i} :=   B_{1,i} \sqcup \dots \sqcup   B_{j,i} 
\]
for $1\leq j \leq l(i)$. Then the intersection $[[ K_1 ; \cdots ; K_k ]]_z\cap \textsf{T}^\vee_{F,\text{hei}(\Delta_F(z))} \subset \bR I$ is the region given by the plate inequalities
\begin{equation}\label{eq:11}
\la h, \lambda_{  \overline{S}_{i-1}\sqcup   \overline{B}_{j,i}       }  \ra \leq z(\overline{S}_{i-1}\sqcup \overline{B}_{j,i}), \qquad 0\leq j < l(i)
\end{equation}
for $1\leq i \leq k$ (although the equality for $i=1$, $j=0$ is trivial), together with the equalities
\begin{equation}\label{eq:33}
	\la h, \lambda_{S_i}  \ra =  \text{hei}(z\talloblong_{S_i})  =  z(\overline{S}_i) - z(\overline{S}_{i-1})    \qquad \text{for} \quad 1\leq i \leq k	 
	.
\end{equation}
On the other hand, $[[ K_i    ]]_{z\talloblong_{S_i}} \subset \bR S_i$ is the region given by the inequalities
\begin{equation}\label{eq:55}
\la h, \lambda_{\overline{B}_{j,i} }  \ra \leq   z\! \talloblong_{S_i}(\overline{B}_{j,i})  =   z(\overline{S}_{i-1} \sqcup    \overline{B}_{j,i}  )  -  z(\overline{S}_{i-1})
\end{equation}
for $0< j < l(i)$, together with the equality for the ambient space
\begin{equation}\label{eq:66}
\la h, \lambda_{ S_i      }  \ra   = z\talloblong_{S_i} (S_i)  = z(\overline{S}_i) -   z(\overline{S}_{i-1}) 
.
\end{equation}
The product $[[ K_1    ]]_{z\talloblong_{S_1}} \times \dots \times[[ K_k   ]]_{z\talloblong_{S_k}  }$ is the region satisfying these equalities for all $1\leq i \leq k$.

Clearly $\textcolor{blue}{(\refeqq{eq:33})}$ $\iff$ $\textcolor{blue}{(\refeqq{eq:66})}$. For $\textcolor{blue}{(\refeqq{eq:11})}$ \& $\textcolor{blue}{(\refeqq{eq:33})}$ $\implies$ $\textcolor{blue}{(\refeqq{eq:55})}$, we have
\[
z(\overline{S}_{i-1})+   
\la h, \lambda_{\overline{B}_{j,i}   }  \ra
=
\la h, \lambda_{\overline{S}_{i-1}}  \ra +   
\la h, \lambda_{\overline{B}_{j,i}   }  \ra
=\la h, \lambda_{\overline{S}_{i-1} \sqcup    \overline{B}_{j,i}   }  \ra \leq z(\overline{S}_{i-1} \sqcup    \overline{B}_{j,i}  ) 
\]
and so
\[
	\la h, \lambda_{\overline{B}_{j,i}   }  \ra       \leq    z(\overline{S}_{i-1} \sqcup    \overline{B}_{j,i}  )  -  z(\overline{S}_{i-1})
	.
\]
For $\textcolor{blue}{(\refeqq{eq:55})}$ \& $\textcolor{blue}{(\refeqq{eq:66})}$ $\implies$ $\textcolor{blue}{(\refeqq{eq:11})}$, we have
\[
\la h, \lambda_{  \overline{S}_{i-1}\sqcup   \overline{B}_{j,i}       }  \ra 
=
\la h, \lambda_{  S_{1}       }  \ra     +       \dots +   \la h, \lambda_{  S_{i-1}       }  \ra      +\la h, \lambda_{   \overline{B}_{j,i}       }  \ra
\]
\[ \leq          z(\overline{S}_1) + \big (z(\overline{S}_2) -  z(\overline{S}_1)  \big ) + \cdots\cdots  +     \big (z(\overline{S}_{i-1}) -  z(\overline{S}_{i-2})  \big )     +       z(\overline{S}_{i-1} \sqcup    \overline{B}_{j,i}  )  -  z(\overline{S}_{i-1})
\]
\[
=
z(\overline{S}_{i-1}\sqcup \overline{B}_{j,i})
.\qedhere \]
\end{proof}

\subsection{Invertible Sheaves From Boolean Functions}\label{A Homomorphism of Bimonoid1}



We now define a morphism of cospecies
\[
\mathcal{O}: \textsf{BF} \to \text{Mod}
,\qquad 
z\mapsto \mathcal{O}_z
\]
whose composition with the external product morphism $\text{Mod}\to \textsf{Mod}$ is a homomorphism of bimonoids $\varphi:\textsf{BF}\to  \textsf{Mod}$. The morphism $\mathcal{O}$ will be Joyal-theoretic, but not strict.

Given a composition $H\in \Sigma[I]$ and a Boolean function $z\in \textsf{BF}[I]$, we have the $\bC[\text{M}_H]$-module $\bC[\text{M}_{H,z}]$ given by
\[
\text{M}_{H,z} := [[H]]_z \cap \bZ I \qquad \text{and} \qquad  \bC[\text{M}_{H,z}]:=  \bigoplus_{h\in  \text{M}_{H,z}} \bC \cdot g^h
.\]
We define the invertible sheaf $\mathcal{O}_z$ of $z$ to be the coherent sheaf which, restricted to the affine open $U_H\subset \bC\Sigma^I$, is the coherent sheaf of the $\bC[\text{M}_H]$-module $\bC[\text{M}_{H,z}]$. We now make this explicit.

We make the identification
\[
\mathcal{K}_I=  \text{Frac} \big(   \bC[\text{T}^\vee_I \cap \bZ I]   \big  )   
\]
where recall $\mathcal{K}_I$ denotes the field of rational functions on permutohedral space $\bC\Sigma^I$. For each height $a\in \bZ$, we have the $\mathcal{K}_I$-module $\mathcal{K}_{I,a}$ given by
\[
\mathcal{K}_{I,a} :=  \text{Frac} \big(   \bC[\text{T}^\vee_{I,a} \cap \bZ I]      \big  )=  \bC[\text{T}^\vee_{I,a} \cap \bZ I]    \otimes_{ \bC[\text{T}^\vee_I \cap \bZ I]  }   \mathcal{K}_I
.\]

\begin{remark}
We can think of $\mathcal{K}_{I,a}$ as rational functions on permutohedral space which have an ambient degree of divergence equal to $a$. Importantly for the bimonoid structure, this extended notion of rational functions will be closed under pullback along the multiplication $\upmu_{F,G}$. 
\end{remark}

For $g\in \bC[\text{M}_{H,z}]\subseteq  \bC[\text{T}^\vee_{I,\text{hei}(z)} \cap \bZ I] $ and $f\in  \bC[\text{M}_H]$, we abbreviate
\[
g/f := g\otimes 1/f \in \mathcal{K}_{I,\text{hei}(z)}
.\]
For the principal open set $U_f\subseteq U_H$, put
\begin{equation}\label{eq:def}
\mathcal{O}_z (U_f) :=   \big \{ g/f^p  \ \big | \ g\in     \bC[\text{M}_{H,z}], \ p\in \bN   \big \} \subseteq  \mathcal{K}_{I,\text{hei}(z)}
.
\end{equation}
Assuming this is well-defined, this defines the sheaf $\mathcal{O}_z$ on a basis of $\bC\Sigma^I$, and so defines the sheaf entirely. The stalk of $\mathcal{O}_z$ at $x\in U_H$ is given by
\begin{equation}\label{eq:stalks}
\mathcal{O}_z(x) = \colim_{x\in U} \mathcal{O}_z (U) =     \big \{         g/f\in    \mathcal{K}_{I,\text{hei}(z)} \ \big | \  g\in     \bC[\text{M}_{H,z}], \  f(x)\neq 0     \big \}
.
\end{equation}
We need to check that \textcolor{blue}{(\refeqq{eq:def})} is well-defined, and that \textcolor{blue}{(\refeqq{eq:stalks})} really are the stalks. 

We show this by constructing isomorphisms with the subsheaves $\mathcal{O}_w$ of the sheaf of rational functions as follows. Suppose we have principal opens
\[
U_{f_1}\subseteq U_{H_1}  
\qquad \text{and} \qquad
U_{f_2}\subseteq U_{H_2}  
\qquad \text{with} \qquad
U_{f_1} = U_{f_2} =U
.\]
Pick some $d \in \text{T}^\vee_{I,-\text{hei}(z)}\cap \bZ I$. Then we have an isomorphism of vector spaces $\Phi_{\text{hei}(z),d}$ given by
\[
\Phi_{\text{hei}(z),d} : \mathcal{K}_{I,\text{hei}(z)} \xrightarrow{\sim}  \mathcal{K}_{I,\text{hei}(z)-\text{hei}(z)} =  \mathcal{K}_{I,0}=  \mathcal{K}_I 
,\qquad
g^h\otimes 1/f\  \mapsto \     g^{h+ d}\otimes 1/f
.\]
Let $w=z+z_d$, which is a height zero Boolean function with associated sheaf $\mathcal{O}_w$ described in \autoref{sec:cartdiv}. Given $f\in \bC[\text{M}_{H}]$, for the principal open $U_f\subseteq U_H$ we have
\[
\mathcal{O}_w (U_f) =   \big \{ g/f^p \in \mathcal{K}_{I}   \ \big | \ g\in     \bC[\text{M}_{H,w}] , \ p\in \bN   \big \} 
.\]
See e.g. toric section 4.3. Thus
\[
\Phi_{\text{hei}(z),d} (\mathcal{O}_z (U_{f_1})  )  =   \mathcal{O}_w (U_{f_1}) =   \mathcal{O}_w (U)  =\mathcal{O}_w (U_{f_2}) =  \Phi_{\text{hei}(z),d} (\mathcal{O}_z (U_{f_2})  ) 
.\]
Since $\Phi_{\text{hei}(z),d}$ is an isomorphism, we must have
\[
\mathcal{O}_z (U_{f_1}) =  \mathcal{O}_z (U_{f_2})
\]
as required. More generally, for any $d\in \bZ I$, we have an isomorphism of invertible sheaves
\[
\mathcal{O}_{z\to z+z_{d}} :  \mathcal{O}_z \xrightarrow{\sim} \mathcal{O}_{z+z_d}  
,\qquad 
g^h\otimes 1/f \  \mapsto  \    g^{h+ d}\otimes 1/f
.\]

Finally, we need to define a $2$-cell $\sigma_{I}$ and then check a single coherence law. In particular, we require a natural map of invertible sheaves $\sigma_{z}$ of the form
\[
\sigma_{z} :  \mathcal{O}'_z \to \mathcal{O}_{z'}
\]
for each Boolean function $z\in \textsf{BF}[J]$ and bijection $\sigma:J\to I$. Let $H\in \Sigma[J]$. Over $U_{H'}\subset \bC\Sigma^I$, the pullback $\mathcal{O}'_{z}$ is (naturally isomorphic to) the coherent sheaf of the $\bC[ \text{M}_{ H' } ]$-module 
\[
\bC[ \text{M}_{ H,z } ] \otimes     \bC[ \text{M}_{ H' } ]   :=\bC[ \text{M}_{ H,z } ] \otimes_{\bC[ \text{M}_{ H } ]}     \bC[ \text{M}_{ H' } ]
.\] 
Over $U_{H'}$, $\mathcal{O}_{z'}$ is by definition the coherent sheaf of the $\bC[ \text{M}_{ H' } ]$-module $ \bC[ \text{M}_{ H',z' } ]$. Over $U_{H'}$, we define $\sigma_{z}$ to be the isomorphism of coherent sheaves corresponding to the isomorphism of \hbox{$\bC[ \text{M}_{ H' } ]$-modules}
\[
\bC[ \text{M}_{ H,z } ] \otimes     \bC[ \text{M}_{ H' } ]  \to \bC[ \text{M}_{ H',z' } ] 
,\qquad 
g\otimes f \mapsto f \cdot  g'
.\]
Every element of the left-hand side is of the form $g\otimes 1$, and so this map is also given by $g\otimes 1 \mapsto g'$. We now make $\sigma_{z}$ explicit at the level of stalks. The stalk of the pullback $\mathcal{O}'_z$ at $x\in U_{H'}$ is given by
\[
\mathcal{O}'_z(x) = \mathcal{O}_z(x') \otimes  \mathcal{O}_I(x) \]
\[
=     \big \{         g/f\in    \mathcal{K}_{J,\text{hei}(z)} \, \big | \,  g\in     \bC[\text{M}_{H,z}],\,    f\in     \bC[\text{M}_{H}],  \,  f(x')\neq 0     \big \} 
\otimes
\big \{         f_1/f_2\in    \mathcal{K}_{I} \, \big | \,  f_1,f_2\in     \bC[\text{M}_{H'}], \,  f_1(x)\neq 0     \big \} 
.\]
The stalk of $\mathcal{O}_{z'}$ at $x\in U_{H'}$ is given by
\[
\mathcal{O}_{z'}(x) =   \big \{         g/f\in    \mathcal{K}_{I,\text{hei}(z')} \, \big | \,  g\in     \bC[\text{M}_{H',z'}], \,   f\in     \bC[\text{M}_{H'}],     \,  f(x)\neq 0     \big \}
.\]
Then the map $\sigma_{z}$ on the stalk at $x\in U_{H'}$ is given by
\[
g/f \otimes  f_1 / f_2   \,  \mapsto\,  (f_1 \cdot g')/(f_2\cdot f') 
.\]
This gives a globally well-defined $\sigma_{I}$ as we vary the affine chart $U_{H'}$ by $(\sigma,\mathtt{f})$-functoriality of $\bC\textbf{\textsf{O}}$. We write germs as e.g. $\text{g}=g/f$ and $\text{g}'=g'/f'$. Every element of $  \mathcal{O}'_z(x)$ is of the form $\text{g}\otimes 1$. Then this map may be written as 
\[
\text{g}\otimes 1 \mapsto \text{g}'
.\] 


\begin{prop}
We have a Joyal-theoretic morphism of cospecies
\[
\mathcal{O} : \textsf{BF} \to \text{Mod}
.\]
This is a strong morphism.
\end{prop}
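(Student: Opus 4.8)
The proof will follow the now-familiar template: $\mathcal{O}$ is defined component-wise on objects $z\in\textsf{BF}[I]$ together with a single non-trivial $2$-cell $\sigma_I$ (coming from the action of bijections), and we must verify that these assemble into a well-defined morphism of $\textsf{Cat}$-valued cospecies which is Joyal-theoretic and strong. The plan is to check the following in order. First, functoriality of the assignment $z\mapsto\mathcal{O}_z$ on morphisms of $\textsf{BF}[I]$: a morphism $z_1\to z_2$ exists precisely when $z_2=z_1+z_d$ for some $d\in\bZ I$, and we already constructed the isomorphism $\mathcal{O}_{z\to z+z_d}\colon\mathcal{O}_z\xrightarrow{\sim}\mathcal{O}_{z+z_d}$, $g^h\otimes 1/f\mapsto g^{h+d}\otimes 1/f$; one checks $\mathcal{O}_{z\to z}=\mathrm{id}$ and $\mathcal{O}_{z_2\to z_3}\circ\mathcal{O}_{z_1\to z_2}=\mathcal{O}_{z_1\to z_3}$, which is immediate from $z_{d_1}+z_{d_2}=z_{d_1+d_2}$. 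Second, naturality of $\sigma_I$ with respect to these morphisms, i.e. the $(\beta,\mathtt{f})$-type square: for $z_1\to z_2$ in $\textsf{BF}[J]$ one compares $\sigma_{z_2}\circ(\mathcal{O}_{z_1\to z_2})'$ with $\mathcal{O}_{z_1'\to z_2'}\circ\sigma_{z_1}$; at the level of stalks both send $\text{g}\otimes 1\mapsto(\text{g}+z_d)'$ so they agree (using that priming commutes with adding $d$ componentwise). Third, the single coherence $3$-cell required of a morphism of cospecies involving only $\sigma$, namely $\sigma$-coherence/$\sigma$-unitality: for composable bijections $K\xrightarrow{\tau}J\xrightarrow{\sigma}I$ one must show $(\sigma\circ\tau)_z\circ(\sigma,\tau)_{\mathcal{O}_z}=\mathcal{O}_{(\sigma,\tau)_z}\circ\sigma_{z'}\circ\tau'_z$ (with the cospecies convention that $\sigma$ and $\tau$ get switched, exactly as in the earlier sheaf-of-modules computations), plus strict unitality $\sigma=\mathrm{id}$ when $\sigma=\mathrm{id}_I$. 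Here $(\sigma,\tau)_{\mathcal{O}_z}$ is the canonical isomorphism $(\mathcal{O}_z')'\xrightarrow{\sim}\mathcal{O}_z''$ that $\text{Mod}$ carries. The verification is a germ-level chase identical in form to the $(\sigma,\tau,\upsilon)$-coherence computation already done for $\textsf{Mod}$: both composites send a germ $\text{g}\otimes 1$ to $\text{g}''$, the only input being that pullback of germs of rational functions along $\bC\Sigma$ satisfies $(\sigma\circ\tau)^\ast=\sigma^\ast\circ\tau^\ast$, i.e. $\sigma$-functoriality of permutohedral space $\bC\Sigma$, together with the well-definedness of $\sigma_I$ across affine charts which follows from $(\sigma,\mathtt{f})$-functoriality of $\bC\textbf{\textsf{O}}$ (invoked when constructing $\sigma_I$).

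Fourth, I would record that $\mathcal{O}$ is Joyal-theoretic: by definition of its target $\text{Mod}$ (the Joyalization of $\textsf{Mod}$), being Joyal-theoretic just means $\mathcal{O}$ is specified on the totally lumped components $F=(I)$ and extended multiplicatively, which is precisely how it was set up — so there is nothing further to check beyond what was done above on $\textsf{BF}[I]$. Fifth, strength: each $\sigma_z$ is an isomorphism of invertible sheaves, since over every affine chart $U_{H'}$ it corresponds to the isomorphism of $\bC[\text{M}_{H'}]$-modules $\bC[\text{M}_{H,z}]\otimes\bC[\text{M}_{H'}]\to\bC[\text{M}_{H',z'}]$, $g\otimes 1\mapsto g'$, which is bijective because relabeling $\bZ J\xrightarrow{\sim}\bZ I$ carries $[[H]]_z\cap\bZ J$ bijectively onto $[[H']]_{z'}\cap\bZ I$ (the defining inequalities $\langle h,\lambda_{\overline A_j}\rangle\le z(\overline A_j)$ transport verbatim). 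Hence $\mathcal{O}$ is a strong, Joyal-theoretic morphism of cospecies, which is the assertion.

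The main obstacle I anticipate is not conceptual but bookkeeping: correctly matching the prime/tilde conventions of the \emph{cospecies} version of $\sigma$-coherence (where, as emphasized in Section 2 of the excerpt, the roles of $\sigma$ and $\tau$ are interchanged relative to the species version) against the explicit formula for $\sigma_z$ at the stalk level, and confirming that the "well-defined across charts" clause — which rests on $(\sigma,\mathtt{f})$-functoriality of $\bC\textbf{\textsf{O}}$ used in the definition of $\sigma_I$ — propagates correctly into the coherence chase. Everything else reduces to the single algebraic fact that the monoid of rational germs pulls back functorially along $\bC\Sigma$, exactly as exploited in the proofs that $\textsf{Mod}$ is a cospecies and a bimonoid, so those computations can be cited essentially mutatis mutandis rather than repeated.
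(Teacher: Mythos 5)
Your proposal is correct and follows essentially the same route as the paper: since $\mathcal{O}$ is Joyal-theoretic, the only coherence law that needs checking is $\sigma$-coherence in the totally lumped case, and the paper verifies exactly this by the same germ-level chase you describe (both composites send $\text{g}\otimes 1\otimes 1$ to $\text{g}''$, using $(\sigma,\tau)_z=\mathrm{id}$ because $\textsf{BF}$ is strict). Your additional steps (functor laws for $\mathcal{O}_{z\to z+z_d}$, naturality of $\sigma_z$, invertibility of $\sigma_z$ over each chart $U_{H'}$) are routine and are handled in the paper's construction preceding the proposition rather than in the proof itself.
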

\begin{proof}
Recall $\sigma$-coherence (for cospecies) says
\[
(\sigma \circ \tau)_z  \circ  (\sigma,\tau)_{\mathcal{O}_z} =\underbrace{   \eta_{F''}\big ( (\sigma,\tau)_{z}\big)}_{=\, \text{id}}  \circ \,  \sigma_{z'} \circ \tau'_{z}
.\]
At each stalk, the left-hand map is given by
\[
\text{g} \otimes 1 \otimes 1 
\mapsto 
\text{g} \otimes 1 
\mapsto 
\text{g}''
\]
and right-hand map is given by
\[
\text{g} \otimes 1 \otimes 1 \mapsto \text{g}' \otimes 1  \mapsto (\text{g}')'= \text{g}''
.\qedhere \]
\end{proof}


\begin{remark} 
	The restriction of the morphism $\textsf{BF} \to \text{Mod}$ to height zero Boolean functions $\bZ^{[-;2]}\hookrightarrow \textsf{BF}$ recovers the usual construction of invertible sheaves on toric varieties as subsheaves of the sheaf of rational functions. Thus, the essential image of $z\mapsto \mathcal{O}_z$ is all invertible sheaves. 
\end{remark}

\subsection{A Homomorphism of Bimonoids}\label{A Homomorphism of Bimonoids}

We have the composition of morphisms $\varphi:= \text{prod} \circ \mathcal{O}$, 
\[
\varphi: \textsf{BF} \to \textsf{Mod}
,\qquad 
( z_1,\dots, z_k )\mapsto \mathcal{O}_{z_1} \boxtimes \dots \boxtimes \mathcal{O}_{z_k}
.\] 
We now equip $\varphi$ with the structure of a strong homomorphism of bimonoids, which involves defining $2$-cells ${\mathtt{\Phi}_{F,G}}$ and ${\mathtt{\Psi}_{F,G}}$ and then checking nine coherence laws.

For compositions $G\leq F$, let
\[
F=(S_1,\dots,   S_k)\qquad \text{and} \qquad G=(T_1, \dots,   T_l)
.\]
Let $1\leq \mathtt{i}_j \leq k$ be the integer such that $S_{\mathtt{i}_j}$ is the $\mathtt{i}$th lump of $F|_{T_j}$, $1\leq j \leq l$. Let $k(j)$ denote the length of $F|_{T_j}$. 

Given an $F$-tuple of Boolean functions $z=(z_1,\dots,z_k)\in \textsf{BF}[F]$, we require a map of $\mathcal{O}_G$-modules ${\mathtt{\Phi}_{F,G}}_{(z_1,\dots,z_k)}$ of the form
\[
{\mathtt{\Phi}_{F,G}}_{(z_1,\dots,z_k)} :  
\Updelta^\ast_{F,G}  (  \mathcal{O}_{z_1}  \boxtimes\dots \boxtimes  \mathcal{O}_{z_k}   ) 
\to   
\mathcal{O}_{(z_{1_1} |\dots | z_{k(1)_1}  )} \boxtimes \cdots \boxtimes   \mathcal{O}_{(   z_{1_l} | \dots | z_{k(l)_l}  )}
.\]
Let $H=(H_1, \dots, H_l)\in \mathsf{\Sigma}[G]$. Over \hbox{$U_H\subset \bC\Sigma^G$}, the pullback $\Updelta^\ast_{F,G}\mathcal{O}_{(z_1 , \dots, z_k)}=\Updelta^\ast_{F,G}  (  \mathcal{O}_{z_1}  \boxtimes\dots \boxtimes  \mathcal{O}_{z_k}   )$ is (naturally isomorphic to) the coherent sheaf of the \hbox{$\bC[\text{M}_{H}]$-module}
\[
\bigotimes_{1\leq i \leq k}
\Big( 
\bC[\text{M}_{H|_{S_i},z_i}] \otimes   \bC[\text{M}_{ (H|_{S_1} ,\dots , H|_{S_k}) }] 
\Big) 
\otimes
\bC[\text{M}_{H}] \, :=
\]
\[ 
{\bigotimes_{1\leq i \leq k}}_{\, \bC[\text{M}_{ (H|_{S_1} ,\dots , H|_{S_k}) }]}  
\Big( 
\bC[\text{M}_{H|_{S_i},z_i}] \otimes_{ \bC[\text{M}_{H|_{S_i}}] }   \bC[\text{M}_{ (H|_{S_1} ,\dots , H|_{S_k}) }] 
\Big) 
\otimes_{\bC[\text{M}_{ (H|_{S_1} ,\dots , H|_{S_k}) }]}
\bC[\text{M}_{H}]
.\]
The external product $\mathcal{O}_{(z_{1_1} |\dots | z_{k(1)_1}  )} \boxtimes \cdots \boxtimes   \mathcal{O}_{(   z_{1_l} | \dots | z_{k(l)_l}  )}$ over $U_H$ is (naturally isomorphic to) the coherent sheaf of the \hbox{$\bC[\text{M}_{H}]$-module}
\[
\bigotimes_{1 \leq j\leq l}  \Big ( \bC[ \text{M}_{H_j,  ( z_{1_j} |\dots | z_{k(j)_j})  } ] \otimes \bC[\text{M}_{H}] \Big )
:= 
{\bigotimes_{1 \leq j\leq l}}_{\, \bC[\text{M}_{H}]} \bC[ \text{M}_{H_j,  ( z_{1_j} |\dots | z_{k(j)_j})  } ] \otimes_{\bC[\text{M}_{H_j}]} \bC[\text{M}_{H}]
.\]
Over $U_H$, we define ${\mathtt{\Phi}_{F,G}}_{(z_1,\dots,z_k)}$ to be the isomorphism of coherent sheaves which corresponds to the isomorphism of $\bC[\text{M}_{H}]$-modules
\[
\bigotimes_{1\leq i \leq k}
\Big( 
\bC[\text{M}_{H|_{S_i},z_i}] \otimes   \bC[\text{M}_{ (H|_{S_1} ,\dots , H|_{S_k}) }] 
\Big) 
\otimes
\bC[\text{M}_{H}] 
\to 
\bigotimes_{1 \leq j\leq l}  \Big ( \bC[ \text{M}_{H_j,  ( z_{1_j} |\dots | z_{k(j)_j})  } ] \otimes \bC[\text{M}_{H}]  \Big )
\]
\[
\underbrace{
	\bigotimes_{1\leq i \leq k}  ( g^{h_{i}} \otimes 1 ) \otimes 1
	=
	\bigotimes_{1\leq j \leq l}  \Big ( \bigotimes_{1_j \leq \mathtt{i}  \leq k(j)_j}  \big( g^{h_{\mathtt{i}_j}}\otimes 1 \big) \Big ) \otimes 1
}_{\text{collect into lumps of $G$}}
\, \mapsto \,    
\bigotimes_{1\leq j \leq l} g^{\text{m}_{F|_{T_j}, a_j}  (   \bigtimes_{1_j \leq \mathtt{i}  \leq k(j)_j}  h_{\mathtt{i}_j}   )} \otimes 1        
\]
where $a_j= (\text{hei}(z_{1_j} ) , \dots , \text{hei}(z_{k(j)_j} )  )$. Note not every element of the left-hand side is of the form $\bigotimes_{1\leq i \leq k} (g^{h_i}  \otimes 1 ) \otimes 1$, rather the map is given by extending this assignment \hbox{$\bC[\text{M}_{H}]$-linearly}. This map of modules is well-defined by \autoref{lem1}. It is an isomorphism because the map of \autoref{lem1} is injective.

We now make ${\mathtt{\Phi}_{F,G}}_{(z_1,\dots,z_k)}$ explicit at the level of stalks. The stalk of $\Updelta^\ast_{F,G}\mathcal{O}_{(z_1,\dots,z_k)}$ at $x\in U_{H}$ is given by
\[
\Updelta^\ast_{F,G}\mathcal{O}_{(z_1,\dots,z_k)}(x) =    
\bigotimes_{1\leq i \leq k} \Big ( \mathcal{O}_{z_i}( x|_{S_i} ) \otimes \mathcal{O}_{F} \big (    \Updelta_{F,G}(x) \big   ) \Big ) \otimes \mathcal{O}_G(x)
\]
\[
=\bigotimes_{1\leq i \leq k} \Big ( \big \{         g/f\in    \mathcal{K}_{S_i,\text{hei}(z_i)} \ \big | \  g\in     \bC[\text{M}_{H|_{S_i},z_i}],\,  f(x|_{S_i})\neq 0     \big \} \otimes \mathcal{O}_{F} \big (    \Updelta_{F,G}(x) \big   )  \Big )
 \otimes \mathcal{O}_G(x)
.\]
The stalk of $\mathcal{O}_{(z_{1_1} |\dots | z_{k(1)_1}  )} \boxtimes \cdots \boxtimes   \mathcal{O}_{(   z_{1_l} | \dots | z_{k(l)_l}  )}$ at $x\in U_{H}$ is given by
\[
\bigotimes_{1\leq j \leq l} \big ( \mathcal{O}_{(z_{1_j} |\dots | z_{k(j)_j}  )}  (x|_{T_j})  \otimes \mathcal{O}_G(x) \big )
\]
\[
=\bigotimes_{1\leq j \leq l} \Big ( \big \{         g/f\in    \mathcal{K}_{T_j,\text{hei}(z_{1_j} |\dots | z_{k(j)_j}  )} \ \big | \  g\in     \bC[\text{M}_{H|_{j},(z_{1_j} |\dots | z_{k(j)_j}  )}],\,  f(x|_{T_j})\neq 0     \big \}   
\otimes \mathcal{O}_G(x) \Big )
.\]
Then the map of stalks is given by extending the assignment
\[
\bigotimes_{1 \leq i\leq k} (g^{h_i}  \otimes 1 ) \otimes 1 
\ \mapsto\    
\bigotimes_{1 \leq j\leq l}  \big (g^{ \text{m}_{F|_{T_j}, a_j  }  (   \bigtimes_{1_j \leq \mathtt{i}_j \leq k(j)_j} h_{\mathtt{i}_j}   )} \otimes 1 \big )
\]
\hbox{$\mathcal{O}_G(x)$-linearly}.\footnote{\ explicitly, $\bigotimes_{i}  (g^{h_i}/f_i \otimes \text{f}_i ) \otimes \text{f}
\	\mapsto \   
	\bigslant{   (  \prod_{i} \Updelta^\ast_{F,G} \text{f}_i  )\text{f}}{ \prod_{i}  (\Updelta^\ast_{F,G}\pi^\ast_{S_i} f_i  )} 
	\bigotimes_{j}\big (g^{ \text{m}_{F|_{T_j}, a_j  }  (   \bigtimes_{\mathtt{i}_j}  h_{\mathtt{i}_j}   )} \otimes 1 \big )$} This gives a globally well-defined ${\mathtt{\Phi}_{F,G}}_{(z_1,\dots,z_k)}$ as we vary the affine chart $U_{H}$ by multiplication $\mathtt{f}$-naturality of $\bC\textbf{\textsf{O}}$.  When checking the coherence laws for a homomorphism of bimonoids in \autoref{thm:mainmain}, we shall abbreviate e.g. $h=(g^h \otimes1)\otimes 1$, so that the map gets written as
\begin{equation}\label{eq:Phi}
	\bigotimes_{1\leq i \leq k} h_i 
	\, \mapsto \,   
	\bigotimes_{1\leq j \leq l} \text{m}_{F|_{T_j}, a_j  } \Big (   \bigtimes_{\mathtt{i}}  h_{\mathtt{i}_j}    \Big )
	.
\end{equation}

Given a $G$-tuple of Boolean functions $z=(z_1, \dots, z_l)\in \textsf{BF}[G]$, we require a map of $\mathcal{O}_F$-modules ${\mathtt{\Psi}_{F,G}}_{( z_1 , \dots , z_l )}$ of the form
\[
{\mathtt{\Psi}_{F,G}}_{( z_1 , \dots , z_l )} :  
\upmu^\ast_{F,G} (\mathcal{O}_{  z_1} \boxtimes  \dots \boxtimes    \mathcal{O}_{  z_l}  ) 
 \to
   \mathcal{O}_{z_1\talloblong_{S_1}} \boxtimes \dots \boxtimes   \mathcal{O}_{z_l\talloblong_{S_k}}
.\]
Let $K=(K_1,\dots, K_k)\in \mathsf{\Sigma}[F]$. Over $U_K\subset \bC\Sigma^F$, the pullback $\upmu^\ast_{F,G} (\mathcal{O}_{  z_1} \boxtimes  \dots \boxtimes    \mathcal{O}_{  z_l}  ) $ is (naturality isomorphic to) the coherent sheaf of the \hbox{$\bC[\text{M}_K]$-module}
\[
\bigotimes_{ 1 \leq j \leq l }
\Big (
\bC[ \text{M}_{K_{1_j} ; \cdots ; K_{k(j)_j}, z_j } ] 
\otimes
\bC[\text{M}_{(K_{1_1} ; \cdots ; K_{k(1)_1},  \cdots, K_{1_l}; \cdots ; K_{k(l)_l})}]
\Big )
\otimes   \bC[ \text{M}_{K} ]
:=
\]
\[
{\bigotimes_{ 1 \leq j \leq l }}_{\bC[\text{M}_{(K_{1_1} ; \cdots ; K_{k(1)_1},  \cdots, K_{1_l}; \cdots ; K_{k(l)_l})}]} 
\Big (
\bC[ \text{M}_{K_{1_j} ; \cdots ; K_{k(j)_j}, z_j } ] 
\otimes_{ \bC[  M_{K_{1_j} ; \cdots ; K_{k(j)_j }}]  }  
\bC[\text{M}_{(K_{1_1} ; \cdots ; K_{k(1)_1},  \cdots, K_{1_l}; \cdots ; K_{k(l)_l})}]
\Big )
\]
\[
\otimes_{ \bC[\text{M}_{(K_{1_1} ; \cdots ; K_{k(1)_1},  \cdots, K_{1_l}; \cdots ; K_{k(l)_l})}] }   \bC[ \text{M}_{K} ]
.\]
Over $U_K$, the external product $\mathcal{O}_{z_1\talloblong_{S_1}} \boxtimes \dots \boxtimes   \mathcal{O}_{z_l\talloblong_{S_k}}$ is (naturality isomorphic to) the coherent sheaf of the $\bC[\text{M}_{K}]$-module
\[
{\bigotimes_{1\leq i\leq k}} 
\Big (\bC[ \text{M}_{ K_i, z\talloblong_{S_i} } ] \otimes  \bC[ \text{M}_{ K  } ]\Big )
:=  
{\bigotimes_{1\leq i\leq k}}_{\,  \bC[ \text{M}_{ K } ]}  
\Big (\bC[ \text{M}_{ K_i, z\talloblong_{S_i} } ] \otimes_{\bC[ \text{M}_{K_i}]} \bC[ \text{M}_{ K  } ]\Big )  
.\] 
Over $U_K$, we define ${\mathtt{\Psi}_{F,G}}_{( z_1 , \dots , z_l )}$ to be the isomorphism of coherent sheaves which corresponds to the isomorphism of $\bC[\text{M}_{K}]$-modules
\[
\bigotimes_{ 1 \leq j \leq l }
\Big (
\bC[ \text{M}_{K_{1_j} ; \cdots ; K_{k(j)_j}, z_j } ] 
\otimes
\bC[\text{M}_{(K_{1_1} ; \cdots ; K_{k(1)_1},  \cdots, K_{1_l}; \cdots ; K_{k(l)_l})}]
\Big )
\otimes   \bC[ \text{M}_{K} ]
\to
{\bigotimes_{1\leq i\leq k}} 
\Big (\bC[ \text{M}_{ K_i, z\talloblong_{S_i} } ] \otimes  \bC[ \text{M}_{ K  } ]\Big )  
\]
\[
\underbrace{
	\bigotimes_{1\leq j\leq l}  ( g^{h_{j}}  \otimes 1)  \otimes 1
=
\bigotimes_{1\leq j\leq l}  \big(    
g^{\text{m}_{	  F|_{T_j}   , a_j } 
 ( \bigtimes_{1_j\leq  \mathtt{i} \leq k(j)_j} d_{\mathtt{i}_j} )} \otimes 1 \big) \otimes 1
}_{\text{factorize each $h_{j}$}}
\  \mapsto \  
\bigotimes_{1\leq j\leq l}  \big ( \bigotimes_{1_j\leq  \mathtt{i} \leq k(j)_j}   g^{d_{\mathtt{i}_j}} \otimes 1 \big ) 
=
\bigotimes_{1\leq i \leq k} (g^{d_i} \otimes 1)
\]
where $a_j=( \text{hei}(z\talloblong_{S_{1_{j}}}),\dots,  \text{hei}(z\talloblong_{S_{ k(j)_j }} ) )$ and we have assumed each $h_j$ factorizes as 
\[
h_{j}= \text{m}_{	  F|_{T_j}   , a_j } \big (\bigtimes_{1_j\leq  \mathtt{i} \leq k(j)_j} d_{\mathtt{i}_j}\big )
,\] 
i.e. $h_j$ lies in the $ F|_{T_j}$-face of the plate $[[ K_{1_j} ; \dots ; K_{k(j)_j} ]]_{z_j}$. Notice that if the $h_j$ do not factorize in this way, then $\bigotimes_{1\leq j\leq l}  ( g^{h_{j}}  \otimes 1)  \otimes 1=0$. This map of modules is well-defined by \autoref{secondprop}. It is an isomorphism because the map of \autoref{secondprop} is injective. 

We now make ${\mathtt{\Psi}_{F,G}}_{( z_1 , \dots , z_l )}$ explicit at the level of stalks. The stalk of $\upmu^\ast_{F,G} (\mathcal{O}_{  z_1} \boxtimes  \dots \boxtimes    \mathcal{O}_{  z_l}  ) $ at $x\in U_K$ is given by
\[
\bigotimes_{1\leq j \leq l}  \Big (  \mathcal{O}_{z_j}\big (  \upmu_{F,G}(x)|_{T_j}\big )      \otimes \mathcal{O}_G\big(  \upmu_{F,G}(x)\big) \Big )        \otimes \mathcal{O}_F(x)
\]
\[
=\bigotimes_{1\leq j \leq l}  \Big ( \big \{         g/f\in    \mathcal{K}_{T_j,\text{hei}(z_j)} \ \big | \  g\in     
\bC[\text{M}_{K_{1_j} ; \cdots ; K_{k(j)_j}, z_j}]
,\,  f( \upmu_{F,G}(x)|_{T_j})\neq 0     \big \}       \otimes \mathcal{O}_G\big(  \upmu_{F,G}(x)\big) \Big )        \otimes \mathcal{O}_F(x)
.\]
The stalk of $\mathcal{O}_{z_1\talloblong_{S_1}} \boxtimes \dots \boxtimes   \mathcal{O}_{z_l\talloblong_{S_k}}$ at $x\in U_K$ is given by
\[
\bigotimes_{1\leq i \leq k} \big ( \mathcal{O}_{z\talloblong_{S_i}}(x|_{S_i}) \otimes \mathcal{O}_F(x) \big) 
\]
\[
=\bigotimes_{1\leq i \leq k} \big (\big \{         g/f\in    \mathcal{K}_{S_i,\text{hei}(z\talloblong_{S_i})} \ \big | \  g\in     \bC[\text{M}_{K_i,z\talloblong_{S_i}}],\,  f(x|_{S_i})\neq 0     \big \}   \otimes \mathcal{O}_F(x) \big) 
.\]
Then the map of stalks is given by extending the assignment
\[
	\bigotimes_{1\leq j\leq l}  ( g^{h_{j}}  \otimes 1)  \otimes 1
	=
	\bigotimes_{1\leq j\leq l}  \big(    
	g^{\text{m}_{	  F|_{T_j}   , a_j } 
		( \bigtimes_{1_j\leq  \mathtt{i} \leq k(j)_j} d_{\mathtt{i}_j} )} \otimes 1 \big) \otimes 1
\  \mapsto \  
\bigotimes_{1\leq j\leq l}  \big ( \bigotimes_{1_j\leq  \mathtt{i} \leq k(j)_j}   g^{d_{\mathtt{i}_j}} \otimes 1 \big ) 
=
\bigotimes_{1\leq i \leq k} (g^{d_i} \otimes 1)
\]
$\mathcal{O}_F(x)$-linearly. This gives a globally well-defined ${\mathtt{\Psi}_{F,G}}_{(z_1,\dots,z_k)}$ as we vary the affine chart $U_{K}$ by comultiplication $\mathtt{f}$-naturality of $\bC\textbf{\textsf{O}}$. When checking the coherence laws for a homomorphism of bimonoids in \autoref{thm:mainmain}, we shall abbreviate this map as
\begin{equation} \label{eq:Psi}
	\bigotimes_{1\leq j\leq l}  h_{j}    =	\bigotimes_{1\leq j\leq l}  \Big(    
	\text{m}_{	  F|_{T_j}   , a_j } 
	\big( \bigtimes_{\mathtt{i}} d_{\mathtt{i}_j}  \big)   \Big)  
\,  \mapsto \,  
\bigotimes_{1\leq j\leq l}  \bigotimes_{\mathtt{i}}    d_{\mathtt{i}_j}  
.
\end{equation}


\begin{thm}\label{thm:mainmain}
The indexing $z\mapsto \mathcal{O}_z$ of invertible sheaves on permutohedral space by Boolean functions, equipped with the $2$-cells ${\mathtt{\Phi}_{F,G}}$ and ${\mathtt{\Psi}_{F,G}}$ as defined above, is a strong homomorphism of bimonoids
\[
\varphi : \textsf{BF} \to \textsf{Mod}
.\]
\end{thm}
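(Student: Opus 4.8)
The plan is to verify the nine coherence laws of a (strong, lax) homomorphism of bimonoids in cospecies, as listed at the level of stalk-components in the subsection on lax homomorphisms of bimonoids. Since both $\textsf{BF}$ and $\textsf{Mod}$ have all their relevant structure built from pullback of modules (and of monomials/germs) along the (co)multiplication of $\bC\Sigma$, and since every $2$-cell in sight ($\sigma_z$, $\beta_z$, the $(\sigma,\tau)$, $(\beta,\gamma)$, $(\sigma,\beta)$ of $\textsf{Mod}$, and the new $\mathtt{\Phi}_{F,G}$, $\mathtt{\Psi}_{F,G}$) is the canonical ``reassociate and multiply germs'' isomorphism, each coherence law reduces, on a stalk, to a purely combinatorial identity about monomials $g^h$ with $h$ an integer point of a plate, together with a compatibility of the maps $\text{m}_{F,G,a}$ of affine semisimple flats. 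The workhorse facts are: (i) the monoid structure of affine semisimple flats, \autoref{rem:mon}, giving associativity of the $\text{m}$'s, exactly as $\text{m}$-associativity of $\text{T}^\vee$ was used in the proof that $\bC\textbf{\textsf{O}}$ is a bialgebra; (ii) \autoref{lem1} and \autoref{secondprop}, which make $\mathtt{\Phi}_{F,G}$ and $\mathtt{\Psi}_{F,G}$ well-defined isomorphisms; and (iii) $(\sigma,\beta)$-, $\sigma$-, $\beta$-functoriality and (co)multiplication $\sigma/\beta$-naturality of $\bC\Sigma$ itself, already invoked throughout \autoref{sec:topos}.

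I would organize the nine checks exactly as the analogous checks were organized in the proof that $\textsf{Mod}$ is a bimonoid and in \autoref{prop:isbimonoid}. First, the four naturality-coherence laws (multiplication/comultiplication $\sigma$- and $\beta$-naturality coherence): on a stalk both composites send a simple tensor $\bigotimes_i h_i$ to $\bigotimes_j \text{m}_{F'|_{T'_j}, a_j}(\bigtimes_\mathtt{i}\sigma^\ast h_{\mathtt{i}_j})$ (resp. with $\beta^\ast$), and equality is forced by the fact that $\sigma^\ast$ (resp. $\beta^\ast$) commutes with the semisimple-flat inclusions $\text{m}$ — this is the cospecies analogue of multiplication $\sigma$-naturality of $\bC\Sigma$, together with $(\sigma,\mathtt{f})$- and $(\beta,\mathtt{f})$-functoriality of $\textsf{Mod}$ which move the extra $\mathcal O$-factors past each other. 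The notation $h=(g^h\otimes 1)\otimes 1$ introduced just before \textcolor{blue}{(\refeqq{eq:Phi})} and \textcolor{blue}{(\refeqq{eq:Psi})} makes these one-line calculations, as in the proof of \autoref{prop:isbimonoid}. Second, the two associativity-coherence laws: here both sides reassociate a triple product $\bigotimes_m\text{m}(\ldots)$ and equality is precisely associativity in the $\bZ$-graded monoid of affine semisimple flats of \autoref{rem:mon}, i.e. $\text{m}_{ST,U,(a+b,c)}\circ(\text{m}_{S,T,(a,b)}\times\mathrm{id})=\text{m}_{S,TU,(a,b+c)}\circ(\mathrm{id}\times\text{m}_{T,U,(b,c)})$, matched against $\Updelta^\ast_{F,G,E}$ / $\mu^\ast_{F,G,E}$ of $\textsf{Mod}$, which are themselves ``multiply the germs'' isomorphisms. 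Third, the $(\sigma,\beta)$-coherence law and finally the bimonoid-coherence law: for the latter one traces a simple tensor around both legs of the hexagon, using that $\mathtt{\Psi}_{FG,F}$ factorizes a monomial in the $F$-face of a plate (\autoref{secondprop}), then $\beta^\ast$ permutes factors, then $\mathtt{\Phi}_{GF,G}$ recombines them, and compares with $\mathtt{\Phi}_{F,A}$ then $\mathtt{\Psi}_{G,A}$; the combinatorial content is exactly the Joyal bimonoid axiom of $\textsf{BF}$ (the identity $(z_1|z_2)\talloblong_{SU}=(z_1\talloblong_S|z_2\talloblong_U)$ etc.) fed through the compatibility $\text{m}_{ST,UV}\circ(\text{m}_{S,T}\times\text{m}_{U,V})=\text{m}_{SU,TV}\circ(\text{m}_{S,U}\times\text{m}_{T,V})\circ(\mathrm{id}\times B\times\mathrm{id})$ of semisimple-flat inclusions, i.e. the same manipulation as in the $\bC\textbf{\textsf{O}}$-bialgebra and $\wt{\bC\Sigma}$-bimonoid proofs, plus $\text{B}_{F,G,A}$ of $\textsf{Mod}$ and $(\sigma,\beta)$-functoriality of $\bC\Sigma$.

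The main obstacle I anticipate is bookkeeping rather than conceptual: in the bimonoid-coherence law the indices $\mathtt{i}_j$ and the various heights $a_j=(\text{hei}(z_{\bullet_j}))$ or $(\text{hei}(z\talloblong_{S_\bullet}))$ must be tracked through a composition $H_1;\cdots;H_k$ and its restrictions, and one must confirm that the plate $[[H_1;\cdots;H_k]]_z$ really does have its $F$-face factor through the semisimple flat $\textsf{T}^\vee_{F,\text{hei}(\Delta_F(z))}$ compatibly with the product decomposition — this is guaranteed by \autoref{prop:Fface} and \autoref{secondprop}, but threading it through all the canonical isomorphisms (and their inverses, e.g. $(\sigma,\beta)^{-1}$ in bimonoid $\sigma$-coherence) without sign or index errors is where care is needed. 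A useful simplification, which I would state explicitly and use throughout, is that since $\varphi$ is Joyal-theoretic it suffices to verify each coherence law in the totally lumped case $G=(I)$ (and $A=(I)$ for the bimonoid axiom), exactly as in the proof of \autoref{thm:joyal} and in \autoref{indexbyopens}: the general case then follows by the factorizations \textcolor{blue}{(\refeqq{eq:fact})} of $\mu_{F,G}$, $\Delta_{F,G}$ together with the factorization of $\mathtt{\Phi}$ and $\mathtt{\Psi}$ into external-product components. This cuts the nine hexagons and decagons down to manageable squares and triangles in one variable, after which each is a short monomial computation of the type already rehearsed in \autoref{prop:isbimonoid}.
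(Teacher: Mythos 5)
Your plan is essentially the paper's proof: one verifies all nine coherence laws on stalks, where the abbreviation $h=(g^h\otimes 1)\otimes 1$ turns each law into a monomial computation, with the same inputs you name — well-definedness of $\mathtt{\Phi}_{F,G}$ and $\mathtt{\Psi}_{F,G}$ via \autoref{lem1} and \autoref{secondprop}, associativity of the graded monoid of affine semisimple flats (\autoref{rem:mon}) for the two associativity laws, the (co)multiplication naturality and $(\sigma,\beta)$-functoriality of $\bC\Sigma$ for the naturality laws, and the regroup-and-permute identity for the $\text{m}$'s in the bimonoid law.

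The one step I would not grant you for free is the claimed reduction to the totally lumped case $G=(I)$ (and $A=(I)$): the reduction lemma you cite is proved only for Joyal-theoretic morphisms between strictly Joyal-theoretic objects, whereas here $\textsf{Mod}$ is only weakly Joyal-theoretic and $\mathtt{\Phi}_{F,G}$, $\mathtt{\Psi}_{F,G}$ are defined for general $F,G$ rather than assembled from binary pieces, so you would first have to show these $2$-cells factorize through the external-product comparison maps compatibly with the $2$-cells of $\textsf{Mod}$ — a check of roughly the same length as the direct verification. The paper forgoes this shortcut and simply carries the indices $\mathtt{i}_j$, $\mathtt{j}_r$, etc.\ through the general case; if you drop the shortcut, your argument coincides with it.
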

\begin{proof}
There are nine coherence laws to check. Recall multiplication $\sigma$-naturality coherence says
\[
\underbrace{\varphi_{G'} ( {\mu_{F,G,\sigma}}_z )}_{=\, \text{id}} \, \circ\,  {\sigma}_{\mu_{F,G}(z_1,\dots,z_k)}  \circ  {\mathtt{\Phi}'_{F,G}}_{(z_1,\dots,z_k)}  = {\mathtt{\Phi}_{F',G'}}_{(z_1,\dots,z_k)'} \circ \Updelta^\ast_{F',G'}({\sigma}_{(z_1,\dots,z_k)})
\circ {\mu_{F,G,\sigma}}_{   \mathcal{O}_{(z_1,\dots,z_k)}  }
.\]
The left-hand map is given by 
\[
 \bigotimes_{1\leq i \leq k}   h_{i}  
=
   \bigotimes_{1\leq j \leq l}   \bigotimes_{\mathtt{i}}   h_{\mathtt{i}_j}  
\mapsto    
 \bigotimes_{1\leq j \leq l}   \text{m}_{F|_{T_j},a_j} \Big (   \bigtimes_{\mathtt{i}}  h_{\mathtt{i}_j}  \Big  ) 
\mapsto     
\bigotimes_{1\leq j \leq l}  \text{m}_{F'|_{T'_j},a_j} \Big (   \bigtimes_{\mathtt{i}}  h'_{\mathtt{i}_j}  \Big  )  
\]
and the right-hand map is given by
\[
{\bigotimes_{1\leq i \leq k}} h_i  
\mapsto 
{\bigotimes_{1\leq i \leq k}} h_i 
\mapsto 
 \bigotimes_{1\leq i \leq k}  h'_i =  \bigotimes_{1\leq j \leq l}  \bigotimes_{   \mathtt{i}  }  h'_{\mathtt{i}_j}       
 \mapsto
\bigotimes_{1\leq j \leq l}  \text{m}_{F'|_{T'_j},a_j} \Big (   \bigtimes_{   \mathtt{i}  }  h'_{\mathtt{i}_j}    \Big )
.\]
Recall multiplication $\beta$-naturality coherence says
\[
\underbrace{\varphi_{\wt{G}} ( {\mu_{F,G,\beta}}_z )}_{=\, \text{id}}\,  \circ\,  {\beta_{G}}_{\mu_{F,G}( z_1,\dots,z_k )} \circ {\wt{\mathtt{\Phi}_{F,G}}}_{(z_1,\dots,z_k)} 
= 
{\mathtt{\Phi}_{\wt{G}F,\wt{G}}}_{\wt{(z_1,\dots,z_k)}}\circ  \Updelta^\ast_{\wt{G}F, \wt{G}}( {\hat{\beta}}_F {\! \,  }_{(z_1,\dots,z_k)}   ) \circ  \mu_{\wt{G}F, \wt{G}}( {\hat{\beta}}_\ta  ) {\circ \mu_{F,G,\beta}}_{\mathcal{O}_z}
.\]
The left-hand map is given by\footnote{\ where the tilde $\wt{F}$ refers to $\hat{\beta}$}
\[
\bigotimes_{1\leq i \leq k}
h_i 
\mapsto
\bigotimes_{1\leq j \leq l}   \text{m}_{F|_{T_j},a_j} \Big (   \bigtimes_{\mathtt{i} }  h_{\mathtt{i}_j}  \Big  )    
\mapsto
 \bigotimes_{1 \leq j\leq l}   \text{m}_{\wt{F}|_{T_{\beta(j)}},  a_j  } \Big (   \bigtimes_{ \mathtt{i}  } h_{\hat{\beta}(\mathtt{i}_j)}  \Big  )     
\]
and the right-hand map is given by
\[
{\bigotimes_{1\leq i \leq k}}  
h_i  
\mapsto 
{\bigotimes_{1\leq i \leq k}}  
 h_i  
\mapsto 
\bigotimes_{1\leq i \leq k}   h_{\hat{\beta}(i)}  
\mapsto
 \bigotimes_{1 \leq j\leq l}   \text{m}_{\wt{F}|_{T_{\beta(j)}},  a_j  } \Big (   \bigtimes_{ \mathtt{i}  } h_{\hat{\beta}(\mathtt{i}_j)}  \Big )
.\]
Comultiplication $\sigma$-naturality and $\beta$-naturality coherence follow similarly. Recall associativity coherence says 
\[
\underbrace{\varphi_E ({\mu_{F,G,E}}_{(z_1,\dots, z_k)})}_{=\, \text{id}} \circ  {\mathtt{\Phi}_{G,E}}_{\mu_{F,G}(z_1,\dots, z_k)}  \circ  \Updelta^\ast_{G,E} (   {\mathtt{\Phi}_{F,G}}_{(z_1,\dots, z_k)}  )  
=
{\mathtt{\Phi}_{F,E}}_{(z_1,\dots, z_k)} \circ {\mu_{F,G,E}}_{\mathcal{O}_{(z_1,\dots, z_k)}}
.\]
Let
\[
F=(S_1,\dots, S_i,\dots , S_k), \qquad G=(T_1,\dots, T_j,\dots, T_l),  \qquad E=(U_1, \dots, U_r, \dots, U_e)
.\]
Let $\mathtt{i}_j$ be as before. Let $1\leq \mathtt{j}_r \leq l$ be the integer such that $T_{\mathtt{j}_r}$ is the $\mathtt{j}$th lump of $G|_{U_r}$, and let $1\leq {\mathtt{i}_\mathtt{j}}_r\leq k$ such that $S_{{\mathtt{i}_\mathtt{j}}_r}$ is the $\mathtt{i}$th lump of $F|_{T_{\mathtt{j}_r}}$. Then the left-hand map is given by
\[
	\bigotimes_{1\leq i \leq k} h_{i}
	=
	\bigotimes_{1\leq j \leq l}  \bigotimes_{\mathtt{i}} h_{\mathtt{i}_j} 
 \mapsto    
\bigotimes_{1\leq j \leq l} \text{m}_{F|_{T_j},a_j} \Big (   \bigtimes_{\mathtt{i}}  h_{\mathtt{i}_j}  \Big  )        
=
\bigotimes_{1\leq r \leq e}  \bigotimes_{{\mathtt{j}}}  \text{m}_{F|_{T_{\mathtt{j}_r}}, a_j } \Big (   \bigtimes_{{\mathtt{i}}}  h_{ {\mathtt{i}_\mathtt{j}}_{r}   }  \Big  )   
\]
\[
\mapsto 
\bigotimes_{1\leq r \leq e}  \text{m}_{G|_{U_r}, a_r}   \bigg ( \bigtimes_{{\mathtt{j}}}  \text{m}_{F|_{T_{\mathtt{j}_r}}, a_{\mathtt{j}_r} } \Big (   \bigtimes_{{\mathtt{i}}}  h_{ {\mathtt{i}_\mathtt{j}}_{r}   }  \Big  )  \bigg) 
\]
and the right-hand map is given by
\[
	\bigotimes_{1\leq i \leq k} h_{i} \mapsto 	\bigotimes_{1\leq i \leq k} h_{i} \mapsto 
\bigotimes_{1\leq r \leq e}  \text{m}_{F|_{U_r}, a_r} \Big (  \bigtimes_{\mathtt{i}}  h_{ \mathtt{i}_{r}   }     \Big) 
.\]
However, we have
\[
\bigotimes_{1\leq r \leq e}  \text{m}_{G|_{U_r},a_r}   \bigg ( \bigtimes_{{\mathtt{j}}}  \text{m}_{F|_{T_{\mathtt{j}_r}},a_{\mathtt{j}_r}} \Big (   \bigtimes_{{\mathtt{i}}}  h_{ {\mathtt{i}_\mathtt{j}}_{r}   }  \Big  )  \bigg)  
 =
  \bigotimes_{1\leq r \leq e}  \text{m}_{F|_{U_r}, a_r} \Big (  \bigtimes_{\mathtt{i}}  h_{ \mathtt{i}_{r}   }     \Big) 
\]
directly from the definition of $\text{m}_{F,(a_1,\dots, a_k)}$ (the exact property we are using here is actually the associativity of the monoid mentioned in \autoref{rem:mon}). Coassociativity coherence says 
\[
\underbrace{\varphi_F ({\Delta_{F,G,E}}_{(z_1,\dots, z_{e})})}_{=\, \text{id}} \circ {\mathtt{\Psi}_{F,G}}_{\Delta_{G,E}(z_1,\dots, z_{e})}  \circ  \upmu^\ast_{F,G} (   {\mathtt{\Psi}_{G,E}}_{(z_1,\dots, z_{e})}  )  
=
{\mathtt{\Psi}_{F,E}}_{(z_1,\dots, z_{e})} \circ {\mu_{F,G,E}}_{\mathcal{O}_{(z_1,\dots, z_{e})}}
.\]
The left-hand map is given by
\[
	\bigotimes_{1\leq r\leq {e}}    h_{r} 
	=
	\bigotimes_{1\leq r\leq {e}}    \text{m}_{G|_{U_r},a_r} \Big( \bigtimes_{\mathtt{j}} d_{\mathtt{j}_r}  \Big)
\  \mapsto \  
\bigotimes_{1\leq r\leq e}  \bigotimes_{\mathtt{j}}    d_{\mathtt{j}_r} 
=
\bigotimes_{1\leq r\leq e}   \bigotimes_{\mathtt{j}}     \text{m}_{ F|_{T_{\mathtt{j}_r}}, a_{\mathtt{j}_r} } \Big (  \bigtimes_\mathtt{i}   c_{{\mathtt{i}_\mathtt{j}}_r} \Big )
\]
\[
\mapsto 
\bigotimes_{1\leq r\leq e}   \bigotimes_{\mathtt{j}}     \bigotimes_{{\mathtt{i}}}   c_{{\mathtt{i}_\mathtt{j}}_r}  =\bigotimes_{1\leq r\leq e}   \bigotimes_{\mathtt{i}}    c_{\mathtt{i}_r}
\]
and the right-hand map is given by
\[
\bigotimes_{1\leq r\leq e}  h_{r}    =	\bigotimes_{1\leq r\leq e}  \bigg(    
\text{m}_{	  F|_{U_r}   , a_r } 
\Big( \bigtimes_{\mathtt{i}} c_{\mathtt{i}_r}  \Big)   \bigg)  
\,  \mapsto \,  
\bigotimes_{1\leq r\leq e}   \bigotimes_{\mathtt{i}}    c_{\mathtt{i}_r} 
\]
(where we have again used the associativity of the monoid mentioned in \autoref{rem:mon}). Recall the bimonoid coherence law is
\[
\varphi_G({\text{B}_{F,G,A}}_{\ta}) 
\circ 
{\mathtt{\Phi}_{GF,G}}_{\wt{\Delta_{FG,F}(z)}} 
 \circ 
  {\beta_{FG}}_{ \Delta_{FG,F} (z)} \circ
\Updelta^\ast_{GF,G}(\wt{{\mathtt{\Psi}_{FG,F}}_z}) \]
\[
=  
{\mathtt{\Phi}_{GF,G}}_{\wt{\Delta_{FG,F}}(\ta)}  
 \circ
    {\mathtt{\Psi}_{G,A}}_{\mu_{F,A}(z)} 
    \circ 
     \upmu^\ast_{G,A}( {\mathtt{\Phi}_{F,A}}_{z}  ) \circ {\text{B}_{F,G,A}}_{\varphi_F(\ta)}
.\]
Let 
\[
F=(S_1,\dots, S_i, \dots, S_k), \quad G=(T_1,\dots, T_j, \dots, T_l),\]
\[ 
FG=(X_1,\dots, X_n,\dots, X_L),\quad GF=(Y_1,\dots,Y_n,\dots , Y_L), \quad  A=(A_1,\dots,A_m,\dots , A_{a})
.\]
Let $1\leq \mathtt{i}_m \leq k$ be the integer such that $S_{\mathtt{i}_m}$ is the $\mathtt{i}$th lump of $F|_{A_m}$, let $1\leq \mathtt{j}_m \leq j$ such that $T_{\mathtt{j}_m}$ is the $\mathtt{j}$th lump of $G|_{A_m}$, let $1\leq \mathtt{n}_i \leq L$ such that $X_{\mathtt{n}_i}$ is the $\mathtt{n}$th lump of $FG|_{S_i}$, and let $1\leq \mathtt{n}_j \leq L$ such that $Y_{\mathtt{n}_j}$ is the $\mathtt{n}$th lump of $GF|_{T_j}$. Let $1\leq \mathtt{n}_{\mathtt{i}_m}\leq L$ such that $X_{\mathtt{n}_{\mathtt{i}_m}}$ is the $\mathtt{n}$th lump of $FG|_{S_{\mathtt{i}_m}}$, and let $1\leq \mathtt{n}_{\mathtt{j}_m}\leq L$ such that $Y_{\mathtt{n}_{\mathtt{j}_m}}$ is the $\mathtt{n}$th lump of $GF|_{T_{\mathtt{j}_m}}$. 

Then the left-hand map is given by
\[
\bigotimes_{1\leq i\leq k} h_i = \bigotimes_{1\leq i\leq k}  \text{m}_{  FG|_{S_i} , a_i } \Big ( \bigtimes_{\mathtt{n}}   d_{\mathtt{n}_i} \Big ) 
\  \mapsto \  
		\bigotimes_{1\leq i\leq k}  \bigotimes_{\mathtt{n}}  d_{ \mathtt{n}_i  }
\]
\[ 
\mapsto\ 
		\underbrace{\bigotimes_{1\leq i\leq k}  \bigotimes_{\mathtt{n}}     d_{\beta(\mathtt{n}_i)} = 	\bigotimes_{1\leq j\leq l}  \bigotimes_{\mathtt{n}}  e_{\mathtt{n}_j}}_{\text{where $e_n:= d_{ \beta( n )}$}}
\ \mapsto \ 
\bigotimes_{1\leq j \leq l}   \text{m}_{ GF|_{T_j} , a_j } \Big (  \bigtimes_{\mathtt{n}} e_{ \mathtt{n}_j }   \Big   )   
\]
and the right-hand map is given by
\[
\bigotimes_{1\leq i\leq k} h_i 
=
\bigotimes_{1\leq m\leq a}   \bigotimes_{\mathtt{i}}     
h_{\mathtt{i}_m}
\]
\[
\  \mapsto \  
\bigotimes_{1\leq m\leq  a}  \text{m}_{ F|_{A_m} ,a_m} \Big  (  \bigtimes_{\mathtt{i}}  h_{\mathtt{i}_m} \Big  )    
= 
\bigotimes_{1\leq m\leq  a}  \text{m}_{ F|_{A_m} ,a_m}  \bigg ( \bigtimes_{\mathtt{i}} \text{m}_{FG|_{S_{\mathtt{i}_m}}, a_{\mathtt{i}_m}}\Big( \bigtimes_{\mathtt{n}}  d_{\mathtt{n}_{\mathtt{i}_m}} \Big  )      \bigg )   
\]
\[ 
= \bigotimes_{1\leq m\leq  a}  \text{m}_{ F|_{A_m} ,a_m} \bigg ( \bigtimes_{\mathtt{j}} \text{m}_{GF|_{T_{\mathtt{j}_m}} , a_{\mathtt{j}_m} }\Big( \bigtimes_{\mathtt{n}}  e_{\mathtt{n}_{\mathtt{j}_m}} \Big  )      \bigg ) 
\ \mapsto \
\bigotimes_{1\leq m\leq  a}   \bigotimes_{\mathtt{j}} \text{m}_{GF|_{T_{\mathtt{j}_m}} , a_{\mathtt{j}_m} } \Big( \bigtimes_{\mathtt{n}}  e_{\mathtt{n}_{\mathtt{j}_m}} \Big  )     
\]
\[
=
\bigotimes_{1\leq j \leq l}   \text{m}_{ GF|_{T_j} ,a_j} \Big (  \bigtimes_{\mathtt{n}} e_{ \mathtt{n}_j }   \Big   ) 
.\qedhere \]
\end{proof}

\begin{remark}
	The restriction of the homomorphism $\varphi : \textsf{BF} \to \textsf{Mod}$ to the bimonoid generalized permutohedra $\textsf{GP}\hookrightarrow \textsf{BF}$, as defined in \cite[Section 5.1 \& 12.3]{aguiar2017hopf}, is an indexing of the \emph{nef invertible sheaves}, as a consequence of e.g. \cite[Proposition 2.2.6]{eur}.
\end{remark}


\subsection{The Bialgebra of Global Sections of Invertible Sheaves}\label{The Joyal-Theoretic Bialgebra of Global Sections of Invertible Sheaves}

Let $\bC\textbf{\textsf{BF}}$ be the $\textsf{BF}$-graded bialgebra which is the pullback of the $\textsf{Mod}$-graded bialgebra $\bC\textbf{\textsf{Mod}}$ along the homomorphism $\varphi: \textsf{BF} \to \textsf{Mod}$, as constructed in \autoref{thm}. Thus
\[
\bC\textbf{\textsf{BF}}_{(z_1,\dots, z_k)}[F] =    \Gamma_{\bC\Sigma^{F}}(\mathcal{O}_{z_1} \boxtimes \dots \boxtimes  \mathcal{O}_{z_k})
.\]
Let $\bC\textbf{BF}$ be the Joyalization of $\bC\textbf{\textsf{BF}}$, thus 
\[
\bC\textbf{BF}_{(z_1,\dots, z_k)}[F] :=    \Gamma_{\bC\Sigma^{S_1}}(\mathcal{O}_{z_1}) \otimes  \dots \otimes   \Gamma_{\bC\Sigma^{S_k}}(\mathcal{O}_{z_k})
.\]
We now show that $\bC\textbf{\textsf{BF}}$ is a strongly Joyal-theoretic bialgebra. The external product of global sections defines a morphism of $\textsf{BF}$-graded vector cospecies $ \bC\textbf{BF}   \to \bC\textbf{\textsf{BF}}$ over the identity, given by
\[
\text{prod}: \bC\textbf{BF}   \to \bC\textbf{\textsf{BF}},\qquad    s_1 \otimes  \dots \otimes s_k\, \mapsto\,   s_{1} \boxtimes \dots \boxtimes  s_{k} := \pi_{S_1}^\ast s_1 \otimes  \dots \otimes  \pi_{S_k}^\ast s_k 
.\]




\begin{prop}
The external product of global sections
\[
\text{prod}: \bC\textbf{BF}   \to \bC\textbf{\textsf{BF}}
,\qquad
s_{1} \otimes \dots \otimes  s_{k}   \mapsto   s_{1} \boxtimes \dots \boxtimes  s_{k} 
\]
is an isomorphism of $\textsf{BF}$-graded vector cospecies. 
\end{prop}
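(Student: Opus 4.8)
The plan is to show that $\text{prod}$ is componentwise a linear isomorphism; since it is already a morphism of $\textsf{BF}$-graded vector cospecies over the identity, and the inverse of a bijective morphism of graded vector cospecies over the identity is automatically a morphism, this gives the claim. Fix a composition $F=(S_1,\dots,S_k)$ and an $F$-tuple $(z_1,\dots,z_k)\in\textsf{BF}[F]$; I must prove that
\[
\text{prod}_{\big(F,(z_1,\dots,z_k)\big)}\colon\ \Gamma_{\bC\Sigma^{S_1}}(\mathcal{O}_{z_1})\otimes\dots\otimes\Gamma_{\bC\Sigma^{S_k}}(\mathcal{O}_{z_k})\ \longrightarrow\ \Gamma_{\bC\Sigma^F}(\mathcal{O}_{z_1}\boxtimes\dots\boxtimes\mathcal{O}_{z_k})
\]
is bijective. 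Both sides carry monomial bases, and I will exhibit $\text{prod}$ as the bijection of index sets $g^{h_1}\otimes\dots\otimes g^{h_k}\mapsto g^{(h_1,\dots,h_k)}$.

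First I would compute the global sections of a single invertible sheaf $\mathcal{O}_z$ on $\bC\Sigma^I$. The charts $U_H$, $H\in\Sigma[I]$, form an open cover, $\mathcal{O}_z$ is (by its construction in \autoref{A Homomorphism of Bimonoid1}) a subsheaf of the constant sheaf $\mathcal{K}_{I,\text{hei}(z)}$ with $\mathcal{O}_z(U_H)=\bC[\text{M}_{H,z}]$, and hence a global section is just a compatible family, i.e. $\Gamma_{\bC\Sigma^I}(\mathcal{O}_z)=\bigcap_{H\in\Sigma[I]}\bC[\text{M}_{H,z}]$. Since $\bC[\text{M}_{H,z}]$ is free over $\bC$ on the monomials $\{g^h:h\in[[H]]_z\cap\bZ I\}$, the intersection is free on the monomials indexed by $\bigcap_{H\in\Sigma[I]}[[H]]_z\cap\bZ I$, which by the defining inequalities of plates equals $\bigcap_{(A,B)\in[I;2]}[[A,B]]_z\cap\bZ I$ (the generalized-permutohedron-type region attached to $z$); its exact shape will be irrelevant.

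Next I would run the same computation on the product, which is the Künneth-type step. The charts of $\bC\Sigma^F=\bC\Sigma^{S_1}\times\dots\times\bC\Sigma^{S_k}$ are the products $U_{H_1}\times\dots\times U_{H_k}$ with $H_i\in\Sigma[S_i]$, and over such a chart $\mathcal{O}_{z_1}\boxtimes\dots\boxtimes\mathcal{O}_{z_k}=\pi_{S_1}^\ast\mathcal{O}_{z_1}\otimes\dots\otimes\pi_{S_k}^\ast\mathcal{O}_{z_k}$ is the coherent sheaf of the module $\bigotimes_{i}^{\bC[\text{M}_{(H_1,\dots,H_k)}]}\big(\bC[\text{M}_{H_i,z_i}]\otimes_{\bC[\text{M}_{H_i}]}\bC[\text{M}_{(H_1,\dots,H_k)}]\big)$, which collapses canonically to the external product of algebras $\bC[\text{M}_{H_1,z_1}]\otimes_\bC\dots\otimes_\bC\bC[\text{M}_{H_k,z_k}]$ (using that $\bC[\text{M}_{(H_1,\dots,H_k)}]\cong\bigotimes_i\bC[\text{M}_{H_i}]$ and that each $\bC[\text{M}_{H_i,z_i}]$ is $\bC$-free); this has monomial basis $\prod_i\big([[H_i]]_{z_i}\cap\bZ S_i\big)$. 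Intersecting over all charts and using that an intersection of products is the product of the intersections, $\Gamma_{\bC\Sigma^F}(\mathcal{O}_{z_1}\boxtimes\dots\boxtimes\mathcal{O}_{z_k})$ is free on the monomials indexed by $\prod_i\big(\bigcap_{H_i\in\Sigma[S_i]}[[H_i]]_{z_i}\cap\bZ S_i\big)$. Finally, under $\text{M}_F\cong\text{M}_{S_1}\oplus\dots\oplus\text{M}_{S_k}$ one has $\pi_{S_i}^\ast g^{h_i}=g^{(0,\dots,h_i,\dots,0)}$, so the external product of monomial sections is $g^{h_1}\boxtimes\dots\boxtimes g^{h_k}=g^{(h_1,\dots,h_k)}$; hence $\text{prod}$ carries the tensor-product basis of the source bijectively onto the above basis of the target. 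Therefore every component of $\text{prod}$ is an isomorphism, and $\text{prod}$ is an isomorphism of $\textsf{BF}$-graded vector cospecies.

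The main obstacle is precisely the Künneth-type identification in the previous paragraph: verifying that the external tensor product sheaf, restricted to a product chart, is monomially split with the claimed basis, i.e. that the iterated base change and tensor product over $\bC[\text{M}_{(H_1,\dots,H_k)}]$ reduces to $\bC[\text{M}_{H_1,z_1}]\otimes_\bC\dots\otimes_\bC\bC[\text{M}_{H_k,z_k}]$. This is standard commutative algebra — freeness of the $\bC[\text{M}_{H_i,z_i}]$ over $\bC$, plus the product decomposition of the coordinate ring of a product toric variety — but it is where one must be careful with the coweight lattices $\text{M}_{S_i}$ and with identifying the monomials; everything else reduces to the elementary facts that intersections commute with products and that $\text{prod}$ is the identity on index sets. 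The potential degenerate cases — an empty plate intersection (both sides $0$) or an unbounded one (both sides infinite-dimensional) — are handled uniformly by the basis bijection and need no separate treatment.
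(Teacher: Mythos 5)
Your proposal is correct and follows essentially the same route as the paper: compute $\Gamma_{\bC\Sigma^{S_i}}(\mathcal{O}_{z_i})$ as the intersection $\bigcap_{H_i}\bC[\text{M}_{H_i,z_i}]$, identify the external product on a product chart $U_{H_1}\times\dots\times U_{H_k}$ with the module $\bC[\text{M}_{H_1,z_1}]\otimes_\bC\dots\otimes_\bC\bC[\text{M}_{H_k,z_k}]$, intersect over all charts, and observe that $\text{prod}$ is the bijection of monomial bases $g^{h_1}\otimes\dots\otimes g^{h_k}\mapsto g^{(h_1,\dots,h_k)}$. The only difference is that you spell out the Künneth-type base-change collapse and the reduction to componentwise bijectivity, which the paper leaves implicit.
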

\begin{proof} 
We have
\[
\Gamma_{\bC\Sigma^{S_1}}(\mathcal{O}_{z_1}) \otimes  \dots \otimes   \Gamma_{\bC\Sigma^{S_k}}(\mathcal{O}_{z_k})
= \bC\Big [ \bigcap_{H_1\in \Sigma[S_1]}   \text{M}_{H_1, z_1} \Big ] \otimes \dots \otimes   \bC\Big [ \bigcap_{H_k\in \Sigma[S_k]}   \text{M}_{H_k, z_k} \Big ]
.\]	
Consider an affine open of the form $U_H=U_{H_1}\times \dots \times U_{H_k}$ of $\bC\Sigma^{F}$. Tensoring coherent sheaves corresponds to tensoring their modules on affine charts, and so
\[
\Gamma_{U_H}(\mathcal{O}_{z_1} \boxtimes \dots \boxtimes  \mathcal{O}_{z_k}) 
= 
(\bC[ \text{M}_{H_1,z_1} ]  \otimes \bC[ \text{M}_{H} ] )\otimes  \dots \otimes  (\bC[ \text{M}_{H_k,z_k} ]\otimes \bC[ \text{M}_{H} ) 
.\]
We make the identification
\[
(\bC[ \text{M}_{H_1,z_1} ]  \otimes \bC[ \text{M}_{H} ] )\otimes  \dots \otimes  (\bC[ \text{M}_{H_k,z_k} ]\otimes \bC[ \text{M}_{H} ) 
=
 \bC[ \text{M}_{H_1,z_1}   \times \dots \times \text{M}_{H_k,z_k} ]
.\]
Then
\[
 \Gamma_{\bC\Sigma^{F}}(\mathcal{O}_{z_1} \boxtimes \dots \boxtimes  \mathcal{O}_{z_k}) = \bigcap_H \bC\Big[  \text{M}_{H_1,z_1}   \times \dots \times  \text{M}_{H_k,z_k} \Big ]
 = \bC\Big[ \bigcap_H \big ( \text{M}_{H_1,z_1}   \times \dots \times  \text{M}_{H_k,z_k}\big ) \Big ]
.\]
Then the external product of global sections is given by the isomorphism of vector spaces
\[
\bC\Big [ \bigcap_{H_1\in \Sigma[S_1]}   \text{M}_{H_1, z_1} \Big ] \otimes \dots \otimes   \bC\Big [ \bigcap_{H_k\in \Sigma[S_k]}   \text{M}_{H_k, z_k} \Big ]
\to
\bC\Big[ \bigcap_H \big ( \text{M}_{H_1,z_1}   \times \dots \times  \text{M}_{H_k,z_k}\big ) \Big ]
\]
\[
g^{h_1} \otimes \dots \otimes g^{h_k}  \mapsto  g^{(h_1,\dots,h_k)}.\qedhere
\]
\end{proof}

\begin{prop}
The $\textsf{BF}$-graded bialgebra $\bC\textbf{\textsf{BF}}$ is strongly Joyal-theoretic. 
\end{prop}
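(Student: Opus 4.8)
The plan is to verify that $\bC\textbf{\textsf{BF}}$ satisfies the definition of a strongly Joyal-theoretic $\textsf{BF}$-graded bialgebra given at the end of \autoref{sec:Joyal-Theoretic Graded Species and Bimonoids}. Recall this means two things: (i) the underlying $\textsf{BF}$-graded vector cospecies is strongly Joyal-theoretic, i.e. equipped with an isomorphism $\text{prod}: \bC\textbf{BF} \to \bC\textbf{\textsf{BF}}$ with $\text{prod}_{(I,z)}=\text{id}$; and (ii) the bialgebra structure is Joyal-theoretic up to this specified isomorphism, meaning the transported (co)multiplication $\boldsymbol{\upmu}_{F,G} = \text{prod}^{-1}_F \circ \upmu_{F,G} \circ \text{prod}_G$ and $\boldsymbol{\Updelta}_{F,G} = \text{prod}^{-1}_G \circ \Updelta_{F,G} \circ \text{prod}_F$ factorize as tensor products over the lumps of $G$. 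Part (i) is already done: the preceding proposition exhibits $\text{prod}$ as an isomorphism of $\textsf{BF}$-graded vector cospecies, and by construction its totally-lumped components are identities (the external product of a single section is that section). So only (ii) remains.

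For (ii), first I would unwind $\boldsymbol{\upmu}_{F,G}$ and $\boldsymbol{\Updelta}_{F,G}$ using the definitions of the structure maps of $\bC\textbf{\textsf{Mod}}$ (pullback of global sections along the (co)multiplication of $\bC\Sigma$, from \autoref{Global Sections}) together with the definition of $\bC\textbf{\textsf{BF}} = \varphi^\ast \bC\textbf{\textsf{Mod}}$ from \autoref{thm}, whose multiplication and comultiplication are $\mathtt{\Phi}_{F,G}\circ \upmu^\ast_{F,G}$ and $\mathtt{\Psi}_{F,G}\circ \Updelta^\ast_{F,G}$ respectively. The key observation is that all four ingredients factorize over lumps: the (co)multiplication of $\bC\Sigma$ factorizes because $\bC\Sigma$ is Joyal-theoretic (so $\upmu_{F,G} = \bigtimes_j \upmu_{F|_{T_j}}$ and $\Updelta_{F,G} = \bigtimes_j \Updelta_{F|_{T_j}}$); the pullback of global sections along a product map is the external tensor product of the pullbacks; the external product isomorphism $\text{prod}$ is compatible with forming sub-products lump by lump (as seen explicitly in the proof above, where $\text{prod}$ sends $g^{h_1}\otimes\dots\otimes g^{h_k} \mapsto g^{(h_1,\dots,h_k)}$); and finally the $2$-cells $\mathtt{\Phi}_{F,G}$ and $\mathtt{\Psi}_{F,G}$ were \emph{defined} in \autoref{A Homomorphism of Bimonoids} to factorize over the lumps of $G$ — they are built as $\bigotimes_{1\leq j\leq l}$ of the maps $\text{m}_{F|_{T_j},a_j}$ and their inverses, see \textcolor{blue}{(\refeqq{eq:Phi})} and \textcolor{blue}{(\refeqq{eq:Psi})}. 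Assembling these, both $\boldsymbol{\upmu}_{F,G}$ and $\boldsymbol{\Updelta}_{F,G}$ become $\bigotimes_{1\leq j \leq l}$ of the corresponding totally-lumped maps $\boldsymbol{\upmu}_{F|_{T_j}}$ and $\boldsymbol{\Updelta}_{F|_{T_j}}$, which is exactly the required factorization.

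The main obstacle I expect is bookkeeping rather than conceptual difficulty: one must carefully track how the various canonical isomorphisms ``pullback of the structure sheaf is the structure sheaf'' interact with the external product isomorphism and with $\text{prod}$, since $\text{prod}$ itself is only an isomorphism and not an identity, and $\mathtt{\Phi}, \mathtt{\Psi}$ are only defined chart-by-chart (on the affine opens $U_H$) and then glued. The cleanest way to handle this is to work on a compatible affine chart $U_H$ with $H = (H_1,\dots,H_l)$ and $H_j \leq F|_{T_j}$, where all sheaves of modules become explicit modules over $\bC[\text{M}_H]$ and all maps become the explicit linear maps spelled out in the constructions of $\mathtt{\Phi}$, $\mathtt{\Psi}$ and $\text{prod}$; there the factorization is manifest from the formulas, and gluing is automatic since every map in sight was already shown to be globally well-defined (by (co)multiplication $\mathtt{f}$-naturality of $\bC\textbf{\textsf{O}}$). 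Thus I would reduce everything to a single chart, observe the factorization there, and conclude. This argument is entirely parallel to the way $\bC\textbf{\textsf{O}}$ was seen to be Joyal-theoretic in \autoref{sec:Graded Bialgebra}, with $\text{M}_{H_j}$ replaced by $\text{M}_{H_j,z_j}$.

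\begin{proof}
By the preceding proposition, the external product of global sections $\text{prod}: \bC\textbf{BF} \to \bC\textbf{\textsf{BF}}$ is an isomorphism of $\textsf{BF}$-graded vector cospecies, and its totally-lumped components are identities, so $\bC\textbf{\textsf{BF}}$ is a strongly Joyal-theoretic $\textsf{BF}$-graded vector cospecies. It remains to check that the bialgebra structure is Joyal-theoretic up to this isomorphism, i.e. that the transported operations
\[
\boldsymbol{\upmu}_{F,G}= \text{prod}^{-1}_F   \circ \upmu_{F,G} \circ  \text{prod}_G
,\qquad
\boldsymbol{\Updelta}_{F,G}= \text{prod}^{-1}_G   \circ \Updelta_{F,G} \circ  \text{prod}_F
\]
satisfy $\boldsymbol{\upmu}_{F,G} =  \bigotimes_{1\leq j\leq l}\boldsymbol{\upmu}_{F|_{T_j}}$ and $\boldsymbol{\Updelta}_{F,G} = \bigotimes_{1\leq j\leq l}\boldsymbol{\Updelta}_{F|_{T_j}}$, where $G=(T_1,\dots, T_l)$.

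We work on a compatible affine chart. Let $H=(H_1,\dots, H_l)$ with $H_j\in \Sigma[T_j]$ and $H_j\leq F|_{T_j}$, giving the affine open $U_H=U_{H_1}\times \dots \times U_{H_l}\subseteq \bC\Sigma^G$; as $H$ varies these cover $\bC\Sigma^G$, so it suffices to verify the factorization over each such $U_H$. On $U_H$, every sheaf of modules in sight becomes an explicit module over $\bC[\text{M}_H]$, and by the explicit formulas for the structure maps: the comultiplication $\Updelta_{F,G}$ of $\bC\Sigma$ factorizes as $\bigtimes_{1\leq j\leq l}\Updelta_{F|_{T_j}}$ since $\bC\Sigma$ is Joyal-theoretic; pullback of global sections along a product of maps is the external tensor product of the pullbacks; the maps ${\upmu^\ast_{F,G}}$, the canonical isomorphisms $\mathcal{O}_{G}\xrightarrow{\sim}\upmu^\ast_{F,G}\mathcal{O}_{F}$, and the external product isomorphism $\text{prod}$ are each given lump-by-lump by the formulas displayed in \autoref{Global Sections}, \autoref{A Homomorphism of Bimonoids} and the previous proposition; and, crucially, the $2$-cells $\mathtt{\Phi}_{F,G}$ and $\mathtt{\Psi}_{F,G}$ were constructed in \autoref{A Homomorphism of Bimonoids} precisely as $\bigotimes_{1\leq j\leq l}$ of the maps $\text{m}_{F|_{T_j},a_j}$ and their inverses, see \textcolor{blue}{(\refeqq{eq:Phi})} and \textcolor{blue}{(\refeqq{eq:Psi})}. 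Composing these, the multiplication $\mathtt{\Phi}_{F,G}\circ \upmu^\ast_{F,G}$ of $\bC\textbf{\textsf{BF}}$ and the comultiplication $\mathtt{\Psi}_{F,G}\circ \Updelta^\ast_{F,G}$ of $\bC\textbf{\textsf{BF}}$ are, after conjugating by $\text{prod}$, equal to $\bigotimes_{1\leq j\leq l}$ of the corresponding totally-lumped maps; that is, $\boldsymbol{\upmu}_{F,G} =  \bigotimes_{1\leq j\leq l}\boldsymbol{\upmu}_{F|_{T_j}}$ and $\boldsymbol{\Updelta}_{F,G} = \bigotimes_{1\leq j\leq l}\boldsymbol{\Updelta}_{F|_{T_j}}$ on $U_H$. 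Since every map involved was already shown to be globally well-defined as $H$ varies (using (co)multiplication $\mathtt{f}$-naturality of $\bC\textbf{\textsf{O}}$), these identities hold globally. Hence $\bC\textbf{\textsf{BF}}$ is strongly Joyal-theoretic.
\end{proof}
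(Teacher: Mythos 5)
Your proposal is correct and takes essentially the same route as the paper: both arguments simply unwind the transported (co)multiplication using the explicitly lump-factorized formulas defining $\mathtt{\Phi}_{F,G}$ and $\mathtt{\Psi}_{F,G}$ together with the prod isomorphism, and read off the factorization on monomial sections (the paper does this directly on the global monomial basis rather than chart-by-chart, but this is only a presentational difference). One labeling slip to fix: the multiplication of $\bC\textbf{\textsf{BF}}$ is $\mathtt{\Phi}_{F,G}\circ\Updelta^\ast_{F,G}$ and the comultiplication is $\mathtt{\Psi}_{F,G}\circ\upmu^\ast_{F,G}$ (not the other pairing), and correspondingly the canonical isomorphism should read $\mathcal{O}_F\xrightarrow{\sim}\upmu^\ast_{F,G}\mathcal{O}_G$; this does not affect the structure of your argument.
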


\begin{proof}
For the multiplication, we need to show factorization
\[
\Updelta^\ast_{F,G}\Big ( \bigotimes_{1\leq i \leq k} s_i  \Big )  =  \bigotimes_{1\leq j \leq l} \Updelta^\ast_{F|_{T_j}}\Big(  \bigotimes_\mathtt{i}  s_{\mathtt{i}_j}\Big )   
.\]
We have that the multiplication pulled-back to $\bC\textbf{\text{BF}}$ along $\text{prod}$ is given by
\[
\Updelta^\ast_{F,G} :  \bigotimes_{1\leq i \leq k} \Gamma_{\bC\Sigma^{S_i}}(\mathcal{O}_{z_i})   
\to
   \bigotimes_{1\leq j \leq l}    \Gamma_{\bC\Sigma^{T_j}}(\mathcal{O}_{  (  z_{1_j}  | \dots | z_{k(j)_j} )  })  
\]
\[
\bigotimes_{1\leq i \leq k}  s^{h_i} 
\mapsto  	
\bigotimes_{1\leq j \leq l}  s^{\text{m}_{F|_{T_j}, a_j  } \big (   \bigtimes_{\mathtt{i}_j}  h_{\mathtt{i}_j}    \big )}
\]
and indeed we have factorization because
\[
\bigotimes_{1\leq j \leq l}  s^{\text{m}_{F|_{T_j}, a_j  } \big (   \bigtimes_{\mathtt{i}_j}  h_{\mathtt{i}_j}    \big )}
=
\Updelta^\ast_{F|_{T_j}}\Big(  \bigotimes_\mathtt{i}  s^{h_{\mathtt{i}_j}} \Big )   
.\]
For the comultiplication, we need to show factorization
\[
\upmu^\ast_{F,G}\Big ( \bigotimes_{1\leq j \leq l} s_j  \Big )  =  \bigotimes_{1\leq j \leq l} \upmu^\ast_{F|_{T_j}}(  s_j  )   
.\]
We have that the comultiplication pulled-back to $\bC\textbf{\text{BF}}$ along $\text{prod}$ is given by
\[
\bigotimes_{1\leq j\leq l}      s^{h_{j}}    
=	
\bigotimes_{1\leq j\leq l}     s^{\text{m}_{	  F|_{T_j}   , a_j}  ( \bigtimes_{\mathtt{i}_j} h_{\mathtt{i}_j}  )}
\,  \mapsto \,  
 \bigotimes_{1\leq j\leq l}  \bigotimes_{\mathtt{i}}    s^{h_{\mathtt{i}_j}}  
\]
and indeed we have factorization because
\[
\bigotimes_{\mathtt{i}}    s^{h_{\mathtt{i}_j}}   =\upmu^\ast_{F|_{T_j}}(  s^{h_j}  )   
.\qedhere \]
\end{proof}

\bibliographystyle{alpha}
\bibliography{steinmann}

\end{document}